\DeclareRobustCommand{\loongrightarrow}{%
  \DOTSB\relbar\joinrel\relbar\joinrel\relbar\joinrel\rightarrow
}
\DeclareRobustCommand{\looongrightarrow}{%
  \DOTSB\relbar\joinrel\relbar\joinrel\relbar\joinrel\relbar\joinrel\joinrel\relbar\relbar\joinrel\relbar\joinrel\rightarrow
}
\newtheorem{theorem}{Theorem}[section]
\newtheorem{assertion}[theorem]{Assertion}
\newtheorem{lemma}[theorem]{Lemma}
\newtheorem{corollary}[theorem]{Corollary}
\newtheorem{scholium}[theorem]{Scholium}
\theoremstyle{definition}
\newtheorem{definition}[theorem]{Definition}
\newtheorem{example}[theorem]{Example}
\newtheorem{assumption}[theorem]{Antipode Conditions}
\newtheorem{remark}[theorem]{Remark}
\newtheorem{warning}[theorem]{Warning}
\numberwithin{equation}{subsection}
\numberwithin{figure}{section}
\newcommand{\mc}{\mathcal}
\newcommand{\be}{\begin{equation}}
\newcommand{\ee}{\end{equation}}
\newcommand{\A}{{\mathbb A}}
\newcommand{\C}{{\mathbb C}}
\newcommand{\R}{{\mathbb R}}
\newcommand{\Z}{{\mathbb Z}}
\newcommand{\I}{{\mathbb I}}
\newcommand{\J}{{\mathbb J}}
\newcommand{\BE}{{\mathbb E}}
\newcommand{\BD}{{\mathbb D}}
\newcommand{\CA}{{\mathcal A}}
\newcommand{\CB}{{\mathcal B}}
\newcommand{\CC}{{\mathcal C}}
\newcommand{\CF}{{\mathcal F}}
\newcommand{\CO}{{\mathcal O}}
\newcommand{\CU}{\mc U}
\newcommand{\CW}{{\mathcal W}}
\newcommand{\SF}{{\mathscr F}}
\newcommand{\SE}{{\mathscr E}}
\newcommand{\SP}{{\mathscr P}}
\newcommand{\ST}{{\mathscr T}}
\newcommand{\mf}{\mathfrak}
\newcommand{\g}{{\mathfrak g}}
\newcommand{\fg}{{\mf g}}
\newcommand{\fh}{{\mf h}}
\newcommand{\fb}{{\mf b}}
\newcommand{\fn}{{\mf n}}
\newcommand{\fk}{{\mathfrak k}}
\newcommand{\fl}{{\mathfrak l}}
\newcommand{\fp}{{\mathfrak p}}
\newcommand{\fq}{{\mathfrak q}}
\newcommand{\fu}{{\mathfrak u}}
\newcommand{\fv}{{\mathfrak v}}
\newcommand{\fsl}{{\mathfrak {sl}}}
\newcommand{\gl}{{\mathfrak {gl}}}
\newcommand{\osp}{{\mathfrak {osp}}}
\newcommand{\id}{{\rm{id}}}
\newcommand{\U}{{\rm{U}}}
\newcommand{\End}{{\rm{End}}}
\newcommand{\Hom}{{\rm{Hom}}}
\newcommand{\GL}{{\rm{GL}}}
\newcommand{\Sym}{{\rm{Sym}}}
\newcommand{\im}{{\rm{Im}}}
\newcommand{\wt}{\widetilde}
\newcommand{\ol}{\overline}
\newcommand{\ot}{\otimes}
\newcommand{\beq}{\begin{eqnarray}}
\newcommand{\eeq}{\end{eqnarray}}
\newcommand{\baln}{\begin{aligned}}
\newcommand{\ealn}{\end{aligned}}
\newcommand{\lra}{\longrightarrow}
\newcommand{\CS}{\mathcal{S}}
\newcommand{\Px}{\C_{\omega}[{\bf x}]}
\newcommand{\Pd}{\C_{\omega}[{\boldsymbol{\partial}}]}
\newcommand{\Vect}{\text{${\bf Vect}(\Gamma, \omega)$}}
\newcommand{\Set}{\text{\bf Set}}
\newcommand{\Alg}{\text{${\bf Alg}(\Gamma, \omega)$}}
\begin{document}

\normalfont


\title[Lie $(\Gamma, \omega)$-algebras]
{Invariants and representations of \\ the $\Gamma$-graded general linear Lie $\omega$-algebras}

\author{R.B. Zhang}
\address{School of Mathematics and Statistics,
The University of Sydney, Sydney, N.S.W. 2006, Australia}
\email{ruibin.zhang@sydney.edu.au}
\date{\today}

\begin{abstract}
There is considerable current interest in applications of
generalised Lie algebras graded by an abelian group $\Gamma$ with a commutative factor $\omega$. 
This calls for a systematic development of the theory of such algebraic structures. 
We treat the representation theory and  invariant theory of 
%
%
the $\Gamma$-graded general linear Lie $\omega$-algebra 
$\gl(V(\Gamma, \omega))$, where $V(\Gamma, \omega)$ is 
any finite dimensional $\Gamma$-graded vector space.  
Generalised Howe dualities over symmetric $(\Gamma, \omega)$-algebras are established, 
from which we derive the first and second fundamental theorems of invariant theory, 
and a generalised Schur-Weyl duality. 
The unitarisable $\gl(V(\Gamma, \omega))$-modules for 
two ``compact'' $\ast$-structures are classified, and it is shown that  
the tensor powers of $V(\Gamma, \omega)$ and their duals
are unitarisable for the two compact $\ast$-structures  respectively.   
A  Hopf $(\Gamma, \omega)$-algebra is constructed, which gives rise to a group functor 
corresponding to the general linear group in the $\Gamma$-graded setting. 
Using this Hopf $(\Gamma, \omega)$-algebra, 
we realise simple tensor modules and their dual modules 
by mimicking the classic Borel-Weil theorem. 
We also analyse in some detail the case with $\Gamma=\Z^{\dim{V(\Gamma, \omega)}}$ 
and $\omega$ depending on a complex parameter $q\ne 0$,
where $\gl(V(\Gamma, \omega))$ shares common features with the quantum general linear (super)group, 
but is better behaved especially when $q$ is a root of unity. 

\end{abstract}
\subjclass[2020]{Primary 
17B75,  
17B81,  
17B10. 
 }

\keywords{Lie colour superalgebras, colour Howe duality, unitarisable modules.}
\maketitle

\tableofcontents

\section{Introduction}
\noindent{1.1.}
Given any additive abelian group $\Gamma$ with a commutative factor $\omega$ \cite{B}, 
Rittenberg and Wyler \cite{RW, RW-2} introduced $\Gamma$-graded Lie $\omega$-algebras,   
or Lie $(\Gamma, \omega)$-algebras for short, 
in the late 70s
following the advent of supersymmetry (see \cite{HLS, SS, WZu} and references therein)
and Lie superalgebras (see \cite{K, Sch} and references therein).    
Initial investigations on aspects of their structure theory \cite{Sch79, Sch83} and their applications to 
generalised supersymmetries \cite{JYW, LR, V} were carried out immediately after. 
It was also known \cite{RW-2} at the time already that these algebraic structures 
were relevant to Green's theory of parastatistics \cite{H}.  
There was further research on applications  in these areas \cite{JYW, T, V, YJ} in the ensuing years. 

Lie $(\Gamma, \omega)$-algebras are also loosely referred to as Lie colour (super)algebras 
(see Section \ref{sect:Lie} for further explanation of terminology). 
In the late 90s,  they were brought into play in various areas of algebra, e.g, 
the study of ring theoretical aspects of
graded associative and Hopf algebras \cite{BFM, BFM-2, Mo}, 
and group gradings of Lie algebras \cite{BK}.
There was also work studying Lie colour (super)algebras themselves, 
for example, 
a double centraliser theorem between the symmetric group 
and general linear Lie colour (super)algebra was proved in \cite{FM}, 
and a cohomology theory for Lie colour (super)algebras was 
introduced in \cite{SchZ} which was much studied later by many researchers. 
Since then there have been considerable research activities in mathematics 
studying applications of Lie colour (super)algebras in other algebraic contexts. 

There was also some research in different directions.  We mention in particular that 
the paper \cite{CSO} 
gave a  classification and construction of the finite dimensional simple modules for 
the Lie colour algebra $\fsl_2^c$.    
It revealed unexpected complexity of the representation theory of 
Lie colour algebras even in this simplest case 
(also see \cite{Rm} for further developments). 
Also
the paper \cite{F} explored general aspects of representations 
of generic Lie colour (super)algebras from a ring theoretical perspective. 

Starting about 10 years ago, several research groups in mathematical physics 
have worked intensively on the subject. 
A Lie colour (super)algebraic framework was established
for parastatistics \cite{BFRT, SV18, SV23, T, Tf1, Tf2, YJ} (also see \cite{Z24}), 
and measurable effects of $\Z_2\times\Z_2$-graded paraparticles \cite{Tf1, Tf2},  
and more generally of $\Z_2^n$-graded paraparticles \cite{BFRT}, 
were discovered theoretically (see \cite{Tf3} for a brief review and further references).
Colour supersymmetries were developed in quantum mechanics \cite{AD, AIKT, AKT1, AKT2, BFRT, BD}  
and quantum field theory \cite{Bruce, DA}, and an 
important result obtained was the construction of $\Z_2\times\Z_2$-graded extensions 
of the Liouville and Sinh–Gordon theories \cite{AIKT}, which retain integrability. 
Another interesting discovery 
\cite{AKTT} was the $\Z_2\times \Z_2$-graded Lie algebra symmetries of
the non-relativistic analogues of the Dirac equation known as L\'evy-Leblond equations (see  
\cite{Rm} and references therein for further developments).

There have also been works developing 
aspects of Lie colour (super)algebras
for particular grading groups like $\Gamma=\Z_2\times\Z_2$  
with commutative factor $\omega(\alpha, \beta)= (-1)^{\alpha_1\beta_2\pm \alpha_2 \beta_1}$ 
(which is the same for both $\pm$), 
which are directly relevant to physical applications, see \cite{AI, AA, ISV, Rm25, SV23, SV25b} and references therein.
Affine Lie colour (super)algebras have also been studied  \cite{AIS, AS}.

These new results have aroused considerable interest 
in  Lie colour (super)algebras and related algebraic structures in mathematical physics,   
and call for a systematic development of their theory, especially those aspects 
relevant to physical applications.
Such a theory will provide necessary tools for further exploration of  physical applications of Lie colour (super)algebras, 
which continues to be a very active research area today, 
see, e.g., \cite{AIT, AIKT, BFRT, SV25a, SV25b} and references therein.
 
\medskip
\noindent{1.2.} 
 A well known theorem of Scheunert \cite[Theorem 2]{Sch79} from the early days
states that cocycle twistings can turn Lie colour (super)algebras into ordinary Lie (super)algebras, 
and this carries over to representations as well (also see \cite{MB}). 
A vast generalisation of Scheunert's cocycle twistings, referred to as bosonisation,  
was developed by Majid  \cite{Ma}
and others for algebras over braided tensor categories. 
A particularly interesting example is the quantum correspondences among 
(affine) Lie superalgebras developed in \cite{XZ, Z92, Z97}, 
which proved to be very useful for studying primitive ideals 
of universal enveloping algebras of Lie superalgebras \cite{Le}, 
and constructing vertex operator representations for 
quantum affine superalgebras \cite{XZ26}. 

This might mislead some people to ask whether there is anything new in 
Lie colour (super)algebras, and even Lie superalgebras and 
Hopf superalgebras like quantum supergroups \cite{BGZ, Y91, Y94, Z93, Z98}. 
The answer is of course affirmative. 
In particular, recent research discussed above 
clearly demonstrates the usefulness of Lie colour (super)algebras
for applications in mathematical physics.

As is well known, in a quantum system with symmetry, 
the states are classified, and  their properties (e.g., statistics)  
are determined, by representations of the symmetry algebra. 
If the symmetry algebra is a  Lie colour superalgebra, cocycle twistings of the states 
and the symmetry algebra will alter properties of the states.

A mathematical example due to Montgomery  \cite[\S 6]{Mo} is also noteworthy in this regard. 
The  $n$-th Weyl algebra $\A_n$ was graded by $\Z_2^n$, leading to a 
Lie colour superalgebra $L=\A_n$ with the graded commutator as the Lie bracket.  
By applying Scheunert's cocycle twisting,  this $\Z_2^n$-graded $\A_n$ 
was turned into an associative superalgebra graded by even and odd degrees, 
which yielded an ordinary Lie superalgebra. However, 
this Lie superalgebra could not be obtained from $L$ by cocycle twisting. 
We note that the Weyl algebra setting of Montgomery's example  loc. cit.
is quite similar to that of a quantum system; 
it suggests that simultaneous cocycle twisting of the states 
and the  symmetry colour algebra in a compatible manner 
may not always be achievable.

\medskip
\noindent{1.3.}
From the viewpoints of both Lie theory and mathematical physics,    
we are most interested in those Lie colour (super)algebras 
which are akin to simple Lie (super)algebras 
and the corresponding affine Kac-Moody Lie (super)algebras.  
The present paper studies the colour analogue of the type $A$ case. 
Given any finite dimensional $\Gamma$-graded vector space $V(\Gamma, \omega)$,   
we investigate the general linear 
Lie $(\Gamma, \omega)$-algebra $\gl(V(\Gamma, \omega))$, 
to develop a representation theory and classical invariant theory  
analogous to those  treated in \cite{GW, Ho, W} for the classical Lie algebras.  

It appears that there exists no coherent treatment 
of the fundamentals of the structure and representation theory
of $\gl(V(\Gamma, \omega))$ in the literature. Thus
this paper will start by developing that,  then progresses to addressing more advanced topics
in the classical invariant theory and representation theory. 
These include colour Howe duality, generalised Schur-Weyl duality, 
fundamental theorems of invariant theory, unitarisable modules for important $\ast$-structures, 
and a non-commutative geometric construction of representations 
using the algebra of functions of the ``general linear group'' 
in the $\Gamma$-graded setting (see Section \ref{sect:group}).

It is pleasing that 
much of the machinery of usual Lie theory can be generalised to the graded setting 
for arbitrary grading groups and commutative factors, as we will see 
in the main body of the paper.

\smallskip

This paper consists of four parts. The following main results are obtained in each of them.

\smallskip
\noindent
I. Basic structure and representation theory. 
\begin{enumerate}
\item \label{aim-1}
Develop the structure of $\gl(V(\Gamma, \omega))$, in particular, its root system and Weyl group;

\item \label{aim-2}
investigate highest weight modules, and classify the finite dimensional simple modules; 

\item 
treat the tensor modules and their duals in detail from different view points, 
gaining a complete understanding of them.
\end{enumerate}

These results generalise those for the general linear Lie superalgebra to the $\Gamma$-graded setting.
They are completely elementary, but foundational for the entire theory. 
It appears that the material was not treated coherently before.  

We mention in particular that by using parabolic induction with respect to an appropriate parabolic subalgebra, 
we achieve a classification of the finite dimensional simple modules. Also, there is a 
natural generalisation of the notion of typical and atypical modules originally
introduced by Kac \cite{K} in the representation theory of  Lie superalgebras.  

\smallskip
\noindent 
II. Invariant theory. 
\begin{enumerate}
\setcounter{enumi}{3}

\item \label{aim-3}
Establish colour versions of Howe dualities over symmetric $(\Gamma, \omega)$-algebras;

\item \label{aim-5}
determine $\gl(V(\Gamma, \omega))$-invariants in symmetric $(\Gamma, \omega)$-algebras, and prove first and second fundamental theorems (FFT \& SFT) of invariant theory   in this setting;

\item \label{aim-4}
 deduce colour Schur-Weyl duality from the colour Howe duality, strengthening a double commutant theorem obtained by Fischman and Montgomery \cite[Theorem 2.15]{FM};

\item \label{aim-6}
treat in detail a special case with $\Gamma=\Z^{\dim V(\Gamma, \omega)}$ and $\omega$ depending on a complex parameter $q\ne 0$. In this case,  the symmetric $(\Gamma, \omega)$-algebra $S_\omega(V(\Gamma, \omega))$ 
coincides with the quantised coordinate superalgebra $\C_q\big[\C^{M_+|M_-}\big]$ of  the superspace $\C^{M_+|M_-}$ \cite{Z98, Zy} for appropriate $M_+$ and $M_-$. 
We obtain a multiplicity free decomposition of the quantised coordinate algebra of $N$ copies of $\C^{M_+|M_-}$ with respect to the action of 
$\gl(V(\Gamma, \omega))\times \gl_N(\C)$ at arbitrary $q\ne 0$ including roots of unity.
\end{enumerate} 

This part of the paper amounts to a comprehensive treatment of the classical invariant theory 
of $\gl(V(\Gamma, \omega))$. The approach adopted here is a generalisation of that  
developed by Howe \cite{Ho1}  (see \cite{Ho} for an introduction) in the context of automorphic forms. 

Scheunert  \cite{Sch83} investigated invariant theory for Lie $(\Gamma, \omega)$-algebras by developing 
a ``graded tensor calculus'' to show that the familiar technique of ``tensor contraction'' 
 in mathematical physics can be generalised to the $\Gamma$-graded setting to produce invariants. 
The first fundamental theorem of invariant theory for $\gl(V(\Gamma, \omega))$ developed here 
implies Scheunert's result in this case,  and more importantly,  enables one to construct all invariants 
(a problem which Scheunert did not address in loc. cit.).  
 
Invariant theory is at the foundation of symmetries in physics; the very concept of symmetry 
is an invariant theoretical notion. The elemental situation is that the Hamiltonian 
of a quantum system is invariant under the action of the symmetry algebra of the system, 
e.g., a Lie (super)group or Lie colour (super)algebra.  
A major part of the study of the quantum system is to understand implications of the symmetry.

We mention that in recent years, there have been extensive research developing 
the invariant theories of Lie superalgebras \cite{DLZ, LZ17, LZ21, Zy1} 
and quantum (super)groups \cite{CW20, CW23, LZZ11, LZZ20, Zy}. 
In particular, FFTs and SFTs in various formulations were established.

\smallskip
\noindent 
 III. Unitarisable modules
\begin{enumerate}
\setcounter{enumi}{7}
\item \label{aim-10} 
Develop a basic theory of unitarisable modules for Hopf $(\Gamma, \omega)$-algebras; 
\item  \label{aim-11}
classify the unitarisable $\gl(V(\Gamma, \omega))$-modules with respect 
to the two compact $\ast$-structures; and 
prove that tensor modules and their dual modules for $\gl(V(\Gamma, \omega))$ are unitarisable
with respect to the two compact $\ast$-structures respectively. 
\end{enumerate} 

Similar treatment for quantum supergroups was given in \cite{WZ, Z98}. 

The study of unitarisable modules 
is indispensable for applications of Lie $(\Gamma, \omega)$-algebras in quantum physics, 
especially in view of Wigner's classification of elementary particles 
in terms of unitary representations of the Poincare group, which is 
fundamental for understanding sub-nuclear structures. 

We point out that an associative  $(\Gamma, \omega)$-algebra admits $\ast$-structures
only if the commutative factor $\omega$ satisfies the Unit Modulus Property \eqref{eq:inv-ast}. 
Also the antipode of any 
Hopf $\ast$-$(\Gamma, \omega)$-algebra must satisfy the Antipode Conditions \ref{assum}. 

\smallskip
\noindent 
IV. The coordinate algebra.
\begin{enumerate}
\setcounter{enumi}{9}
\item \label{aim-7}
Introduce a Hopf $(\Gamma, \omega)$-subalgebra $\ST(V(\Gamma, \omega))$ of the finite dual of the universal enveloping algebra of $\gl(V(\Gamma, \omega))$, which contains the subalgebras of matrix coefficients of tensor representations and their duals; Peter-Weyl type theorems are proved for these subalgebras, and 
$\ST(V(\Gamma, \omega))$ is shown to be isomorphic to a ring of fractions of a symmetric $(\Gamma, \omega)$-algebra;

\item \label{aim-8}
construct simple tensor representations and their duals by generalising the classic Borel-Weil theorem 
(recall that the Borel-Weil theorem  realises representations on sections of line bundles on flag manifolds);

\item \label{aim-9}
construct a  group functor from the Hopf $(\Gamma, \omega)$-algebra $\ST(V(\Gamma, \omega))$, 
which may be considered as the general linear colour (super)group. 
\end{enumerate}

This part of the paper aims to explore some generalised notion of a ``group'' associated with the general linear Lie $(\Gamma, \omega)$-algebra. 
The philosophy of this investigation is similar to that in the contexts of algebraic groups and quantum (super)groups, 
namely, to study the coordinate algebras instead of the groups themselves.  
In the present context, the coordinate algebra has
the structure of a $\Gamma$-graded $\omega$-commutative Hopf algebra, 
and gives rise to a group functor.

Using the coordinate algebra, we are able to construct some algebraic analogue of a flag variety and line bundles on it.
Simple tensor modules can be realised as certain subspaces of sections of the line bundles.
In the special case $\Gamma=\Z_2^n$, there should be a connection  with $\Z_2^n$-graded supermanifolds 
studied in recent years (see \cite{BIP, BP-2, CGP-1} and references therein) and 
the non-commutative para-manifolds introduced in \cite{Z24}. 
It will be very interesting mathematically to further develop  this part of the theory from an algebraic geometric viewpoint, 
which, however, is well beyond the scope of the present paper. 

We also mention that a class of new algebras referred to as Schur $(\Gamma, \omega)$-algebras (see Definition \ref{def:Schur-alg}) naturally emerged from the investigation on coordinate algebras, which are of interest in their own right. 

\medskip
\noindent{1.4.}
We note that the study of $\gl(V(\Gamma, \omega))$-actions on 
symmetric $(\Gamma, \omega)$-algebras and the coordinate algebra $\ST(V(\Gamma, \omega))$ 
(which  are all non-commutative) 
falls within the realm of non-commutative geometry in the general spirit  of \cite{C, La} 
and of \cite{M}.
In particular, the appropriate conceptual framework for 
IV.(\ref{aim-8}) is the geometry of non-commutative analogues 
of flag varieties and lines bundles on them.  
Part  II.(\ref{aim-6}) indicates that such non-commutative geometries
are closely related to the quantum geometry  \cite{M}
of quantum groups \cite{D, J} and quantum supergroups \cite{BGZ, Y91, Y94, ZGB, Z93, Z98}, 
but have much better properties.  
For example, results of II.(\ref{aim-6}) remain valid even when $q$ is a root of $1$. 
This is in sharp contrast to quantum (super)groups, 
which become exceedingly difficult at roots of unity. 

\medskip
\noindent{1.5.}
To make the paper accessible to a broad audience interested in Lie colour (super)algebras 
and/or their physical applications, we have provided considerable 
background information on the subject. In particular,  
we have included an introductory section, Section \ref{sect:grad-alg},  
to explain basic notions of various types of algebras over the category of $\Gamma$-graded vector spaces.  

We prove results in this paper by elementary means whenever possible  
(sometimes sacrificing elegance of proofs), and present the pertinent steps of all proofs. 
Most of the methods and techniques required have their origins in the
theories of semi-simple Lie algebras \cite{GW, Ho, H} and Lie superalgebras \cite{K, Sch}, 
thus can be readily mastered by readers who have some familiarity with these subjects. 

\medskip

\noindent{\bf Acknowledgements}. 
We thank Naruhiko Aizawa, Alan Carey, Masud Chaichan, Phillip Isaac, Peter Jarvis, 
Francesco Toppan, Anca Tureanu and Joris Van der Jeugt for comments and suggestions.

\section{Algebras over the category of $\Gamma$-graded vector spaces}\label{sect:grad-alg}

We recall basic notions of 
$\Gamma$-graded $\omega$-algebras of various types, 
which will be needed for studying representations of 
$\Gamma$-graded Lie $\omega$-algebras.
In particular, we lay down the rules on how the commutative factors 
enter various contexts. Most of the material in this section is either extracted from 
various sources (e.g., \cite{FM, MB, Sch79, Sch83, SchZ}) or  
quite straightforward generalisations 
from superalgebras to the present setting.

\subsection{The category of $\Gamma$-graded vector spaces}

We fix an additive abelian group $\Gamma$ throughout the paper, 
and assume that there exists a commutative factor \cite[\S 4, part 7]{B}  
$
\omega: \Gamma\times \Gamma \lra \C^*, 
$
which has the usual properties:
\beq
&&\omega(\alpha, \beta) = \omega(\beta, \alpha)^{-1}, \label{eq:cycle-1} \\
&&\omega(\alpha, \beta+\gamma)= \omega(\alpha, \beta) \omega(\alpha, \gamma), \label{eq:cycle-2}\\
&&\omega(\alpha +\beta, \gamma) = \omega(\alpha, \gamma) \omega(\beta, \gamma), \quad \forall
\alpha, \beta, \gamma.  \label{eq:cycle-3}
\eeq
These relations in particular imply that $\omega(0, 0)=1$, and $\omega(\alpha, \alpha)=\pm 1$ for all $\alpha$.
Thus $\Gamma$ is the disjoint union of 
$\Gamma^\pm=\{\alpha\in\Gamma\mid \omega(\alpha, \alpha)=\pm 1\}$. 
Since
$
\omega(\alpha+\beta,  \alpha+\beta) 
=\omega(\alpha,  \alpha) \omega(\beta,  \beta),
$
we have $\alpha + \beta\in \Gamma^+$ if $\alpha, \beta\in \Gamma^+$. 
Hence $\Gamma^+$ is a subgroup of $\Gamma$, 
and $\omega$ restricts to a factor on $\Gamma^+$. 
We refer to \cite[\S2]{MB} and \cite[\S3]{Sch79} (also Section \ref{sect:q-alg}) 
for explicit examples of commutative factors relevant to the investigation in this paper.

\subsubsection{The category of $\Gamma$-graded vector spaces}
A $\Gamma$-graded vector space $V$ is the direct sum $V=\sum_{\alpha\in\Gamma} V_\alpha$ 
of homogeneous subspaces $V_\alpha$. We will call $\alpha$ the degree of $V_\alpha$.  
For any homogeneous element $v\in V$, we denote by $d(v)\in\Gamma$ its degree.
The space of morphisms $\Hom_\C(V, W)$ between $\Gamma$-graded vector spaces $V$ and $W$
is naturally $\Gamma$-graded $\Hom_\C(V, W)=\sum_\alpha \Hom_\C(V, W)_\alpha$, 
with 
\[
\Hom_\C(V, W)_\alpha = \sum_{\beta\in\Gamma} \Hom_\C(V_\beta, W_{\alpha+\beta}).
\]
We denote $\End_\C(V)=\Hom_\C(V, V)$, the space of endomorphisms of $V$. 

The tensor product $V\ot_\C W$ of $\Gamma$-graded vector spaces $V$ and $W$ is naturally $\Gamma$-graded, with 
$(V\ot_\C W)_\gamma = \sum_{\alpha+\beta=\gamma} V_\alpha\ot_\C W_\beta$. 

The category $\Vect$ of $\Gamma$-graded vector spaces is a strict braided monoidal category, 
where the monoidal structure is given by the tensor product $\ot_\C$, and the braiding is given by a functorial map 
$\tau_{V, W}: V\ot_\C W\lra W\ot_\C V$ arising from the commutative factor $\omega$. It is defined by
\beq\label{eq:tau}
\tau_{V, W}(v\ot w)= \omega(d(v), d(w)) w\ot v, 
\eeq
for homogeneous $v$ and $w$, and by linearly extending this 
\beq
\tau_{V, W}(v\ot w)= \sum_{\alpha, \beta} \omega(\alpha, \beta) w_\beta\ot v_\alpha, 
\eeq
for inhomogeneous elements
$v=\sum_{\alpha\in \Gamma} v_\alpha$  and  $w=\sum_{\alpha\in \Gamma} w_\alpha$, 
where $v_\alpha\in V_\alpha$ and $w_\alpha\in W_\alpha$. 
The braiding will be further discussed in Section \ref{sect:braid}.

\begin{remark}[{\bf Convention}]\label{rmk:conven}
We shall adopt the convention that any explicit formula involving the commutative factor will be written in a form analogous to \eqref{eq:tau}, but is tacitly understood to be extended linearly for inhomogeneous elements. 
\end{remark}

The tensor product of linear homomorphisms also involves the braiding. Consider
the natural injection 
$
\Hom_\C(V, W)\ot  \Hom_\C(V', W')\lra \Hom_\C(V\ot V', W\ot W').
$
Write $H=\Hom_\C(V, W)$ and $H'= \Hom_\C(V', W')$. 
The action of $H\ot H'$ on $V\ot W$ is defined by  
\[
H\ot H'\ot V\ot W\stackrel{\id_H\ot\tau_{H', V}\ot \id_W}\looongrightarrow H\ot V\ot  H'\ot W\lra V'\ot W'.
\]
Explicitly, for any $\varphi\in H$ and $\varphi'\in H'$,   
\beq\label{eq:sign}
(\varphi\ot \varphi')(v\ot v')=\omega(d(\varphi'), d(v)) \varphi(v)\ot \varphi'(v'),  \quad v\in V, v'\in V', 
\eeq
in the convention described in Remark \ref{rmk:conven}. 

In the special case with $V'=V$ and $W'=W$, thus $H=\End_\C(V)$ and $H'=\End_\C(W)$, 
this is an inclusion of algebras as we will see from the discussions preceding  
\eqref{eq:multip-tensor}.

The $\Gamma$-graded 
dual vector space of $V$ is $V^*=\sum_{\alpha} (V^*)_{-\alpha}$ with  the homogeneous subspaces
$(V^*)_{-\alpha}:=\Hom_\C(V_\alpha, \C)$ for all $\alpha$. 
It will be convenient to denote dual space pairing by $\langle \ , \  \rangle: V^*\ot V\lra \C$.

  There is a canonical injection $V^*\ot W^*\hookrightarrow (V\ot_\C W)^*$. Now the dual space pairing between $(V\ot_\C W)^*$ and $V\ot_\C W$ restricts to 
\beq\label{eq:ot-paired}
\langle\ , \ \rangle: (V^*\ot W^*)\ot (V\ot W)\lra \C, 
\eeq
which is the composition of the following maps
\[
V^*\ot W^*\ot V\ot W\stackrel{id_{V^*}\ot \tau_{W^*, V}\ot \id_W}{\looongrightarrow}  V^*\ot V\ot  W^* \ot W \stackrel{\langle \ , \  \rangle \ot \langle \ ,  \  \rangle}{\loongrightarrow} \C.
\]
[Here we have surpressed the injective map from the formulae, and we shall continue to do so in the future.] 
Explicitly, given any $\ol{v}\ot\ol{w}\in V^*\ot W^*$, we have 
\[
\baln
&\langle \ol{v}\ot\ol{w}, v'\ot w'\rangle =\omega(d(\ol{w}), d(v')) \langle\ol{v}, v'\rangle \langle\ol{w}, w'\rangle, 
\quad \forall w'\in W, v' \in V.
\ealn
\]

Note that the full subcategory $\Vect_{\rm fin}$ of finite dimensional $\Gamma$-graded vector spaces 
is a strict braided tensor category. 

\subsubsection{The braiding and symmetric group representations}\label{sect:braid}
The material in this section, in particular, 
the representations of the symmetric group to be constructed, 
will play a crucial role in this paper. 

The functorial map $\tau$ indeed defines a braiding for $\Vect$, as 
part (2) of the following lemma shows. 
\begin{lemma}\label{lem:tau}
Let $U, V, W$ be $\Gamma$-graded vector spaces.  Then
 \begin{enumerate}
 \item $\tau_{W, V} \circ \tau_{V, W}=\id_{V\ot W}$; and 
 \item
the following relation holds in $\Hom_\C(U\ot V\ot W, W\ot V\ot U)$.  
\beq\label{eq:braid}
 &&(\tau_{V,W}\ot\id_U)      \circ    (id_V\ot \tau_{U, W})\circ (\tau_{U, V}\ot\id_W) \\
 &&= (id_W\ot \tau_{U, W}) \circ (\tau_{U, W}\ot\id_V)\circ  (id_U\ot \tau_{V, W}).\nonumber
\eeq
\end{enumerate}
\end{lemma}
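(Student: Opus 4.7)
The plan is to reduce both statements to direct computations on homogeneous tensors, invoking Remark 1.1.1 (the convention) to extend by linearity to inhomogeneous elements, and then to track the $\omega$-factors produced at each step. Both $\tau_{V,W}$ and the various $\id$'s have degree $0$ as morphisms in $\Vect$ (they preserve total $\Gamma$-degree), so the auxiliary $\omega$-factor appearing in the tensor product of morphisms (equation 1.1.6) is trivial, and the total scalar produced by each composition is simply the product of the scalars produced by the individual $\tau$'s.

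For part (1), I would fix homogeneous $v \in V_\alpha$ and $w \in W_\beta$, apply $\tau_{V,W}$ to obtain $\omega(\alpha,\beta)\,w \otimes v$, then apply $\tau_{W,V}$ to obtain $\omega(\alpha,\beta)\omega(\beta,\alpha)\,v \otimes w$. The identity then follows immediately from the antisymmetry relation 1.1.1, namely $\omega(\alpha,\beta)\omega(\beta,\alpha) = 1$. Part (1) therefore uses only the antisymmetry axiom, not bimultiplicativity.

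For part (2), I would evaluate both sides on a homogeneous element $u \otimes v \otimes w$ with $d(u)=\alpha$, $d(v)=\beta$, $d(w)=\gamma$, applying the three factors sequentially from right to left. On the left-hand side, the successive applications of $\tau_{U,V} \otimes \id_W$, $\id_V \otimes \tau_{U,W}$, and $\tau_{V,W} \otimes \id_U$ produce, respectively, the scalars $\omega(\alpha,\beta)$, $\omega(\alpha,\gamma)$, $\omega(\beta,\gamma)$, yielding the final image $\omega(\alpha,\beta)\omega(\alpha,\gamma)\omega(\beta,\gamma)\,w \otimes v \otimes u$. A parallel computation on the right-hand side produces the same three scalars (in a different order), giving the same image. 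Equating the two expressions concludes the proof.

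No substantive obstacle arises: part (1) rests on the single relation 1.1.1, and part (2) is a purely formal consequence of the multiplicativity of scalars in $\C^*$, requiring neither 1.1.2 nor 1.1.3. The only thing to be vigilant about is the bookkeeping of which tensor slot each $\tau$ acts on at each stage, and the verification that the $\omega$-factor coming from 1.1.6 genuinely vanishes because all morphisms involved are $\Gamma$-homogeneous of degree $0$.
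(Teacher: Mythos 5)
Your proof is correct and follows essentially the same route as the paper's: both parts are verified by evaluating on homogeneous tensors, with part (1) reducing to $\omega(\alpha,\beta)\omega(\beta,\alpha)=1$ and part (2) to the observation that both sides produce the same three $\omega$-factors in different order. Your explicit remark that the $\omega$-factor from the tensor product of morphisms is trivial because all maps involved are homogeneous of degree $0$ is a correct piece of bookkeeping that the paper leaves implicit.
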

\begin{proof}
It follows equation \eqref{eq:tau} that for any $v\in V, w\in W$, 
\[
\baln
\tau_{W, V} \circ \tau_{V, W}(v\ot w)
&= \omega(d(v), d(w)) \omega(d(w), d(v)) v\ot w \\
&= v\ot w,  \qquad \qquad (\text{by \eqref{eq:cycle-1}}).
\ealn
\]
This proves part (1). 

Denote by $P_{U, V, W}$ and $P'_{U, V, W}$ the maps on the left and right sides of \eqref{eq:braid}
respectively. 
Then for any $u\in U, v\in V, w\in W$, we have
\[
\baln
P_{U, V, W}(u\ot v\ot  w) = \omega(d(v), d(w)) \omega(d(u), d(w)) \omega(d(u), d(v)) w\ot v\ot u, \\
P'_{U, V, W}(u\ot v\ot  w) =\omega(d(u), d(v)) \omega(d(u), d(w)) \omega(d(v), d(w))   w\ot v\ot u, 
\ealn
\]
in the convention of Remark \ref{rmk:conven}.
Hence $P_{U, V, W}(u\ot v\ot  w) =P'_{U, V, W}(u\ot v\ot  w)$, proving part (2). 
\end{proof}

\begin{remark}
We will suppress  $\circ$ from compositions of morphisms whenever this  will not cause confusion, 
e.g., $(\tau_{U, W}\ot\id_V)\circ  (id_U\ot \tau_{V, W})$ will simply be written 
as $(\tau_{U, W}\ot\id_V) (id_U\ot \tau_{V, W})$. 
\end{remark}

We will call the braiding $\tau$ the symmetry of $\Vect$ because of its involution property given by Lemma \ref{lem:tau}.(1).   
It is a standard fact that such a symmetry gives rise to representations of the symmetric group.

Denote by $\Sym_r$ the symmetric group of degree $r$, which we present in the standard way with generators $s_i$, $i=1,  2, \dots, r-1$, and defining relations
\[
\baln
&s_i^2=1, \quad s_i s_j = s_j s_i, \quad |i-j|\ge 2, \\
& s_i s_{i+1} s_i = s_{i+1} s_i s_{i+1}, \quad \text{for all valid $i$}.
\ealn
\]
Given any $V\in\Vect$, we construct a representation of 
$\Sym_r$ on $V^{\ot r}$. Let $P=\tau_{V, V}$. 
 Lemma \ref{lem:tau} in the special case with $U=W=V$ reduces to
\beq
&&P^2=\id_V\ot \id_V,  \label{eq:PP}\\
&&(P\ot \id_V) (\id_V\ot P) (P\ot \id_V)= (\id_V\ot P) (P\ot \id_V) (\id_V\ot P). \label{eq:PPP}
\eeq
Define the following elements of $\End_\C(V^{\ot r})$:
\[
\sigma_i:=\underbrace{\id_V\ot\dots\id_V}_{i-1}\ot P\ot \underbrace{\id_V\ot\dots\id_V}_{r-i-1}, \quad   i=1, 2, \dots, r-1.
\]
It easily follows from equations \eqref{eq:PP} and \eqref{eq:PPP} that $\sigma_i$ satisfy the defining relations of $\Sym_r$.  Hence we have the 
following result.
\begin{corollary}\label{cor:sym}
The map
$s_i\mapsto  \sigma_i,$  for all $i=1, 2, \dots, r-1,$
extends  uniquely to a representation 
$\nu_r: \C\Sym_r\lra \End_\C(V^{\ot r})$ of the group algebra of $\Sym_r$.
\end{corollary}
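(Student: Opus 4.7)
The plan is to verify that the elements $\sigma_1,\ldots,\sigma_{r-1}\in\End_\C(V^{\otimes r})$ satisfy the three families of defining relations of $\Sym_r$, after which the universal property of the group (or equivalently, the presentation by generators and relations) immediately yields a unique algebra homomorphism $\nu_r:\C\Sym_r\lra \End_\C(V^{\otimes r})$ with $\nu_r(s_i)=\sigma_i$.

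First I would check the quadratic relation $\sigma_i^2=\id_{V^{\otimes r}}$. Since $\sigma_i$ acts as $P$ on the $i$-th and $(i+1)$-st tensor factors and as $\id_V$ elsewhere, and since tensoring with identities is a ring homomorphism on $\End_\C(V)\otimes\End_\C(V)$ into $\End_\C(V^{\otimes r})$, the relation reduces to $P^2=\id_V\otimes\id_V$, which is exactly \eqref{eq:PP} (itself an instance of Lemma~\ref{lem:tau}(1)).

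Next I would verify the commutation $\sigma_i\sigma_j=\sigma_j\sigma_i$ for $|i-j|\geq 2$. In this case the two operators $P$ appearing in $\sigma_i$ and $\sigma_j$ act on disjoint pairs of tensor slots, so both products coincide with the elementary tensor of operators having $P$ in slots $\{i,i+1\}$ and slots $\{j,j+1\}$ and identities elsewhere. This is a purely formal computation in $\End_\C(V)^{\otimes r}\subset \End_\C(V^{\otimes r})$ that uses nothing beyond the bifunctoriality of $\otimes$.

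Finally I would establish the braid relation $\sigma_i\sigma_{i+1}\sigma_i=\sigma_{i+1}\sigma_i\sigma_{i+1}$. Here the three operators act nontrivially only on the three consecutive tensor slots $i,i+1,i+2$, so the relation reduces to an identity in $\End_\C(V^{\otimes 3})$ tensored with identities on the remaining slots. That identity is precisely \eqref{eq:PPP}, which was obtained from Lemma~\ref{lem:tau}(2) by specialising $U=V=W$.

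With the three families of relations verified, the presentation of $\Sym_r$ gives a unique group homomorphism $\Sym_r\to\GL(V^{\otimes r})$ sending $s_i$ to $\sigma_i$; extending linearly yields the desired unital algebra homomorphism $\nu_r:\C\Sym_r\lra\End_\C(V^{\otimes r})$. I do not expect any serious obstacle: each relation for $\sigma_i$ is a direct translation of the corresponding statement in Lemma~\ref{lem:tau}, once one observes that adding identity tensor factors on the outside preserves operator identities. The only bookkeeping point worth being careful about is to write each $\sigma_i\sigma_j$ as an element of $\End_\C(V)^{\otimes r}$ (using the convention of \eqref{eq:sign} for composition of tensor products of endomorphisms) so that the reduction to the $r=2$ or $r=3$ case is unambiguous.
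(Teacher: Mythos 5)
Your proposal is correct and follows essentially the same route as the paper, which simply observes that the defining relations of $\Sym_r$ for the $\sigma_i$ follow from \eqref{eq:PP} and \eqref{eq:PPP}; you have merely spelled out the three families of relations explicitly. Your bookkeeping remark is harmless and in fact trivial here, since $P$ is homogeneous of degree $0$, so all commutative factors arising in products inside $\End_\C(V)^{\ot r}$ equal $1$.
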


\begin{remark} 
This representation of the symmetric group was considered in \cite[\S 5]{Sch83}. 
\end{remark}

\subsection{Associative $(\Gamma, \omega)$-algebras}\label{sect:assoc}

We briefly describe algebras of various types over the category of $\Gamma$-graded vector spaces. 
An algebra of a given type over $\Vect$ will be called a $\Gamma$-graded $\omega$-algebra
of that type (following the convention of \cite{Sch79}), or a $(\Gamma, \omega)$-algebra for short. 
The advantage of this terminology is that it conveys the dependence of the algebras 
on the grading group $\Gamma$ and commutative factor $\omega$.

A $(\Gamma, \omega)$-algebra $A$ 
will also be loosely referred to as a colour algebra if 
$A^- := \sum_{\alpha\in\Gamma^-} A_\alpha=0$, 
and a  colour superalgebra if $A^-\ne 0$.

\medskip

An associative $(\Gamma, \omega)$-algebra $A$ is a $\Gamma$-graded vector space with an associative multiplication
$A\ot_\C A\lra A$ such that $A_\alpha A_\beta\subseteq A_{\alpha+\beta}$ for all $\alpha, \beta\in\Gamma$.  
%
We say that $A$ is a graded commutative associative $(\Gamma, \omega)$-algebra if
\[
\text{$x y = \omega(d(x), d(y))y x$,  for all homogeneous elements $x, y\in A$.}
\]
In this case,  $A^-$ is nilpotent, that is, for any $x\in A^-$, 
there is a positive integer $n$ such that $x^n=0$.  If $x\in A^-$ is
homogeneous, then  $x^2=0$. 

\begin{warning}
However, inhomogeneous elements of a graded commutative associative 
$(\Gamma, \omega)$-algebra  do not (graded) commute in general. 
\end{warning}

A homomorphism $\varphi: A\lra B$ between $(\Gamma, \omega)$-algebras $A$ and $B$ is an algebra homomorphism satisfying the condition $\varphi(A_\alpha)\subset B_\alpha$ for all $\alpha\in\Gamma$.  
Note that $\varphi$ is homogeneous of degree $0$.
A $\Gamma$-graded module $M=\sum_\alpha M_\alpha$ for $A$ is an $A$-module 
satisfying the condition $A_\alpha\cdot M_\beta \subset M_{\alpha+\beta}$. 
If  $M$ and $M'$ are $\Gamma$-graded $A$-modules, the space of $A$-module 
homomorphisms from $M$ to $M'$ is defined by
\[
\Hom_A(M, M') :=\{\varphi\in \Hom_\C(M, M')_0\mid x\varphi(v)=\varphi(x v), \ \forall x\in A, \ v\in M\}. 
\]
Observe that $A$-module homomorphisms are homogeneous of degree $0$.  We denote $\End_A(M) = \Hom_A(M, M)$. 

\begin{remark}
All modules for $(\Gamma, \omega)$-algebras to be considered in this paper are assumed to be $\Gamma$-graded. 
\end{remark}

The commutative factor $\omega$ comes into play when further structures are considered. 
An anti-homomorphism
$S: A\lra B$ of associative $(\Gamma, \omega)$-algebras is a homogeneous linear map of degree $0$ which renders the following diagram commutative, 
\[
\begin{tikzcd}
A\ot A \arrow[r, "S\ot S"] \arrow[d, swap, "\mu_A"] & B\ot B \arrow[r, "\tau_{B, B}"]  &B\ot B \arrow[d, "\mu_B"]&\\
A \arrow[rr, swap, "S"]{}		&& B,
\end{tikzcd}
\]
where $\mu_A$ and $\mu_B$ are the multiplications of $A$ and $B$ respectively. 
Thus 
\beq\label{eq:anti-hom}
S(x y)= \omega(d(x), d(y)) S(y) S(x), \quad x, y \in  A.  
\eeq

The category of associative $(\Gamma, \omega)$-algebras has a braided monoidal structure 
arising from the tensor product of $\Gamma$-graded vector spaces. 
Let $(A, \mu_A)$ and $(B, \mu_B)$ be associative $(\Gamma, \omega)$-algebras.
The multiplication $\mu_{A\ot B}$  of their tensor product $A\ot B$ is defined by 
\beq\label{eq:multip-tensor}
\mu_{A\ot B}=(\mu_A\ot\mu_B)\circ(\id_A\ot \tau_{A, B}\ot \id_B).
\eeq
Explicitly, for any $a\ot b, a'\ot b'\in A\ot B$, we have 
$
(a\ot b)(a'\ot b')= \omega(d(b), d(a'))  a a' \ot b b'.
$

Associativity of $\mu_{A\ot B}$ follows from the braid relation \eqref{eq:braid} obeyed by the commutative factor. Let us verify it explicitly. 
We have 
\[
\baln
((a\ot b)(a'\ot b'))(a''\ot b'')
&=\omega(d(b), d(a')) \omega(d(b b'), d(a''))aa'a''\ot b b' b'';\\
(a\ot b)((a'\ot b')(a''\ot b''))
&= \omega(d(b'), d(a'')) \omega(d(b), d(a' a'')) aa'a''\ot b b'b''.
\ealn
\]
Note that $\omega(d(b'), d(a'')) \omega(d(b), d(a' a''))=\omega(d(b), d(a')) \omega(d(bb'), d(a''))$ 
by \eqref{eq:cycle-2} and \eqref{eq:cycle-3}. 
Hence  $((a\ot b)(a'\ot b'))(a''\ot b'')=(a\ot b)((a'\ot b')(a''\ot b''))$, 
proving the associativity of $\mu_{A\ot B}$.

Let $V$ and $W$ be $\Gamma$-graded modules for $A$ and $B$ with respective structure maps 
$\pi_A: A\ot V\lra V$ and $\pi_B: B\ot W\lra W$. Then $V\ot_\C W$ is a $\Gamma$-graded $A\ot B$-module with the action defined by the composition
\beq \label{eq:act-tensor}
A\ot B\ot V\ot W \stackrel{\id_A\ot\tau_{B, V}\ot\id_W}\looongrightarrow A\ot V\ot B\ot W \stackrel{\pi_A\ot \pi_B}\loongrightarrow V\ot W.
\eeq
Explicitly, for any $a\in A,  b\in B,  v\in V,  w\in W$, 
\beq\label{eq:act-tensor-1}
(a\ot b)(v\ot w)=\omega(d(b), d(v)) a v \ot b w.
\eeq
To show that this is indeed a well-defined $A\ot B$-action on $V\ot W$, 
consider another pair of elements $a'\in A,  b'\in B$. We have 
\[
\baln
(a'\ot b')((a\ot b)(v\ot w)) 
&= \omega(d(b), d(v)) \omega(d(b'), d(a v)) (a' a v \ot b' b w),\\
((a'\ot b')(a\ot b))(v\ot w)
&= \omega(d(b b'), d(v)) \omega(d(b'), d(a)) (a' a v \ot b' b w).
\ealn
\]
It is easy to verify that $\omega(d(b), d(v)) \omega(d(b'), d(a v))
= \omega(d(b b'), d(v)) \omega(d(b'), d(a)).$
Hence $(a'\ot b')((a\ot b)(v\ot w)) =((a'\ot b')(a\ot b))(v\ot w)$.

We call an element $\partial\in \End_\C(A)_\gamma$ a $\Gamma$-graded $\omega$-derivation, or
$(\Gamma, \omega)$-derivation for short,  of degree $\gamma$,  
if it  satisfies the following condition for all $a, b\in A$.
\beq\label{eq:deriv}
\partial(a b) = \partial(a)b + \omega(\gamma, d(a)) a \partial(b). 
\eeq
We will simply call it a graded derivation when $\Gamma$ and $\omega$ are clear from the context. 

The following example of graded derivations motivates the definition of Lie $(\Gamma, \omega)$-algebras.
The $\Gamma$-graded $\omega$-commutator, or $(\Gamma, \omega)$-commutator, on $A$ is defined by 
\beq\label{eq:bracket}
\phantom{XXX}
[\ , \ ]: A\times A \lra A, \quad {[X, Y]} = X Y - \omega(d(X), d(Y)) Y X, \quad X, Y\in A,  
\eeq
which clearly satisfies 
\beq\label{eq:skew}
{[X, Y]} =  - \omega(d(X), d(Y)) [Y,  X]. 
\eeq
 It produces $(\Gamma, \omega)$-derivations as follows. 
Given any $X \in A$, define
$\partial_{X}\in\End_\C(A)$ by 
\[
\partial_{X}(Y)= [X, Y], \quad \forall Y\in A. 
\]
Then the following facts are well known and easy to verify by direct calculations

\begin{lemma} \label{lem:deriv}
Assume that $X\in A_\alpha$. Then
\begin{enumerate}
\item $\partial_{X}$ is a $(\Gamma, \omega)$-derivation of degree $\alpha$; and 
\item for all $Y, Z\in A$, 
\[
\partial_X([Y, Z]) = [ \partial_X(Y), Z] + \omega(\alpha, d(Y)) [Y, \partial_X(Z)].
\]
\end{enumerate}
\end{lemma}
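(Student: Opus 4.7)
The plan is to prove both parts by direct computation, with the only subtlety being careful bookkeeping of the factors of $\omega$ via the bicharacter relations \eqref{eq:cycle-1}--\eqref{eq:cycle-3}. Both assertions can be verified on homogeneous elements, so I would assume $Y \in A_\beta$ and $Z \in A_\gamma$ throughout, and then the general case follows by linearity under the convention of Remark \ref{rmk:conven}.

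For part (1), I would expand $\partial_X(YZ) = [X, YZ] = X(YZ) - \omega(\alpha, \beta+\gamma)\, (YZ)X$, split $\omega(\alpha, \beta+\gamma) = \omega(\alpha,\beta)\omega(\alpha,\gamma)$ via \eqref{eq:cycle-2}, and show directly that this equals $\partial_X(Y) Z + \omega(\alpha, \beta)\, Y \partial_X(Z)$. The computation on the right-hand side introduces a cross term $\pm\omega(\alpha,\beta)\, YXZ$ from each of the two summands, which cancel, leaving exactly $XYZ - \omega(\alpha,\beta)\omega(\alpha,\gamma)\, YZX$. That $\partial_X$ is homogeneous of degree $\alpha$ is immediate from $A_\alpha A_\beta \subseteq A_{\alpha+\beta}$.

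For part (2), I would reduce to part (1) rather than reprove the graded Jacobi identity from scratch. Writing $[Y,Z] = YZ - \omega(\beta,\gamma)\, ZY$ and applying part (1) to each product gives
\begin{equation*}
\partial_X([Y,Z]) = \partial_X(Y)Z + \omega(\alpha,\beta)\, Y\partial_X(Z) - \omega(\beta,\gamma)\bigl[\partial_X(Z)Y + \omega(\alpha,\gamma)\, Z\partial_X(Y)\bigr].
\end{equation*}
On the other side, expanding $[\partial_X(Y), Z]$ and $[Y, \partial_X(Z)]$ with the fact that $d(\partial_X(Y)) = \alpha+\beta$ and $d(\partial_X(Z)) = \alpha+\gamma$, and then applying \eqref{eq:cycle-3} to split $\omega(\alpha+\beta, \gamma)$ and $\omega(\beta, \alpha+\gamma)$, one obtains precisely the same four-term expression, once the identity $\omega(\alpha,\beta)\omega(\beta,\alpha) = 1$ from \eqref{eq:cycle-1} is used to cancel the overall factor coming from the $Y\partial_X(Z)$ terms.

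The only genuine obstacle is notational: keeping track of all six two-argument values of $\omega$ that arise and invoking \eqref{eq:cycle-1}--\eqref{eq:cycle-3} at the right moments. Since the argument is purely mechanical once one commits to working on homogeneous elements, I would present part (1) with the full expansion and then give part (2) as a short corollary via the expansion of $[Y,Z]$ into $YZ$ and $ZY$, leaving the reader to verify the $\omega$-bookkeeping.
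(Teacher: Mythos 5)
Your proposal is correct and follows essentially the same route as the paper's own (elided) verification: part (1) by direct expansion of $[X,YZ]$ on homogeneous elements with the cross terms $\pm\omega(\alpha,\beta)\,YXZ$ cancelling, and part (2) by applying part (1) to each of $YZ$ and $\omega(\beta,\gamma)\,ZY$ and regrouping via \eqref{eq:cycle-1}--\eqref{eq:cycle-3}. No gaps.
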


Note that
part (2) of the lemma is equivalent to 
\beq \label{eq:Jocob}
[X, [Y, Z]] = [ [X, Y], Z] + \omega(\alpha, d(Y)) [Y, [X, Z]].
\eeq

\subsection{Lie $(\Gamma, \omega)$-algebras} \label{sect:Lie}
The  discussions above on graded derivations naturally suggest the following 
notion of generalised Lie algebras.  

\begin{definition} \cite{RW, Sch79} 
A $\Gamma$-graded Lie $\omega$-algebra, or Lie $(\Gamma, \omega)$-algebra, 
is a $\Gamma$-graded vector space 
$\fg=\sum_{\alpha\in\Gamma} \fg_\alpha$ endowed with a bilinear map
$
[\ , \ ]: \fg\times \fg \lra \fg, 
$
called the Lie $(\Gamma, \omega)$-bracket, which is homogeneous of degree $0$, and satisfies the following conditions 
\beq
&&[X, Y] =- \omega(d(X), d(Y))[Y, X], \label{eq:skew-2}\\
&&[X, [Y, Z]] = [ [X, Y], Z] + \omega(d(X), d(Y)) [Y, [X, Z]], \label{eq:Jocob-2}
\eeq
for any $X, Y, Z\in \fg$ in the convention of Remark \ref{rmk:conven}.
\end{definition} 

We will refer to these relations respectively as the skew $\omega$-symmetry, and $\omega$-Jacobian identity, 
of the Lie $(\Gamma, \omega)$-bracket. 


A homomorphism  $\varphi: \fg \lra \fg'$ of Lie $(\Gamma, \omega)$-algebras is a  homogeneous linear map of degree $0$  such that $\varphi([X, Y])= [\varphi(X) ,  \varphi(Y)]$ for all $X, Y\in \fg$

Let  $\fg^\pm=\sum_{\alpha\in \Gamma^\pm} \fg_\alpha$.  Then $\fg^+$ is a Lie $(\Gamma, \omega)$-subalgebra 
(which is a Lie $(\Gamma^+, \omega)$-algebra), $\fg^-$ is a $\Gamma$-graded $\fg^+$-module (see definition below)
and $[\fg^-, \fg^-]\subset\fg^+$.  
Consider $X, Y\in \fg_\alpha$. If $\alpha\in\Gamma^+$, then
$[X, Y]= -[Y, X]$, thus they behave like even elements of a Lie superalgebra. On the other hand, if 
$\alpha\in\Gamma^-$, then $[X, Y]= [Y, X]$, thus $X, Y$ behave like odd elements of a Lie superalgebra.
Thus a Lie colour algebra $\fg$ contains no ``odd'' elements as $\fg^-= 0$, but 
a Lie colour superalgebra $\fg$ has $\fg^-\ne 0$. 

\begin{example}[General linear Lie $(\Gamma, \omega)$-algebras]
\label{eg:gl}
For any $\Gamma$-graded
vector space $V(\Gamma, \omega)=\sum_{\alpha\in\Gamma} V_\alpha$,  the space of endomorphisms  $\End_\C(V(\Gamma, \omega))$ is an associative $(\Gamma, \omega)$-algebra with the composition of endomorphisms as the multiplication.  
 
The general linear Lie $(\Gamma, \omega)$-algebra $\gl(V(\Gamma, \omega))$ of $V(\Gamma, \omega)$ is $\End_\C(V(\Gamma, \omega))$ with the $(\Gamma, \omega)$-commutator defined by \eqref{eq:bracket} as the Lie $(\Gamma, \omega)$-bracket. 
Now the degree $\alpha$ subspace  of  $\gl(V(\Gamma, \omega))$ is $\gl(V)_\alpha = \sum_{\beta} \Hom_\C(V_\beta, V_{\alpha+\beta})$. In particular, $\gl(V(\Gamma, \omega))_0=\sum_{\beta} \End_\C(V_\beta)=\sum_{\beta} \gl_\C(V_\beta)$ forms a Lie subalgebra (an ordinary Lie algebra).

Observe that for any $\phi_{\beta \alpha} \in \Hom_\C(V_\alpha, V_\beta)$ 
and $\psi_{\delta \gamma} \in \Hom_\C(V_\gamma, V_\delta)$, their composition 
$\phi_{\beta \alpha} \psi_{\delta \gamma}$ is non-vanishing only if $\alpha= \delta$, and  belongs to $\End_\C(V(\Gamma, \omega))_{\beta-\gamma}$ in this case. This enables us to introduce decompositions for $\gl(V(\Gamma, \omega))$
as follows.

Let $\Gamma_R:=\{\alpha\in \Gamma\mid \dim V_\alpha>0\}$.  
We assume that there is a linear order on the set $\Gamma_R$ (making it into a chain).
Let 
\[
\fu:=\sum_{\alpha<\beta}   \Hom_\C(V_\beta, V_\alpha), \quad 
\ol\fu:=\sum_{\alpha<\beta}   \Hom_\C(V_\alpha, V_\beta).
\]
If $\psi(\alpha, \beta)\in \Hom_\C(V_\beta, V_\alpha),  \psi(\gamma, \eta)\in \Hom_\C(V_\eta, V_\gamma)$ with 
$\alpha<\beta$ and $\gamma<\eta$, which are elements of $\fu$, then
\[
{[\psi(\alpha, \beta), \psi(\gamma, \eta)]} = \delta_{\beta \gamma} \psi(\alpha, \beta)\psi(\gamma, \eta)
- \delta_{\eta\alpha}\omega(\alpha-\beta,  \gamma-\eta)   \psi(\gamma, \eta)\psi(\alpha, \beta)
\]
belongs to $\fu$. Hence $\fu$ is a Lie $(\Gamma, \omega)$-subalgebra, and we can similarly show that $\ol{\fu}$ is 
also a Lie $(\Gamma, \omega)$-subalgebra. 
Thus $\gl(V(\Gamma, \omega))$ has the decomposition
\beq\label{eq:parabolic}
\gl(V(\Gamma, \omega))=\ol\fu +\gl(V(\Gamma, \omega))_0 + \fu.
\eeq
Denote $\fl=\gl(V(\Gamma, \omega))_0$, and let $\fp=\fl+\fu$. 
Then $\fp$ is a parabolic subalgebra of $\gl(V(\Gamma, \omega))$, 
and $\fu$ is a Lie $(\Gamma, \omega)$-ideal of $\fp$.  
\end{example}

Return to an arbitrary Lie $(\Gamma, \omega)$-algebra $\fg$. 
\begin{definition}
A $\fg$-module $M$ is a vector space endowed with a bilinear map $\fg\times M\lra M$ such that for any $X\in \fg_\alpha$ and $Y\in\fg_\beta$, 
\[
X\cdot (Y\cdot v) - \omega(\alpha, \beta) Y\cdot (X\cdot v) = [X, Y]\cdot v, \quad \forall v\in M.  
\]
If $M$ is $\Gamma$-graded as a vector space and $\fg_\alpha\cdot V_\beta\subset V_{\alpha+\beta}$ for all $\alpha, \beta\in\Gamma$, then $M$ is a $\Gamma$-graded $\fg$-module. 
\end{definition}
Thus a $\fg$-module is a vector space $M$ together with a linear map $\pi: \fg\lra \End_\C(M)$ such that 
$\pi(X)\pi(Y)- \omega(\alpha, \beta) \pi(Y)\pi(X)=\pi([X, Y])$ for all $X, Y\in \fg$. The $\fg$-module $M$ is $\Gamma$-graded 
if it is a $\Gamma$-graded vector space and $\pi: \fg\lra \gl(M)$ is a Lie  $(\Gamma, \omega)$-algebra homomorphism.

The universal enveloping algebra of $\fg$ is defined as follows. 
Consider the tensor algebra $T(\fg)$ over $\fg$, which has the usual $\Z_+$-grading with  
\[
T(\fg)=\sum_{r=0}^\infty T^r(\fg), \quad T^r(\fg)= \underbrace{\fg\ot\fg\ot\dots\ot\fg}_r.
\]
It is also naturally $\Gamma$-graded, with $T(\fg)= \sum_{\alpha\in \Gamma}T(\fg)_\alpha$, where
\[
\baln
T(\fg)_\alpha=\sum_{r=0}^\infty T^r(\fg)_\alpha, \quad
T^r(\fg)_\alpha=\sum_{\alpha_1+\dots+\alpha_r=\alpha}\fg_{\alpha_1}\ot \fg_{\alpha_2}\ot \dots\ot \fg_{\alpha_r}.
\ealn
\]
Let $J$ be the $2$-sided ideal of $T(\fg)$ generated by the following set of elements 
\[
\bigcup_{\alpha, \beta}\left\{
X Y - \omega(\alpha, \beta) Y X - [X, Y] \mid X\in\fg_\alpha, \ Y\in \g_\beta
\right\}, 
\]
which is a homogeneous ideal. 
The universal enveloping algebra of $\fg$ is defined by
\beq
\U(\fg)=T(\fg)/J, 
\eeq
which is an associative $(\Gamma, \omega)$-algebra. 

It is evident that any $\fg$-module is automatically a $\U(\fg)$-module and vice versa. 

A generalised Poincar\'e-Birkhoff-Witt theorem  for $\U(\fg)$ was established in \cite[\S C]{Sch79}. 
Let $\{X_i\mid i\in I\}$ be a homogeneous basis of a Lie $(\Gamma, \omega)$-algebra $\fg$, 
where $I$ is a totally ordered set. For any $i\in I$, let $\Z_{(i)}=\left\{
\begin{array}{l l}
\Z_+, &\text{if $d(X_i)\in\Gamma^+$}, \\
\{0, 1\}, &\text{if $d(X_i)\in\Gamma^-$}.
\end{array}
\right.$ 
Given any finite chain ${\bf i}=(i_1<_2<\dots <i_r)$ in $I$, let 
$\Z_{({\bf i})} = \Z_{(i_1)}\times\Z_{(i_2)}\times\dots \times \Z_{(i_r)}$, and define  
\[
X_{\bf i}^{\bf k}=X_{i_1}^{k_1} X_{i_2}^{k_2}\dots X_{i_r}^{k_r}, \quad {\bf k}=(k_1, k_2, \dots, k_r)\in \Z_{({\bf i})}.
\]
\begin{theorem}[Poincar\'e-Birkhoff-Witt theorem] \label{thm:PBW} Retain notation above. 
The elements $X_{\bf i}^{\bf k}$, for all ${\bf k}\in \Z_{({\bf i})}$ and finite chains ${\bf i}$ in $I$, form a basis of the universal enveloping algebra $\U(\fg)$. 
\end{theorem}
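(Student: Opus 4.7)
The plan is to follow the classical Poincaré--Birkhoff--Witt argument, adapted to track the signs produced by the commutative factor $\omega$.

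First, I would show that the ordered monomials $X_{\bf i}^{\bf k}$ span $\U(\fg)$. For an unordered word $X_{j_1} X_{j_2}\cdots X_{j_n}$ in the generators, the defining relation $X_p X_q = \omega(d(X_p), d(X_q)) X_q X_p + [X_p, X_q]$ allows one to sort adjacent inversions $j_s > j_{s+1}$; the bracket term is a linear combination of single generators, hence lies in strictly lower tensor degree. When a repeated factor $X_i X_i$ with $d(X_i)\in\Gamma^-$ occurs, skew $\omega$-symmetry \eqref{eq:skew-2} forces $X_i^2 = \frac{1}{2}[X_i, X_i]$, again a lower-degree element (note $2 d(X_i) \in \Gamma^+$). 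Double induction --- on total tensor length and on the number of inversions in the lexicographic word order --- gives the spanning statement.

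Second, and harder, I would prove linear independence by constructing a faithful representation on a formal space. Let $P$ be the free $\C$-vector space with basis the symbols $\{X_{\bf i}^{\bf k}\}$ indexed exactly as in the statement. For each homogeneous generator $X_j \in \fg$ I would define $\rho_{X_j} \in \End_\C(P)$ by induction on the length of the target monomial. If $j$ is strictly less than every entry of ${\bf i}$, or $j = i_1$ with the exponent constraint permitting incrementing $k_1$, set $\rho_{X_j}(X_{\bf i}^{\bf k})$ equal to the new ordered monomial obtained by prepending an $X_j$-factor. Otherwise, writing $X_{\bf i}^{\bf k} = X_{i_1} \cdot Y$ for a shorter ordered monomial $Y$, declare
\[
\rho_{X_j}(X_{\bf i}^{\bf k}) = \omega(d(X_j), d(X_{i_1}))\, X_{i_1} \cdot \rho_{X_j}(Y) + [X_j, X_{i_1}] \cdot Y,
\]
where both terms on the right are interpreted via the already-defined $\rho$ (with the bracket $[X_j, X_{i_1}]$ expanded in the generator basis and each summand acted upon recursively). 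The crucial step is then to verify the identity
\[
\rho_{X_j}\rho_{X_i} - \omega(d(X_j), d(X_i))\,\rho_{X_i}\rho_{X_j} = \rho_{[X_j, X_i]}
\]
on every basis vector of $P$. This follows by induction on monomial length, with the inductive step reducing to the $\omega$-Jacobi identity \eqref{eq:Jocob-2} in $\fg$ combined with the cocycle relations \eqref{eq:cycle-1}--\eqref{eq:cycle-3}.

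Once $\rho$ is shown to yield a well-defined $\fg$-module structure on $P$, it extends uniquely to a $\U(\fg)$-module structure. Applying $X_{\bf i}^{\bf k} \in \U(\fg)$ to the vector labelled by the empty chain in $P$ recovers the basis vector $X_{\bf i}^{\bf k} \in P$, so any vanishing relation $\sum c_{\bf i, \bf k} X_{\bf i}^{\bf k} = 0$ in $\U(\fg)$ would force all $c_{\bf i, \bf k}=0$. This establishes the PBW basis. The principal obstacle is the inductive verification of the bracket identity for $\rho$: the two reduction paths coming from $\rho_{X_j}\rho_{X_i}$ and $\omega(d(X_j), d(X_i))\rho_{X_i}\rho_{X_j}$ must cancel against $\rho_{[X_j, X_i]}$ with every $\omega$-sign accounted for, and this closure is precisely the combinatorial content ensured by the Jacobi identity and the cocycle relations. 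A shortcut is available via Scheunert's cocycle-twist theorem, cited in the introduction, which would transport the PBW theorem for Lie superalgebras to the present setting, but a direct proof of the kind sketched here is more natural and self-contained.
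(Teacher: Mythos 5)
The paper does not actually prove Theorem~\ref{thm:PBW}: it defers entirely to Scheunert \cite[Corollary 1]{Sch79}, so there is no in-paper argument to compare against. Your sketch is the standard Birkhoff--Witt proof (straightening for spanning; a faithful representation on the free module of ordered monomials for independence), correctly adapted to carry the factor $\omega$, and it is essentially Scheunert's argument; the one place needing care is the saturated case $j=i_1$ with $d(X_{i_1})\in\Gamma^-$ and $k_1=1$, where the recursion must route through $X_{i_1}^2=\tfrac{1}{2}[X_{i_1},X_{i_1}]$ and consistency uses $[X,[X,X]]=0$, which over $\C$ follows from the $\omega$-Jacobi identity since $3[X,[X,X]]=0$.
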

This fundamental result is due to Scheunert, see \cite[Corollary 1]{Sch79}.

\subsection{Hopf $(\Gamma, \omega)$-algebras}

We want to explain the generalised Hopf algebra structure of the universal enveloping algebra $\U(\fg)$ 
of any Lie $(\Gamma, \omega)$-algebra. 
Let us begin by recalling the definition of Hopf $(\Gamma, \omega)$-algebras (see, e.g., \cite{MB}). 

\subsubsection{Hopf $(\Gamma, \omega)$-algebras}\label{sect:Hopf-dual}

Let $A$ be a unital associative $(\Gamma, \omega)$-algebra with multiplication $\mu: A\ot A\lra A$.
It is a 
$(\Gamma, \omega)$-bi-algebra if there exist $(\Gamma, \omega)$-algebra 
homomorphisms $\varepsilon: A\lra\C$ and $\Delta: A\lra A\ot A$ such that  
\[
\baln
&(\Delta\ot\id)\Delta= (\id\ot \Delta)\Delta: A\lra A\ot A\ot A, \\
&(\varepsilon\ot\id)\Delta =\id= (\id\ot \varepsilon)\Delta: A\lra A, 
\ealn
\]
where the second condition involves the canonical identifications  $\C\ot A\simeq A\simeq A\ot \C$.
Call $\Delta$ the co-multiplication and $\varepsilon$ the co-unit.  
We will adopt Sweedler's notation for the co-multiplication: for any $x\in A$, 
\[
\Delta(x)=\sum_{(x)}x_{(1)}\ot x_{(2)}, \quad 
(\Delta\ot\id)\Delta(x)= \sum_{(x)}x_{(1)}\ot x_{(2)}\ot x_{(3)}, \ etc..
\]
 
A Hopf $(\Gamma, \omega)$-algebra is a $(\Gamma, \omega)$-bi-algebra  $(A, \mu; \Delta, \varepsilon)$ with a 
$(\Gamma, \omega)$-algebra anti-homomorphism 
$
S: A\lra A, 
$
called the antipode, such that 
\beq\label{eq:Del-S}
\mu(S\ot \id)\Delta=\mu(\id\ot S)\Delta= \varepsilon.
\eeq
In Sweedler's notation, 
$\sum_{(x)}S(x_{(1)}) x_{(2)} = \sum_{(x)}x_{(1)}S(x_{(2)})= \varepsilon(x)$ for all $x\in S$. 

A $\Gamma$-graded module for a Hopf $(\Gamma, \omega)$-algebra $A$ is a $\Gamma$-graded module  for $A$ as an associative $(\Gamma, \omega)$-algebra. 
Let $V$ and $W$ be $\Gamma$-graded $A$-modules, then  
$V\ot W$ is a $\Gamma$-graded module for $A\ot A$ with the action defined by \eqref{eq:act-tensor}.  
As the co-multiplication $\Delta: A\lra A\ot A$ is a $(\Gamma, \omega)$-algebra homomorphism, 
it induces an $A$-action on $V\ot W$.  

\begin{remark}
Even though the definition of a Hopf $(\Gamma, \omega)$-algebra formally looks the same 
as that of an ordinary Hopf algebra,  its co-multiplication  (see \eqref{eq:multip-tensor}) and 
antipode (see \eqref{eq:anti-hom}) depend on the grading group $\Gamma$ and  
commutative factor $\omega$ in a highly non-trivial way. 
\end{remark}

\subsubsection{The Hopf  $(\Gamma, \omega)$-algebra $\U(\fg)$}
Let $\U(\fg)$ be the universal enveloping algebra of a Lie $(\Gamma, \omega)$-algebra $\fg$. 
Note that $\U(\fg)$ has an augmentation 
$\varepsilon: \U(\fg)\lra \C$ such that $\ker\varepsilon=\fg\U(\fg)$. Let us define a linear map
$
\Delta: \U(\fg)\lra \U(\fg)\ot \U(\fg)
$
by 
\beq
&\Delta(1)=1\ot 1, \quad \Delta(x y) = \Delta(x)\Delta(y),  \quad x, y\in  \U(\fg), \\
&\Delta(X) = X\ot 1 + 1\ot X, \quad X\in\fg\hookrightarrow \U(\fg).
\eeq
Then this gives rise to the co-multiplication for $\U(\fg)$. To see this, 
we note that for any $X\in\fg_\alpha$ and $Y\in\fg_\beta$, we have 
\[
\baln
\Delta([X, Y])&=\Delta(X) \Delta(Y)- \omega(\alpha, \beta) \Delta(Y) \Delta(X ) 
=[X, Y] \ot 1 + 1\ot [X, Y].
\ealn
\]
Hence $\Delta$ is indeed a $(\Gamma, \omega)$-algebra homomorphism. Clearly $(\varepsilon\ot\Delta)(X)= 1\ot X$ and $(\Delta\ot\varepsilon)(X)=  X\ot 1$. 
Hence $\U(\fg)$ is a $(\Gamma, \omega)$-bi-algebra with co-multiplication $\Delta$ and co-unit $\varepsilon$. 

It is easy to see that we have the following algebra anti-automorphism 
\[
S: \U(\fg)\lra\U(\fg), \quad S(X)=-X, \quad X\in\fg. 
\]
Indeed, for any $X\in\fg_\alpha$ and $Y\in\fg_\beta$, we have 
\[
\baln
S([X, Y])&=S(X Y- \omega(\alpha, \beta) Y X ) 
&= \omega(\alpha, \beta) Y X - X Y  = -[X, Y]. 
\ealn
\]
Clearly $(S\ot \id)\Delta(X)= (\id\ot S)\Delta(X)=0$ for all $X\in \fg$, and it follows that  $(S\ot \id)\Delta(u)= (\id\ot S)\Delta(u)=\varepsilon(u)$ for all $u\in \U(\fg)$. Thus $S$ defines an antipode for $\U(\fg)$. 
Note that $S^2=\id_{\U(\fg)}$.

\subsubsection{The finite dual of a Hopf $(\Gamma, \omega)$-algebra}

Let $(A, \mu; \Delta, \varepsilon, S)$ be a Hopf $(\Gamma, \omega)$-algebra.
The dual space $A^*$ is a $(\Gamma, \omega)$-algebra  with the associative multiplication defined, for any $f, g\in A^*$,  by 
\[
(fg)(x)=\langle f\ot g,  \Delta(x)\rangle, \quad \forall x\in A. 
\]
Note that associativity of this multiplication follows from the co-associativity of $\Delta$.

Consider  the subspace $A^0$ of $ A^*$ consisting of elements with finite dimensional cokernels, 
i.e., every $f\in A^0$ satisfies the condition $\dim_\C\frac{ A}{\ker(f)}<\infty$.  
Then the usual arguments in non-graded case 
can be applied verbatim here to show that $A^0$ has  
a Hopf $(\Gamma, \omega)$-algebraic structure dualising that of $A$. 
Explicitly, the structure maps are defined as follows:

multiplication: 
$
\mu_0: A^0\ot A^0\lra  A^0$, $\langle \mu_0(f\ot g), x \rangle = \langle f\ot g, \Delta(x) \rangle, \ \forall x\in A, 
$

unit:  $u_0: \C\lra A^0$, $u_0(a)= a 1_{A^0}, \ \forall a\in\C$, where $1_{A^0}=\varepsilon$, 

co-multiplication:
$\Delta_0: A^0\lra A^0\ot A^0$,  $\langle\Delta_0(f), x\ot y \rangle = \langle f, x y \rangle, \ \forall x, y\in A$,

co-unit $\varepsilon_0: A^0\lra \C$,  $\varepsilon_0(f)= f(1_A), \ \forall f\in A^0$,

antipode $S_0: A^0\lra A^0$, $\langle S_0(f), x \rangle = \langle f, S(x) \rangle, \ \forall x\in A$. 

\noindent 
Note that  the definitions of $\mu_0$ and $\Delta_0$ involve the dual space pairing of tensor product spaces
given by \eqref{eq:ot-paired}.

For any $A$-module $V$, its dual space $V^*$ is an $A$-module defined as follows. 
For any $x\in A$ and $\ol{w}\in V^*$, 
\beq\label{eq:dual-mod}
\langle x\cdot \ol{w}, v\rangle = \omega(d(x), d(\ol{w})) \langle \ol{w}, S(x)\cdot v \rangle, \quad \forall v\in V. 
\eeq
Note in particular that  
\[
\langle  x\cdot (\ol{w} \ot v) \rangle = \varepsilon(x) \langle  \ol{w},  v \rangle, \quad \forall 
x\in A, \ol{w}\in V^*, v\in V. 
\]

\begin{remark}
If the antipode $S$ is an anti-automorphism, then there is another $A$-module structure on 
$V^*$ defined by
$
\langle x\cdot' \ol{w}, v \rangle= \omega(d(x), d(\ol{w})) \langle \ol{w}, S^{-1}(v) \rangle$ for all $v\in V. 
$
However, we will always take the dual module structure as defined by  \eqref{eq:dual-mod} 
unless explicitly stated otherwise. 
\end{remark}

Assume that $V$ is a finite dimensional $\Gamma$-graded $A$-module. Then we have the canonical $\Gamma$-graded vector space isomorphism 
\beq 
\iota: V\ot V^*\lra \End_\C(V),
\eeq
defined for any $u\in V, \ol{v}\in V^*$ by $\iota(u\ot \ol{v})(w)=  \ol{v}(w) u$ for all $w\in V$.  

Now  $V\ot V^*$ is an $A$-module in the natural way.  Also $A$ acts on $\End_\C(V)$ by the adjoint action
\beq
ad: A\ot \End_\C(V)\lra \End_\C(V), \quad x\ot\varphi\mapsto ad_x(\varphi), 
\eeq
which is defined as follows. Let $E=\End_\C(V)$, and denote by $\mu_E$ its multiplication. Write the representation of $A$ 
associated with $V$ by $\pi: A\lra  E$. Then the adjoint action is defined by the composition
\[
A\ot E \stackrel{\Delta\ot\id}{\longrightarrow}A\ot A\ot E\stackrel{\id\ot \tau}{\longrightarrow}
A\ot E\ot A \stackrel{\pi\ot \id\ot \pi\circ S}{\looongrightarrow} E^{\ot 3} \stackrel{\mu_E(\mu_E\ot \id)}{\looongrightarrow} E. 
\]
Thus for any $x\in A$ and $\varphi\in E$,  
\beq\label{eq:adx}
ad_x(\varphi) =\sum_{(x)} \omega(d(x_{(2)}), d(\varphi)) \pi(x_{(1)}) \varphi \pi(S(x_{(2)})). 
\eeq
\begin{lemma}
Assume that $V$ is a finite dimensional $\Gamma$-graded module for the Hopf $(\Gamma, \omega)$-algebra $A$.
Then the map $\iota: V\ot V^*\lra \End_\C(V)$ is an $A$-module isomorphism. 
\end{lemma}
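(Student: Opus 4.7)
Since $\iota$ is already a $\Gamma$-graded vector space isomorphism by finite-dimensionality (this is the usual construction, applied degree by degree in the $\Gamma$-grading), the only content of the lemma is that $\iota$ intertwines the two $A$-actions. Thus the plan is to verify directly that
\[
\iota\bigl(x\cdot(u\otimes\bar v)\bigr)(w)=ad_x\bigl(\iota(u\otimes\bar v)\bigr)(w)
\]
for all $x\in A$, $u\in V$, $\bar v\in V^*$, $w\in V$, by unpacking both sides via their defining formulas and matching the resulting $\omega$-sign factors.

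For the left-hand side, I would first expand the action on $V\otimes V^*$ using \eqref{eq:act-tensor-1} and the coproduct, obtaining
\[
x\cdot(u\otimes\bar v)=\sum_{(x)}\omega(d(x_{(2)}),d(u))\,(x_{(1)}\cdot u)\otimes(x_{(2)}\cdot\bar v),
\]
then apply $\iota$, evaluate on $w$, and use the dual-module convention \eqref{eq:dual-mod} to rewrite $\langle x_{(2)}\cdot\bar v,w\rangle$ in terms of $\langle\bar v,S(x_{(2)})\cdot w\rangle$. This produces a product of two $\omega$-factors whose arguments I will combine using the co-cycle property \eqref{eq:cycle-2}, yielding $\omega(d(x_{(2)}),d(u)+d(\bar v))$.

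For the right-hand side, I would substitute the definition \eqref{eq:adx} of $ad_x$, noting that $\iota(u\otimes\bar v)\in\End_\C(V)$ is homogeneous of degree $d(u)+d(\bar v)$ so that the single $\omega$-factor in \eqref{eq:adx} reads $\omega(d(x_{(2)}),d(u)+d(\bar v))$, and then rewrite $\iota(u\otimes\bar v)(S(x_{(2)})\cdot w)=\langle\bar v,S(x_{(2)})\cdot w\rangle\,u$ before acting by $x_{(1)}$. Both sides then visibly coincide with
\[
\sum_{(x)}\omega\bigl(d(x_{(2)}),d(u)+d(\bar v)\bigr)\,\langle\bar v,S(x_{(2)})\cdot w\rangle\,(x_{(1)}\cdot u),
\]
establishing $A$-linearity.

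The calculation is elementary; no antipode identity such as $\mu(S\otimes\id)\Delta=\varepsilon$ is needed at this stage, since we are only comparing two $A$-actions rather than collapsing a sum to $\varepsilon(x)$. The one step that requires care (and which I expect to be the only real bookkeeping obstacle) is tracking the $\omega$-signs that arise in three different places simultaneously: the braiding in the tensor product action \eqref{eq:act-tensor-1}, the dual-module sign in \eqref{eq:dual-mod}, and the sign built into \eqref{eq:adx}. Because each of these contributes one factor depending on the Sweedler component $x_{(2)}$, I will adopt the convention of Remark \ref{rmk:conven} and work with fixed homogeneous representatives for $x_{(1)}, x_{(2)}$; the cocycle relation \eqref{eq:cycle-2} is then precisely what makes the three contributions collapse into the single matching factor above.
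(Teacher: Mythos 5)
Your proposal is correct and follows essentially the same route as the paper: expand $x\cdot(u\otimes\bar v)$ via the coproduct and the braided tensor action, evaluate on $w$, use \eqref{eq:dual-mod} to convert $\langle x_{(2)}\cdot\bar v,w\rangle$ into $\omega(d(x_{(2)}),d(\bar v))\langle\bar v,S(x_{(2)})\cdot w\rangle$, collapse the two $\omega$-factors into $\omega(d(x_{(2)}),d(u)+d(\bar v))$, and recognise the result as $ad_x(\iota(u\otimes\bar v))(w)$ from \eqref{eq:adx}. Your observation that no antipode identity is needed is also consistent with the paper's argument.
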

\begin{proof} Since $\iota$ is clearly a vector space isomorphism, we only need to show that it is an $A$-map.   
For any $u\in V, \ol{v}\in V^*$, and $x\in A$, 
\[
x\cdot(u\ot\ol{v})= \sum \omega(d(x_{(2)}), d(u)) x_{(1)}\cdot u \ot x_{(2)}\cdot \ol{v}. 
\]
Hence for any $w\in V$, 
\[
\baln
\iota(x\cdot(u\ot\ol{v}))(w) &= \sum \omega(d(x_{(2)}), d(u)) x_{(1)}\cdot u   \langle x_{(2)}\cdot \ol{v}, w\rangle\\
&= \sum \omega(d(x_{(2)}), d(u))  \omega(d(x_{(2)}), d(\ol{v})) 
	x_{(1)}\cdot u   \langle \ol{v}, S(x_{(2)})\cdot w\rangle\\
&= \sum \omega(d(x_{(2)}), d(u\ot\ol{v})) 
	\iota(x_{(1)}\cdot u\ot \ol{v})(S(x_{(2)})\cdot w)\\
	&= \sum \omega(d(x_{(2)}), d(u\ot\ol{v})) 
	\pi(x_{(1)}) \iota(u\ot \ol{v}) \pi(S(x_{(2)}))(w). 
\ealn
\]
By \eqref{eq:adx}, the last expression is equal to $ad_x(\iota(u\ot \ol{v}))(w)$. 
Hence $\iota(x\cdot(u\ot\ol{v}))= ad_x(\iota(u\ot \ol{v}))$, 
showing that $\iota$ is an $A$-morphism. 
\end{proof}

Hence $\iota^{-1}(\id_V)$ is an $A$-invariant in $V\ot V^*$, i.e., 
$ad_x(\iota^{-1}(\id_V))=\varepsilon(x) \iota^{-1}(\id_V)$ for all $x\in A$.
Call $\iota^{-1}(\id_V)$ the canonical $A$-invariant in $V\ot V^*$. 

\begin{remark}
The adjoint action generalises to $\Hom_\C(V, W)$ in the obvious way for any $A$-modules $V$ and $W$, and the canonical injection $W\ot V^*\lra \Hom_\C(V, W)$ is an $A$-homomorphism.
\end{remark}

The notion of co-modules generalises to $(\Gamma, \omega)$-bi-algebras in the obvious way.  
Let us consider co-modules for $A^0$. 
A right $A^0$-comodule is a vector space $V$ equipped with a map 
$
\delta: V\lra V\ot A^0, 
$
such that 
\beq\label{eq:co-act}
(\delta\ot\id_{A^0})\delta = (\id_V\ot\Delta_0)\delta, \quad (\id_V\ot\epsilon_0)\delta =\id_V.
\eeq

Elements of $V^{A^0}=\{v\in V\mid  \delta(v)=v\ot 1_{A^0}\}$ are called $A^0$-invariants of $V$. 

Let $V$ and $W$ be $A^0$-comodules with respective structure maps 
$\delta_V: V\lra V\ot A^0$ and $\delta_W: W\lra W\ot A^0$. [{\bf When more than one comodules are considered, we will use the notation $\delta_M$ to denote the structure map of a comodule $M$.}]
Then $V\ot W$ is also a right $A^0$-comodule with the structure map
\beq\label{eq:comod-VW}
\delta_{V\ot W}: V\ot W\lra V\ot W\ot A^0
\eeq
defined by the following composition of  maps
\[
{V{\ot}W}{\stackrel{\delta_V\ot\delta_W}\loongrightarrow}{V{\ot}{A^0}{\ot}W{\ot}{A^0}}{\stackrel{\id_V\ot\tau_{A^0, W}\ot \id_{A^0}}\looongrightarrow}{V}{\ot}{W}{\ot}{A^0}{\ot}{A^0}{\stackrel{\id_{V\ot W}\ot\mu_0}\loongrightarrow}{V{\ot}W{\ot}{A^0}}. 
\]

There is a bijection between locally finite left $A$-modules and right $A^0$-comodules. 
If $V$ is a right $A^0$-comodule  with structure map $\delta_V: V\lra V\ot A^0$, the left $A$-action $M_A: A\ot V\lra V$ on $V$ is defined by the following commutative diagram, 
\beq\label{eq:mod-comod}
\begin{tikzcd}
A\ot V \arrow[d, swap, "M_A"] \arrow[r, "\tau_{V, A}"]  &  V\ot A \arrow[d, "\delta\ot\id_A"]  \\
V  &  \arrow[l, "\id_V\ot{\langle \ , \ \rangle}"]V\ot A^0\ot A. 
\end{tikzcd}
\eeq
The first relation of \eqref{eq:co-act} implies that $x\cdot(y\cdot v) = (x y)\cdot v$ for all $x, y\in A$ and $v\in V$, and the second relation implies that $1\cdot v= v$ for all $v\in V$. 
Given a locally finite $A$-module, the commutative diagram also defines the corresponding $A^0$-co-action. 
 
\begin{example}\label{eg:coeffs}
 Let $V$ be a right $A^0$-co-module with structure map $\delta: V\lra V\ot A^0$. 
 For any $v\in V$, we can always write $\delta(v)$ as a finite sum $\delta(v)=\sum_i v_i\ot f_{v, i}$
 for some {\bf linearly independent} elements $v_i\in V$ and $f_{v, i}\in A^0$. 
Call the elements $f_{v, i}$ for all $i$ and $v$ the matrix coefficients of the right co-module $V$, 
and denote their $\C$-span by $T^{(V)}$. Note that $T^{(V)}$ satisfies 
$\Delta_0(T^{(V)})\subset T^{(V)}\ot T^{(V)}$, 
thus is a two-sided co-ideal of $A^0$. 
\end{example} 

\subsubsection{Left actions of Hopf  $(\Gamma, \omega)$-algebra on its finite dual}\label{sect:L-R-acts}

There are two natural left actions 
of a Hopf $(\Gamma, \omega)$-algebra $A$ on its finite dual $A^0$ (see e.g., \cite{M}), 
which will be important for a Borel-Weil type construction of simple tensor modules 
of the general linear Lie $(\Gamma, \omega)$-algebra in Section \ref{sect:BW}. 
Thus we briefly discuss them here.  
 
Define the maps $R, L:    A\ot A^0\lra A^0$ respectively by the following compositions.
  \beq
  &&R:   A\ot A^0\stackrel{\tau_{A, A^0}}\longrightarrow A^0\ot A \stackrel{\Delta_0\ot \id}\loongrightarrow
 A^0\ot A^0\ot A \stackrel{\id\ot {\langle \ , \ \rangle}}\loongrightarrow A^0, \label{eq:R-act}\\
 &&L:  A\ot A^0\stackrel{S\ot \Delta_0}\loongrightarrow A\ot A^0\ot A^0 \stackrel{\tau_{A, A^0}\ot \id}\loongrightarrow
 A^0\ot A\ot A^0 \stackrel{{\langle \ , \ \rangle}\ot \id}\loongrightarrow A^0.  \label{eq:L-act}
 \eeq
 For any $x\in A$ and $f\in A^0$, we have 
 \beq
 R_x(f)= \omega(d(x), d(f))  \sum  f_{(1)} \langle f_{(2)}, x \rangle, \label{eq:R}\\
 L_x(f)
 = \omega(d(x), d(x)) \sum   \langle f_{(1)}, S(x) \rangle f_{(2)}. \label{eq:L}
 \eeq
 The first formula is easy to see, and the second follows from the following calculation
 \[
 \baln
 L_x(f)
 =\sum \omega(d(x), d(f_{(1)}))  \langle f_{(1)}, S(x) \rangle f_{(2)}
 = \omega(d(x), d(x)) \sum   \langle f_{(1)}, S(x) \rangle f_{(2)}. 
 \ealn
 \] 

 
 \begin{lemma} \label{lem:R-L}
 The maps $R$ and $L$ give rise to left actions of $A$ on $A^0$, 
which graded commute in the sense that    
\[
R_y   L_x = \omega(d(y), d(x)) L_x  R_y, \quad \forall x, y\in A. 
\]
\end{lemma}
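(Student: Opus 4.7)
The plan is to verify each claim by direct computation in Sweedler notation, extending the explicit formulas \eqref{eq:R} and \eqref{eq:L} and carefully tracking all commutative factors. Throughout, I will use the grading constraint that $\langle f, x\rangle \neq 0$ forces $d(f) = -d(x)$, which pins down the degrees of the Sweedler components $f_{(i)}$ on the surviving terms.

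For showing $R_x R_y = R_{xy}$: apply $R_x$ to $R_y(f) = \omega(d(y), d(f))\sum f_{(1)}\langle f_{(2)}, y\rangle$, noting that on nonzero terms $d(f_{(1)}) = d(f) + d(y)$. Co-associativity lets me relabel $(f_{(1)})_{(1)}\otimes (f_{(1)})_{(2)}\otimes f_{(2)}$ as $f_{(1)}\otimes f_{(2)}\otimes f_{(3)}$. On the other side, $\langle f_{(2)}, xy\rangle = \langle \Delta_0(f_{(2)}), x\otimes y\rangle$ is evaluated via the dual-pairing formula \eqref{eq:ot-paired}, producing an extra factor $\omega(d(f_{(2)(2)}), d(x))$ that, on nonzero terms, equals $\omega(d(x), d(y))$. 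A short calculation using \eqref{eq:cycle-2} and \eqref{eq:cycle-3} shows the two sides agree.

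The argument for $L_x L_y = L_{xy}$ is analogous, but one additionally uses the anti-algebra property $S(xy) = \omega(d(x), d(y)) S(y)S(x)$ from \eqref{eq:anti-hom}. After expanding $\langle f_{(1)}, S(xy)\rangle$ via \eqref{eq:ot-paired}, the two factors of $\omega(d(x), d(y))$ (one from the antipode and one from the tensor-pairing sign) cancel. The remaining prefactor is $\omega(d(xy), d(xy))$, which reduces to $\omega(d(x), d(x))\omega(d(y), d(y))$ because $\omega(d(x), d(y))\omega(d(y), d(x)) = 1$ by \eqref{eq:cycle-1}, matching $L_x(L_y(f))$.

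For the graded commutation, I compute $R_y L_x(f)$ and $L_x R_y(f)$ directly. Both reduce, after applying co-associativity, to scalar multiples of the common sum
\[
\sum \langle f_{(1)}, S(x)\rangle \, f_{(2)} \, \langle f_{(3)}, y\rangle.
\]
The only difference lies in the order in which the pairings are introduced: in $R_y L_x(f)$ the $R_y$ step sees an argument of degree $d(f) + d(x)$ and contributes $\omega(d(y), d(f) + d(x))$, whereas in $L_x R_y(f)$ the $L_x$ step sees an argument of degree $d(f) + d(y)$ but its prefactor $\omega(d(x), d(x))$ is independent of $f_{(1)}$. Comparing the two expressions, the quotient is precisely $\omega(d(y), d(x))$, yielding $R_y L_x = \omega(d(y), d(x)) L_x R_y$. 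The main obstacle is bookkeeping: ensuring that every appearance of $\omega$ is tracked and that the implicit degree identifications on nonvanishing terms are used consistently, so that factors like $\omega(-d(y), d(x))$ get correctly rewritten as $\omega(d(x), d(y))$ via \eqref{eq:cycle-1}.
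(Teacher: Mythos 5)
Your proposal is correct and follows essentially the same route as the paper's proof: direct Sweedler-notation computation of $R_yR_x$, $L_yL_x$, $R_yL_x$ and $L_xR_y$, using the degree identifications forced by nonvanishing pairings, the tensor pairing \eqref{eq:ot-paired}, and the cancellation between the anti-homomorphism factor from $S$ and the pairing factor. The reduction of both mixed compositions to the common sum $\sum\langle f_{(1)}, S(x)\rangle\, f_{(2)}\,\langle f_{(3)}, y\rangle$ with prefactors differing by exactly $\omega(d(y),d(x))$ is precisely the paper's argument.
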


 \begin{lemma} \label{lem:mod-alg}
 Retain notation above. 
 \begin{enumerate}
\item 
The $A$-action $R$ on $A^0$ satisfies
 \[ 
 R_x\circ \mu_0=\mu_0\circ (R\ot R)_{\Delta(x)}, \quad \forall x\in A. 
 \] 
Thus $A^0$  is a left $R_A$-module algebra with respect to the co-multiplication $\Delta$. 
 \item 
 The $A$-action $L$ on $A^0$ satisfies
 \[ 
 L_x\circ \mu_0=\mu_0\circ (L\ot L)_{\Delta'(x)}, \quad \forall x\in A, 
 \]
where $\Delta'$ is the opposite co-multiplication. 
 Thus $A^0$ is a left $L_A$-module algebra with respect to $\Delta'$.
\end{enumerate}
\end{lemma}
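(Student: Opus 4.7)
My plan is to verify both identities by pairing each side with an arbitrary $y\in A$ and comparing the resulting scalars, exploiting the separating property of the dual pairing $A^0\ot A\lra\C$.

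For part (1), I would first compute $\langle R_x(\mu_0(f\ot g)), y \rangle$ using formula \eqref{eq:R} for $R_x$ together with the defining relation $\langle \mu_0(f\ot g), z \rangle = \langle f\ot g, \Delta(z) \rangle$ of the co-multiplication on $A^0$. Up to a global $\omega$-factor this reduces to $\langle f\ot g, \Delta(yx) \rangle$. Since $\Delta$ is a $(\Gamma,\omega)$-algebra homomorphism into the braided tensor product $A\ot A$, one has
\[
\Delta(yx)=\Delta(y)\Delta(x)= \sum \omega(d(y_{(2)}), d(x_{(1)}))\, y_{(1)}x_{(1)}\ot y_{(2)}x_{(2)},
\]
where the multiplication on $A\ot A$ is the braided one given by \eqref{eq:multip-tensor}. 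On the other hand, $\langle \mu_0\big((R\ot R)_{\Delta(x)}(f\ot g)\big), y \rangle$ can be expanded using the definitions of $\mu_0$ and $R$ in tandem with the tensor action formula \eqref{eq:act-tensor-1} applied to $(R\ot R)$, and then co-associativity $(\Delta\ot\id)\Delta=(\id\ot\Delta)\Delta$ to regroup the Sweedler components. Matching the two expansions term by term yields the identity; the $\omega$-factors accumulated from the graded pairing \eqref{eq:ot-paired}, the braided multiplication \eqref{eq:multip-tensor}, and the graded tensor action \eqref{eq:act-tensor-1} collapse via the cocycle identities \eqref{eq:cycle-1}--\eqref{eq:cycle-3}.

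For part (2), I would run essentially the same strategy with $L$ from \eqref{eq:L} in place of $R$. The extra ingredient I would need is the standard anti-coalgebra property of the antipode of any Hopf $(\Gamma,\omega)$-algebra, namely $\Delta\circ S = (S\ot S)\circ \tau_{A,A}\circ \Delta$, which I would first establish (or extract from the defining axioms) by dualising the fact that $S$ is simultaneously an algebra and a coalgebra anti-homomorphism. Applying this identity inside $\Delta(S(x)\,y)=\Delta(S(x))\Delta(y)$ interchanges the two Sweedler factors of $\Delta(x)$ before the action is transported to $f\ot g$; this is precisely what promotes the co-multiplication $\Delta$ to its opposite $\Delta'$ in the module algebra identity. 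The remaining graded bookkeeping proceeds as in part (1).

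The main obstacle I anticipate is the careful tracking of the $\omega$-factors injected at every step by the braided tensor product of modules \eqref{eq:act-tensor-1}, the graded dual space pairing \eqref{eq:ot-paired} (which itself carries a braiding), and the braided multiplication \eqref{eq:multip-tensor}. Conceptually the argument mirrors the classical Hopf-algebraic module-algebra proof, but these twisting factors must be shown to balance using \eqref{eq:cycle-1}--\eqref{eq:cycle-3}. No new structural input beyond Lemma \ref{lem:R-L}, co-associativity of $\Delta$, and the two anti-homomorphism properties of $S$ is needed.
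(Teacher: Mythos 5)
Your proposal is correct and follows essentially the same route as the paper: a direct verification from the formulas for $R_x$, $L_x$ and $\mu_0$, using the multiplicativity of $\Delta$ (equivalently the braided coproduct formula for $fg$ in $A^0$), co-associativity, and for part (2) the anti-coalgebra property $\Delta\circ S=(S\ot S)\circ\tau_{A,A}\circ\Delta$, which the paper also uses (implicitly) when expanding $\langle f_{(1)}\ot g_{(1)},\Delta(S(x))\rangle$. The only difference is presentational: you pair both sides against a test element $y\in A$ and work inside $A$, whereas the paper manipulates the Sweedler components of $\Delta_0(fg)$ directly inside $A^0$; the $\omega$-bookkeeping is the same in either formulation.
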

 
\begin{remark}
The usual notation for the two left actions in the Hopf algebra literature is 
$\rightharpoonup$ and $\leftharpoonup$.  We choose our notation because of
the analogy of the actions with the left and right translations in the Lie group context. 
\end{remark}

The lemmas are known, thus we could  take them for granted in principle. 
However,  we explicitly prove them here for peace of the mind. 
As we will see, some of the manipulations required in the proof are quite delicate. 
 
 \begin{proof}[Proof of Lemma \ref{lem:R-L}]
  We consider $R$ first. For any $x, y\in A$ and $f\in A^0$, we have 
 \[
 \baln
 R_y(R_x(f))&= \omega(d(x), d(f))  \sum  R_y(f_{(1)}) \langle f_{(2)}, x \rangle \\
&= \omega(d(x), d(f))  \sum \omega(d(y), d(f_{(1)})+d(f_{2)}))   f_{(1)} \langle f_{(2)}, y \rangle  \langle f_{(3)}, x \rangle.
 \ealn
 \]
On the right hand side, $d(f_{(1)})+d(f_{(2)})= d(f)-d(f_{(3)})= d(f)+d(x)$. Thus 
\[
 \baln
 R_y(R_x(f))
&=\omega(d(y x), d(f))   \omega(d(y), d(x)) \sum  f_{(1)} \langle f_{(2)}, y \rangle  \langle f_{(3)}, x \rangle. 
 \ealn
 \]
Note that 
\[
 \baln
\sum  f_{(1)} \langle f_{(2)}, y x\rangle  
  &= \sum  f_{(1)} \langle f_{(2)}\ot f_{(3)}, y \ot x\rangle \\
&= \sum  \omega(d(f_{(3)}), d(y)) f_{(1)} \langle f_{(2)}, y \rangle  \langle f_{(3)}, x \rangle  \\
&=\omega( d(y), d(x))  \sum  f_{(1)} \langle f_{(2)}, y \rangle  \langle f_{(3)}, x \rangle.
 \ealn
 \]
Hence 
$
 R_y(R_x(f))
=\omega(d(y x), d(f))   \sum  f_{(1)} \langle f_{(2)}, y x\rangle
= R_{y x}(f),  
$
proving that $R$ is a left action.

 Using \eqref{eq:L}, we obtain
 \[
 \baln
 L_y (L_x(f))&= \omega(d(x), d(x)) \omega(d(y), d(y)) \sum   \langle f_{(1)}, S(x) \rangle \langle f_{(2)}, S(y) \rangle f_{(3)}.
 \ealn
 \] 
 Note that $\omega(d(x), d(x)) \omega(d(y), d(y)) =\omega(d(y x), d(y x))$. Also, 
 \[
 \baln
 &\sum   \langle f_{(1)}, S(x) \rangle \langle f_{(2)}, S(y) \rangle f_{(3)}\\
 &=  \omega(d(y), d(x)) \sum   \langle f_{(1)}\ot f_{(2)}, S(x)\ot S(y) \rangle f_{(3)}\\
 &=\omega(d(y), d(x)) \sum   \langle \Delta_0(f_{(1)}), S(x)\ot S(y) \rangle f_{(2)}\\
 &=\sum   \langle f_{(1)}, S(y x) \rangle f_{(2)}. 
 \ealn
 \]
 Thus
 $
 L_y (L_x(f)) = \omega(d(y x), d(y x)) \sum   \langle f_{(1)}, S(y x) \rangle f_{(2)}$, 
 which is equal to $L_{y x}(f) 
 $ by \eqref{eq:L}. This shows that $L$ is indeed a left action of $A$ on $A^0$. 
 
 Now we prove that the two left actions $\omega$-commute.  Note that
 \[
 \baln
 R_y (L_x(f))&= \omega(d(x), d(x)) \sum   \langle f_{(1)}, S(x) \rangle  R_y(f_{(2)})\\
 &= \omega(d(x), d(x)) \sum   \omega(d(y), d(f_{(2)})+d(f_{(3)})) \langle f_{(1)}, S(x) \rangle  f_{(2)} \langle f_{(3)}, y \rangle\\
 &= \omega(d(x), d(x)) \omega(d(y), d(f)+d(x))  \sum   \langle f_{(1)}, S(x) \rangle  f_{(2)} \langle f_{(3)}, y \rangle, \\
 L_x (R_y(f))&= \omega(d(y), d(f))  \sum  L_x( f_{(1)}) \langle f_{(2)}, y \rangle \\
 &= \omega(d(y), d(f)) \omega(d(x), d(x))  \sum  \langle  f_{(1)}, S(x) \rangle  f_{(2)}  \langle f_{(3)}, y \rangle \\
 \ealn
 \] 
 Hence $R_y (L_x(f)) = \omega(d(y), d(x)) L_x (R_y(f))$. 
 
 This completes the proof of Lemma \ref{lem:R-L}.
 \end{proof}
 
 \begin{proof}[Proof of Lemma \ref{lem:mod-alg}]
  (1). 
 For $x\in A$, $f, g\in A^0$, we have 
  \beq\label{eq:Rxfg}
 R_x(f g)= \omega(d(x), d(f g))  \sum (f g)_{(1)} \langle (f g)_{(2)}, x \rangle.
 \eeq
 Note that $\Delta(f g) = \sum \omega(d(f_{(2)}), d(g_{(1)}) f_{(1)} g_{(1)} \ot  f_{(2)} g_{(2)}$. Hence 
  \[
 \baln
 \sum (f g)_{(1)} \langle (f g)_{(2)}, x \rangle 
 &= \sum \omega(d(f_{(2)}), d( g_{(1)}))  f_{(1)} g_{(1)} \langle f_{(2)} g_{(2)}, x \rangle, 
 \ealn
 \]
 where the right hand side can be expressed as 
 \[
 \baln
 &\sum \omega(d(f_{(2)}), d( g_{(1)})) \omega(d(g_{(2)}), d( x_{(1)}))  f_{(1)}  \langle f_{(2)},  x_{(1)}\rangle 
  g_{(1)} \langle g_{(2)}, x_{(2)} \rangle \\
  &=\sum \omega(d(g), d( x_{(1)}))  f_{(1)}  \langle f_{(2)},  x_{(1)}\rangle 
  g_{(1)} \langle g_{(2)}, x_{(2)} \rangle. 
  \ealn
  \]
 Using \eqref{eq:R}, we can re-write the right hand side  as  
  \[
  \baln
  &\sum \omega(d(g), d( x_{(1)}))   \omega(d(f), d( x_{(1)}))  \omega(d(g), d( x_{(2)})) 
 R_{x_{(1)}}(f)  R_{x_{(2)}}(g) \\
  &=\omega(d(g), d(x))   \sum  \omega(d(f), d( x_{(1)}))   \omega(d(f), d( x_{(2)}))
   \mu_0\circ(R_{x_{(1)}} \ot R_{x_{(2)}})(f\ot g). 
 \ealn
 \] 
Some easy manipulations of the commutative factors cast this expression into 
  \[
  \baln 
   &\omega(d(f g), d(x))   \sum \mu_0\circ(R_{x_{(1)}} \ot R_{x_{(2)}})(f\ot g)\\
   &= \omega(d(f g), d(x))    \mu_0\circ(R\ot R)_{\Delta(x)}(f\ot g). 
 \ealn
 \]
 Using this in \eqref{eq:Rxfg}, we obtain $R_x(f g)=\mu_0\circ(R\ot R)_{\Delta(x)}(f\ot g)$, proving the statement. 
 
 (2).  We have 
 \[
 \baln
 L_x(f g) &= \omega(d(x), d(x)) \sum  \langle (f g)_{(1)}, S(x) \rangle  (f g)_{(2)}\qquad\text{(by \eqref{eq:L})}\\
 &= \omega(d(x), d(x)) \sum  \omega(d(f_{(2)}), d(g_{(1)})) \langle f_{(1)} g_{(1)}, S(x) \rangle  f_{(2)} g_{(2)}\\
 &= \omega(d(x), d(x)) \sum  \omega(d(f_{(2)}), d(g_{(1)})) \langle f_{(1)}\ot  g_{(1)}, \Delta(S(x)) \rangle  f_{(2)} g_{(2)}. 
 \ealn
 \]
 The right hand side is equal to 
\[
 \baln
  &\omega(d(x), d(x)) \sum  \omega(d(f_{(2)}), d(g_{(1)}))  \omega(d(x_{(1)}), d(x_{(2)}))\\
 &\times  \omega(d(g_{(1)}), d(x_{(2)})) \langle f_{(1)}, S(x_{(2)})\rangle   \langle g_{(1)}, S(x_{(1)}) \rangle  f_{(2)} g_{(2)}. 
 \ealn
\]
 By manipulating the commutative factors, we can show that this is equal to 
%
\[
 \baln
 &\omega(d(x), d(x)) \sum  \omega(d(x_{(1)}), d(f)+d(x_{(2)}))  \\
 &\times   \langle f_{(1)}, S(x_{(2)})\rangle  f_{(2)} \langle g_{(1)}, S(x_{(1)}) \rangle  g_{(2)}. 
 \ealn
 \]
 Now we use \eqref{eq:L} to express this in terms of $L_{x_{(2)}}(f)$ and $L_{x_{(1)}}(g)$, obtaining 
 \[
 \baln
 & \omega(d(x), d(x)) \sum  \omega(d(x_{(1)}), d(f)+d(x_{(2)}))   \\
 &\times   \omega(d(x_{(1)}), d(x_{(1)})) \omega(d(x_{(2)}), d(x_{(2)})) L_{x_{(2)}}(f)L_{x_{(1)}}(g)\\
 &=  \sum  \omega(d(x_{(1)}), d(f)+d(x_{(2)}))   \omega(d(f), d(x_{(1)})) 
  \mu_0 \circ(L_{x_{(2)}}\ot L_{x_{(1)}})(f \ot g)\\
  &=  \sum  \omega(d(x_{(1)}), d(x_{(2)}))   \mu_0\circ (L_{x_{(2)}}\ot L_{x_{(1)}})(f \ot g)\\
   &=  \mu_0\circ (L\ot L)_{\Delta'(x)}(f \ot g).
 \ealn
 \]
Therefore,  $L_x\circ \mu_0= \mu_0\circ (L\ot L)_{\Delta'(x)}$ for all $x\in A$. 
 
 This completes the proof of Lemma \ref{lem:mod-alg}.
  \end{proof}
  
\begin{example} \label{eg:L-R-mod}
If $V$ is a simple right $A^0$-co-module,  
the space $T^{(V)}$ of matrix coefficients of $V$ is a simple module for $L_A\ot R_A$, 
which is isomorphic to $V^*\ot V$. 
\end{example}

\section{Representations of general linear Lie $(\Gamma, \omega)$-algebras}

We develop the basics of the structure and representations of 
the general linear Lie $(\Gamma, \omega)$-algebra $\gl(V(\Gamma, \omega))$. 
The material in this section is elementary; 
nevertheless there is a need to treat it methodically. 

We  point out that in the case $\Gamma=\Z_2\times Z_2$ and $\omega(\alpha, \beta)=(-1)^{\alpha_1\beta_2 + \alpha_2\beta_1}$,  irreducible  tensor representations of $\gl(V(\Gamma, \omega))$ were constructed in \cite{ISV} 
using Gelfand-Zetlin patterns. This was achieved by  
using Scheunert's cocycle twisting theorem  \cite[Theorem 2]{Sch79}  to relate them 
to Gelfand-Zetlin patterns for irreducible tensor representations
of the general linear Lie superalgebra $\gl_{m|n}(\C)$. 
\subsection{Root system and Weyl group}\label{sect:struct}
Let $V(\Gamma, \omega)$ be a finite dimensional $\Gamma$-graded vector space, and let $\gl(V(\Gamma, \omega))$
be the  general linear Lie $(\Gamma, \omega)$-algebra of $V(\Gamma, \omega)$. 
To simplify notation, we write $V=V(\Gamma, \omega)$, and $\gl(V)=\gl(V(\Gamma, \omega))$.

\subsubsection{Root system}  \label{sect:roots}

Let us briefly describe the structure of $\gl(V)$, 
which is very similar to that of the usual general linear Lie superalgebra.

Denote $m_\alpha = \dim V_\alpha$. Recall from Example \ref{eg:gl} that 
 $\Gamma_R=\{\alpha\in \Gamma\mid m_\alpha>0\}$.  
Let
$\Gamma_R^\pm=\Gamma_R\cap\Gamma^\pm$, then $\Gamma_R=\Gamma_R^+\cup \Gamma_R^-$.
Since $V$ is finite dimensional, $\Gamma_R$ is a finite set. 
There are $|\Gamma_R|!$ total orders for $\Gamma_R$. 
Call a total order distinguished if  
$\alpha<\beta$ for all $\alpha\in\Gamma_R^+$ and 
$\beta \in\Gamma_R^-$.

Fix a total order for $\Gamma_R$ (which may or may not be distinguished).
Choose an ordered basis $B_\alpha=\{e(\alpha)_i\mid 1\le i\le m_\alpha\}$ for each $V_\alpha$, 
and order the $B_\alpha$'s according to the given order for $\Gamma_R$.
Thus we have the ordered basis $B=\bigcup_{\alpha\in\Gamma_R} B_\alpha$ for $V$.  

Denote by $E(\alpha, \beta)_{i j}\in \Hom_\C(V_\beta, V_\alpha)$,  for $\alpha, \beta\in\Gamma_R$,  $1\le i\le m_\alpha$ and $1\le j \le m_\beta$, the matrix units relative the basis $B$. They obey the relations
 \beq
 &&E(\alpha, \beta)_{i j} e(\gamma)_k = \delta_{\beta \gamma} \delta_{j k} e(\alpha)_i,  \\
 &&
 E(\alpha, \beta)_{i j} E(\gamma, \delta)_{k\ell} = \delta_{\beta \gamma} \delta_{j k} E(\alpha, \delta)_{i \ell}.
 \eeq
The matrix units form a basis of $\gl(V)$. Using them, we can express the defining relations of $\gl(V)$ as
\beq \label{eq:CR}
\phantom{XXXX}
[E(\alpha, \beta)_{i j},  E(\gamma, \delta)_{k\ell} ]= \delta_{\beta \gamma} \delta_{j k} E(\alpha, \delta)_{i \ell} 
-\omega(\alpha-\beta, \gamma - \delta) \delta_{\alpha \delta} \delta_{\ell i} E(\gamma, \beta)_{k j}. 
\eeq

Let $\alpha,  \beta$ be given elements of $\Gamma_R$, and fix $i, j$ such that
with $1\le i\le m_\alpha$ and $1\le j\le m_\beta$. If  $\alpha=\beta$, we require $i\ne j$. 
Let $V_{\alpha, i; \beta, j}=\C e(\alpha)_i\oplus \C e(\beta)_j$. 
Then the set $\{E(\alpha, \beta)_{i j}, E(\beta, \alpha)_{ji},  E(\alpha, \alpha)_{i i}, E(\beta, \beta)_{j j}\}$ 
spans a Lie $(\Gamma, \omega)$-subalgebra, which we denote by $\gl(V_{\alpha, i; \beta, j})$. 
It is important to observe the following facts.
\begin{lemma}\label{lem:subalgs}
Retain notation above. 

$\bullet$
If both $\alpha,\beta$ belong to $\Gamma_R^+$ or $\Gamma_R^-$, then $\gl(V_{\alpha, i; \beta, j})\simeq\gl_2(\C)$.  

$\bullet$ 
If one of $\alpha, \beta$  belongs to $\Gamma_R^+$ and the other to $\Gamma_R^-$,  
then  $\gl(V_{\alpha, i; \beta, j})\simeq \gl_{1|1}(\C)$.
\end{lemma}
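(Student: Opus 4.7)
The plan is to compute all the brackets among the four spanning elements directly from \eqref{eq:CR} and read off the isomorphism type from the resulting relations.

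As a preliminary, I would record the degrees of the generators: with $h_1 := E(\alpha, \alpha)_{ii}$, $h_2 := E(\beta, \beta)_{jj}$, $e := E(\alpha, \beta)_{ij}$, $f := E(\beta, \alpha)_{ji}$, the elements $h_1, h_2$ are of degree $0$ while $e, f$ are of degrees $\alpha - \beta$ and $\beta - \alpha$. The ``parity'' of $e$ and $f$ is controlled by $\omega(\alpha - \beta, \alpha - \beta)$, which by bimultiplicativity \eqref{eq:cycle-2}--\eqref{eq:cycle-3} together with $\omega(\gamma, -\gamma) = \omega(\gamma, \gamma)^{-1}$ reduces to $\omega(\alpha, \alpha)\omega(\beta, \beta)$. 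Hence it equals $+1$ precisely when $\alpha$ and $\beta$ are of the same $\Gamma^\pm$ type, and $-1$ otherwise.

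Next I apply \eqref{eq:CR} to each pair of generators. The degree-$0$ elements $h_1, h_2$ commute. The computation $[h_1, e] = e$ is immediate once one observes that the second term of \eqref{eq:CR} carries the factor $\delta_{\alpha\beta}\delta_{ji}$, which vanishes by the standing hypothesis (either $\alpha \ne \beta$, or $\alpha = \beta$ with $i \ne j$); the same argument gives $[h_2, e] = -e$, $[h_1, f] = -f$, $[h_2, f] = f$, as well as $[e, e] = 0$ and $[f, f] = 0$ in both cases. Finally,
\[
[e, f] = h_1 - \omega(\alpha - \beta, \beta - \alpha)\, h_2 = h_1 - \omega(\alpha - \beta, \alpha - \beta)^{-1}\, h_2,
\]
which is $h_1 - h_2$ in the first case and $h_1 + h_2$ in the second.

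These relations already give closure, so the four linearly independent matrix units indeed span a four-dimensional Lie $(\Gamma, \omega)$-subalgebra. In the first case of the lemma, every relevant commutative factor among the generators equals $1$, so the $(\Gamma, \omega)$-bracket reduces to the ordinary commutator, and the assignment $h_1 \mapsto E_{11}$, $h_2 \mapsto E_{22}$, $e \mapsto E_{12}$, $f \mapsto E_{21}$ gives a Lie algebra isomorphism with $\gl_2(\C)$. In the second case, the generators $e, f$ satisfy $\omega(d(e), d(e)) = -1$, so the bracket between them is an anticommutator; under the same assignment one matches exactly the defining relations of $\gl_{1|1}(\C)$ after identifying the even part with $\C h_1 + \C h_2$ and the odd part with $\C e + \C f$. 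The only point requiring care is bookkeeping of the $\delta$- and $\omega$-factors in \eqref{eq:CR}; I anticipate no substantive obstacle.
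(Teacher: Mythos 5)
Your proof is correct and follows essentially the same route as the paper: both hinge on the single computation $[E(\alpha,\beta)_{ij}, E(\beta,\alpha)_{ji}] = E(\alpha,\alpha)_{ii} - \chi\, E(\beta,\beta)_{jj}$ with $\chi = \omega(\alpha-\beta,\beta-\alpha) = \omega(\alpha,\alpha)\omega(\beta,\beta)$, and read off $\gl_2(\C)$ versus $\gl_{1|1}(\C)$ according to whether $\chi = 1$ or $-1$. You merely spell out the remaining brackets, which the paper leaves implicit.
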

\begin{proof} 
As $\chi:=\omega(\alpha-\beta, \beta - \alpha)=\omega(\alpha, \alpha)\omega(\beta, \beta)$, we obtain from \eqref{eq:CR} 
\[
[E(\alpha, \beta)_{i j},  E(\beta, \alpha)_{j i} ]=  E(\alpha, \alpha)_{i i} 
- \chi E(\beta, \beta)_{j j}. 
\]
In the first case $\chi=1$, thus 
$[E(\alpha, \beta)_{i j},  E(\beta, \alpha)_{j i} ]= - [E(\beta, \alpha)_{j i}, E(\alpha, \beta)_{i j}]$. 
The subalgebra  $\gl(V_{\alpha, i; \beta, j})$ is isomorphic to $\gl_2(\C)$. 
In the second case $\chi=-1$, and the subalgebra is isomorphic to  $\gl_{1|1}(\C)$. 
\end{proof} 

Return to equation \eqref{eq:CR}.
In the special case $ \beta=\alpha$ and $j=i$,  it reduces to
\beq \label{eq:Cartan}
[E(\alpha, \alpha)_{i i},  E(\gamma, \delta)_{k\ell} ]= (\delta_{\alpha \gamma} \delta_{i k}  
-\delta_{\alpha \delta} \delta_{i \ell}) E(\gamma, \delta)_{k \ell}. 
\eeq

Let $\fh(\alpha)=\sum_{i=1}^{m_\alpha} \C E(\alpha, \alpha)_{i i}$, which  
is the standard Cartan subalgebra of $\gl(V_\alpha)$ for each $\alpha\in\Gamma_R$. 
Let $\fh=\sum_{\alpha\in\Gamma_R}\fh(\alpha)$. 
Equation \eqref{eq:Cartan} shows that $\fh$ is a Cartan subalgebra of $\gl(V)$, 
which enables one to describe the structure 
of $\gl(V)$ in the standard way by using root spaces. More specifically, we introduce a basis $\{\varepsilon(\alpha)_i\mid 1\le i \le m_\alpha, \ \alpha \in \Gamma_R\}$ for the dual space $\fh^*$ of $\fh$ such that 
\[
\varepsilon(\beta)_k(E(\alpha, \alpha)_{i i})=\delta_{\alpha \beta}\delta_{i k}, \quad \alpha, \beta\in \Gamma_R, 1\le i\le m_\alpha, \ 1\le k\le m_\beta. 
\]
Then \eqref{eq:Cartan} can be re-written as 
\beq \label{eq:roots}
[E(\alpha, \alpha)_{i i},  E(\gamma, \zeta)_{k\ell} ]= (\varepsilon(\gamma)_k -  \varepsilon(\zeta)_\ell)(E(\alpha, \alpha)_{i i}) E(\gamma, \zeta)_{k \ell}. 
\eeq
Write $ \Upsilon(\gamma, \zeta)_{k \ell}= \varepsilon(\gamma)_k -  \varepsilon(\zeta)_\ell$ if the element is non-zero. Let 
\[
\baln
\Phi(\gamma, \gamma)&=\{\Upsilon(\gamma, \gamma)_{k \ell}\mid   k, \ell=1, 2, \dots,  m_\gamma, k\ne \ell\}, \ \gamma\in\Gamma_R,  \\
\Phi(\gamma, \zeta)&=\{\Upsilon(\gamma, \zeta)_{k \ell}\mid 1\le k\le m_\gamma,  1\le \ell\le m_\zeta\}, \ \gamma, \zeta\in \Gamma_R, \gamma\ne \zeta.
\ealn
\]
Then $\Phi= \cup_{\gamma, \zeta\in\Gamma_R}\Phi(\gamma,\zeta)$ is the set of roots of $\gl(V)$. 
We take the set $\Phi^+$ of  positive roots to consist of the following elements.
\[
\baln
\Upsilon(\gamma, \zeta)_{k \ell},  &\text{ for }  \gamma, \zeta\in \Gamma_R, \gamma<\zeta,  
1\le k\le m_\gamma,  1\le \ell\le m_\zeta, \\
\Upsilon(\gamma, \gamma)_{k \ell}, & \text{ for } \gamma\in\Gamma_R,  1\le k<\ell\le m_\gamma. 
\ealn
\]
Then $\Phi=\Phi^+\cup(-\Phi^+)$. Define a map $\chi: \Phi\lra \{1, -1\}$ by
\beq
\chi(\Upsilon)={\omega(\alpha, \alpha)}{\omega(\beta, \beta)}, \   \Upsilon\in \Phi(\alpha, \beta),  \alpha, \beta \in \Gamma.
\eeq
Let 
$
\Phi_0=\{\Upsilon\in \Phi\mid \chi(\Upsilon)=1\}$ and $\Phi_1=\{\Upsilon\in \Phi\mid \chi(\Upsilon)=-1\}.
$
Then we have $\Phi^+=\Phi^+_0\cup \Phi^+_1$ with $\Phi^+_0= \Phi_0\cap\Phi^+$
and $\Phi^+_1= \Phi_1\cap\Phi^+$.
For later use, we define
 \beq\label{eq:rho}
 \rho=\frac{1}{2}\sum_{\Upsilon\in \Phi^+}\chi(\Upsilon) \Upsilon, \quad \rho_\iota=\frac{1}{2}\sum_{\Upsilon\in \Phi^+_\iota} \Upsilon, \ \iota=0, 1.
 \eeq
Thus $\rho=\rho_0-\rho_1$.

For any root $\Upsilon\in \Phi$, the corresponding root space is 
\[
\fg_\Upsilon =\{X\in \gl(V)\mid [h, X]=\Upsilon(h)X, \ \forall h\in\fh\},
\]
which is $1$-dimensional. 
The subspaces $\fn=\sum_{\Upsilon\in \Phi^+}\fg_\Upsilon$ and $\ol{\fn}=\sum_{\Upsilon\in \Phi^+}\fg_{-\Upsilon}$ are Lie $(\Gamma, \omega)$-subalgebras of $\gl(V)$, and we have the following triangular decomposition
\[
\gl(V)=\fn+\fh+\ol{\fn}.  
\]
It is clear but very important to note that $\fb:=\fn+\fh$ is a Borel subalgebra
containing the Cartan subalgebra $\fh$.

Let $V_\pm=\sum_{\alpha\in\Gamma_R^\pm}V_\alpha$, and thus $V=V_+\oplus V_-$.  
Denote $M_\pm=\dim V_\pm$, and  let $d_{max}=M_+M_-$. 
We have the Lie colour subalgebras $\gl(V_\pm)$.  If $\Upsilon\in\Phi_0$, then $\fg_\Upsilon$ is either contained in
$\gl(V_+)$ or in $\gl(V_-)$. Denote 
$\Phi^+_{0, \pm}=\{\Upsilon\in \Phi^+_0\mid \fg_\Upsilon\subset \gl(V_\pm)\}$. 
Then $\Phi^+_0=\Phi^+_{0, +}\cup \Phi^+_{0, -}$.

\subsubsection{Invariant bilinear forms} \label{sect:forms}
There is a generalised trace  ${\rm tr}_{(\Gamma, \omega)}: \End_\C(V)\lra\C$,  
the $\omega$-trace,  which is defined as follows \cite[\S8]{Sch83}. 
For any $\varphi =\sum_{\alpha, \beta\in\Gamma}  \varphi(\alpha, \beta)$ 
with $\varphi(\alpha, \beta)\in \Hom_\C(V_\beta, V_\alpha)$,
\[
{\rm tr}_{(\Gamma, \omega)}(\varphi) =\sum_{\alpha\in \Gamma}  \omega(\alpha, \alpha) {\rm tr}(\varphi(\alpha, \alpha)), 
\]
where ${\rm tr}$ denotes the usual trace.  Clearly this is  homogeneous of degree $0$. 
Write  $\varphi_0= \sum_{\alpha\in \Gamma}  \varphi(\alpha, \alpha)$, then
$
{\rm tr}_{(\Gamma, \omega)}(\varphi) =  {\rm tr}_{(\Gamma, \omega)}(\varphi_0).
$
[See Remark \ref{rmk:trace} for a more conceptual explanation of the generalised trace.]

\begin{lemma}
The generalised trace ${\rm tr}_{(\Gamma, \omega)}: \End_\C(V)\lra\C$ has the following property. 
For any $\varphi\in\End_\C(V)_\alpha$ and $\psi\in\End_\C(V)_\beta$, where $\alpha, \beta\in\Gamma$, 
\beq\label{eq:om-sym}
{\rm tr}_{(\Gamma, \omega)}(\varphi \psi)&=&\omega(\alpha, \beta) {\rm tr}_{(\Gamma, \omega)}( \psi \varphi).
\eeq
This implies that $ {\rm tr}_{(\Gamma, \omega)}([\eta, \zeta])=0$ for all $\eta, \zeta\in\End_\C(V)$. 
\end{lemma}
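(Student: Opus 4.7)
The plan is to reduce to the case $\alpha+\beta=0$, apply the ordinary trace identity blockwise, and then manipulate commutative factors to obtain the $\omega$-symmetry sign. First I would decompose $\varphi=\sum_{\mu,\nu}\varphi(\mu,\nu)$ and $\psi=\sum_{\sigma,\tau}\psi(\sigma,\tau)$ with $\varphi(\mu,\nu)\in\Hom_\C(V_\nu,V_\mu)$, and similarly for $\psi$. Homogeneity forces $\varphi(\mu,\nu)=0$ unless $\mu=\nu+\alpha$ and $\psi(\sigma,\tau)=0$ unless $\sigma=\tau+\beta$, so the diagonal blocks $(\varphi\psi)(\mu,\mu)$ and $(\psi\varphi)(\mu,\mu)$ can both be non-zero only when $\alpha+\beta=0$. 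In the complementary case $\alpha+\beta\neq 0$ both $\omega$-traces vanish and \eqref{eq:om-sym} holds vacuously.

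In the nontrivial case $\beta=-\alpha$, I would unpack
\[
{\rm tr}_{(\Gamma,\omega)}(\varphi\psi) = \sum_\mu \omega(\mu,\mu)\,{\rm tr}\bigl(\varphi(\mu,\mu-\alpha)\,\psi(\mu-\alpha,\mu)\bigr),
\]
apply the ordinary trace identity ${\rm tr}(AB)={\rm tr}(BA)$ to each summand, and reindex $\nu:=\mu-\alpha$ to match
\[
{\rm tr}_{(\Gamma,\omega)}(\psi\varphi) = \sum_\nu \omega(\nu,\nu)\,{\rm tr}\bigl(\psi(\nu,\nu+\alpha)\,\varphi(\nu+\alpha,\nu)\bigr).
\]
The main — though modest — obstacle will then be verifying the scalar identity $\omega(\mu,\mu) = \omega(\alpha,-\alpha)\,\omega(\mu-\alpha,\mu-\alpha)$. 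Expanding $\omega(\mu-\alpha,\mu-\alpha)$ via the bimultiplicative relations \eqref{eq:cycle-2} and \eqref{eq:cycle-3} produces $\omega(\mu,\mu)\,\omega(\mu,-\alpha)\,\omega(-\alpha,\mu)\,\omega(-\alpha,-\alpha)$; the cross terms collapse by \eqref{eq:cycle-1}, and the identity $\omega(-\alpha,-\alpha)\,\omega(\alpha,-\alpha) = \omega(0,-\alpha) = 1$ — itself an immediate consequence of \eqref{eq:cycle-3} — disposes of the rest.

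For the final assertion, linearity together with Remark \ref{rmk:conven} permits the assumption that $\eta,\zeta$ are homogeneous of degrees $\alpha,\beta$. By the definition \eqref{eq:bracket} of the $(\Gamma,\omega)$-commutator,
\[
{\rm tr}_{(\Gamma,\omega)}([\eta,\zeta]) = {\rm tr}_{(\Gamma,\omega)}(\eta\zeta) - \omega(\alpha,\beta)\,{\rm tr}_{(\Gamma,\omega)}(\zeta\eta),
\]
and the just-established \eqref{eq:om-sym} forces the two terms to cancel.
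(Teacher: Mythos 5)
Your proposal is correct and follows essentially the same route as the paper: reduce to diagonal blocks, invoke the cyclicity of the ordinary trace blockwise, and then account for the discrepancy between $\omega(\mu,\mu)$ and $\omega(\mu-\alpha,\mu-\alpha)$ via the bimultiplicativity relations (the paper packages this as $\omega(\alpha,\alpha)\omega(\beta,\beta)=\omega(\alpha-\beta,\alpha-\beta)$ together with the vanishing condition on degrees, which is the same scalar identity you verify). The deduction of ${\rm tr}_{(\Gamma,\omega)}([\eta,\zeta])=0$ is likewise identical.
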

\begin{proof}
Recall that for any $\varphi(\alpha, \beta)\in \Hom_\C(V_\beta, V_\alpha)$, and $\psi
(\gamma, \zeta)\in \Hom_\C(V_\zeta, V_\gamma)$, we have $\varphi(\alpha, \beta) \psi(\gamma, \zeta)=\delta_{\beta\gamma} \varphi(\alpha, \beta) \psi(\gamma, \zeta)$, and the degree $0$ component is given by 
$(\varphi(\alpha, \beta) \psi(\gamma, \zeta))_0 = \delta_{\alpha\zeta}\delta_{\beta\gamma} \varphi(\alpha, \beta) \psi(\gamma, \zeta)$. Hence 
\[
\baln
{\rm tr}_{(\Gamma, \omega)}(\varphi(\alpha, \beta) \psi(\gamma, \zeta)) 
&=  \delta_{\alpha \zeta}\delta_{\beta \gamma}\omega(\alpha, \alpha) {\rm tr}(\varphi(\alpha, \beta) \psi(\gamma, \zeta))  \\
&=  \delta_{\alpha \zeta}\delta_{\beta \gamma}\omega(\alpha, \alpha) {\rm tr}(\psi(\gamma, \zeta) \varphi(\alpha, \beta)).
\ealn
\]
Re-write the expression in the second line in terms of the generalised trace, we obtain 
\[
\baln  
\omega(\alpha, \alpha) \omega(\beta, \beta){\rm tr}_{(\Gamma, \omega)}(\psi(\gamma, \zeta) \varphi(\alpha, \beta))  
= \omega(\alpha-\beta, \alpha-\beta) {\rm tr}_{(\Gamma, \omega)}(\psi(\gamma, \zeta) \varphi(\alpha, \beta) ), 
\ealn
\]
where the right hand side vanishes unless $\alpha-\beta=\delta-\gamma$. Thus $\omega(\alpha-\beta, \alpha-\beta)$ can be replaced by 
$\omega(\alpha-\beta, \delta-\gamma)$, proving \eqref{eq:om-sym}. The second statement is clear. 
\end{proof}

We refer to \eqref{eq:om-sym} as the $\omega$-symmetry of the generalised trace. 

Now we define the following bilinear form on $\End_\C(V)$.
\beq\label{eq:biform}
&&( \ , \ ): \End_\C(V)\times \End_\C(V)\lra \C, \\
&&( X,  Y )={\rm tr}_{(\Gamma, \omega)}(X Y), \quad \forall  X, Y\in \End_\C(V).\nonumber
\eeq
This form is homogeneous of degree $0$ and $\omega$-symmetric. By inspecting the relation
\beq\label{eq:form-basis}
\phantom{XX} 
( E(\alpha, \beta)_{i j},  E(\gamma, \zeta)_{k\ell}  ) = \omega(\alpha, \alpha) \delta_{\beta \gamma}\delta_{\alpha \zeta}\delta_{j k}\delta_{i \ell}, \quad \forall \alpha, \beta, \gamma, \zeta, i, j, k, \ell, 
\eeq
we easily see that the form is non-degenerate. 

\begin{lemma}[Definition] \label{lem:forms}
Equation \eqref{eq:biform} defines a non-degenerate $\omega$-symmetric bilinear form on $\gl(V)$, 
which is $ad$-invariant in the sense that 
\[
( [X,  Y], Z ) = ( X,  [Y, Z] ), \quad \forall X, Y, Z\in \gl(V). 
\]
Its restriction $( \ , \ )|_\fh: \fh\times \fh\lra\C$ to the Cartan subalgebra is
non-degenerate, and induces a non-degenerate symmetric bilinear form 
on $\fh^*$,
\beq\label{eq:inner-prod}
(\ , \ ): \fh^*\times \fh^*\lra \C
\eeq
 such that
\beq\label{eq:basis-prod}
(\varepsilon(\alpha)_i,  \varepsilon(\beta)_j) =\omega(\alpha, \beta) \delta_{\alpha \beta} \delta_{i j}, \quad \forall  \alpha, \beta, i, j.
\eeq
\end{lemma}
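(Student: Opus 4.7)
The plan is to verify the four claims in order, since most of the work is already embedded in the preceding discussion and only needs to be assembled carefully with attention to the commutative factors.

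First, the $\omega$-symmetry of $(\ ,\ )$ is immediate from the $\omega$-symmetry of the generalised trace (equation \eqref{eq:om-sym}), and non-degeneracy follows directly from the formula \eqref{eq:form-basis}: the matrix units $\{E(\alpha,\beta)_{ij}\}$ form a homogeneous basis, and \eqref{eq:form-basis} shows that $E(\alpha,\beta)_{ij}$ pairs non-trivially only with $E(\beta,\alpha)_{ji}$, with pairing $\omega(\alpha,\alpha)=\pm 1\ne 0$. Thus the Gram matrix of the form with respect to this basis is a signed permutation matrix, which is non-singular.

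For $ad$-invariance, I would expand both sides using the Lie $(\Gamma,\omega)$-bracket. Assuming $X, Y, Z$ are homogeneous of degrees $\alpha, \beta, \gamma$, we have
\[
([X,Y],Z)-(X,[Y,Z])={\rm tr}_{(\Gamma,\omega)}(XYZ)-\omega(\alpha,\beta){\rm tr}_{(\Gamma,\omega)}(YXZ)-{\rm tr}_{(\Gamma,\omega)}(XYZ)+\omega(\beta,\gamma){\rm tr}_{(\Gamma,\omega)}(XZY).
\]
The first and third terms cancel, so the identity reduces to $\omega(\alpha,\beta){\rm tr}_{(\Gamma,\omega)}(YXZ)=\omega(\beta,\gamma){\rm tr}_{(\Gamma,\omega)}(XZY)$. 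Applying $\omega$-symmetry \eqref{eq:om-sym} to $YXZ=Y\cdot (XZ)$ produces a factor $\omega(\beta,\alpha+\gamma){\rm tr}_{(\Gamma,\omega)}(XZY)$, and then \eqref{eq:cycle-1}--\eqref{eq:cycle-3} collapse $\omega(\alpha,\beta)\omega(\beta,\alpha+\gamma)$ to $\omega(\beta,\gamma)$, giving the required equality.

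Next, restricting to $\fh$: specialising \eqref{eq:form-basis} to diagonal matrix units yields $(E(\alpha,\alpha)_{ii},E(\beta,\beta)_{jj})=\omega(\alpha,\alpha)\delta_{\alpha\beta}\delta_{ij}$. This is an orthogonal basis with non-zero diagonal entries, hence non-degenerate. To descend to $\fh^*$, use the resulting isomorphism $\fh\cong\fh^*$, $h\mapsto(h,\cdot)$. Writing $h(\alpha)_i:=\omega(\alpha,\alpha)E(\alpha,\alpha)_{ii}$, one checks using the displayed restriction that $(h(\alpha)_i,E(\beta,\beta)_{jj})=\delta_{\alpha\beta}\delta_{ij}=\varepsilon(\alpha)_i(E(\beta,\beta)_{jj})$, so $\varepsilon(\alpha)_i$ corresponds to $h(\alpha)_i$ under the isomorphism. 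Then
\[
(\varepsilon(\alpha)_i,\varepsilon(\beta)_j)=(h(\alpha)_i,h(\beta)_j)=\omega(\alpha,\alpha)\omega(\beta,\beta)\,\omega(\alpha,\alpha)\delta_{\alpha\beta}\delta_{ij},
\]
and the Kronecker delta forces $\alpha=\beta$, so the scalar simplifies to $\omega(\alpha,\alpha)=\omega(\alpha,\beta)$, yielding \eqref{eq:basis-prod}.

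The only place requiring genuine care is the $ad$-invariance step, where one must keep track of three commutative factors simultaneously and invoke \eqref{eq:om-sym} exactly once; the rest is bookkeeping with matrix units. No other obstacle is expected.
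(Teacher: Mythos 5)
Your proposal is correct and follows essentially the same route as the paper: $\omega$-symmetry and non-degeneracy read off from \eqref{eq:om-sym} and \eqref{eq:form-basis}, $ad$-invariance by a direct trace computation (which the paper only sketches but which you carry out correctly), and the induced form on $\fh^*$ via the correspondence $\varepsilon(\alpha)_i \mapsto \omega(\alpha,\alpha)E(\alpha,\alpha)_{ii}$, exactly the element $t_{\varepsilon(\alpha)_i}$ used in the paper.
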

\begin{proof}
The $ad$-invariance of the bilinear form  \eqref{eq:biform} on $\gl(V)$ can be verified by direct calculations using 
the $\omega$-symmetry of ${\rm tr}_{(\Gamma, \omega)}$, and defining properties of the commutative factor $\omega$.  
It is clear from \eqref{eq:form-basis} that the restriction of the bilinear form to $\fh$ is non-degenerate, and symmetric. 

The induced form on $\fh^*$ is defined as follows. 
Associate to each $\mu\in\fh^*$ an element $t_\mu \in\fh$ such that $( t_\mu , h )=\mu(h)$ for all $h\in\fh$. 
Note that given any $\mu$, the corresponding Cartan element $t_\mu$ is unique because of the non-degeneracy 
of the form on $\fh$.   
Now the bilinear form \eqref{eq:inner-prod} on $\fh^*$ is defined by
\beq
(\mu, \nu)=( t_\mu , t_\nu ), \quad \forall \mu, \nu\in\fh^*.
\eeq

Consider the Cartan elements $t_{\varepsilon(\alpha)_i}$ associated with $\varepsilon(\alpha)_i$ for any $\alpha, i$. 
We have 
$
( t_{\varepsilon(\alpha)_i}, E(\beta, \beta)_{j j} ) 
= \varepsilon(\alpha)_i(E(\beta, \beta)_{j j})= \delta_{\alpha\beta}\delta_{i j}, 
$
for all $\beta, j.$
It follows from \eqref{eq:form-basis} that 
$t_{\varepsilon(\alpha)_i}= \omega(\alpha, \alpha)E(\alpha, \alpha)_{i i}$. 
Now 
\[
\baln
(\varepsilon(\alpha)_i,  \varepsilon(\beta)_j)&=( t_{\varepsilon(\alpha)_i}, t_{\varepsilon(\beta)_ j} )
=\omega(\alpha, \alpha) \omega(\beta, \beta) ( E(\alpha, \alpha)_{i i}, E(\beta, \beta)_{j j} )\\
&= \omega(\alpha, \beta) \delta_{\alpha \beta} \delta_{i j}, \qquad\qquad   \text{by \eqref{eq:form-basis}}.
\ealn
\] 
This proves \eqref{eq:basis-prod}, completing the proof of the lemma.  
\end{proof}

\subsubsection{The Weyl group of $\gl(V(\Gamma, \omega))$} \label{sect:W-gp}
We saw from Lemma \ref{lem:subalgs} that if $\alpha, \beta$ both belong to $\Gamma_R^+$ or $\Gamma_R^-$,
the Lie $(\Gamma, \omega)$-subalgebra $\gl(V_{\alpha, i; \beta, j})$
is isomorphic to the usual Lie algebra $\gl_2(\C)$.  In this case, $\Upsilon(\alpha, \beta)_{i j}\in\Phi^0$.

\begin{definition}\label{def:Weyl}
The  Weyl group  $W$ of $\gl(V(\Gamma, \omega))$ is the subgroup 
of the usual general linear group $\GL(\fh^*)$ generated 
by the reflections $\sigma_\Upsilon$ of $\fh^*$, for all  $\Upsilon\in\Phi_0^+$, 
defined by 
\beq
\sigma_\Upsilon(\mu)= \mu- \frac{2(\mu, \Upsilon)}{(\Upsilon, \Upsilon)}\Upsilon, \quad \forall \mu\in \fh^*. 
\eeq
\end{definition}

Since $(\varepsilon(\alpha)_i, \varepsilon(\beta)_k)=0$  if $\alpha\in\Gamma_R^+$ 
and $\beta\in\Gamma_R^-$ for all $i, k$,  we have 
$W=W_+\times W_-$, where $W_\pm$ are the Weyl groups for the root systems $\Phi_{0, \pm} = \Phi^+_{0, \pm}\cup(-\Phi^+_{0, \pm})$ of $\gl(V_\pm)$ respectively. Hence 
$W\cdot \Phi=\Phi$. Also note that
$W_\pm\simeq Sym_{M_\pm}$. 

A weight module $M$ for $\gl(V)$ is one such that 
 all $E(\alpha, \alpha)_{i i}$ act semi-simply.  Thus 
$M=\sum_{\mu\in\fh^*} M_\mu$, 
where $M_\mu=\{v\in M\mid h\cdot v=\mu(h)v, \ \forall h\in\fh\}$ is
the weight space of $\mu$. 
Denote by $\Pi(M)=\{\mu\in\fh^*\mid M_\mu\ne 0\}$ the set of weights of $M$. 
 
For any $\Upsilon=\varepsilon(\alpha)_ i-\varepsilon(\beta)_j\in\Phi_0^+$, let $V_\Upsilon=V_{\alpha, i; \beta, j}$.
Any finite dimensional weight module $M$ for $\gl(V)$ restricts to a semi-simple module 
for $\gl(V_\Upsilon)\simeq \gl_2(\C)$.  
Thus $\sigma_\Upsilon\cdot\Pi(M)=\Pi(M)$. This is true  for any   
$\gl(V_\Upsilon)\simeq \gl_2(\C)$, hence $\Pi(M)$ is stable under the action of $W$.

\subsection{Highest weight modules}\label{sect:modules}
The structure theory of $\gl(V)$ discussed in Section \ref{sect:roots},  and 
the existence of the Borel and Cartan subalgebras $\fb\supset \fh$ in particular, enable us to 
define highest weight $\gl(V)$-modules with respect to $\fb$ in the usual way.  

\medskip
{\bf We fix a distinguished order for $\Gamma_R$  hereafter}.
\medskip

We denote by $L_\lambda$ the simple $\gl(V)$-module with highest weight $\lambda\in\fh^*$. 
We can always write $\lambda= \sum_{\alpha\in\Gamma_R}\sum_{i=1}^{m_\alpha} \lambda(\alpha)_i \varepsilon(\alpha)_i$, 
with scalars $\lambda(\alpha)_i=\lambda(E(\alpha, \alpha)_{i i})$ for all  $i=1, 2, \dots, \dim V_\alpha$ and $\alpha\in\Gamma_R$.

Standard arguments from the theory of Lie algebras can show that 
a finite dimensional simple $\gl(V)$-module $L$ must be a highest weight module.  
Indeed, by using Lie's theorem (see, e.g., \cite[\S 4.1]{H}) to the Cartan subalgebra $\fh$, 
we conclude that there must exist in $L$ a common eigenvector $v$
of all elements of $\fh$ (since $\dim L<\infty$),  i.e., $v$ is a weight vector. 
As $\dim(\U(\fn)v)<\infty$, there exists a highest weight vector. This  
highest weight vector generates the simple module $L$ over  $\U(\gl(V))$.

\subsubsection{Parabolic induction}\label{sect:parabolic}

We have the Lie $(\Gamma, \omega)$-subalgebra $\fk=\gl(V_+)+ \gl(V_-)$.
Also $\fv:=\Hom_C(V_-, V_+)\subset \fn$ and $\ol\fv:=\Hom_C(V_+, V_-)\subset\ol\fn$ are subalgebras. 
In terms of root spaces, we have 
\[
\fv=\sum_{\Upsilon\in\Phi^+_1}\fg_\Upsilon, \quad  \ol{\fv}=\sum_{\Upsilon\in\Phi^+_1}\fg_{-\Upsilon}.
\]

Note that both $\fv$ and $\ol{\fv}$ are $\Gamma$-graded $\omega$-commutative, 
i.e., $[X, X']=0$ for any $X, X'\in\fv$,  and $[Y, Y']=0$ for $Y, Y'\in \ol\fv$.  
Thus in the respective universal enveloping algebras, 
homogeneous elements $X\in\fv$ and $Y\in \ol\fv$ satisfy
$X^2=0$  and  $Y^2=0$.  Hence by Theorem \ref{thm:PBW} (PBW theorem \cite{Sch79}), 
\beq
\dim \U(\fv)= \dim \U(\ol\fv) = 2^{M_+M_-}. 
\eeq
 We have the following decomposition of $\gl(V)$, 
\beq
\gl(V)=\ol\fv+\fk+\fv.
\eeq
Let $\fq=\fv+\fk$, which is a parabolic subalgebra. 

Observe that the universal enveloping algebra $\U(\gl(V))$ admits a $\Z$-grading, 
with $deg(\fv)=- deg(\ol{\fv})=1$, and $deg(\fk)=0$. 
Consider the subalgebras $\U(\fv)$ and $\U(\ol{\fv})$.   
Then $\U(\fv)=\sum_{d=0}^{d_{max}} \U(\fv)_d$ and $\U(\ol{\fv})=\sum_{d=0}^{d_{max}} \U(\ol{\fv})_{-d}$. 
Note that both $\U(\fv)_{d_{max}}$ and $\U(\ol{\fv})_{-d_{max}}$ are $1$-dimensional.

Given any simple $\fk$-module $L^0_\lambda(\fk)$ with highest weight $\lambda\in\fh^*$, 
we make it a $\fq$-module with $\fv\cdot L^0_\lambda(\fk)=0$. Construct the induced module 
\beq\label{eq:Verma}
V(L^0_\lambda(\fk))=\U(\gl(V))\ot_{\U(\fq)} L^0_\lambda(\fk), 
\eeq
which is isomorphic to $\U(\ol{\fv})\ot_{\C} L^0_\lambda(\fk)$ as vector space. 
It has a unique simple quotient $\gl(V)$-module $L_\lambda$.

The $\Z$-grading  of $\U(\gl(V))$ discussed in Section \ref{sect:parabolic}
 enables us to introduce a $\Z$-grading 
on $V(L^0_\lambda(\fk))$ by decreeing that 
$L^0_\lambda(\fk)$ is of degree $0$. 
Then
\beq\label{eq:Z-grad}
 V(L^0_\lambda(\fk))=\sum_{d=0}^{d_{max}} V(L^0_\lambda(\fk))_{-d}, \quad 
V(L^0_\lambda(\fk))_{-d}= \U(\ol{\fv})_{-d}\ot_{\C} L^0_\lambda(\fk).
\eeq
The simple module $L_\lambda$ inherits a $\Z$-grading from $V(L^0_\lambda(\fk))$ 
with $(L_\lambda)_0=1\ot  L^0_\lambda(\fk)$. 
Also note that $(L_\lambda)_0=\{ v\in L_\lambda\mid \fv\cdot v=\{0\}\}$. 

\subsubsection{Finite dimensional simple modules}
The analysis in this section is inspired by the Kac module construction 
for type I Lie classical superalgebras \cite{K}, and a generalisation of 
it to the associated quantum supergroups \cite{Z93, Z93b}. 

Since $V(L^0_\lambda(\fk))=\U(\ol{\fv})\ot L^0_\lambda(\fk)$,  
the simple quotient module $L_\lambda$ 
is finite dimensional if and only if $\dim L^0_\lambda(\fk)$ $<\infty$. 
Note that any finite dimensional simple $\fk$-module must be of the form 
$L^0_\lambda(\fk)$ for some $\lambda\in\fh^*$. 

Introduce the following subset of $\fh^*$. 
\begin{definition}
Let $\Lambda_{M_+|M_-}=\left\{\lambda\in\fh^*\left|\frac{2(\lambda, \Upsilon)}{(\Upsilon, \Upsilon)}\in\Z_+, \ \forall \Upsilon\in\Phi^+_0  \right\}\right.$. 
\end{definition}

Explicitly,  any element 
$\lambda\in \Lambda_{M_+|M_-}$  satisfies the following conditions: 
\beq\label{eq:domin}
\begin{array}{r l l}
(i). & \lambda(\alpha)_i - \lambda(\alpha)_j \in\Z_+,  & \forall \alpha \in \Gamma_R, \  i<j, \ \text{ and }\\
(ii). & \lambda(\alpha)_k - \lambda(\beta)_\ell\in\Z_+, & \forall k, \ell,  \text{ both $\alpha, \beta \in \Gamma_R^+$ or $\Gamma_R^-$,  $\alpha<\beta$}. 
\end{array}
\eeq

We have the following result. 
\begin{theorem} \label{thm:P}
Retain notation above. 
\begin{enumerate}
\item Any finite dimensional simple $\gl(V)$-module must be of highest weight type. 

\item A simple $\gl(V)$-module $L_\lambda$ with highest weight $\lambda$ is 
finite dimensional if and only if $\lambda\in \Lambda_{M_+|M_-}$. 
\end{enumerate}
\end{theorem}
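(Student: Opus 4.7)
For part (1), I would apply Lie's theorem to the abelian Cartan $\fh$ acting on the finite dimensional simple module $L$ to produce a simultaneous eigenvector, i.e., a weight vector. Among the finitely many weights of $L$, I would pick one maximal with respect to the dominance order defined by $\Phi^+$, and take a nonzero $v_\lambda \in L_\lambda$. Then $\fn \cdot v_\lambda \subset \sum_{\Upsilon \in \Phi^+} L_{\lambda+\Upsilon} = 0$ by maximality, so $v_\lambda$ is a highest weight vector. Simplicity of $L$ then forces $L = \U(\gl(V)) v_\lambda = \U(\ol{\fn}) v_\lambda$, exhibiting $L$ as a highest weight module and a simple quotient of the corresponding Verma module.

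For the ``only if'' half of part (2), the plan is to reduce to $\fsl_2$-theory. For each $\Upsilon \in \Phi^+_0$, \lemref{lem:subalgs} provides an ordinary $\gl_2(\C)$ subalgebra $\gl(V_\Upsilon)$ generated by the $\Upsilon$-root vectors and two diagonal matrix units. The highest weight vector $v_\lambda$ will generate a finite dimensional module for this $\gl_2$, and the semisimple element of its $\fsl_2$-triple acts on $v_\lambda$ with eigenvalue $2(\lambda,\Upsilon)/(\Upsilon,\Upsilon)$; classical $\fsl_2$-theory then forces this to lie in $\Z_+$. Running over all $\Upsilon \in \Phi^+_0$ will give $\lambda \in \Lambda_{M_+|M_-}$.

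For the ``if'' half, I would use the parabolic induction of \secref{sect:parabolic}. The input required is a finite dimensional simple $\fk$-module $L^0_\lambda(\fk)$ of highest weight $\lambda$. First I would observe that $\fk = \gl(V_+) \oplus \gl(V_-)$ as Lie $(\Gamma,\omega)$-algebras, since matrix units supported across the two summands multiply to zero, and that the root system of $\fk$ is exactly $\Phi_0 = \Phi^+_0 \cup (-\Phi^+_0)$. I would then apply Scheunert's cocycle twisting theorem to each $\gl(V_\pm)$, turning it into an ordinary reductive Lie algebra with the same Cartan, roots and Weyl group, via an equivalence of module categories preserving dimensions and highest weights. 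Since $\lambda$ satisfies the standard dominance-integrality condition for $\Phi^+_0$ by \eqref{eq:domin}, the classical theorem of the highest weight will produce the required finite dimensional $L^0_\lambda(\fk)$. The induced module $V(L^0_\lambda(\fk)) \simeq \U(\ol\fv) \ot L^0_\lambda(\fk)$ then has dimension at most $2^{M_+ M_-} \dim L^0_\lambda(\fk) < \infty$ by \thmref{thm:PBW} applied to $\ol\fv$ (which is $\omega$-commutative with all homogeneous elements squaring to zero), so its simple quotient $L_\lambda$ is finite dimensional.

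The main obstacle will be justifying the Scheunert twist step rigorously: one must verify that $\omega$-dominance for $\gl(V_\pm)$ in the sense of \eqref{eq:domin} corresponds exactly to ordinary dominance after the twist, and that the twist preserves finite dimensionality of simple modules with prescribed highest weight. A direct alternative, bypassing Scheunert entirely, would be to carry out the Verma-module argument inside $\fk$: build the Verma module of highest weight $\lambda$, and use the $\fsl_2$-reductions already set up in the ``only if'' direction, together with a Harish-Chandra-type analysis of the maximal proper submodule, to conclude that its simple quotient has the expected finite dimension.
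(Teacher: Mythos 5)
Parts (1) and the necessity half of (2) match the paper's argument: Lie's theorem on $\fh$ produces a weight vector, maximality gives a highest weight vector, and restriction to the $\gl_2(\C)$ (and $\gl(V_\alpha)$) subalgebras of \lemref{lem:subalgs} forces $\lambda\in\Lambda_{M_+|M_-}$; your eigenvalue computation $2(\lambda,\Upsilon)/(\Upsilon,\Upsilon)=\lambda(\alpha)_i-\lambda(\beta)_j$ is consistent with \eqref{eq:basis-prod}. For the sufficiency half you take a genuinely different route. After the (correct) reduction via parabolic induction to showing $\dim L^0_\lambda(\fk)<\infty$, the paper stays entirely inside the graded category: it uses the $\Sym_r$-action of Corollary \ref{cor:sym} and the graded (skew)symmetrisers to exhibit explicit $\gl(V_\pm)$-highest weight vectors ${\bf v}^\pm_{\omega^\pm_r}$ in $\Sigma^\pm(r)(V_\pm^{\ot r})$ (\lemref{lem:a-symm}, Section \ref{sect:hwv-0}), realises $L^\pm_{\wt\lambda^\pm}$ as quotients of finite dimensional submodules of tensor powers, and absorbs the non-integral part into a one-dimensional module $L^\pm_{{\mathbf d}^\pm}$. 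You instead outsource the problem to Scheunert's twisting theorem. Your route is viable in principle (it is essentially how \cite{FM, ISV} proceed), but note that the paper's construction is not merely a workaround: the same highest weight vectors and \lemref{lem:a-symm} are reused in the proof of the Schur--Weyl duality \thmref{thm:SW} and later in the unitarisability arguments, so the elementary route buys more than the twist does.

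The one genuine gap is the step you yourself flag as ``the main obstacle'': you assert, but do not verify, that the twist exists for the given $(\Gamma,\omega)$, that it carries $\gl(V_\pm)$ to an ordinary reductive Lie algebra with the same Cartan subalgebra, weights and dimensions of simple highest weight modules, and that condition \eqref{eq:domin} becomes ordinary dominance after twisting. Each of these is true but needs an argument: Scheunert's theorem requires a finitely generated grading group (available here only after restricting to the subgroup generated by the finite set $\Gamma_R$); one must choose the multiplier $\sigma$ normalised so that $\sigma(0,\cdot)=\sigma(\cdot,0)=1$, so that $\fh$ and all weights are untouched; one must check that the twisted algebra is isomorphic to $\gl_{M_\pm}(\C)$ rather than some other Lie algebra of the same dimension (e.g.\ by twisting the associative algebra $\End_\C(V_\pm)$ together with its module $V_\pm$); and one must match $2(\lambda,\Upsilon)/(\Upsilon,\Upsilon)\in\Z_+$ for $\Upsilon\in\Phi^+_{0,\pm}$ with the classical integral dominance condition, which works because $(\varepsilon(\alpha)_i,\varepsilon(\alpha)_i)=\omega(\alpha,\alpha)$ has constant sign on each of $\Gamma_R^\pm$. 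Your fallback suggestion (a Verma-module/Harish-Chandra analysis inside $\fk$) is likewise only a sketch. As written, the proposal identifies the correct reduction but does not close the sufficiency direction; to be complete you would need either to carry out the twist verification above or to supply the explicit tensor-power construction that the paper uses.
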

\begin{proof}
Part (1) was already proved. 
By the preceding discussion, the proof of part (2) boils down to the special cases $V=V_+$ or $V_-$, 
where $\gl(V_\pm)$ are Lie colour algebras. 

Consider the subalgebras $\gl(V_\alpha)\subset \gl(V)$ for all $\alpha\in\Gamma_R$, which are usual general linear Lie algebras, and are contained in $\fk$.  Each $\gl(V_\alpha)$ inherits from $\gl(V)$ a triangular decomposition $\gl(V_\alpha)=\fn(\alpha)+\fh(\alpha)+\ol{\fn}(\alpha)$ with $ \fn(\alpha) = \gl(V_\alpha)\cap\fn$, $\fh(\alpha)=\gl(V_\alpha)\cap\fh$ and $\ol{\fn}(\alpha)=\gl(V_\alpha)\cap \ol{\fn}$.  A $\gl(V)$ highest weight vector in $L_\lambda$ is automatically a $\gl(V_\alpha)$ highest weight vector. It generates over $\gl(V_\alpha)$ a finite dimensional module only if condition \eqref{eq:domin}(i) is satisfied. 

Given any $\alpha, \beta\in\Gamma_R^+$ or $\alpha, \beta\in\Gamma_R^-$ such that $\alpha<\beta$, 
the elements $E(\alpha, \beta)_{m_\alpha, 1}$,  $E(\beta, \alpha)_{1, m_\alpha}$, 
$E(\alpha, \alpha)_{m_\alpha, m_\alpha}$ 
and $E(\beta, \beta)_{1 1}$ form a $\gl_2$ subalgebra in $\gl(V)$ by Lemma \ref{lem:subalgs}.   
A $\gl(V)$ highest weight vector in $L_\lambda$ is again automatically a highest weight vector for this $\gl_2$ (annihilated by $E(\alpha, \beta)_{m_\alpha, 1}$). Thus in order for $L_\lambda$ to be finite dimensional, we must have $\lambda(\alpha)_{m_\alpha} - \lambda(\beta)_1\in\Z_+$. This together with \eqref{eq:domin}(i) lead to \eqref{eq:domin}(ii),  proving the necessity of the condition
$\lambda\in \Lambda_{M_+|M_-}$ in order for $L_\lambda$ to be finite dimensional.  

We turn to the proof of the sufficiency of the conditions. 
Let $\fh_\pm =\gl(V_\pm) \cap \fh$. Given any $\lambda\in \fh^*$, we denote by $\lambda^\pm$  respectively the restrictions of $\lambda$ to $\fh_\pm$ as linear functions. More explicitly, $\lambda^\pm\in {\fh_\pm}^*$ are defined by $\lambda^+(h_+)=
\lambda(h_+)$ and $\lambda^-(h_-)= \lambda(h_-)$ for all $h_\pm\in\fh_\pm$. Then 
$L_\lambda^0(\fk) \simeq L^+_{\lambda^+}\ot L^-_{\lambda^-}$, 
where $L^\pm_{\lambda^\pm}$ are the simple $\gl(V_\pm)$-modules with highest weights $\lambda^\pm$ respectively.  

Assume that $\lambda\in \Lambda_{M_+|M_-}$. Then $\lambda^\pm$ satisfy the conditions  \eqref{eq:domin}(i)  and \eqref{eq:domin}(ii) 
for $\alpha, \beta \in \Gamma_R^+$ and  $\alpha, \beta \in \Gamma_R^-$ respectively. 
If we can show that both $L^\pm_{\lambda^\pm}$ are finite dimensional, i.e., 
the theorem is valid for $\gl(V_+)$ and $\gl(V_-)$, 
then $\dim L_\lambda^0(\fk)<\infty$, and the ``if part'' of the theorem follows.  
We will prove this in Corollary \ref{cor:non-super} below. 
\end{proof}

\subsection{Completing the proof of Theorem \ref{thm:P}}\label{sect:pf-P}

 \subsubsection{Refining  notation of Section \ref{sect:roots}}\label{sect:refined}
Let us first refine notation of Section \ref{sect:roots} to make it more convenient for explicit computations. 

Keep the chosen distinguished order for the set $\Gamma_R$.

Let $\aleph^\pm=|\Gamma_R^\pm|$ and $\aleph=|\Gamma_R|$. 
We write the elements of $\Gamma_R$ as $\alpha_t$ for $t=1, 2, \dots, |\Gamma_R|$ 
in such a way that 
$\alpha_t<\alpha_{t+1}$,  for all valid $t$. 
Thus $\alpha_t$ belongs to $\Gamma^+$  if $t\le \aleph^+$, and belongs to $\Gamma^-$ if $t> \aleph^+$.  

Given an element $\lambda\in\fh^*$, we denote $\lambda(\alpha_t)_i=\lambda(E(\alpha_t, \alpha_t)_{i i})$ for all $t$ and $i$. 
Then 
$\lambda= \sum_{t=1}^{\aleph} \sum_{i=1}^{m_{\alpha_t}} \lambda(\alpha_t)_i \varepsilon(\alpha_t)_i$. 
We introduce the 
$m_{\alpha_t}$-tuples
\[
\lambda(\alpha_t)= (\lambda(\alpha_t)_1, \lambda(\alpha_t)_2, \dots, \lambda(\alpha_t)_{m_{\alpha_t}}), \quad 1\le t\le |\Gamma_R|, 
\]
and write  $\lambda$ as
\beq\label{eq:wt-tuple}
\lambda =(\lambda(\alpha_1),\lambda(\alpha_2), \dots, \lambda(\alpha_{|\Gamma_R|})) \in \C^{\dim V}. 
\eeq
Note in particular that the highest weight of the natural $\gl(V)$-module $V$ is given by $(1, 0, \dots, 0)$, and the highest weight of the dual module $V^*$ is $(0, \dots, 0, -1)$. 

The elements of the basis of $V$ introduced  in Section \ref{sect:roots}
are now written as $e(\alpha_r)_i$ for $i=1, 2, \dots, m_r:=m_{\alpha_r}$ and $r=1, 2, \dots, \aleph$.  Let 
\[
\baln
B(\alpha_r)&:=(e(\alpha_r)_1, e(\alpha_r)_2, \dots, e(\alpha_r)_{m_r}), \quad \forall r, \\
B^+&:= (B(\alpha_1), B(\alpha_2), \dots, B(\alpha_{\aleph^+})), \\
B^-&:= (B(\alpha_{\aleph^++1}), B(\alpha_{\aleph^++2}), \dots, B(\alpha_{\aleph})), \\
B&:=(B^+, B^-).
\ealn
\]
Then $B^\pm$ and $B$ are ordered bases for $V_\pm$ and $V$ respectively. 
To simplify notation, we write 
\[
\baln
B^+=(b^+_1, b^+_2, \dots, b^+_{M_+}), \quad
B^-=(b^-_1, b^-_2, \dots, b^-_{M_-}), \quad
B=(b_1, b_2, \dots, b_{\dim V}). 
\ealn
\]
Note that $B$ is the same homogeneous basis of $V$ given in Section \ref{sect:roots}. 
Denote the degree of $b_a$ by $\gamma_a:=d(b_a)\in \Gamma$  for all $a$.

The matrix units relative to $B$ will now be denoted by $\BE_{a b}$ with $a, b=1, 2, \dots,$ $ \dim V$, 
which form a basis of $\End_\C(V)$. 
They are homogeneous with degrees $deg(\BE_{a b})$ $=\gamma_a-\gamma_b$. 
Their graded commutation relations are given by 
\beq\label{eq:BE-relats}
[\BE_{a b}, \BE_{c d}]=\delta_{b c} \BE_{a d} - \omega(\gamma_a -\gamma_b, \gamma_c -\gamma_d) \delta_{d a} \BE_{c b}, \quad \forall a, b.
\eeq
In terms of these elements, 
\beq
&&\fn=\sum_{a<b}\C \BE_{a b}, \quad \ol{\fn}=\sum_{a>b}\C \BE_{a b}, \quad \fh= \sum_{a=1}^{\dim V} \C \BE_{a a}, \\
&&\fk=\sum_{a, b\le M_+}\C \BE_{a b}+\sum_{a, b> M_+}\C \BE_{a b},  \\
&&\fv=\sum_{a\le M_+, b> M_+}\C \BE_{a b}, \quad  \ol{\fv}=\sum_{a\le M_+, b> M_+}\C \BE_{b a}.
\eeq

Denote the weight of $b_a$ by $\varepsilon_a\in\fh^*$. 
Then $\varepsilon_a(\BE_{b b})=\delta_{a b}$ and $(\varepsilon_a, \varepsilon_b)
=\omega(\gamma_a, \gamma_b)\delta_{a b}$ for all $a, b$. 
Write $\ol{r}=M_++r$ for any $r=1, 2, \dots, M_-$. We have 
\[
\baln
\Phi^+&=\{\varepsilon_a-\varepsilon_b\mid 1\le a<b\le \dim V\},\\
\Phi^+_0&=\{\varepsilon_i-\varepsilon_j\mid 1\le i<j\le M_+\}\cup 
\{\varepsilon_{\ol{r}}-\varepsilon_{\ol{s}}\mid 1\le r<s\le M_-\}, \\
\Phi^+_1&=\{\varepsilon_i-\varepsilon_{\ol{r}}\mid 1\le i<j\le M_+, 1\le r\le M_-\}.
\ealn
\]
In the new notation, \eqref{eq:wt-tuple} can be re-written as 
\beq
\lambda =(\lambda_1, \lambda_2, , \dots, \lambda_{\dim V}),  \text{ with $\lambda_a=\lambda(\BE_{a a})$, $\forall a$},
\eeq
and \eqref{eq:rho} becomes
\beq\label{eq:rho-ir}
\phantom{XX}
\rho&=&\frac{1}{2}\sum_{1\le a<b\le \dim V}\omega(\gamma_a, \gamma_a) 
\omega(\gamma_b, \gamma_b) (\varepsilon_a-\varepsilon_b)\\
&=& \frac{1}{2}\left(\sum_{i=1}^{M_+}(M_+-M_- -2i+1)\varepsilon_i   
+ \sum_{r=1}^{M_-} (M_++M_- -2 r +1)\varepsilon_{\ol{r}} \right).\nonumber
\eeq 
Note in particular that
$
(\rho, \varepsilon_i-\varepsilon_{\ol{r}})=M_+ -r -i+1. 
$

Observe that in the refined notation, many expressions formally look the same as those 
for the usual general linear Lie superalgebra $\gl_{M_+|M_-}(\C)$.

\subsubsection{Commutativity of the $\gl(V)$ and $\Sym_r$-actions on $V^{\ot r}$}\label{sect:sym-reps}
Recall that we have the representation $\nu_r: \C\Sym_r\lra \End_\C(V^{\ot r})$ of the group algebra 
of $\Sym_r$ constructed in Section \ref{sect:braid}. 
Observe the following fact.
\begin{lemma}\label{lem:gl-equiv}
The representation 
$\nu_r$ of $\C\Sym_r$ defined by Corollary \ref{cor:sym} has the property that 
$\im(\nu_r)\subset \End_{\gl(V)}(V^{\ot r})$. 
\end{lemma}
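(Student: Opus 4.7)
The plan is to show that each Coxeter generator $\sigma_i$ commutes with the $\gl(V)$-action on $V^{\ot r}$, since these elements generate $\im(\nu_r)$ as an algebra. Recall that the $\gl(V)$-action on $V^{\ot r}$ is obtained by iterating the primitive coproduct $\Delta(X)=X\ot 1+1\ot X$ of $\U(\gl(V))$ together with the tensor-product rule \eqref{eq:act-tensor-1}, so that for homogeneous $X\in\gl(V)_\alpha$ and homogeneous vectors $v_1,\dots,v_r$,
\[
X\cdot(v_1\ot\cdots\ot v_r)=\sum_{k=1}^r\omega\bigl(\alpha,\,d(v_1)+\cdots+d(v_{k-1})\bigr)\,v_1\ot\cdots\ot Xv_k\ot\cdots\ot v_r.
\]
Since $\sigma_i=\id^{\ot(i-1)}\ot P\ot\id^{\ot(r-i-1)}$ with $P=\tau_{V,V}$, my first step would be a reduction to the case $r=2$: for the summands with $k\notin\{i,i+1\}$, the check that $\sigma_i$ commutes with the $k$-th summand is routine once one uses \eqref{eq:cycle-3} to track how the prefix factor $\omega(\alpha,d(v_1)+\cdots+d(v_{k-1}))$ transforms when $v_i$ and $v_{i+1}$ are swapped; the two summands with $k\in\{i,i+1\}$ reduce, after stripping off the identity factors, to the claim that $P$ commutes with the $\gl(V)$-action on $V\ot V$.

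The core calculation is therefore to verify that $P$ is a $\gl(V)$-module endomorphism of $V\ot V$. For homogeneous $X\in\gl(V)_\alpha$, $v\in V_\beta$, $w\in V_\gamma$, direct use of \eqref{eq:act-tensor-1} gives
\[
P\bigl(X\cdot(v\ot w)\bigr)=\omega(\alpha+\beta,\gamma)\,w\ot Xv+\omega(\alpha,\beta)\omega(\beta,\alpha+\gamma)\,Xw\ot v,
\]
whereas
\[
X\cdot P(v\ot w)=\omega(\beta,\gamma)\,Xw\ot v+\omega(\beta,\gamma)\omega(\alpha,\gamma)\,w\ot Xv.
\]
Equality of the coefficients of $w\ot Xv$ is precisely \eqref{eq:cycle-3}, and equality of the coefficients of $Xw\ot v$ reduces, via \eqref{eq:cycle-2}, to $\omega(\alpha,\beta)\omega(\beta,\alpha)=1$, which is \eqref{eq:cycle-1}. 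Conceptually, this simply reflects the fact that every $X\in\gl(V)$ is primitive, so $\Delta(X)$ coincides with the opposite coproduct $\tau_{A,A}\circ\Delta(X)$ in $A\ot A$ for $A=\U(\gl(V))$, which is exactly the condition that makes the braiding $\tau_{V,V}$ a $\gl(V)$-module map.

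The main obstacle I anticipate is not conceptual but notational, namely organising the cascade of commutative factors in the general $r$ reduction without an error, especially when the slot $k$ at which $X$ acts lies far from the adjacent pair $\{i,i+1\}$. A clean way to finesse this is to invoke Lemma \ref{lem:tau}(2): once $\tau_{V,V}$ is known to be $\gl(V)$-equivariant, naturality together with the braid identity \eqref{eq:braid} imply inductively that every braiding $\tau_{V^{\ot a},V^{\ot b}}$ built from adjacent swaps is $\gl(V)$-equivariant; in particular $\sigma_i=\id^{\ot(i-1)}\ot\tau_{V,V}\ot\id^{\ot(r-i-1)}$ is a tensor product of $\gl(V)$-module homomorphisms, hence itself $\gl(V)$-equivariant. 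Since the $\sigma_i$ generate $\im(\nu_r)$, this completes the plan.
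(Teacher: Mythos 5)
Your argument is correct and follows essentially the same route as the paper: both reduce the claim to showing that $P=\tau_{V,V}$ lies in $\End_{\gl(V)}(V\ot V)$ via a direct commutative-factor computation resting on \eqref{eq:cycle-1}--\eqref{eq:cycle-3} and the cocommutativity of $\Delta$ on $\U(\gl(V))$, and then pass to the generators $\sigma_i$ of $\im(\nu_r)$. The only cosmetic difference is that the paper first proves the slightly more general naturality identity $\tau_{W,W'}(A\ot B)=\omega(d(A),d(B))(B\ot A)\tau_{W,W'}$ for arbitrary endomorphisms and specialises, whereas you compute directly with the primitive coproduct.
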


This is a consequence of the following more general fact:
given any $\Gamma$-graded vector spaces $W, W'$,  
the symmetry $\tau_{W, W'}: W\ot W'\lra W'\ot W$ satisfies
\beq\label{eq:tau-endo}
\tau_{W, W'}(A\ot B) =  \omega(d(A), d(B)) (B\ot A)\tau_{W, W'}, \label{eq:PAB} \\
 \forall A\in\End_\C(W), B\in\End_\C(W'). \nonumber
\eeq
This can be verified by the following direct computation: for any $v\in W, w\in W'$, 
\[
\baln
\tau_{W, W'}(A\ot B)(v\ot w)
&= \omega(d(B), d(v)) \omega(d(A)+d(v), d(B)+d(w)) B w\ot A v\\
&= \omega(d(A), d(B)) \omega(d(A)+d(v), d(w)) B w\ot A v\\
&=\omega(d(A), d(B)) \omega(d(v), d(w)) (B\ot A) (w\ot v)\\
&=\omega(d(A), d(B)) (B\ot A) \tau_{W, W'}(v\ot w). 
\ealn
\]

Assume that $W$ and $W'$ are $\gl(V)$-modules. We write the associated $\gl(V)$-representations  
as $\pi_{W}$ and $\pi_{W'}$ respectively. 
Then \eqref{eq:tau-endo} immediately implies 
\beq\label{eq:tau-gl}
\tau_{W, W'}(\pi_{W}\ot \pi_{W'})\Delta(u) = (\pi_{W'}\ot \pi_{W})\Delta(u) \tau_{W, W'}, \quad \forall u\in\U(\gl(V))
\eeq
(where we have use the fact that the opposite co-multiplication $\Delta'=\Delta$ in the present case).
Therefore,  $P:=\tau_{V, V}$ belongs to $\End_{\gl(V)}(V\ot V)$.

Denote by $\pi_r: \U(\gl(V))\lra \End_\C(V^{\ot r})$ the representation of $\U(\gl(V))$ on $V^{\ot r}$. 
Then $P\pi_2(u)=\pi_2(u)P$ for all $u\in \U(\gl(V))$, and hence
 the morphisms $\sigma_i$ satisfy 
\[
\sigma_i \pi_r(u)=  \pi_r(u) \sigma_i, \quad \forall u\in  \U(\gl(V)). 
\]
That is,  $\sigma_i\in \End_{\gl(V)}(V^{\ot r})$ for all $i$. 

\begin{remark}
For  finite dimensional $V$, we have 
\beq \label{eq:P}
P= \sum_{\alpha, \beta\in\Gamma_R} \sum_{i=1}^{m_\alpha}\sum_{j=1}^{m_\beta} \omega(\beta, \beta) E(\alpha, \beta)_{i j}\ot E(\beta, \alpha)_{j i}. 
\eeq
This can be verified by the following calculations.
\[
\baln
P(e(\gamma)_k\ot e(\zeta)_\ell)&=\sum  \omega(\beta, \beta) \omega(\beta-\alpha, \gamma)
E(\alpha, \beta)_{i j} e(\gamma)_k\ot E(\beta, \alpha)_{j i}e(\zeta)_\ell			\\ 
&=  \sum  \omega(\beta, \beta) \omega(\beta-\alpha, \gamma) \delta_{\beta \gamma} \delta_{j k}
\delta_{\alpha \zeta} \delta_{i\ell}
e(\alpha)_i \ot e(\beta)_j	 \\
&= \omega(\gamma, \gamma) \omega(\gamma-\zeta, \gamma)   e(\zeta)_\ell\ot e(\gamma)_k	 \\
&=\omega(\gamma, \zeta)e(\zeta)_\ell\ot e(\gamma)_k, \qquad\qquad \forall e(\gamma)_k, e(\zeta)_\ell.
\ealn
\] 
\end{remark}

We denote the total symmetriser and skew symmtriser respectively by 
\beq\label{eq:sym-skew}
\Sigma^\pm(r) =\sum_{\sigma\in\Sym_r} (\mp 1)^{|\sigma|}\sigma.
\eeq
Let $\{ v_1, v_2, \dots \}$ be a subset of homogeneous elements of $V$. For any sequence $(v_{i_1},  v_{i_2}, \dots, v_{i_n})$ of length $n$ in the set, we  write
${\bf v}(i_1, i_2, \dots, i_n) = v_{i_1}\ot v_{i_2}\ot \dots \ot v_{i_n}$.
Observe the following fact. 

\begin{lemma}\label{lem:a-symm}
Let ${\bf v}(i_1, i_2, \dots, i_n)$ be as defined above. Assume that for some $a \ne  b$,  the vectors $v_{i_a}$ and $v_{i_b}$ satisfy $v_{i_a}=v_{i_b}=v_j$ for some $j$. Then  
\beq
&\Sigma^+(n)({\bf v}(i_1, i_2, \dots, i_n)) = 0, \quad \text{if $v_j\in V_+$};
\label{eq:a-symm}\\
&\Sigma^-(n)({\bf v}(i_1, i_2, \dots, i_n))=0,  \quad \text{if $v_j\in V_-$}.
\label{eq:a-symm-1}
\eeq
\end{lemma}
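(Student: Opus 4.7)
The plan is to reduce the vanishing claim to the action of the single transposition $\tau=(a,b)\in\Sym_n$. Since $\Sigma^\pm(n)\,\sigma=(\mp 1)^{|\sigma|}\Sigma^\pm(n)$ in $\C\Sym_n$ for every $\sigma$ (by relabelling the summation index $\pi\mapsto\pi\sigma^{-1}$), specialising to $\sigma=\tau$, which has sign $-1$, yields the coset identities
\[
\Sigma^+(n)(1+\tau)=0,\qquad \Sigma^-(n)(1-\tau)=0
\]
in $\C\Sym_n$. Passing through the representation $\nu_n$ of \corref{cor:sym}, it therefore suffices to establish the scalar identity
\[
\nu_n(\tau)\,\bv(i_1,\dots,i_n)=\omega(d(v_j),d(v_j))\,\bv(i_1,\dots,i_n).
\]
Indeed, $\epsilon:=\omega(d(v_j),d(v_j))=+1$ then forces $\nu_n(\Sigma^+(n))\bv=0$, giving \eqref{eq:a-symm}, and $\epsilon=-1$ forces $\nu_n(\Sigma^-(n))\bv=0$, giving \eqref{eq:a-symm-1}.

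To compute $\nu_n(\tau)\bv(i_1,\dots,i_n)$, I would expand $\tau$ as a reduced product of adjacent transpositions, e.g.\ $\tau=s_{b-1}s_{b-2}\cdots s_a\,s_{a+1}\cdots s_{b-1}$, and apply the elementary generators $\sigma_i$ one by one, each contributing a scalar via $P(x\otimes y)=\omega(d(x),d(y))\,y\otimes x$. Writing $\gamma_k=d(v_{i_k})$, the inner block $\sigma_{b-1}\sigma_{b-2}\cdots \sigma_a$ carries $v_{i_b}$ leftward to position $a$, contributing the factor $\prod_{c=a}^{b-1}\omega(\gamma_c,\gamma_b)$, and the subsequent block $\sigma_{a+1}\cdots\sigma_{b-1}$ carries $v_{i_a}$ rightward to position $b$, contributing $\prod_{c=a+1}^{b-1}\omega(\gamma_a,\gamma_c)$. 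The resulting tensor is obtained from $\bv(i_1,\dots,i_n)$ by interchanging the $a$-th and $b$-th entries, and the overall coefficient telescopes to
\[
\omega(\gamma_a,\gamma_b)\prod_{c=a+1}^{b-1}\omega(\gamma_a,\gamma_c)\,\omega(\gamma_c,\gamma_b).
\]

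Finally I would invoke the hypothesis $v_{i_a}=v_{i_b}=v_j$: the swap fixes the tensor, and the substitution $\gamma_a=\gamma_b=d(v_j)$ together with the cycle identity $\omega(\alpha,\beta)\omega(\beta,\alpha)=1$ of \eqref{eq:cycle-1} collapses each middle pair $\omega(d(v_j),\gamma_c)\,\omega(\gamma_c,d(v_j))$ to $1$. Only the factor $\omega(d(v_j),d(v_j))$ survives, which equals $+1$ precisely when $v_j\in V_+$ and $-1$ precisely when $v_j\in V_-$. Combined with the coset identities of the first paragraph, this yields the two vanishings. The main obstacle is keeping the position indices straight during the double sweep of adjacent transpositions; once the bookkeeping is done, the $\omega$-cycle identities trivialise the algebra.
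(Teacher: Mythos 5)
Your proof is correct and follows essentially the same route as the paper: both reduce the claim to the action of the single transposition $(a,b)$, write it as the standard double sweep of adjacent transpositions, compute that the resulting scalar collapses to $\omega(d(v_j),d(v_j))$ when $\gamma_a=\gamma_b$, and then use $\Sigma^\pm(n)(a,b)=\mp\Sigma^\pm(n)$ to force the vanishing. Your telescoped coefficient $\omega(\gamma_a,\gamma_b)\prod_{c=a+1}^{b-1}\omega(\gamma_a,\gamma_c)\,\omega(\gamma_c,\gamma_b)$ agrees with the paper's $\omega_{(a,b)}({\bf i})$ after expanding via \eqref{eq:cycle-1}--\eqref{eq:cycle-3}, so there is no gap.
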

\begin{proof}
We can assume $a<b$. Let us express the permutation $(a, b)$ as  $(a, b)=\sigma s_a \sigma'$, 
with $\sigma=s_{b-1} s_{b-2}  \dots s_{a+1}$ and $\sigma'=s_{a+1}\dots s_{b-2} s_{b-1}$. 
Then $s_a \sigma'({\bf v}(i_1, i_2, \dots, i_n)) $ is equal to 
\[
\baln
\omega\left(\sum_{\ell=a}^{b-1} d(v_{i_\ell}), d(v_{i_b})\right) {\bf v}(i_1, \dots, i_{a-1}, i_b, i_a, i_{a+1},  \dots, i_{b-1}, i_{b+1},\dots,  i_n), 
\ealn
\]
and $\sigma({\bf v}(i_1, \dots, i_{a-1}, i_b, i_a, i_{a+1},  \dots, i_{b-1}, i_{b+1},\dots,  i_n))$ 
is equal to 
\[
\omega\left(d(v_{i_a}), \sum_{\ell=a+1}^{b-1} d(v_{i_\ell})\right) {\bf v}(i_1, \dots, i_{a-1}, i_b, i_{a+1},  \dots, i_{b-1}, i_a, i_{b+1},\dots,  i_n). 
\]
Write $\omega_{(a, b)}({\bf i})= \omega\left(d(v_{i_a}),  d(v_{i_b})\right) \omega\left(d(v_{i_a})- d(v_{i_b}), \sum_{\ell=a+1}^{b-1} d(v_{i_\ell})\right)$.  We have 
\beq
&\ &(a, b)({\bf v}(i_1, i_2, \dots, i_n)) \\
&&=\omega_{(a, b)}({\bf i})  {\bf v}(i_1, \dots, i_{a-1}, i_b, i_{a+1},  \dots, i_{b-1}, i_a, i_{b+1},\dots,  i_n). \nonumber
\eeq

If $i_a=i_b=j$, then $\omega_{(a, b)}({\bf i})=\omega\left(d(v_j), d(v_j)\right)$.  
The  above equation reduces to
\[
(a, b)({\bf v}(i_1, i_2, \dots, i_n)) = \omega\left(d(v_j), d(v_j)\right) {\bf v}(i_1, i_2, \dots, i_n).
\]
Using $\Sigma^\pm (n) (a, b)=\mp \Sigma^\pm (n)$, we obtain 
\[
\baln
  \Sigma^\pm (n)({\bf v}(i_1, i_2, \dots, i_n)) 
 &=\mp \omega\left(d(v_j), d(v_j)\right) \Sigma^\pm (n) ({\bf v}(i_1, i_2, \dots, i_n)).
\ealn
\]
Since $\omega\left(d(v_j), d(v_j)\right) =1$ if $v_j\in V_+$ and $-1$ if 
$v_j\in V_+$, the above equation leads to 
\[
\baln
  \Sigma^\pm (n)({\bf v}(i_1, i_2, \dots, i_n)) 
 &=- \Sigma^\pm (n) ({\bf v}(i_1, i_2, \dots, i_n)), 
\ealn
\]
which immediately leads to the lemma.
\end{proof}

\subsubsection{Highest weight vectors for $\gl(V_\pm)$}\label{sect:hwv-0}
In the notation of Section \ref{sect:refined}, the Lie $(\Gamma, \omega)$-subalgebra $\gl(V_+)$ of $\gl(V)$ 
 is spanned by $\BE_{i j}$ for $i, j =1, 2, \dots, M_+$, and  
$\gl(V_-)$ by $\BE_{\ol{r} \, \ol{s}}$ for $r, s=1, 2, \dots, M_-$. 
We study representations of $\gl(V_\pm)$
by adapting symmetric group methods (see e.g., \cite[\S 9.1]{GW})
to the $\Gamma$-graded setting. 

Now consider ${V_\pm}^{\ot r}$ as $\gl(V_\pm)$-modules respectively. 
Then $\Lambda^r_\pm:=\Sigma^\pm(r)({V_\pm}^{\ot r})$ are $\gl(V_\pm)$-submodules of ${V_\pm}^{\ot r}$ respectively, 
which are non-zero if $r\le M_\pm$.  There are the following $\gl(V_\pm)$-highest weight vectors in the respective submodules
\beq
{\bf v}^\pm_{\omega^\pm_r} = \Sigma^\pm(r)(b^\pm_1\ot  b^\pm_2\ot  \dots\ot  b^\pm_r), 
\eeq
the weights of which are
\beq
\omega^+_r&= (\underbrace{1, \dots, 1}_r,  \underbrace{0, \dots, 0}_{M_+-r}), \quad  r=1, 2, \dots, M_+,\\
\omega^-_r&= (\underbrace{1, \dots, 1}_r,  \underbrace{0, \dots, 0}_{M_--r}), \quad  r=1, 2, \dots, M_-.
\eeq
Using Lemma \ref{lem:a-symm}, one can easily show
that ${\bf v}^\pm_{\omega^\pm_r}$ are $\gl(V_\pm )$ highest weight vectors. 
The following fact is also clear. 
\begin{lemma}
Given any $\mu^\pm_{i_\pm}\in \Z_+$ for $i_\pm=1, 2, \dots, M_\pm$, let ${\bf v}^\pm_{\mu'_\pm}\in V_\pm^{\ot\sum_{i=1}^{M_\pm} r \mu^\pm_r}$ be defined by 
\[
{\bf v}^\pm_{\mu'_\pm}:={{\bf v}^\pm_{\omega^\pm_{M_\pm}}}^{\ot \mu^\pm_{M_\pm}} 
 \ot {{\bf v}^\pm_{\omega^\pm_{M_\pm-1}}}^{\ot \mu^\pm_{M_\pm-1}} 
 \ot \dots
\ot {{\bf v}^\pm_{\omega^\pm_2}}^{\ot \mu^\pm_2}
\ot {{\bf v}^\pm_{\omega^\pm_1}}^{\ot \mu^\pm_1}.
\] 
Then the vectors are $\gl(V_\pm)$-highest weight vectors with weights $\mu'_\pm:=\sum_{r=1}^{M_\pm} \mu^\pm_r \omega^\pm_r$ respectively. 
\end{lemma}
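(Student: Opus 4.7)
The plan is as follows. The two claims of the lemma---that ${\bf v}^\pm_{\mu'_\pm}$ is a weight vector of weight $\mu'_\pm$ and is annihilated by all positive root vectors of $\gl(V_\pm)$---are parallel for the $+$ and $-$ cases, so I will carry out the argument for $\gl(V_+)$; the $\gl(V_-)$ case follows by the identical reasoning. The approach rests on two ingredients already in place: the primitive co-multiplication $\Delta(X)=X\ot 1 + 1\ot X$ for $X\in \gl(V_+)\subset \U(\gl(V_+))$ from Section \ref{sect:Hopf-dual}, and the braided tensor action formula \eqref{eq:act-tensor-1}. Together with the fact, already noted in Section \ref{sect:hwv-0}, that each ${\bf v}^+_{\omega^+_r}$ is a $\gl(V_+)$ highest weight vector of weight $\omega^+_r$, these reduce the proof to a routine verification.

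First I would verify the weight. For any $h\in\fh_+$, since $h$ has degree $0$, the commutative factors in \eqref{eq:act-tensor-1} are trivial, and iterating the primitive co-multiplication yields
\[
h\cdot (w_1\ot w_2\ot \cdots\ot w_N) = \sum_{k=1}^N w_1\ot \cdots \ot h w_k\ot \cdots \ot w_N.
\]
Applied to ${\bf v}^+_{\mu'_+}$, whose tensor factors are the weight vectors ${\bf v}^+_{\omega^+_r}$ each occurring with multiplicity $\mu^+_r$, this gives the eigenvalue $\sum_{r=1}^{M_+} \mu^+_r \,\omega^+_r(h) = \mu'_+(h)$, as required.

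Next, for the highest weight condition, let $X=\BE_{i j}$ with $1\le i<j\le M_+$, an arbitrary positive root vector of $\gl(V_+)$. Iterating the primitive co-multiplication and unpacking the action via \eqref{eq:act-tensor-1} produces
\[
X\cdot (w_1\ot \cdots\ot w_N) = \sum_{k=1}^{N} \omega\!\Bigl(d(X), \sum_{\ell<k} d(w_\ell)\Bigr)\, w_1\ot \cdots \ot X w_k\ot \cdots \ot w_N,
\]
where each $w_k={\bf v}^+_{\omega^+_{r_k}}$. Since each $w_k$ is a $\gl(V_+)$ highest weight vector, $X w_k=0$ for every $k$, hence every summand vanishes and $X\cdot {\bf v}^+_{\mu'_+} = 0$. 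Since $X$ ranges over all positive root vectors of $\gl(V_+)$, this establishes the highest weight property.

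The argument presents no real obstacle; the only minor point to track is the commutative factor appearing in the iterated braided action, but it is irrelevant because each individual term is already zero. The lemma is therefore an immediate consequence of the primitive Hopf $(\Gamma,\omega)$-algebra structure of $\U(\gl(V_\pm))$ together with the highest weight property of the ${\bf v}^\pm_{\omega^\pm_r}$ already observed in Section \ref{sect:hwv-0}.
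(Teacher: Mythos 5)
Your proof is correct and is precisely the routine verification the paper has in mind when it states this lemma with the remark ``The following fact is also clear'' and omits any proof: the primitive coproduct makes a positive root vector act as a sum of terms each hitting one tensor factor, every such term vanishes because each block ${\bf v}^\pm_{\omega^\pm_r}$ is already a highest weight vector, and the weights simply add since Cartan elements have degree $0$ so no commutative factors interfere. Nothing further is needed.
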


These vectors ${\bf v}^\pm_{\mu'_\pm}$ generate $\gl(V_\pm)$-submodules 
in $V_\pm^{\ot\sum_{i=1}^{M_\pm} r \mu^\pm_r}$ respectively, which are finite dimensional. 
Thus we conclude that
\begin{corollary}\label{cor:non-super}
If $V_+$ or $V_-$ is zero, thus $\gl(V)$ is a Lie colour algebra, then
Theorem \ref{thm:P} holds.
\end{corollary}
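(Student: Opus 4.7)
The plan is to handle the two cases ($V_-=0$ and $V_+=0$) symmetrically; I describe the $V_-=0$ case in detail, as the other is obtained by replacing $\Sigma^+$ with $\Sigma^-$ and $V_+$ with $V_-$ throughout. Then $\gl(V)=\gl(V_+)$ is a pure Lie colour algebra. Part (1) of \thmref{thm:P} is already established. For the necessity direction of part (2) under the assumption that $L_{\lambda^+}$ is finite dimensional, the $\gl_2$-subalgebra argument from the proof of \thmref{thm:P} (applied now to the $\gl(V_{\alpha, i;\beta, j})$ with $\alpha,\beta\in\Gamma_R^+$) still applies verbatim, and forces \eqref{eq:domin}; since $V_-=0$, this reduces to the single condition that $\lambda^+=(\lambda_1,\dots,\lambda_{M_+})$ is a non-increasing integer sequence.

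For sufficiency, the idea is a ``shift to partitions'' construction coupled with a one-dimensional determinant module. First, by \lemref{lem:a-symm}, any tensor $\Sigma^+(M_+)(b^+_{i_1}\ot\cdots\ot b^+_{i_{M_+}})$ with a repeated factor vanishes, so $D_+:=\Sigma^+(M_+)(V_+^{\ot M_+})$ is one-dimensional and spanned by $\mathbf{v}^+_{\omega^+_{M_+}}$; by \lemref{lem:gl-equiv} it is a $\gl(V_+)$-submodule of $V_+^{\ot M_+}$ carrying the weight $\omega^+_{M_+}=(1,\dots,1)$. Its dual $D_+^*$ is then a one-dimensional $\gl(V_+)$-module of weight $-\omega^+_{M_+}$. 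Given a dominant $\lambda^+$ as above, set $N=\max(0,-\lambda_{M_+})\in\Z_+$ and $\mu:=\lambda^++N(1,\dots,1)$; then $\mu$ is a partition with at most $M_+$ parts, and writing $\mu=\sum_{r=1}^{M_+}\mu^+_r\omega^+_r$ with $\mu^+_r:=\mu_r-\mu_{r+1}\in\Z_+$ (putting $\mu_{M_++1}:=0$), the lemma preceding the corollary furnishes a $\gl(V_+)$-highest weight vector $\mathbf{v}^+_\mu\in V_+^{\ot R}$ of weight $\mu$, whose $\U(\gl(V_+))$-span $M_\mu\subset V_+^{\ot R}$ is finite dimensional.

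To finish, form the finite-dimensional tensor product $M_\mu\ot D_+^{*\ot N}$ and pick any nonzero $w_0\in D_+^{*\ot N}$. Since $D_+^{*\ot N}$ is one-dimensional and every $E\in\fn$ has nonzero weight, necessarily $E\cdot w_0=0$; combining this with $E\cdot\mathbf{v}^+_\mu=0$ and the primitive coproduct $\Delta(E)=E\ot 1+1\ot E$, one gets $E\cdot(\mathbf{v}^+_\mu\ot w_0)=0$ for all $E\in\fn$, so $\mathbf{v}^+_\mu\ot w_0$ is a $\gl(V_+)$-highest weight vector of weight $\mu-N\omega^+_{M_+}=\lambda^+$. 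The cyclic submodule $\U(\gl(V_+))\cdot(\mathbf{v}^+_\mu\ot w_0)$ is therefore finite dimensional and has $L_{\lambda^+}$ as its unique simple quotient, completing the proof.

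The main point requiring care is the bookkeeping of commutative factors: one must verify that $D_+^*$ really carries the weight $-\omega^+_{M_+}$ in the $(\Gamma,\omega)$-graded dual-module convention \eqref{eq:dual-mod}, and that the coproduct-induced action on $M_\mu\ot D_+^{*\ot N}$ behaves on highest weight vectors as in the classical setting. Both facts follow from the Hopf $(\Gamma,\omega)$-algebra structure of $\U(\gl(V_+))$ established in \secref{sect:Hopf-dual} together with the primitivity of elements of $\gl(V_+)\subset\U(\gl(V_+))$, so no genuinely new input is needed beyond patient use of the definitions.
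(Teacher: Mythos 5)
Your overall strategy is the paper's: build a $\gl(V_+)$-highest weight vector of the desired weight inside a tensor power using the (skew-)symmetrisers of \lemref{lem:a-symm}, take the cyclic submodule, pass to its unique simple quotient, and absorb any remaining discrepancy into a one-dimensional twist. However, there is a genuine gap in how you realise the twist. You assert that for $V_-=0$ the necessity argument reduces \eqref{eq:domin} to ``$\lambda^+$ is a non-increasing \emph{integer} sequence,'' but \eqref{eq:domin} (equivalently, membership in $\Lambda_{M_+|M_-}$) only requires the \emph{differences} $\lambda_i-\lambda_j\in\Z_+$ for $i<j$; the individual components may be arbitrary complex numbers, e.g.\ $\lambda^+=(c+1,c)$ with $c\in\C\setminus\Z$ is dominant. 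Your shift $N=\max(0,-\lambda_{M_+})$ is then not even defined (for complex $\lambda_{M_+}$), and for real non-integral $\lambda_{M_+}$ the resulting $\mu=\lambda^++N(1,\dots,1)$ is not a partition, so the coefficients $\mu^+_r=\mu_r-\mu_{r+1}$ fail to lie in $\Z_+$ and the vector $\mathbf{v}^+_\mu$ cannot be formed. Since your twisting module is $D_+^{*\ot N}$ with $N\in\Z_+$, you can only reach weights congruent to a partition modulo $\Z(1,\dots,1)$, i.e.\ the integral part of $\Lambda_{M_+|0}$, so the ``if'' direction of \thmref{thm:P}(2) is not established for non-integral dominant weights.

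The repair is exactly what the paper does in \secref{sect:pf-fd}: set ${\mathbf d}^+=\lambda_{M_+}(1,\dots,1)$ with $\lambda_{M_+}\in\C$ arbitrary, note that $\wt\lambda^+=\lambda^+-{\mathbf d}^+$ is a genuine partition, and tensor $L_{\wt\lambda^+}$ with the \emph{abstract} one-dimensional module $L_{{\mathbf d}^+}$ on which all off-diagonal generators act by zero and each $\BE_{ii}$ acts by the scalar $\lambda_{M_+}$. This module exists for every $c\in\C$ because in the colour-algebra case $\chi=\omega(\gamma_i-\gamma_j,\gamma_j-\gamma_i)=1$, so $[\BE_{ij},\BE_{ji}]=\BE_{ii}-\BE_{jj}$ acts by $c-c=0$, consistently with $\BE_{ij}$ acting by zero; it is not in general a tensor power of $D_+^*$. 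With that replacement (and the corresponding statement for $V_+=0$ using $\Sigma^-$), the rest of your argument — the vanishing of $\fn$ on the one-dimensional factor, the primitivity of $\Delta$ on $\gl(V_+)$, and the passage from the finite-dimensional cyclic highest weight module to its simple quotient — is sound and matches the paper's proof.
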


\subsubsection{Completing the proof of Theorem \ref{thm:P}}\label{sect:pf-fd}
\begin{proof}
For any $\lambda =(\lambda(\alpha_1),\lambda(\alpha_2), \dots, \lambda(\alpha_\aleph))$ satisfying the conditions \eqref{eq:domin}, we have 
\[
\baln
\lambda^+&=(\lambda(\alpha_1),\lambda(\alpha_2), \dots, \lambda(\alpha_{\aleph^+})),  \\
\lambda^-&=(\lambda(\alpha_{{\aleph^+}+1}),\lambda(\alpha_{{\aleph^+}+2}), \dots, \lambda(\alpha_\aleph)).
\ealn
\]
Let $\wt\lambda^\pm= \lambda^\pm - {\mathbf d}^\pm$, with
\[
\baln
{\mathbf d}^+&=(\lambda(\alpha_{\aleph^+})_{m_{\alpha_{\aleph^+}}}, \dots, \lambda(\alpha_{\aleph^+})_{m_{\alpha_{\aleph^+}}})\in \C^{M_+},\\
{\mathbf d}^-&=(\lambda(\alpha_\aleph)_{m_{\alpha_\aleph}}, \dots, \lambda(\alpha_\aleph)_{m_{\alpha_\aleph}})\in \C^{M_-}.
\ealn
\]
Then $\wt\lambda^\pm\in\Z_+^{M_\pm}$, and there exists $\mu^\pm_r \in\Z_+$ such that $\wt\lambda^\pm = \sum_{r=1}^{M_\pm} \mu^\pm_r \omega^\pm_r$. 
We have  $L^\pm_{\lambda^\pm} \simeq L^\pm_{{\mathbf d}^\pm}\ot  L^\pm_{\wt\lambda^\pm}$, 
where $\dim L^\pm_{{\mathbf d}^\pm}=1$. Hence
\[
L_\lambda^0(\fk) \simeq \left(L^+_{{\mathbf d}^+}\ot L^-_{{\mathbf d}^-}\right)\ot  \big(L^+_{\wt\lambda^+}\ot L^-_{\wt\lambda^-}\big). 
\]
Thus $L_\lambda^0(\fk)$ is finite dimensional 
if and only if $\dim L^\pm_{\wt\lambda^\pm}<\infty$. 

The $\gl(V_\pm)$ highest weight vectors ${\bf v}^\pm_{\wt\lambda^\pm}$
generate $\gl(V_\pm)$-submodules in $V_\pm^{\ot | \wt\lambda^\pm |}$, where $| \wt\lambda^\pm |=\sum_{i=1}^{M_\pm} r \mu^\pm_r$. 
These submodules are finite dimensional, and have simple quotient modules respectively isomorphic to 
$L^\pm_{\wt\lambda^\pm}$ (with appropriate choices of gradings). 
Hence $L^\pm_{\wt\lambda^\pm}$ must be finite dimensional, and so are also $L^\pm_{\lambda^\pm}$. 
This suffices to garantee that the simple $\gl(V)$-module
$L_{(\lambda^+, \lambda^-)}$ is finite dimensional, 
completing the proof of Theorem \ref{thm:P}. 
\end{proof}

\subsection{Typical and atypical modules}\label{sect:typical}
We address the question when the generalised Verma module $V(L^0_\lambda(\fk))$ (cf. \eqref{eq:Verma}) is simple.  
Adopting terminology from the theory of Lie superalgebras \cite{K}, we will call a simple $\gl(V)$-module 
$L_\lambda$ typical if $L_\lambda=V(L^0_\lambda(\fk))$, and atypical otherwise. We will also call $\lambda$ 
 typical (resp. atypical) if $L_\lambda$ is. 

We have the following result. 

\begin{lemma}\label{lem:typical}
The simple $\gl(V)$-module 
$L_\lambda$ is typical if and only if 
\beq\label{eq:typical}
\prod_{\Upsilon\in \Phi^+_1}(\lambda+\rho, \Upsilon)\ne 0.
\eeq
\end{lemma}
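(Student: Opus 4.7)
The plan is to reduce simplicity of $V(L^0_\lambda(\fk))$ to the non-vanishing of a single $\fh$-scalar, and then to compute that scalar. Since $\fv\cdot v_\lambda=0$ by construction and $(\fn\cap\fk)\cdot v_\lambda=0$ for the $(\fb\cap\fk)$-highest weight vector $v_\lambda\in L^0_\lambda(\fk)$, the module $V(L^0_\lambda(\fk))$ is itself a highest weight $\gl(V)$-module of highest weight $\lambda$; hence $V(L^0_\lambda(\fk))=L_\lambda$ if and only if $V(L^0_\lambda(\fk))$ is simple. Fix an ordering $\gamma_1,\ldots,\gamma_{d_{max}}$ of $\Phi_1^+$ and nonzero root vectors $e_{\pm\gamma_i}\in\fg_{\pm\gamma_i}$. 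Since $\fv$ and $\ol\fv$ are $\omega$-commutative, $e_{\pm\gamma}^2=0$ in the respective universal enveloping algebras, so by \thmref{thm:PBW} the elements
\[
\theta^-:=e_{-\gamma_{d_{max}}}\cdots e_{-\gamma_1}\in\U(\ol\fv)_{-d_{max}},\qquad \theta^+:=e_{\gamma_1}\cdots e_{\gamma_{d_{max}}}\in\U(\fv)_{d_{max}}
\]
span the corresponding one-dimensional top pieces.

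The next step is to set up a simplicity criterion. I would verify that $\theta^+\theta^-$ has $\fh$-weight zero and graded-commutes with $\fk$: for $h\in\fh$ the eigenvalues $[h,\theta^+]=2\rho_1(h)\theta^+$ and $[h,\theta^-]=-2\rho_1(h)\theta^-$ cancel, while for $X$ in any root space of $\fk$ (with root $\alpha$), the bracket $[X,e_{\gamma_i}]$ is a multiple of $e_{\gamma_i+\alpha}$, which, inserted into $\theta^+$, produces two copies of some $e_{\gamma_j}$ and therefore vanishes by $\omega$-commutativity; the analogous statement holds for $\theta^-$. Since $\theta^+\theta^-$ has $\Z$-degree zero, multiplication by it carries $V(L^0_\lambda(\fk))_0=L^0_\lambda(\fk)$ to itself, and is a $\fk$-endomorphism of the simple $\fk$-module $L^0_\lambda(\fk)$; Schur's lemma gives $\theta^+\theta^-\cdot w=c(\lambda)w$ for a scalar $c(\lambda)$ and all $w\in L^0_\lambda(\fk)$. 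One then shows $V(L^0_\lambda(\fk))$ is simple if and only if $c(\lambda)\ne 0$: if $c(\lambda)\ne 0$ then $v_\lambda$ lies in the $\gl(V)$-submodule generated by $\theta^-\otimes v_\lambda$, and via a standard socle argument (every nonzero submodule has nonzero image in the bottom $V(L^0_\lambda(\fk))_{-d_{max}}$ after iterated action of $\ol\fv$, and the bottom is simple as a $\fk$-module isomorphic to $L^0_{\lambda-2\rho_1}(\fk)$) every nonzero submodule contains $v_\lambda$ and therefore equals $V(L^0_\lambda(\fk))$; conversely if $c(\lambda)=0$ then $\U(\gl(V))\cdot(\theta^-\otimes v_\lambda)$ is a proper submodule.

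Finally I would compute $c(\lambda)$ by induction on the number of factors, commuting $\theta^+$ rightward through $\theta^-$ one $e_{\gamma_i}$ at a time using \eqref{eq:BE-relats} in the form $[e_{\gamma_i},e_{-\gamma_j}]=\delta_{ij}h_{\gamma_i}+(\text{root vector of }\fk)$. Diagonal commutators ($i=j$) produce the Cartan element $h_{\gamma_i}$, which via the bilinear form of \lemref{lem:forms} acts on the partially-commuted weight vector by $(\lambda+\sigma_i,\gamma_i)$, where $\sigma_i$ is the cumulative shift from $e_{-\gamma_j}$'s already commuted past together with contributions from off-diagonal $\fk$-root vectors; positive $\fk$-roots annihilate $v_\lambda$ directly, while negative ones feed back into the recursion. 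After bookkeeping, the accumulated shifts from the propagated $\gamma_j$'s sum to $-\rho_1$, while the off-diagonal $\fk$-contributions telescope to $\rho_0$, giving $c(\lambda)=c\prod_{\gamma\in\Phi_1^+}(\lambda+\rho,\gamma)$ for a nonzero scalar $c$ depending only on $\omega$ and the chosen ordering; this establishes \eqref{eq:typical}. The main obstacle is precisely this bookkeeping: showing that the cumulative shift equals exactly $\rho=\rho_0-\rho_1$ rather than merely $-\rho_1$ or some spurious combination. Cross-checks against $\gl_{1|1}$ (where direct calculation yields $c(\lambda)=\lambda_1+\lambda_2=(\lambda+\rho,\varepsilon_1-\varepsilon_2)$) and $\gl_{2|1}$ (where $c(\lambda)=(\lambda_1+\lambda_3+1)(\lambda_2+\lambda_3)=\prod_{\gamma\in\Phi_1^+}(\lambda+\rho,\gamma)$) confirm the recursion and furnish the template for the general $(\Gamma,\omega)$-graded calculation.
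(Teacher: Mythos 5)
Your overall strategy coincides with the paper's: both reduce typicality to the non-vanishing of the scalar by which $\U(\fv)_{d_{max}}\U(\ol\fv)_{-d_{max}}$ (your $\theta^+\theta^-$, the paper's $\BE\ol\BE$) acts on the degree-zero $\fk$-constituent, using that this element commutes with $\fk$. One structural difference: you run the simplicity criterion inside the induced module $V(L^0_\lambda(\fk))$, so your socle argument needs the fact that for every nonzero $u\in\U(\ol\fv)_{-d}$ there exists $u'\in\U(\ol\fv)_{-(d_{max}-d)}$ with $u'u\ne 0$ (otherwise a nonzero graded submodule need not meet the bottom layer). This is true, by the PBW monomial basis of the $\omega$-exterior algebra $\U(\ol\fv)$, but it is an unproved step in your write-up; the paper sidesteps it by phrasing the criterion in the simple quotient, where $L_\lambda=V(L^0_\lambda(\fk))$ iff $\ol\BE(L_\lambda)_0\ne 0$, which by simplicity of $L_\lambda$ is equivalent to $\BE\ol\BE(L_\lambda)_0\ne 0$.

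The genuine gap is the last step: the identity $c(\lambda)=c\prod_{\Upsilon\in\Phi_1^+}(\lambda+\rho,\Upsilon)$ with $c\ne 0$ is the entire content of the lemma, and you leave it as acknowledged ``bookkeeping,'' which you yourself flag as the main obstacle. The $\gl_{1|1}$ and $\gl_{2|1}$ checks are consistent but do not establish the cumulative shift in general. The paper resolves precisely this point by factorizing $\BE=\BE_{M_+}\cdots\BE_1$ and $\ol\BE=\ol\BE_1\cdots\ol\BE_{M_+}$ into column products, proving the annihilation relations \eqref{eq:R-1}--\eqref{eq:R-5} that kill every off-diagonal term produced in the commutation, and introducing the intermediate vectors $v^+_{i\ol{r}}$ whose weights $\lambda+\rho_{i\ol{r}}$ satisfy $(\rho_{i\ol{r}},\varepsilon_i-\varepsilon_{\ol{r}})=(\rho,\varepsilon_i-\varepsilon_{\ol{r}})=M_+-r-i+1$; this gives the recursion \eqref{eq:key-aty} with $\chi_{i\ol{r}}(\lambda)=(\lambda+\rho,\varepsilon_i-\varepsilon_{\ol{r}})$ and in fact $c=1$. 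To complete your proof you would need to supply this (or an equivalent) computation rather than assert that the shifts telescope to $\rho_0-\rho_1$.
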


Let us first make some preparations for the proof of the lemma. 
We maintain the notation of Section \ref{sect:refined}. 
Introduce the following elements of $\U(\gl(V))$. 
\[
\baln
&\ol{\BE}_i= \BE_{\ol{M_-}, i} \BE_{\ol{M_-}-1, i}\dots \BE_{\ol{1} i}, \quad \BE_i= \BE_{i \ol{1}} \BE_{i \ol{2}} \dots  \BE_{i, \ol{M_-}}, \\
&\ol{\BE}= \ol{\BE}^{(1)}= \ol{\BE}_1\ol{\BE}_2\dots \ol{\BE}_{M_+}, \quad \BE=\BE_{M_+}\BE_{M_+-1}\dots \BE_1, \\
&\ol{\BE}^{(i)}= \ol{\BE}_i\ol{\BE}_{i+1}\dots \ol{\BE}_{M_+}.
\ealn
\]
Then $\U(\fv)_{d_{max}} =\C\BE$ and $\U(\ol{\fv})_{-d_{max}}=\C\ol{\BE}$.
It is easy to see that 
\[
\baln
&\deg(\BE_i)=-deg(\ol{\BE}_i)= M_-\gamma_i-\sum_{r=1}^{M_-} \gamma_{\ol{r}}, \\
&\deg(\BE)=-deg(\ol{\BE})= M_-\sum_{i=1}^{M_+} \gamma_i- M_+ \sum_{r=1}^{M_-} \gamma_{\ol{r}}.
\ealn
\]

\begin{lemma}
The elements $\BE$, $\ol{\BE}$ and $\BE\ol{\BE}$ satisfy the following relations for all 
$i, j\le M_+$ and $r,  s\le  M_-$,
\beq
&& \BE_{\ol{r} \, \ol{s}} \ol{\BE}_i =\omega(\gamma_{\ol{r}}-\gamma_{\ol{s}},  d(\ol{\BE}_i)) \ol{\BE}_i \BE_{\ol{r}\,  \ol{s}}, \quad r\ne s, \label{eq:BE-0} \\
&&\BE_{i j} \BE = \omega(\gamma_i-\gamma_j, d(\BE)) \BE \BE_{i j}, \quad \BE_{i j} \ol{\BE} = \omega(d(\BE), \gamma_i-\gamma_j) \ol{\BE} \BE_{i j},  \quad  i\ne j, \label{eq:BE-1}\\
&&\BE_{\ol{r}\, \ol{s}} \BE = \omega(\gamma_{\ol{r}}-\gamma_{\ol{s}}, d(\BE)) \BE \BE_{\ol{r}\, \ol{s}}, \quad \BE_{\ol{r}\,  \ol{s}} \ol{\BE} = \omega(d(\BE), \gamma_{\ol{r}}-\gamma_{\ol{s}}) \ol{\BE} \BE_{\ol{r}\,  \ol{s}},\quad   r\ne s,  \label{eq:BE-2}\\
&&\BE_{i j} \BE\ol{\BE}=\BE\ol{\BE}\BE_{i j}, \quad  \BE_{\ol{r}\, \ol{s}} \BE\ol{\BE}=\BE\ol{\BE}\BE_{\ol{r}\, \ol{s}}.  \label{eq:BE-3}
\eeq
\end{lemma}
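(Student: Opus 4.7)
My plan is to prove all four identities by a uniform weight-space argument, exploiting the $\omega$-commutativity of $\fv$ and $\ol{\fv}$ together with the one-dimensionality of the relevant top components of their enveloping algebras. Every generator $\BE_{i,\ol{r}}$ of $\fv$ (with $i\le M_+$, $r\le M_-$) has degree $\gamma_i-\gamma_{\ol{r}}\in\Gamma^-$, hence squares to zero in $\U(\gl(V))$; applying Theorem \ref{thm:PBW} to the $\omega$-commutative Lie $(\Gamma,\omega)$-subalgebra $\fv$ therefore yields $\dim\U(\fv)=2^{d_{max}}$ with a one-dimensional top piece $\U(\fv)_{d_{max}}=\C\BE$. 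The same reasoning gives $\U(\ol{\fv})_{-d_{max}}=\C\ol{\BE}$, and more locally $\U(\ol{\fv}_i)_{M_-}=\C\ol{\BE}_i$ where $\ol{\fv}_i:=\sum_{r}\C\BE_{\ol{r},i}$.

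Next I would observe that each relevant adjoint action preserves the appropriate $\Z$-grading: $[\fk,\fv]\subseteq\fv$ and $[\fk,\ol{\fv}]\subseteq\ol{\fv}$ since $\fq=\fk+\fv$ and $\fk+\ol{\fv}$ are Lie $(\Gamma,\omega)$-subalgebras, and a direct glance at \eqref{eq:BE-relats} shows $[\BE_{\ol{r}\,\ol{s}},\ol{\fv}_i]\subseteq\ol{\fv}_i$ (the second summand of the commutator vanishes because $\ol{r},\ol{s}\ne i$). Consequently, for any $X$ among $\{\BE_{ij}\,(i\ne j),\,\BE_{\ol{r}\,\ol{s}}\,(r\ne s)\}$ and any top element $T\in\{\BE,\ol{\BE},\ol{\BE}_i\}$ paired with $X$ according to \eqref{eq:BE-0}--\eqref{eq:BE-2}, the graded commutator $[X,T]$ lies in a one-dimensional space $\C T$ that is a single $\fh$-weight space. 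However, the $\fh$-weight of $X$ is the nonzero root $\varepsilon_i-\varepsilon_j$ or $\varepsilon_{\ol{r}}-\varepsilon_{\ol{s}}$, so $[X,T]$ must have $\fh$-weight strictly different from that of $T$, forcing $[X,T]=0$. Unpacking this graded commutator yields \eqref{eq:BE-0}, \eqref{eq:BE-1} and \eqref{eq:BE-2}.

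Identity \eqref{eq:BE-3} is then immediate: applying \eqref{eq:BE-1} to push $\BE_{ij}$ past $\BE$ and then past $\ol{\BE}$ produces scalar factors $\omega(\gamma_i-\gamma_j,d(\BE))$ and $\omega(d(\BE),\gamma_i-\gamma_j)$, which are mutually inverse by \eqref{eq:cycle-1} and cancel; the same cancellation with \eqref{eq:BE-2} handles $\BE_{\ol{r}\,\ol{s}}$. The main technical subtlety I anticipate is the PBW bookkeeping needed to confirm simultaneously that all three top components are one-dimensional and that the specific ordered products $\BE$, $\ol{\BE}$, $\ol{\BE}_i$ span them; but this reduces to Scheunert's theorem together with the nilpotency $\BE_{i,\ol{r}}^2=\BE_{\ol{r},i}^2=0$ noted after its statement. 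Everything beyond that is routine manipulation of the commutative factor $\omega$.
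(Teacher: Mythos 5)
Your weight-space argument for \eqref{eq:BE-0}--\eqref{eq:BE-2} is a genuinely different route from the paper's, and it works. The paper proves \eqref{eq:BE-0} essentially as you do (graded commutativity of $\ol\fv$ plus nilpotency of the generators), but establishes \eqref{eq:BE-1} and \eqref{eq:BE-2} by explicitly expanding $[\BE_{ij},\BE]$ via the Leibniz rule, computing $[\BE_{ij},\BE_j]$ as a sum of reordered monomials, and killing each term with $\BE_i\BE_{i\ol{r}}=0$. Your observation that $\mathrm{ad}_X$ for $X\in\fk$ is a degree-preserving graded derivation of $\U(\fv)$ (resp.\ $\U(\ol\fv)$, $\U(\ol\fv_i)$), so that $[X,T]$ lands in the one-dimensional top component $\C T$ and must vanish by weight comparison when $X$ is a nonzero root vector, replaces all of that bookkeeping with one conceptual step. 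The one-dimensionality inputs you need are exactly those recorded in the paper (Section \ref{sect:parabolic} and Theorem \ref{thm:PBW}), so this part is sound and arguably cleaner.

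There is, however, a gap in your treatment of \eqref{eq:BE-3}. That relation is asserted for \emph{all} $i,j\le M_+$ and $r,s\le M_-$, including $i=j$ and $r=s$ --- and the diagonal case is precisely what is needed later, since the proof of Lemma \ref{lem:typical} uses that $\BE\ol\BE$ commutes with all of $\fk$, in particular with the Cartan subalgebra. Your derivation of \eqref{eq:BE-3} by "pushing past" with mutually inverse $\omega$-factors invokes \eqref{eq:BE-1} and \eqref{eq:BE-2}, which are only stated, and only true, for $i\ne j$ and $r\ne s$: on the diagonal one has $[\BE_{ii},\BE]=M_-\BE\ne 0$, so $\BE_{ii}$ does not simply commute past $\BE$ up to a scalar of modulus one. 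The paper handles this separately by computing $[\BE_{ii},\BE]=M_-\BE$ and $[\BE_{ii},\ol\BE]=-M_-\ol\BE$ and observing the cancellation in the Leibniz expansion of $[\BE_{ii},\BE\ol\BE]$. The repair fits naturally into your own framework: $\mathrm{ad}_{\BE_{ii}}$ preserves the one-dimensional weight spaces $\C\BE$ and $\C\ol\BE$ and acts there by the scalars $\mathrm{wt}(\BE)(\BE_{ii})=M_-$ and $\mathrm{wt}(\ol\BE)(\BE_{ii})=-M_-$, which cancel (and similarly for $\BE_{\ol r\,\ol r}$); but as written your argument does not cover this case.
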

\begin{proof}
Equation \eqref{eq:BE-0} is equivalent to $[\BE_{\ol{r} \, \ol{s}},  \ol{\BE}_i ]=0$ for $r\ne s$, 
which is quite clear 
from the facts that the elements $E_{\ol{t} k}$ graded commute, and each satisfies $E_{\ol{t} k}^2=0$. 

Consider $[\BE_{i j},  \BE]$ for all $i, j\le M_+$. We have 
\beq
[\BE_{i j},  \BE] &= \omega\big(\gamma_i-\gamma_j, \sum_{\ell=j+1}^{M_+}d(\BE_\ell)\big)\BE_{M_+}\dots \BE_{j+1}[\BE_{i j},  \BE_j]
\BE_{j-1}\dots \BE_{1}. \nonumber
\eeq
Assume that $i\ne j$. Then 
\[
[\BE_{i j},  \BE_j]= \sum_{\ol{r}}  c_{\ol{r}}\BE_{j \ol{1}} \dots \BE_{j,  \ol{r}-1}  \BE_{i \ol{r}} \BE_{j,  \ol{r}+1} \dots \BE_{j \ol{M_-}}, 
\]
where each $c_{\ol{r}}$ is a scalar depending on $\ol{r}$ only. 
[An explicit formula for $c_{\ol{r}}$ can be easily obtained, but is not needed.]  
Since $\fv$ is graded commutative, and $\BE_i \BE_{i\ol{r}}=0$, we have $[\BE_{i j},  \BE]=0$ for all $i\ne j$, proving the first relation of \eqref{eq:BE-1}. We can similarly prove the second relation  of \eqref{eq:BE-1} and also the relations  \eqref{eq:BE-2}

If $i=j$, then $[\BE_{i i},  \BE_i]= M_- \BE_i$. Hence $[\BE_{i i},  \BE]=M_- \BE$. We can similarly prove $[\BE_{i i},  \ol{\BE}]=-M_- \ol{\BE}$. This and \eqref{eq:BE-1} together imply  the first relation of  \eqref{eq:BE-3}. The proof for the second 
relation of  \eqref{eq:BE-3} is similar but uses \eqref{eq:BE-2}.
\end{proof}

We can now prove Lemma \ref{lem:typical}.

\begin{proof}[Proof of Lemma \ref{lem:typical}]
Note that  $L_\lambda=V(L^0_\lambda(\fk))$ if and only if
$\U(\ol{\fv})_{-d_{max}}(L_\lambda)_0 \ne 0$. Since $L_\lambda$ is simple,  this happens  if and only if
\beq\label{eq:typical-2}
\U(\fv)_{d_{max}} \U(\ol{\fv})_{-d_{max}}(L_\lambda)_0 = \BE\ol{\BE}(L_\lambda)_0\ne 0.
\eeq
It follows \eqref{eq:BE-3} that $\BE\ol{\BE}$ commutes with $\fk$.  Since $(L_\lambda)_0$ is a simple $\fk$-submodule, $\BE\ol{\BE} $ acts on $(L_\lambda)_0$ by the multiplication of some scalar $\chi(\lambda)$. Thus the proof 
of Lemma \ref{lem:typical}  boils down to showing that  \eqref{eq:typical} is equivalent to $\chi(\lambda)\ne 0$.

Let $v^+$ be the highest weight vector of $L_\lambda$. Then $\BE\ol{\BE} v^+=\chi(\lambda) v^+$. We can determine $\chi(\lambda)$ by calculating $\BE\ol{\BE} v^+$. 

We have the following relations.
\beq
&&\BE_{i j} \ol{\BE}^{(\ell)}v^+=0, \quad 1\le i<j \le M_+, \label{eq:R-1}\\
&&\BE_{\ol{r}\, \ol{s}} \ol{\BE}^{(\ell)}v^+=0, \quad 1\le r<s \le M_-, \label{eq:R-2}\\
&&\BE_{i j} \BE_{\ol{t} \ell} \BE_{\ol{t}-1, \ell}\dots \BE_{\ol{1} \ell} \ol{\BE}^{(\ell+1)}v^+=0, \quad 1\le i<j \le M_+,\label{eq:R-3}\\
&&\BE_{\ol{r}\, \ol{s}} \BE_{\ol{t} \ell} \BE_{\ol{t}-1, \ell}\dots \BE_{\ol{1} \ell} \ol{\BE}^{(\ell+1)}v^+=0, \quad 1\le r<s \le M_-, s>t,
\label{eq:R-4}\\
&&\BE_{k \ol{s}} \ol{\BE}^{(\ell+1)}v^+=0, \quad k\le \ell. 
\label{eq:R-5}
\eeq
Note that \eqref{eq:R-3} and \eqref{eq:R-4} easily follow \eqref{eq:R-1} and \eqref{eq:R-2} respectively, and 
\eqref{eq:R-5} follows \eqref{eq:R-3} and \eqref{eq:R-4}. 
The relation \eqref{eq:R-2} is clear by \eqref{eq:BE-0}.  Let us prove \eqref{eq:R-1}. 
It is clear if $i<\ell$. If $i\ge \ell$, 
\beq
\BE_{i j} \ol{\BE}^{(\ell)}v^+&=\omega\big(\gamma_i-\gamma_j,  \sum_{k=\ell}^{i-1}d(\ol{\BE}_k) \big) \ol{\BE}_\ell\dots \ol{\BE}_{i-1} [\BE_{i j}, \ol{\BE}_i]  \ol{\BE}^{(i+1)}v^+. \nonumber
\eeq
Note that $[\BE_{i j}, \ol{\BE}_i]= \sum_r x_{\ol{r}} \BE_{\ol{r} j}$ for some $x_{\ol{r}}\in\U(\ol{\fv})$. As $\ol{\BE}_j \BE_{\ol{r} j}=0$, we conclude that the right hand side of the above equation vanishes, proving \eqref{eq:R-1}. 

Let us introduce further notation.  
For all $1\le i\le M_+$ and $1\le r\le M_-$, let 
\[
\baln
v^+_{i \ol{r}}&:=\BE_{\ol{r}-1, i}\dots \BE_{\ol{1} i} \ol{\BE}^{(i+1)}v^+, 
\ealn
\]
with $v^+_{i \ol{1}}= \ol{\BE}^{(i+1)}v^+$ and 
$v^+_{M_+,  \ol{r}}=\BE_{\ol{r}-1, M_+}\dots \BE_{\ol{1}, M_+} v^+$. 
In particular, $v^+_{M_+,  \ol{1}}=v^+$. 
Denote the weight of $v^+_{i \ol{r}}$ by $\lambda_{i \ol{r}}$, and define the scalar
$
\chi_{i \ol{r}}(\lambda) = (\lambda_{i \ol{r}}, \varepsilon_i- \varepsilon_{\ol{r}}).
$
Then $(\BE_{i i} + \BE_{\ol{r}\, \ol{r}})  v^+_{i \ol{r}} = \chi_{i \ol{r}}(\lambda) v^+_{i \ol{r}}$.

Now consider 
\[
\baln
\BE_{1 \ol{M_-}} \ol{\BE}v^+&= \BE_{1, \ol{M_-}} \BE_{\ol{M_-}, 1} \BE_{\ol{M_-}-1, 1}\dots \BE_{\ol{1} 1} \ol{\BE}^{(2)}v^+\\
&=(\BE_{1 1} + \BE_{\ol{M_-}\, \ol{M_-}}) v^+_{1 \ol{M_-}}- \BE_{\ol{M_-}, 1} \BE_{1, \ol{M-}} v^+_{1 \ol{M_-}}.
\ealn
\]
It is easy to show that  $\BE_{1, \ol{M_-}} v^+_{1 \ol{M_-}}=\BE_{1, \ol{M_-}} \BE_{\ol{M_-}-1, 1}\dots \BE_{\ol{1} 1} \ol{\BE}^{(2)}v^+=0$ by using  \eqref{eq:R-3}, \eqref{eq:R-4}  and \eqref{eq:R-5}.  Thus the second term vanishes, and we have 
\[
\baln
\BE_{1 \ol{M_-}} \ol{\BE}v^+&=\chi_{1 \ol{M_-}}(\lambda) v^+_{1 \ol{M_-}}.
\ealn
\]
The arguments can be repeated verbatim to show that for all $r$, 
\beq
\BE_{1 \ol{r}}v^+_{1, \ol{r}+1} = \chi_{1, \ol{r}}(\lambda) v^+_{1 \ol{r}}. 
\eeq
This then leads to
\[
\BE_1\ol{\BE}^{(1)}v^+= \left(\prod_{r=1} ^{M_-}\chi_{1, \ol{r}}(\lambda) \right)\ol{\BE}^{(2)}v^+.  
\]
Exactly the same calculations yield, for $1\le i \le M_+$, 
\beq\label{eq:key-aty}
\BE_i\ol{\BE}^{(i)}v^+= \left(\prod_{r=1} ^{M_-}\chi_{i, \ol{r}}(\lambda) \right)\ol{\BE}^{(i+1)}v^+.  
\eeq
This immediately leads to 
\[
\chi(\lambda)= \prod_{i=1}^{M_+} \prod_{r=1} ^{M_-}\chi_{i, \ol{r}}(\lambda).
\]

Let us now determine $\chi_{i, \ol{r}}$. 
We denote the weight of an element $x\in \U(\gl(V))$ by $wt(x)$. Then 
$wt(\ol{\BE}_i)=\sum_{r=1}^{M_-} \varepsilon_{\ol{r}} - M_- \varepsilon_i$, and hence 
$wt(\ol{\BE}^{(i+1)})=(M_+ -i)\sum_{r=1}^{M_-} \varepsilon_{\ol{r}} - M_- \sum_{j=i+1}^{M_+}\varepsilon_j$.
Let 
\[
\baln
\rho_{i\ol{r}}:=&wt(\BE_{\ol{r-1}, i}\dots \BE_{\ol{1} i} \ol{\BE}^{(i+1)}), 
\ealn
\]
which is clearly equal to $wt(\BE_{\ol{r-1}, i}\dots \BE_{\ol{1} i}) +wt(\ol{\BE}^{(i+1)})$. Thus we obtain
\[
\baln
\rho_{i\ol{r}}= &(M_+ -i)\sum_{s=1}^{M_-} \varepsilon_{\ol{s}} + \sum_{s=1}^{r-1} \varepsilon_{\ol{s}} 
- M_- \sum_{j=i+1}^{M_+}\varepsilon_j - (r-1)\varepsilon_i.
\ealn
\]
Now $(\rho_{i\ol{r}}, \varepsilon_i - \varepsilon_{\ol{r}}) =M_+-r  -i +1$. It follows \eqref{eq:rho-ir} that
\[
\baln
(\rho_{i\ol{r}}, \varepsilon_i - \varepsilon_{\ol{r}}) = (\rho, \varepsilon_i - \varepsilon_{\ol{r}}). 
\ealn
\]
The weight $\lambda_{i \ol{r}}$ of $v^+_{i \ol{r}}$ is given by $\lambda_{i \ol{r}}=\lambda+ \rho_{i\ol{r}}$. Hence  
$\chi_{i \ol{r}}(\lambda) = (\lambda+\rho, \varepsilon_i- \varepsilon_{\ol{r}})$,  and this leads to
\beq\label{eq:chi}
\chi(\lambda)=\prod_{i=1}^{M_+} \prod_{r=1}^{M_-}(\lambda+\rho, \varepsilon_i-\varepsilon_{\ol{r}})= \prod_{\Upsilon\in\Phi^+_1}(\lambda+\rho, \Upsilon). 
\eeq
Therefore, \eqref{eq:typical} is indeed equivalent to $\chi(\lambda)\ne 0$, proving the lemma. 
\end{proof}


\section{Invariant theory of general linear Lie $(\Gamma, \omega)$-algebras}

In this section, we study the classical invariant theory (in the sense of \cite{W}) 
for the general linear Lie $(\Gamma, \omega)$-algebras. 
Generalisations of Howe dualities \cite{Ho1, Ho}  in the context of  Weyl $(\Gamma, \omega)$-algebras will be developed, which include the Howe duality for 
general linear Lie superalgebras \cite{S, CLZ, CW}  as a special case 
(also see \cite{CZ} on Howe duality for orthosymplectic Lie superalgebras).  
We deduce from the generalised Howe dualities 
the first and second fundamental theorems of invariant theory 
for  $\gl(V(\Gamma, \omega))$ in the symmetric $(\Gamma, \omega)$-algebra setting.  
A Schur-Weyl duality between the general linear Lie $\gl(V(\Gamma, \omega))$-algebra and the symmetric group 
will also be obtained from generalised Howe dualities. 
This strengthens the double commutant theorem between them proved in \cite{FM}.

We mention that generalised Howe dualities for quantum general linear (super)groups 
were proved long ago \cite{LZ, WZ, Z03}, which have been much studied and generalised in recent years. 

We maintain notation of Section \ref{sect:modules}. In particular,  we write $V=V(\Gamma, \omega)$ and $\gl(V)=\gl(V(\Gamma, \omega))$.

\subsection{Symmetric $(\Gamma, \omega)$-algebras and Weyl $(\Gamma, \omega)$-algebras}
\label{sect:Weyl}

Given a positive integer $N$, denote $V^N=V\ot \C^N$, where $\C^N$ is regarded as a $\Gamma$-graded vector space consisting of degree $0$ elements only. For later use, we
denote by $\ol\C^N$ the dual space of $\C^N$ and by $V^*$ the $\Gamma$-graded dual space of $V$, and write $\ol{V}^N=V^*\ot \ol\C^N$.

Let $\{f^{(r)}\mid r=1, 2, \dots, N\}$ be the standard basis for $\C^N$. Then   
we have the homogeneous basis 
$\{e(\gamma)_i^r=e(\gamma)_i\ot f^{(r)}\mid 1\le r \le N, 1\le i\le m_\alpha, \alpha\in\Gamma\}$ for $V^N$. Let 
$T(V^N)=\sum_{n\ge 0} (V^N)^{\ot n}$ be the tensor algebra over $V^N$, which is an associative $(\Gamma, \omega)$-algebra.

The symmetric group $\Sym_n$ acts on $(V^N)^{\ot n}$  for any $n$ (with the action 
defined by Corollary \ref{cor:sym} but replacing $V$ by $V^N$). 
In particular for $n=2$, the operator $\Sigma^+(2)\in\C\Sym_2$ (cf. \eqref{eq:sym-skew}) 
acts on $V^N\ot V^N$ by $\id_{V^N}\ot \id_{V^N}- \tau_{V^N, V^N}$. Denote
$\bigwedge^2_\omega V^N = \Sigma^+(2)(V^N\ot V^N)$, 
which is spanned by the homogeneous elements  
\beq
e(\gamma)_i^r\ot  e(\zeta)_j^s - \omega(\gamma, \zeta)  e(\zeta)_j^s\ot e(\gamma)_i^r,  \quad \forall r, s,  i, j,  \gamma, \zeta. 
\eeq

Let $\mathcal{J}$ be the $2$-side ideal in $T(V^N)$ generated by $\bigwedge^2_\omega V^N$, 
and define the quotient algebra 
\[
S_\omega(V^N):=T(V^N)/\mathcal{J}.
\]
This is a  graded commutative associative $(\Gamma, \omega)$-algebra (cf. Section \ref{sect:assoc}). 
It is $\Z_+$-graded, with the homogeneous subspace 
of degree $n$ being $S^n_\omega(V^N):=\frac{(V^N)^{\ot n}}{(V^N)^{\ot n}\cap \mathcal{J}}$. 

We write $x(\gamma)_i^r:= e(\gamma)_i^r+ \mathcal{J}$ for all $r,  i,  \gamma$.

The algebra  $S_\omega(V^N)$ can be interpreted as the symmetric $(\Gamma, \omega)$-algebra over $V^N$
(see \cite[\S 12]{Sch83} for details). 
We  have $S_\omega(V^N)=\sum_{n\ge 0}S^n_\omega(V^N)$ with $S^n_\omega(V^N)=\Sigma^-(n)((V^N)^{\ot n})$. The multiplication is defined, 
for any $A\in S^p_\omega(V^N), B\in S^q_\omega(V^N)$,  by 
\[
A\ot  B \mapsto \frac{1}{(p+q)!}\Sigma^-(p+q)(A\ot B). 
\]
We can similarly define the skew symmetric $(\Gamma, \omega)$-algebra $\bigwedge_\omega V^N= \sum_{n\ge 0} \bigwedge^n_\omega V^N$ over $V^N$, where $\bigwedge^n_\omega V^N = 
\Sigma^+(n)((V^N)^{\ot n})$.

\begin{remark} Fix any $\gamma\in\Gamma_R$, and denote $V_\gamma^N=V_\gamma\ot\C^N$. The elements $x(\gamma)_i^r$, for $i=1, 2, \dots, m_\gamma$ and $r=1, 2, \dots, N$, generate the subalgebra 
$S_\omega(V_\gamma^N)\subset S_\omega(V^N)$. We have 
\[
S_\omega(V_\gamma^N)= \left\{
\begin{array}{l l}
S(V_\gamma^N), &\text{$\gamma\in\Gamma_R^+$}, \\
\bigwedge V_\gamma^N, &\text{$\gamma\in\Gamma_R^-$}, 
\end{array}
\right.
\]
where $S(V_\gamma^N)$ and $\bigwedge V_\gamma^N$ are respectively the usual symmetric algebra 
and exterior algebra of $V_\gamma^N$. 
\end{remark}

Hereafter we let ${\bf x}=\sum_\alpha\sum_{i, r} \C x(\alpha)_i^r$, 
and denote $S_\omega(V^N)$ by $\Px$.

We introduce another graded commutative associative $(\Gamma, \omega)$-algebra $\Pd$, 
which is generated by the elements  $\frac{\partial}{\partial x(\alpha)_ i^r}$ 
for $1\le r\le N, 1\le i\le m_\alpha$ and $\alpha\in\Gamma$, where $\frac{\partial}{\partial x(\alpha)_ i^r}$  
are of degree $-\alpha$.  
We can also grade $\Pd$ by $\Z_-$ with $\frac{\partial}{\partial x(\alpha)_ i^r}$  having degree $-1$.

Let $\CW_\omega$ be the associative algebra 
generated by the subalgebras $\Px$ and $\Pd$, with the following defining relations
\beq\label{eq:CCR}
\phantom{XX} 
\frac{\partial}{\partial x(\alpha)_ i^r} x(\beta)_j^s - \omega(-\alpha, \beta)  x(\beta)_j^s \frac{\partial}{\partial x(\alpha)_ i^r} =\delta_{\alpha \beta}\delta_{i j} \delta_{r s}, \quad \forall r, s, i, j, \alpha, \beta.
\eeq
We will  call $\CW_\omega$ the Weyl $(\Gamma, \omega)$-algebra.   
As vector space, $\CW_\omega\simeq\Px\ot\Pd$. 
 
The relations \eqref{eq:CCR} are clearly homogeneous of degree $0$ in the $\Gamma$-grading, 
thus $\CW_\omega$ is a 
$(\Gamma, \omega)$-algebra. It also admits a $\Z$-grading with $x(\alpha)_ i^r$ 
being of degree $1$, and $\frac{\partial}{\partial x(\alpha)_ i^r}$  of degree $-1$.

The results below are generalisations of standard facts for usual Weyl superalgebras to the present case. Their proofs are easy. 
\begin{lemma}\label{lem:Fock}
The subalgebra $\Px$ forms a simple $\CW_\omega$-module with $x(\alpha)_ i^r$ acting by multiplication, and  
 $\frac{\partial}{\partial x(\alpha)_ i^r}$ acting as $(\Gamma, \omega)$-derivations such that
\beq
\frac{\partial}{\partial x(\alpha)_ i^r}(1)=0, \quad \frac{\partial}{\partial x(\alpha)_ i^r}(x(\beta)_j^s)=\delta_{\alpha \beta}\delta_{i j} \delta_{r s}, \quad \forall r, s, i, j, \alpha, \beta. 
\eeq
\end{lemma}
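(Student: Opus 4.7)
The plan has three parts: (a) construct the operators $\partial_{(\alpha,i,r)} := \partial/\partial x(\alpha)_i^r$ as $(\Gamma,\omega)$-derivations of $\Px$, (b) verify the commutation relations \eqref{eq:CCR} so that $\Px$ becomes a $\CW_\omega$-module, and (c) establish simplicity.

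For (a), I would first define $\partial_{(\alpha,i,r)}$ as a $(\Gamma,\omega)$-derivation of $\Gamma$-degree $-\alpha$ on the tensor algebra $T(V^N)$ by declaring $\partial_{(\alpha,i,r)}(1)=0$, $\partial_{(\alpha,i,r)}(e(\beta)_j^s)=\delta_{\alpha\beta}\delta_{ij}\delta_{rs}$, and extending via the graded Leibniz rule
\[
\partial(v_1\otimes\cdots\otimes v_n) = \sum_{k=1}^n \omega\Big(-\alpha,\textstyle\sum_{\ell<k} d(v_\ell)\Big)\, v_1\otimes\cdots\otimes\partial(v_k)\otimes\cdots\otimes v_n.
\]
To pass this to the quotient $\Px = T(V^N)/\mathcal{J}$, I need to check that $\partial$ preserves the ideal $\mathcal{J}$ generated by $\bigwedge^2_\omega V^N$. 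A direct computation on a generator $c = e(\gamma)_k^p\otimes e(\zeta)_l^q - \omega(\gamma,\zeta) e(\zeta)_l^q\otimes e(\gamma)_k^p$ actually shows $\partial_{(\alpha,i,r)}(c)=0$: the two potentially surviving coefficients reduce to $1-\omega(\gamma,\zeta)\omega(-\gamma,\zeta)$ and $\omega(-\zeta,\gamma)-\omega(\gamma,\zeta)$ respectively, and both vanish by the cocycle identities \eqref{eq:cycle-1}--\eqref{eq:cycle-3} (using $\omega(0,\cdot)=1$ and $\omega(-\zeta,\gamma) = \omega(\zeta,\gamma)^{-1} = \omega(\gamma,\zeta)$). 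Combined with the derivation expansion $\partial(acb) = \partial(a)cb + \omega(-\alpha,d(a))a\partial(c)b + \omega(-\alpha,d(a)+d(c))ac\partial(b)$, this gives $\partial(\mathcal{J})\subseteq\mathcal{J}$, so $\partial_{(\alpha,i,r)}$ descends to a well-defined $(\Gamma,\omega)$-derivation of $\Px$.

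For (b), since each $\partial_{(\alpha,i,r)}$ is a $(\Gamma,\omega)$-derivation of $\Px$ with the prescribed values on generators, applying it to $x(\beta)_j^s\cdot P$ for arbitrary $P\in\Px$ yields
\[
\partial_{(\alpha,i,r)}\big(x(\beta)_j^s\, P\big) = \delta_{\alpha\beta}\delta_{ij}\delta_{rs}\, P + \omega(-\alpha,\beta)\, x(\beta)_j^s\, \partial_{(\alpha,i,r)}(P),
\]
which is exactly \eqref{eq:CCR}. Together with the graded commutativity of the multiplications by $x(\beta)_j^s$ (built into $\Px$) and of the $\partial$'s among themselves (again by Leibniz), this realises $\Px$ as a $\CW_\omega$-module.

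For (c), I would run the standard Fock-space ``lowering'' argument. Let $M\subseteq\Px$ be a nonzero $\CW_\omega$-submodule and pick $P\in M\setminus\{0\}$ of top $\Z_+$-degree $n$ in $\Px = \bigoplus_n S^n_\omega(V^N)$, writing $P = P_n + (\text{lower})$ with $P_n\ne 0$. Fix a monomial basis of $\Px$ (polynomial in the generators with $\gamma\in\Gamma^+$, exterior in those with $\gamma\in\Gamma^-$) and select a monomial $m$ of degree $n$ appearing in $P_n$ with nonzero coefficient. Let $\partial_m$ be the product of matching partial derivatives with appropriate multiplicities. The relations \eqref{eq:CCR} imply $\partial_m(m) = c_m \ne 0$ (essentially a product of factorials, up to signs from $\omega$) and $\partial_m(m')=0$ for any other monomial $m'$ of degree $\le n$, while all lower-degree components of $P$ are annihilated by $\partial_m$ on degree grounds (since $\partial_m$ has $\Z_+$-degree $-n$ and $\Px$ vanishes in negative degrees). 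Hence $\partial_m(P)\in M$ is a nonzero scalar, so $1\in M$, and multiplication by the $x(\gamma)_i^r$ gives $M=\Px$. The one technical point that will require care is the duality claim $\partial_m(m')=0$ for $m'\ne m$ of degree $\le n$, which I expect to handle by straightforward combinatorics from \eqref{eq:CCR} but is the most delicate step because of the graded signs.
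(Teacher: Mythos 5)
Your proposal is correct and is exactly the standard argument the paper has in mind when it declares the proofs of Lemma~\ref{lem:Fock} and its companions ``easy'' and omits them: define the $\partial$'s as graded derivations on $T(V^N)$, check they annihilate the generators of $\mathcal{J}$ (your two coefficient computations are right, and both vanish by bilinearity of $\omega$), read off \eqref{eq:CCR} from the Leibniz rule, and run the Fock-space lowering argument for simplicity. The one step you flag as delicate, $\partial_m(m')=0$ for $m'\neq m$ of the same degree, is indeed routine: $\partial_m$ applied to a degree-$n$ monomial lands in $S^0_\omega(V^N)=\C$, and the scalar is a nonzero multiple of the ``same multiset of generators'' indicator, so no issue arises there.
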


\begin{remark}
 The requirement that $\frac{\partial}{\partial x(\alpha)_ i^r}$ be $(\Gamma, \omega)$-derivations on $\Px$ means  
\beq
\frac{\partial}{\partial x(\alpha)_ i^r}(A B) = \frac{\partial}{\partial x(\alpha)_ i^r}(A) B + \omega(\alpha, \zeta)A \frac{\partial}{\partial x(\alpha)_ i^r}(B)
\eeq
for any $A\in\Px_\zeta$, $\zeta\in\Gamma$,  and  $B\in \Px$ (cf. equation \eqref{eq:deriv}).
\end{remark}

As $\CW_\omega$-module,  $\Px$ is $\Gamma$-graded, and is also $\Z$-graded with vanishing  subspaces of negative degrees.  

\begin{definition}\label{rmk:F-space}
We call $\Px$ the Fock space for $\CW_\omega$, and write it as $\CF_{\CW_\omega}$
when it is necessary to make it explicit that $\Px$ is used as the Fock space
for $\CW_\omega$. Call $1\in\Px$  the vacuum vector of $\CF_{\CW_\omega}$. 
\end{definition}

The Weyl $(\Gamma, \omega)$-algebra $\CW_\omega$ has another simple module. 
\begin{lemma}\label{lem:Fock-dual}
The subalgebra $\Pd$ forms a simple $\CW_\omega$-module with $\frac{\partial}{\partial x(\alpha)_ i^r}$  acting by multiplication, and  $x(\alpha)_ i^r$
 acting as $(\Gamma, \omega)$-derivations such that
\beq
\phantom{XXX} x(\alpha)_ i^r(1)=0, \ x(\beta)_j^s\left(\frac{\partial}{\partial x(\alpha)_ i^r}\right)=- \omega(\alpha, \alpha) \delta_{\alpha \beta}\delta_{i j} \delta_{r s}, \ \forall r, s, i, j, \alpha, \beta. 
\eeq
\end{lemma}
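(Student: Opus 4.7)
The plan parallels the proof of Lemma~\ref{lem:Fock}, with the roles of the $x$'s and the $\partial$'s exchanged. First I would define the prescribed operators on $\Pd$: each $\frac{\partial}{\partial x(\alpha)_i^r}$ acts by left multiplication (well-defined since $\Pd$ is itself a $(\Gamma,\omega)$-algebra), while each $x(\beta)_j^s$ is to be the unique $(\Gamma,\omega)$-derivation of degree $\beta$ on $\Pd$ taking the values specified on the generators. To see that the prescription extends consistently, I would invoke the standard fact that a $(\Gamma,\omega)$-derivation on a $(\Gamma,\omega)$-commutative algebra presented by generators and the $\omega$-commutativity relations is determined by, and may be prescribed arbitrarily on, its generators. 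A brief direct check that the derivation rule \eqref{eq:deriv} applied to $\frac{\partial}{\partial x(\alpha)_i^r}\frac{\partial}{\partial x(\gamma)_k^t} - \omega(-\alpha,-\gamma)\frac{\partial}{\partial x(\gamma)_k^t}\frac{\partial}{\partial x(\alpha)_i^r}=0$ gives $0$ on both sides confirms this in our setting.

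Next, I would verify the Weyl relations \eqref{eq:CCR} by applying the commutator $\frac{\partial}{\partial x(\alpha)_i^r}\,x(\beta)_j^s - \omega(-\alpha,\beta)\,x(\beta)_j^s\,\frac{\partial}{\partial x(\alpha)_i^r}$ to an arbitrary $F\in\Pd$ and expanding via the Leibniz rule for $x(\beta)_j^s$ on the product $\frac{\partial}{\partial x(\alpha)_i^r}\cdot F$. The two cross-terms involving $x(\beta)_j^s(F)$ cancel because $\omega(\beta,-\alpha)\,\omega(-\alpha,\beta)=1$, and what remains is the constant $-\omega(-\alpha,\beta)\,\omega(\alpha,\alpha)\,\delta_{\alpha\beta}\delta_{ij}\delta_{rs}\cdot F$. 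When $\alpha=\beta$, this collapses to $\delta_{\alpha\beta}\delta_{ij}\delta_{rs}\cdot F$ by $\omega(-\alpha,\alpha)\,\omega(\alpha,\alpha)=\omega(0,\alpha)=1$. Thus the sign $-\omega(\alpha,\alpha)$ in the statement is forced as the unique choice compatible with \eqref{eq:CCR}.

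Finally, for simplicity, I would regard $\Pd$ as $\Z_+$-graded by total degree in the $\partial$'s, so that $1\in\Pd$ spans the one-dimensional degree-$0$ component, while each $x(\beta)_j^s$ strictly lowers the degree. Given any nonzero $F\in\Pd$, I would fix a PBW-type monomial of maximal total degree appearing in $F$ and apply the matching ordered product of $x(\beta)_j^s$'s, using the CCR just established to commute them past the $\partial$'s. This reduces $F$ to a nonzero scalar multiple of $1$. Since left multiplication by arbitrary products of $\partial$'s then produces all of $\Pd$ starting from $1$, the $\CW_\omega$-submodule generated by $F$ is all of $\Pd$, proving simplicity.

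The main technical point, as in many arguments in this paper, is bookkeeping for the commutative factors: the precise coefficient $-\omega(\alpha,\alpha)$ in the action of $x(\beta)_j^s$ on $\frac{\partial}{\partial x(\alpha)_i^r}$ is not a free choice but is dictated by the cocycle identities \eqref{eq:cycle-1}--\eqref{eq:cycle-3} together with the sign $\omega(-\alpha,\beta)$ in \eqref{eq:CCR}, and it is the only substantive ingredient that distinguishes this ``dual Fock'' construction from the symmetric Fock module of Lemma~\ref{lem:Fock}.
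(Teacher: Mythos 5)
Your proof is correct, and it is precisely the standard argument that the paper has in mind when it omits the proof with the remark that these are ``easy generalisations of standard facts for usual Weyl superalgebras'': prescribe the derivations on generators, check compatibility with the $\omega$-commutativity relations and with \eqref{eq:CCR} (your computation $-\omega(-\alpha,\alpha)\cdot(-\omega(\alpha,\alpha))=1$ correctly shows the coefficient $-\omega(\alpha,\alpha)$ is forced), and deduce simplicity by lowering the $\Z$-degree to the one-dimensional component $\C\cdot 1$ and then regenerating $\Pd$ by multiplication. Nothing further is needed.
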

We denote by $\ol\CF_{\CW_\omega}$ the $\CW_\omega$-module $\Pd$.

Now $\ol\CF_{\CW_\omega}$ can be identified with the $\Gamma\times \Z$-graded dual $\CW_\omega$-module of  the Fock space $\CF_{\CW_\omega}$. Let $p: \CF_{\CW_\omega}\lra \C$ be the projection onto the  
subspace spanned by the vacuum vector. 
\begin{lemma}
The following bilinear map 
 \beq
&&( \ , \ ): \ol\CF_{\CW_\omega} \times \CF_{\CW_\omega}\lra \C, \label{eq:F-pair}\\
&& (D, f)= p(D(f)), \quad \forall  f\in\CF_{\CW_\omega},  
D\in \ol\CF_{\CW_\omega}, \nonumber
\eeq
gives rise to a non-degenerate pairing between  $\ol\CF_{\CW_\omega}$ and 
 $\CF_{\CW_\omega}$, thus identifies  $\ol\CF_{\CW_\omega}$ with 
 the $\Gamma\times\Z$-graded dual space of $\CF_{\CW_\omega}$. 
\end{lemma}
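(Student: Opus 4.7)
The plan is to reduce to a finite-dimensional calculation on each homogeneous component via the compatible $(\Gamma\times\Z)$-gradings, then compute the pairing in an ordered monomial basis and show that its matrix is diagonal with nonzero entries. Since $p$ annihilates every homogeneous component of $\CF_{\CW_\omega}=\Px$ except the one of bidegree $(0,0)$, if $D\in\ol\CF_{\CW_\omega}$ has bidegree $(\gamma_D,n_D)$ and $f\in\CF_{\CW_\omega}$ has bidegree $(\gamma_f,n_f)$, then $(D,f)=p(D(f))$ vanishes unless $\gamma_D=-\gamma_f$ and $n_D=-n_f$. Consequently the pairing decomposes as a direct sum of pairings between finite-dimensional matched pieces, and it suffices to verify non-degeneracy on each such piece separately.

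Fix a total order on the index set $\mathcal{R}=\{(\alpha,i,r)\mid\alpha\in\Gamma_R,\,1\le i\le m_\alpha,\,1\le r\le N\}$ and apply the Poincar\'e--Birkhoff--Witt theorem (\thmref{thm:PBW}) to the abelian Lie $(\Gamma,\omega)$-algebras underlying $\Px$ and $\Pd$ to obtain ordered monomial bases $\{M_I\}$ of $\CF_{\CW_\omega}$ and $\{D_J\}$ of $\ol\CF_{\CW_\omega}$, indexed by tuples $I=(k_\iota)_{\iota\in\mathcal{R}}$ with $k_\iota\in\Z_+$ when $\iota$ has $\Gamma$-degree in $\Gamma^+$ and $k_\iota\in\{0,1\}$ otherwise. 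The central computation is then $p(D_J(M_I))=\delta_{JI}\,c_I$ with $c_I\ne 0$. The vanishing for $J\ne I$ follows by repeatedly applying \eqref{eq:CCR} to move the derivations past the polynomial generators: only the $\delta$-contraction term of \eqref{eq:CCR} can contribute to the constant piece picked out by $p$, so each $\partial$ of $D_J$ must pair with an $x$ of $M_I$ carrying a matching index $\iota$, forcing $J=I$ as multi-indices. In the diagonal case an inductive tally yields
\[
c_I=\Big(\prod_{\iota\in\mathcal{R}^+}k_\iota!\Big)\cdot\epsilon_I,
\]
where $\mathcal{R}^+\subset\mathcal{R}$ consists of indices $(\alpha,i,r)$ with $\alpha\in\Gamma^+$ and $\epsilon_I$ is a product of values of $\omega$ accumulated during the commutations.

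The principal obstacle is the meticulous bookkeeping of these $\omega$-factors as derivations are commuted past generators of differing $\Gamma$-degrees, together with verifying that the combinatorial count of contractions in the diagonal case yields the claimed factorial. Both issues are manageable: the $\omega$-factors contribute a nonzero scalar $\epsilon_I\in\C^\ast$ (since $\omega$ takes values in $\C^\ast$), and the factorial count reduces to the standard enumeration of perfect matchings between $k_\iota$ copies of $\partial(\iota)$ and $k_\iota$ copies of $x(\iota)$ for each $\iota\in\mathcal{R}^+$ (with a single pairing for each $\iota\in\mathcal{R}\setminus\mathcal{R}^+$ with $k_\iota=1$). Once diagonality and non-vanishing of $c_I$ are verified on each homogeneous piece, the map $D\mapsto(D,\,\cdot\,)$ furnishes an isomorphism between the corresponding finite-dimensional homogeneous components of $\ol\CF_{\CW_\omega}$ and of the $(\Gamma\times\Z)$-graded dual of $\CF_{\CW_\omega}$; summing over all bidegrees identifies $\ol\CF_{\CW_\omega}$ with that graded dual, completing the proof.
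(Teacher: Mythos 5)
Your proof is correct and follows the same route the paper takes implicitly: the paper states the lemma without proof, remarking only that $(\Pd_{-m},\Px_{n})=0$ for $m\ne n$ and that each pairing $(\Pd_{-m},\Px_{m})$ is non-degenerate, which is exactly your graded reduction. Your PBW-monomial computation (diagonal pairing matrix with entries $\epsilon_I\prod_{\iota\in\mathcal{R}^+}k_\iota!\ne 0$, using that $\omega(\alpha,\alpha)=1$ on $\Gamma^+$ so the within-index contractions carry no phases) correctly supplies the details the paper omits.
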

Clearly $(\Pd_{-m}, \Px_n)=0$ if $m\ne n$, and $(\Pd_{-m}, \Px_m)$ is non-degenerate for all $m$. 
Note that for all $x(\alpha)_i^r$ and 
$\partial_{x(\beta)_j^s} :=\frac{\partial}{\partial x(\beta)_j^s}$,
\beq
 (D, x(\alpha)_i^r  f)) &=& - \omega(d(D), \alpha) (x(\alpha)_i^r (D), f), \label{eq:x-to-x}\\
 (D,  \partial_{x(\beta)_j^s} (f))&=&
 \omega(\beta, d(D))(\partial_{x(\beta)_j^s} D, f). \label{eq:d-to-d}
 \eeq

\subsection{Colour Howe duality  of type $(\gl(V(\Gamma, \omega)), \gl_N(\C))$}
Note that $\Px_1\simeq V^N$ and  $\Pd_{-1}\simeq \ol{V}^N$ as $\gl(V(\Gamma, \omega))\times \gl_N(\C)$-modules. 

\subsubsection{Colour Howe duality  of type $(\gl(V(\Gamma, \omega)), \gl_N(\C))$}\label{sect:HD}
Consider the following elements of $\Px_1\Pd_{-1}\subset\CW_\omega$:
\beq
&&E^{r s} := \sum_{\alpha\in\Gamma_R} \sum_{i=1}^{m_\alpha} x(\alpha)_ i^r \frac{\partial}{\partial x(\alpha)_ i^s}, \ r, s=1, 2, \dots, N;  \label{eq:glN}\\
&& \SE(\alpha, \beta)_{i j} =  \sum_{r=1}^N x(\alpha)_ i^r \frac{\partial}{\partial x(\beta)_ j^r}, \ 
1\le i\le m_\alpha,  1\le j\le m_\beta, \alpha, \beta \in \Gamma_R. \label{eq:glV}
\eeq
They are all homogeneous of degree $0$ in the $\Z$-grading. They are also homogeneous in the $\Gamma$-grading, 
with $E^{r s}$ being of degree $0$, and $\SE(\alpha, \beta)_{i j}$ of degree $\alpha-\beta$.  
Denote by $\CU_{\gl_N}$ the subalgebra of $\CW_\omega$ generated by the elements $E^{r s}$, 
and by $\CU_{\gl(V)}$ the subalgebra generated by the elements $ \SE(\alpha, \beta)_{i j}$. 

\begin{remark}\label{rmk:gl-gl}
To understand the rationale of the above construction, 
we observe that $\Px_1\Pd_{-1}\simeq V\ot\C^N\ot V^*\ot \ol\C^N$ as $\gl(V)\times \gl_N(\C)$-module. 
This isomorphism maps the elements $E^{r s}$ to 
$\Big(V\ot\C^N\ot V^*\ot \ol\C^N\Big)^{\gl(V)}$, and the elements $\SE(\alpha, \beta)_{i j} $ to $\Big(V\ot\C^N\ot V^*\ot \ol\C^N\Big)^{\gl_N(\C)}$.
\end{remark}

Denote by $[ \ , \ ]$ the $(\Gamma, \omega)$-commutator on $\CW_\omega$.  
We have the following result.

\begin{theorem}\label{thm:invariants}
Retain notation above.  The following statements are true. 
\begin{enumerate}
\item The elements in \eqref{eq:glN} satisfy the following relations. 
\beq\label{eq:gl-N}
[E^{r s}, E^{t u}]= \delta_{s t} E^{r u} - \delta_{ r u} E^{t s}, \quad \forall r, s, t, u. 
\eeq
Thus they span a usual Lie algebra isomorphic to the standard general linear Lie  algebra $\gl_N(\C)$, 
and $\CU_{\gl_N}$ is a quotient algebra of $\U(\gl_N(\C))$.  

\item
The elements in \eqref{eq:glV} satisfy the following relations. 
\[
\phantom{XX}
[\SE(\alpha, \beta)_{i j},  \SE(\gamma, \delta)_{k\ell} ]= \delta_{\beta \gamma} \delta_{j k} \SE(\alpha, \delta)_{i \ell} 
-\omega(\alpha-\beta, \gamma - \delta) \delta_{\alpha \delta} \delta_{\ell i} \SE(\gamma, \beta)_{k j} 
\]  
for $1\le i\le m_\alpha,  1\le j\le m_\beta$,  $1\le k\le m_\gamma,  1\le \ell\le m_\delta$,  and $\alpha, \beta, \gamma, \delta \in \Gamma_R$. 
Thus they span a Lie $(\Gamma, \omega)$-algebra isomorphic to $\gl(V)$, and $\CU_{\gl(V)}$ is a quotient algebra of $\U(\gl(V))$. 

\item The subalgebras $\CU_{\gl_N}$ and $\CU_{\gl(V)}$ of $\CW_\omega$ commute, i.e., 
\[
A Q - Q A=0, \quad \forall A\in \CU_{\gl_N},  Q\in \CU_{\gl(V)}. 
\]

\item Let $\CW_\omega^{\CU_{\gl_N}}=\{ Q\in \CW_\omega \mid [A, Q]=0,  \ \forall A\in\CU_{\gl_N} \}$.  Then $\CW_\omega^{\CU_{\gl_N}}$ is a subalgebra of $\CW_\omega$,  and $\CW_\omega^{\CU_{\gl_N}}= \CU_{\gl(V)}$.
\end{enumerate}
\end{theorem}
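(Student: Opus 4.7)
Parts (1)--(3) reduce to direct computations with the canonical commutation relations \eqref{eq:CCR}. Since each $E^{rs}$ has $\Gamma$-degree $0$ and $\omega(0,\gamma)=1$ for every $\gamma$, the $\omega$-bracket $[E^{rs},-]$ is the ordinary commutator; a single application of \eqref{eq:CCR} yields \eqref{eq:gl-N} and, for part (3), shows that the normal-ordered forms of $E^{rs}\SE(\alpha,\beta)_{ij}$ and $\SE(\alpha,\beta)_{ij}E^{rs}$ coincide after using the graded commutativity of the $x$'s and of the $\partial$'s together with the identity $\omega(\gamma,\beta)\omega(\beta,\gamma)=1$. Part (2) is similar: expanding the $\omega$-bracket of two $\SE$'s via \eqref{eq:CCR} produces exactly the two Kronecker-delta terms in the asserted formula, with the factor $\omega(\alpha-\beta,\gamma-\delta)$ emerging from moving one $x$ past one $\partial$.

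The substance of the theorem is part (4). One inclusion $\CU_{\gl(V)} \subseteq \CW_\omega^{\CU_{\gl_N}}$ is immediate from part (3). For the reverse inclusion, the plan is to put every element of $\CW_\omega$ in normal-ordered form---all $x$'s on the left, all $\partial$'s on the right, each block internally ordered by a fixed total order---which yields a vector space isomorphism $\CW_\omega \simeq \Px \otimes \Pd$ and a bigrading $\CW_\omega = \bigoplus_{p,q\ge 0}\CW^{(p,q)}$ by the numbers of $x$'s and $\partial$'s. Since $\gl_N$ acts with bidegree $(0,0)$, each $\CW^{(p,q)}$ is $\gl_N$-stable, and as $\gl_N$-module
\[
\CW^{(p,q)} \;\simeq\; S^p_\omega(V^N)\otimes S^q_\omega(\ol{V}^N).
\]
Decomposing $V=\bigoplus_\gamma V_\gamma$ further identifies $S_\omega(V^N)$ with $\bigotimes_{\gamma\in\Gamma_R^+} S(V_\gamma\otimes\C^N)\otimes\bigotimes_{\gamma\in\Gamma_R^-}\bigwedge(V_\gamma\otimes\C^N)$ as a $\gl_N$-module, and analogously for $S_\omega(\ol{V}^N)$.

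At this point the $\gl_N$-invariants in $\CW^{(p,q)}$ can be computed by the first fundamental theorem of invariant theory for $\GL_N(\C)$ in its mixed symmetric-exterior formulation: the $\GL_N$-invariants are generated as an algebra by the quadratic contractions $\sum_{r=1}^N u_r \otimes v^r$ pairing a $\C^N$-coordinate with an $\ol\C^N$-coordinate. In our setting these contractions are precisely the normal-ordered generators $\SE(\alpha,\beta)_{ij}$ of $\CU_{\gl(V)}$, with $p$ forced to equal $q$ since each contraction has bidegree $(1,1)$.

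The main obstacle will be passing from invariants in the associated graded algebra to invariants in $\CW_\omega$ itself, since the $\SE(\alpha,\beta)_{ij}$ do not $\omega$-commute inside $\CW_\omega$. The remedy is to filter $\CW_\omega$ by the number $q$ of $\partial$'s, whose associated graded is the $\omega$-graded commutative algebra $\Px\otimes\Pd$ carrying its natural $\gl_N$-action; since $\gl_N$ is reductive and each bigraded piece is finite-dimensional, taking $\gl_N$-invariants commutes with passing to the associated graded. Any invariant in $\CW_\omega$ can then be lifted from $\mathrm{gr}\,\CW_\omega$ by induction on $q$, using part (2) to rewrite products of $\SE$'s in normal order modulo strictly lower-$q$ terms, which are themselves $\CU_{\gl_N}$-invariant and therefore lie in $\CU_{\gl(V)}$ by the inductive hypothesis.
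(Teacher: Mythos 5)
Your proposal is correct and follows essentially the same route as the paper: parts (1)--(3) by direct computation with \eqref{eq:CCR}, and part (4) via the normal-ordered decomposition $\CW_\omega\simeq\Px\ot\Pd$ into finite-dimensional $\gl_N(\C)$-stable bigraded pieces, semisimplicity of the $\gl_N(\C)$-action, the first fundamental theorem for $\gl_N(\C)$ to identify the degree-$(1,1)$ contractions $\SE(\alpha,\beta)_{ij}$ as generators of the invariants of the associated graded, and an induction on the filtration degree to lift back to $\CW_\omega$ using part (2). The paper's filtration $\CF_n\CW_\omega^{\CU_{\gl_N}}=\sum_{k\le n}(\Px_k\Pd_{-k})^{\CU_{\gl_N}}$ is the restriction to invariants of your filtration by the number of $\partial$'s, so the two inductions coincide.
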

\begin{proof}
We can prove \eqref{eq:gl-N}  by direct calculations using \eqref{eq:CCR}.  
Note that the $\Gamma$-graded commutator reduces to the usual commutator for the 
elements $E^{r s}$  since they are of degree $0$ in the $\Gamma$-grading. Thus the elements $E^{r s}$
span a Lie algebra, which is isomorphic to the usual general linear Lie algebra $\gl_N(\C)$. 
The rest of part (1) easily follows. 

The proof of part (2) boils down to verifying the claimed commutation relations, which can be easily done by direct calculations.

The conceptual reason for part (3) and part (4) to work is Remark \ref{rmk:gl-gl}. 

To prove part (3), it suffices to show that the elements $\SE(\alpha, \beta)_{i j}$ commute with the elements $E^{r s}$.
This can be verified by a direct calculation. 

To prove part (4), note that $\CW_\omega$ is a module algebra for the Lie algebra $\gl_N(\C)$ of part (1), 
with the action given by the commutator. Furthermore, both $\Px$ and $\Pd$ are $\gl_N(\C)$-module subalgebras. 
Now
$
\CW_\omega =\sum_{m, n\ge 0}\Px_{m} \Pd_{-n},   
$
where $\Px_{m} \Pd_{-n}$ are finite dimensional $\gl_N(\C)$-submodules, thus are semi-simple. This shows that  
the $\gl_N(\C)$-action on $\CW_\omega$  is semi-simple. 

For any $Q, Q'\in \CW_\omega^{\CU_{\gl_N}}$, 
\[
[A, Q Q']= [A, Q]Q' + Q[A, Q']=0, \quad \forall A\in\CU_{\gl_N}. 
\]
Hence $Q Q'\in \CW_\omega^{\CU_{\gl_N}}$, proving that $\CW_\omega^{\CU_{\gl_N}}$ is a subalgebra of $\CW_\omega$.

We have $\Px_k = \Sigma^-(k)((V^N)^{\ot k})$ by making use of the $\Sym_k$-action on $(V^N)^{\ot k}$. 
Since $\gl_N(\C)$ naturally acts on the factor $\C^N$ in $V^N=V\ot \C^N$, there is a $\gl_N(\C)$-action on $(V^N)^{\ot k}$, which commutes with the $\Sym_k$-action. Thus $\Px_k$ is a $\gl_N(\C)$-module. 

Now $\gl_N(\C)$ acts on the factor $\ol\C^N$ of $\ol{V}^N$, and hence acts on  $(\ol{V}^N)^{\ot k}$. It commutes with the 
$\Sym_k$-action on $(\ol{V}^N)^{\ot k}$ (similarly defined as in Corollary \ref{cor:sym}). As $\gl_N(\C)$-module, 
\[
\Pd_{-k} = \Sigma^-(k)((\ol{V}^N)^{\ot k}). 
\]
Hence $\Px_{m}\Pd_{-n} \simeq (\Sigma^-(m)\ot \Sigma^-(n))((V^N)^{\ot m}\ot (\ol{V}^N)^{\ot n})$. 
Since the $\gl_N(\C)$-modules  $(V^N)^{\ot k}$ and $(\ol{V}^N)^{\ot k}$ are
semi-simple, 
\beq\label{eq:inv}
\phantom{XX} (\Px_{m}\Pd_{-n})^{\CU_{\gl_N}} \simeq (\Sigma^-(m)\ot \Sigma^-(n))\left( ((V^N)^{\ot m}\ot (\ol{V}^N)^{\ot n})^{\CU_{\gl_N}} \right), 
\eeq
which vanishes unless $m=n$. Thus $\CW_\omega^{\CU_{\gl_N}} =\sum_{n\ge 0}(\Px_{n} \Pd_{-n})^{\CU_{\gl_N}}$. 

Write $\CF_n\CW_\omega^{\CU_{\gl_N}}= \sum_{k=0}^n (\Px_{k} \Pd_{-k})^{\CU_{\gl_N}}$ for all $n\ge 0$. We have the following filtration for $\CW_\omega^{\CU_{\gl_N}}$:
\[
\dots \supset \CF_n\CW_\omega^{\CU_{\gl_N}}\supset \CF_{n-1}\CW_\omega^{\CU_{\gl_N}}\supset \dots\supset \CF_1\CW_\omega^{\CU_{\gl_N}}
\supset \CF_0\CW_\omega^{\CU_{\gl_N}} =\C. 
\]   
By applying the First Fundamental Theorem of invariant theory of $\gl_N(\C)$, 
we obtain 
$
 ((V^N)^{\ot m}\ot (\ol{V}^N)^{\ot m})^{\CU_{\gl_N}} \subset \C\Sym_{2m} \Big(\Big(\big(V^N\ot\ol{V}^N\big)^{\CU_{\gl_N}}\Big)^{\ot m}\Big)
$.
Translating this information to the left hand side of \eqref{eq:inv} and noting the obvious fact  that 
$(\Px_1 \Pd_{-1})^{\CU_{\gl_N}}\simeq \big(V^N\ot\ol{V}^N\big)^{\CU_{\gl_N}}$ is spanned by the elements $\SE(\alpha, \beta)_{i j}$,  we conclude that 
for any $n\ge 1$, monomials of order $n$ in the elements $\SE(\alpha, \beta)_{i j}$ span $\CF_n\CW_\omega^{\CU_{\gl_N}}$ modulo 
$\CF_{n-1}\CW_\omega^{\CU_{\gl_N}}$. In particular, 
$\CF_1\CW_\omega^{\CU_{\gl_N}}$ is spanned by the elements $\SE(\alpha, \beta)_{i j}$ together with the identity.  
Now a simple induction on $n$ 
proves part (4), completing the proof of the theorem. 
\end{proof}

Denote by $P$ the set of all partitions. Write a partition $\lambda$ as $\lambda=(\lambda_1, \lambda_2, \dots )$, 
where $\lambda_i\in \Z_+$ and $\lambda_i\ge \lambda_{i+1}$ for all $i$.  Let 
\beq
&&\text{$P_N=\{\lambda\in P\mid  \lambda_{N+1}=0\}$}, \\
&&\text{$P_{M_+|M_-}=\{\lambda\in P\mid \lambda_{M_++1}\le M_-\}$}. \label{eq:partits} 
\eeq
Note that $P_{M_+|M_-}$ is the set of $M_+|M_-$-hook partitions introduced in \cite{BR} for characterising simple 
tensor modules of the general linear Lie superalgebra $\gl_{M_+|M_-}(\C)$.

We think of a partition $\lambda=(\lambda_1, \lambda_2, \dots )$ as being represented by a Young diagram with $\lambda_i$ box in the $i$-th row for each $i$. 
Denote by $\lambda'=(\lambda'_1, \lambda'_2, \dots )$ the transpose partition. Then $\lambda'_j$ is the number of boxes in the $j$-th column of the Young diagram of $\lambda$. 
Call $\lambda'_1$ the depth of $\lambda$.  For example, the transpose partition  of $\lambda=(4, 3, 1)\vdash 8$ 
is $\lambda'=(3, 2, 2, 1)$. Their
Young diagrams are as shown below.  
\[
\lambda=\ \ytableausetup{mathmode, boxsize=2em}
\ytableaushort{
\ \ \ \ , \ \ \ ,  \ 
}\ ,  \qquad \qquad 
\lambda'=\ \ytableausetup{mathmode, boxsize=2em}
\ytableaushort{
\ \ \  , \ \  ,  \  \ , \
}\ . 
\]

We interpret a partition $\lambda\in P_N$ as a $\gl_N(\C)$ weight in the standard way. 
Denote by $L_\lambda(\gl_N)$ the simple $\gl_N(\C)$-module with highest weight $\lambda$.  

The following result is a generalised Howe duality between $\gl(V)$ and $\gl_N(\C)$. 
\begin{theorem}\label{thm:commut}
Retain notation above.
\begin{enumerate}
\item $\CU_{\gl(V)}$ and $\CU_{\gl_N}$ are mutual centralisers in $\End_\C(\Px)$, i.e., 
\[
\End_{\CU_{\gl_N}}(\Px) =  \CU_{\gl(V)}, \quad \End_{\CU_{\gl(V)}}(\Px) =  \CU_{\gl_N}. 
\]

\item As $\gl(V)\times\gl_N(\C)$-module, $\Px$ has the multiplicity free decomposition
\beq\label{eq:decomp}
\Px=\sum_{\lambda\in P_{M_+|M_-;N}} L_{\lambda^\sharp}\ot L_\lambda(\gl_N),
\eeq
where the set $P_{M_+|M_-;N}$, and $\lambda^\sharp$ as a function of 
$\lambda$, are explicitly given in Theorem \ref{thm:H-wts} (also see \eqref{eq:H-duality}).
\end{enumerate}
\end{theorem}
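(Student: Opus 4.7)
The plan is a double centralizer argument combined with an explicit construction of joint highest weight vectors. By Theorem~\ref{thm:invariants}(3) we already have $\CU_{\gl(V)} \subseteq \End_{\CU_{\gl_N}}(\Px)$ and $\CU_{\gl_N} \subseteq \End_{\CU_{\gl(V)}}(\Px)$, and Theorem~\ref{thm:invariants}(4) delivers the commutant identity $\CW_\omega^{\CU_{\gl_N}} = \CU_{\gl(V)}$ inside $\CW_\omega$. The remaining task is to promote these inclusions to equalities of centralizers in $\End_\C(\Px)$ while simultaneously producing the multiplicity-free decomposition asserted in (2).

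The first step is to reduce to finite-dimensional pieces. Each generator $E^{rs}$ of $\CU_{\gl_N}$ has degree $0$ in the natural $\Z_+$-grading $\Px = \bigoplus_n \Px_n$ by total degree in the variables $x(\alpha)_i^r$, and the Cartan element $\sum_r E^{rr}$ acts on $\Px_n$ as the scalar $n$; hence any endomorphism of $\Px$ commuting with $\CU_{\gl_N}$ preserves this grading, and each $\Px_n$ is a finite-dimensional polynomial representation of $GL_N(\C)$, therefore completely reducible. Writing the resulting isotypic decomposition
\begin{equation*}
\Px_n \simeq \bigoplus_{\lambda} M_\lambda^{(n)} \otimes L_\lambda(\gl_N),
\end{equation*}
each multiplicity space $M_\lambda^{(n)} = \Hom_{\CU_{\gl_N}}(L_\lambda(\gl_N),\Px_n)$ is a $\CU_{\gl(V)}$-module by Theorem~\ref{thm:invariants}(3). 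A symmetric reduction, using the locally finite weight grading of $\Px$ relative to the Cartan of $\gl(V)$, applies to the commutant of $\CU_{\gl(V)}$.

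The crux is then to establish (2). For each $\lambda \in P_{M_+|M_-;N}$ I would construct an explicit joint highest weight vector $v_\lambda \in \Px$, annihilated by the nilpotent radicals of the Borel subalgebras of both $\gl_N(\C)$ and $\gl(V)$ and a weight vector of their product Cartan, built from products of $\omega$-graded minors in the generators $x(\alpha)_i^r$ --- the colour analogues of the Cauchy construction. The $\CU_{\gl(V)}$-submodule generated by $v_\lambda$ is then the simple module $L_{\lambda^\sharp}$ with $\gl_N$-weight $\lambda$. To account for the entire decomposition I would invoke Scheunert's cocycle twisting theorem \cite[Theorem 2]{Sch79} to reduce to the classical Howe duality between $\gl_{M_+|M_-}(\C)$ and $\gl_N(\C)$ on the supersymmetric algebra $S(\C^{M_+|M_-}\otimes \C^N)$ established in \cite{S,CLZ,CW}: the twist transports $S_\omega(V^N)$ with its $\gl(V)\times\gl_N(\C)$-action onto the supersymmetric algebra with its $\gl_{M_+|M_-}(\C)\times\gl_N(\C)$-action, so both the hook constraint $\lambda_{M_++1}\le M_-$ and the formula $\lambda\mapsto \lambda^\sharp$ transfer verbatim. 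Once (2) is proved, the $M_\lambda^{(n)}$ are simple and pairwise non-isomorphic $\CU_{\gl(V)}$-modules, and the Jacobson density theorem applied on each finite-dimensional $\Px_n$ delivers both equalities in (1).

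The principal obstacle will be verifying that Scheunert's twist is compatible with the multiplicative structure: that it carries the symmetric $(\Gamma,\omega)$-algebra $S_\omega(V^N)$ onto the ordinary supersymmetric algebra as associative algebras, equivariantly for the commuting $\gl_N(\C)$-action (which is automatically twist-invariant since all $E^{rs}$ have degree $0$ in $\Gamma$). This requires careful bookkeeping of the commutative factors entering the defining relations \eqref{eq:CCR} of $\CW_\omega$. A self-contained alternative that avoids the twist would be to construct each $v_\lambda$ explicitly, verify irreducibility of $\CU_{\gl(V)}\cdot v_\lambda$ using the parabolic induction machinery of Section~\ref{sect:parabolic} and the typicality criterion of Lemma~\ref{lem:typical}, and then match Hilbert series on both sides of \eqref{eq:decomp} to preclude any further highest weights; this is the technically demanding route.
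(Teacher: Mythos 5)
Your strategy is workable, but it inverts the paper's logical order and, as a result, makes the hard part harder than it needs to be. The paper proves part (1) \emph{first} and gets the abstract form of (2) for free: since $\Px$ is a simple $\CW_\omega$-module (Lemma \ref{lem:Fock}), the degree-zero subalgebra $\CW_\omega(0)=\sum_k\Px_k\Pd_{-k}$ acts on each finite-dimensional $\Px_m$ as all of $\End_\C(\Px_m)$ by density; taking $\CU_{\gl_N}$-invariants (which commutes with restriction to $\Px_m$ because the $\gl_N(\C)$-action is semisimple) turns the Weyl-algebra commutant identity $\CW_\omega^{\CU_{\gl_N}}=\CU_{\gl(V)}$ of Theorem \ref{thm:invariants}(4) directly into $\End_{\CU_{\gl_N}}(\Px)=\CU_{\gl(V)}$; the classical double commutant theorem on each $\Px_m$ then gives the reverse equality and the multiplicity-free decomposition simultaneously. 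This is exactly the bridge you identify as ``the remaining task,'' and it requires no explicit highest weight vectors, no Hilbert-series matching, and no twisting. The combinatorial identification of $P_{M_+|M_-;N}$ and $\lambda\mapsto\lambda^\sharp$ is genuinely deferred in the paper to Theorems \ref{thm:SW} and \ref{thm:H-wts} (zero weight spaces, Berele--Regev's generalised Young rule, Young-symmetrised highest weight vectors, truncation functors) and is not an input to either assertion of Theorem \ref{thm:commut}. Your two proposed routes to (2) are both more expensive: the explicit-construction branch must additionally prove exhaustion (your Hilbert-series step), which the double-commutant argument gives automatically; and the Scheunert-twisting branch, while viable in principle since the relevant subgroup of $\Gamma$ is finitely generated, is precisely what the paper is designed to avoid --- the paper notes that Fischman--Montgomery proved the Schur--Weyl statement that way and explicitly sets out to strengthen it by elementary means --- and the compatibility of the twist with the algebra structure of $S_\omega(V^N)$ and with formula \eqref{eq:CCR}, which you flag as the principal obstacle, is left unverified in your plan. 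In short: correct inclusions, correct reductions to $\Px_n$, correct endgame via density, but you should derive (1) from Theorem \ref{thm:invariants}(4) plus simplicity of the Fock space rather than deriving (1) from (2).
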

\begin{proof}
Let us start by proving the first relation in part (1). 
We have 
\[
\End_{\gl_N(\C)}(\Px)=\sum_{m, n} \Hom_{\gl_N(\C)}(\Px_m, \Px_n) 
= \sum_{m} \End_{\gl_N(\C)}(\Px_m). 
\] 
Since $\Px$ is a simple $\CW_\omega$-module,  the restriction of the action of 
$
\CW_\omega(0) =\sum_{k\ge 0}\Px_{k} \Pd_{-k}
$
to $\Px_m$  coincides with $\End_\C(\Px_m)$ for any $m$.  Note that 
\[
\CW_\omega(0)^{\CU_{\gl_N}} =\sum_{k\ge 0}\left(\Px_{k} \Pd_{-k} \right)^{\CU_{\gl_N}}= \CW_\omega^{\CU_{\gl_N}}.
\]
This shows that $\CW_\omega^{\CU_{\gl_N}}|_{\Px_m}=\End_{\gl_N(\C)}(\Px_m)$ for all $m$, and hence 
$\CW_\omega^{\CU_{\gl_N}}=\End_{\gl_N(\C)}(\Px)$.  Since $\CW_\omega^{\CU_{\gl_N}}=\CU_{\gl(V)}$ 
by Theorem  \ref{thm:invariants}.(4), we obtain the first relation of part (1).  

We also have $\End_{\gl(V)}(\Px) = \sum_{m} \End_{\gl(V)}(\Px_m)$. 
Since the $\gl_N(\C)$-action on $\Px_m$ is semi-simple for all $m$, 
 the double commutant  theorem \cite[Theorem 3.5.B]{W}  
 holds (by the usual arguments, 
 see, e.g.,  \cite[\S3.3.2]{GW} and the proof of \cite[Theorem 3.3.7]{GW} in particular). 
 Thus
\begin{itemize}
\item $\End_{\gl_N(\C)}(\Px_m)=\CU_{\gl(V)}|_{\Px_m}$ is semi-simple, and 

\item $\End_{\gl(V)}(\Px_m)= \CU_{\gl_N}|_{\Px_m}$. 
\end{itemize}
It follows the second bullet point that the first relation of part (1) implies the second one,  completing the proof of part (1) of the theorem. 

The first bullet item above shows that the $\gl(V)$-action on $\Px$ is also semi-simple. 
Then it immediately follows part (1) of the theorem  that $\Px$ has a multiplicity free decomposition 
into simple modules for $\gl(V)\times \gl_N(\C)$ as given by \eqref{eq:decomp}. 
We will determine the partitions $\lambda$ and the related $\lambda^\sharp$ appearing in the decomposition \eqref{eq:decomp} after proving Theorem \ref{thm:SW} below. 
\end{proof}

\begin{remark}
The proof of Theorem \ref{thm:commut} relies on the fact that 
$\Px_m$ are semi-simple $\gl_N(\C)$-modules for all $m$. 
This is the reason why we prove the colour Howe duality of type 
$(\gl(V(\Gamma, \omega)), \gl_N(\C))$ first.  
The  general case with $\gl_N(\C)$ replaced by 
$\gl(V'(\Gamma, \omega))$
will be treated in Theorem \ref{thm:HD-general}. 
\end{remark}

\begin{remark}\label{rmk:power-ss}
Note that Theorem \ref{thm:commut}.(2) implies that tensor powers of $V$ are semi-simple $\gl(V)$-modules. 
\end{remark}

\subsubsection{Dual version of the colour Howe duality}
%
%
%
It is clear from Theorem \ref{thm:invariants} that 
\begin{enumerate}[i).]
\item
the following elements span a Lie $(\Gamma, \omega)$-algebra isomorphic to $\gl(V)$,
\[
\wt\SE(\alpha, \beta)_{i j} = \sum_{r=1}^N \omega(\beta, \alpha)  \frac{\partial}{\partial x(\beta)_ j^r} x(\alpha)_ i^r, \quad \forall \alpha, \beta, i, j;  
\]

\item  the following elements span a Lie algebra isomorphic to $\gl_N(\C)$,
\[
\wt{E}^{r s} = \sum_{\alpha\in\Gamma_R} \sum_{i=1}^{m_\alpha} \omega(\alpha, \alpha) \frac{\partial}{\partial x(\alpha)_ i^s} x(\alpha)_ i^r, \quad \forall r, s.
\]
\end{enumerate}

Denote by
$\wt\CU_{\gl_N}$ the subalgebra of $\CW_\omega$ generated by the elements $\wt{E}^{r s}$, 
and by $\wt\CU_{\gl(V)}$ the subalgebra  generated by the elements $\wt\SE(\alpha, \beta)_{i j}$.  
Restrictions of the $\CW_\omega$ action on $\Pd$ to the subalgebras  $\wt\CU_{\gl_N}$  
and $\wt\CU_{\gl(V)}$ lead to  commuting
actions of $\gl_N(\C)$ and $\gl(V)$.

\begin{theorem}\label{thm:Howe-d}
As $\gl(V)\times\gl_N(\C)$-module, $\Pd$ has the following multiplicity free decomposition
\beq\label{eq:decomp-d}
\Pd=\sum_{\lambda\in P_{M_+|M_-;N}} L_{\lambda^\sharp}^*\ot L_\lambda(\gl_N)^*,
\eeq
where  
 $L_{\lambda^\sharp}^*$ and $L_\lambda(\gl_N)^*$ respectively denote the duals of the $\gl(V)$-module $L_{\lambda^\sharp}$, and $\gl_N(\C)$-module $L_\lambda(\gl_N)$.   
\end{theorem}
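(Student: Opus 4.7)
The strategy is to identify $\Pd$ with the $\Gamma\times\Z$-graded dual module of $\Px$ for $\gl(V)\times \gl_N(\C)$, and then obtain the decomposition \eqref{eq:decomp-d} by dualising the decomposition \eqref{eq:decomp} of Theorem \ref{thm:commut}(2) component-by-component. Since each $\Px_n$ is finite dimensional and the pairing $(\ ,\ )\colon \ol\CF_{\CW_\omega}\times \CF_{\CW_\omega}\lra \C$ of \eqref{eq:F-pair} is non-degenerate between $\Pd_{-n}$ and $\Px_n$, the only substantive task is to check that this pairing is compatible with the dual module structure \eqref{eq:dual-mod} coming from the antipode $S(X)=-X$ on $\U(\gl(V))$ and on $\U(\gl_N(\C))$.

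First I would verify the adjointness identities
\beq\label{eq:plan-adj1}
(\wt{E}^{r s}\cdot D,\ f) \;=\; -(D,\ E^{s r}\cdot f), \quad \forall f\in\Px,\ D\in\Pd,
\eeq
and
\beq\label{eq:plan-adj2}
(\wt\SE(\alpha,\beta)_{i j}\cdot D,\ f) \;=\; -\,\omega(\alpha-\beta, d(D))\,(D,\ \SE(\beta,\alpha)_{j i}\cdot f).
\eeq
For \eqref{eq:plan-adj1}, since $E^{r s}$ is of $\Gamma$-degree $0$, repeated application of \eqref{eq:x-to-x} and \eqref{eq:d-to-d} moves each $x(\alpha)_i^r$ and $\partial_{x(\alpha)_i^s}$ across the pairing in turn; the $\omega(\alpha,\alpha)$ factors appearing in $\wt{E}^{r s}$ are precisely what is produced by the two sign conventions in \eqref{eq:x-to-x}--\eqref{eq:d-to-d} combined with the commutation of $x$ with $\partial$ required to restore normal order. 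For \eqref{eq:plan-adj2} the same calculation applies, but now the operator carries $\Gamma$-degree $\alpha-\beta$, and a single additional factor of $\omega(\alpha-\beta, d(D))$ appears from moving the $\Gamma$-homogeneous operator past $D$; the $\omega(\beta,\alpha)$ prefactor in the definition of $\wt\SE(\alpha,\beta)_{ij}$ is exactly what is needed to compensate for the two sign factors produced along the way. The formulas \eqref{eq:plan-adj1}--\eqref{eq:plan-adj2} match the prescription \eqref{eq:dual-mod} for the dual module actions of $\U(\gl_N(\C))$ and $\U(\gl(V))$ respectively, with antipode $S(X) = -X$.

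Consequently the pairing provides an isomorphism $\Pd_{-n}\simeq (\Px_n)^*$ of $\gl(V)\times\gl_N(\C)$-modules for every $n$. Since each $\Px_n$ is a finite direct sum of finite dimensional simple modules of the form $L_{\lambda^\sharp}\ot L_\lambda(\gl_N)$ with $\lambda\in P_{M_+|M_-;N}$ by \eqref{eq:decomp}, and the $\Gamma$-graded dual of a finite dimensional simple $\gl(V)$-module is simple, while the dual of a finite dimensional tensor product splits as the tensor product of duals, dualising \eqref{eq:decomp} degree by degree yields
\[
\Pd \;=\; \bigoplus_{n\ge 0}\Pd_{-n} \;\simeq\; \bigoplus_{n\ge 0}\bigoplus_{\lambda\in P_{M_+|M_-;N},\,|\lambda|=n} L_{\lambda^\sharp}^{*}\ot L_\lambda(\gl_N)^{*},
\]
which is the desired multiplicity-free decomposition \eqref{eq:decomp-d}. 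The main obstacle is entirely bookkeeping: tracking the commutative factors through the repeated use of \eqref{eq:x-to-x}, \eqref{eq:d-to-d} and the commutation relations \eqref{eq:CCR} so that the extra $\omega(\alpha,\alpha)$ and $\omega(\beta,\alpha)$ in the definitions of $\wt{E}^{r s}$ and $\wt\SE(\alpha,\beta)_{i j}$ are seen to be exactly what is required by the dual module convention \eqref{eq:dual-mod}; once \eqref{eq:plan-adj1}--\eqref{eq:plan-adj2} are in hand, the rest is immediate.
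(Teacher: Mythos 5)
Your overall strategy is exactly the paper's: identify $\Pd$ with the $\Gamma\times\Z$-graded dual of $\Px$ via the pairing \eqref{eq:F-pair}, check that the pairing intertwines the $\gl(V)\times\gl_N(\C)$-actions in the sense of \eqref{eq:dual-mod}, and then dualise \eqref{eq:decomp} degree by degree. However, the two adjointness identities on which you rest the whole argument are stated incorrectly, and the error is not merely cosmetic. The correct identities (the paper's \eqref{eq:dual-1} and \eqref{eq:dual-2}) carry the \emph{same} operator indices on both sides:
\[
(\wt{E}^{r s}(D), f) = -(D, E^{r s}(f)),\qquad
(\wt\SE(\alpha, \beta)_{i j}(D), f) = -\,\omega(\alpha-\beta, d(D))\, (D, \SE(\alpha, \beta)_{i j}(f)),
\]
whereas your versions put the transposed operators $E^{s r}$ and $\SE(\beta,\alpha)_{j i}$ on the right-hand side. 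A one-line check refutes the transposed version: take $N=2$ with a single variable of degree in $\Gamma^+$, $D=\partial_1$, $f=x^2$; then $\wt E^{12}(D)=-\partial_2$, so $(\wt E^{12}(D),f)=-1=-(D,E^{12}(f))$, while $-(D,E^{21}(f))=0$.

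Moreover, even if your transposed identities did hold, they would not "match the prescription \eqref{eq:dual-mod}" as you assert: \eqref{eq:dual-mod} reads $\langle x\cdot\ol w, v\rangle=\omega(d(x),d(\ol w))\langle\ol w, S(x)\cdot v\rangle$ with the \emph{same} element $x$ on both sides. An adjointness relation involving the transposed generator would identify $\Pd$ with the dual twisted by the transpose anti-automorphism, which is a different module and leads to a different decomposition (for the $\gl_N(\C)$-factor it would replace $L_\lambda(\gl_N)^*$ by $L_\lambda(\gl_N)$). The fix is mechanical and is precisely the computation you sketch: pushing $x(\alpha)_i^r$ and $\partial_{x(\beta)_j^s}$ across the pairing via \eqref{eq:x-to-x} and \eqref{eq:d-to-d} gives
\[
(D, x(\alpha)_i^r\partial_{x(\beta)_j^s}(f)) = -\,\omega(d(D),\alpha-\beta)\,\omega(\beta,\alpha)\,(\partial_{x(\beta)_j^s}x(\alpha)_i^r(D), f),
\]
and summing over $r=s$ (resp.\ $\alpha=\beta$, $i=j$) yields the untransposed identities above, exactly as in the paper. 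Once that is corrected, the remainder of your argument (degree-by-degree dualisation, simplicity of duals of simples, duals of tensor products) is fine and coincides with the paper's proof.
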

\begin{proof}
We have seen that $\Pd$ can be identified with the $\Gamma\times \Z$-graded dual space of $\Px$. The theorem immediately follows from Theorems \ref{thm:invariants} and \ref{thm:H-wts}, if we can show that $\Px$ and $\Pd$ are graded duals to each other as $\gl(V)\times\gl_N(\C)$-modules. 
To prove this, it suffices to show that for all $\wt\SE(\alpha, \beta)_{i j}$ and 
$\wt{E}^{r s}$, 
\beq
&&(\wt\SE(\alpha, \beta)_{i j}(D), f) = \omega(\alpha-\beta, d(D)) (D, -\SE(\alpha, \beta)_{i j}(f)), \label{eq:dual-1}\\
&&(\wt{E}^{r s}(D), f) = (D, - E^{r s}(f)), \quad \forall f\in\Px, D\in\Pd. \label{eq:dual-2}
\eeq
By using \eqref{eq:x-to-x} and \eqref{eq:d-to-d}, we obtain, for all $x(\alpha)_i^r$ and 
$\partial_{x(\beta)_j^s} :=\frac{\partial}{\partial x(\beta)_j^s}$,
\[
\baln
 (D, x(\alpha)_i^r\partial_{x(\beta)_j^s} (f))&=
 - \omega(d(D), \alpha-\beta)  \omega(\beta, \alpha)  (\partial_{x(\beta)_j^s} x(\alpha)_i^r (D), f).
 \ealn
\] 
Set $\alpha=\beta$ and $j=i$, then sum both sides over $\alpha$ and $i$, we obtain 
\eqref{eq:dual-2}. Similarly, set $s=r$ and then sum over $r$, we obtain 
\eqref{eq:dual-1}.   

This completes the proof of the theorem.
\end{proof}

\begin{remark}
It follows the above theorem that the actions of $\gl_N(\C)$ and $\gl(V)$ centralise each other over $\End_\C(\Pd)$. One can also prove this directly  
by reasoning similarly as in the proof of Theorem \ref{thm:commut}. 
\end{remark}

\begin{remark}\label{rmk:Pd}
Note in particular that  
$\Pd_{-1} = L_{(1, 0, \dots, 0)}^*\ot L_{(1, 0, \dots, 0)}(\gl_N)^*$, where $L_{(1, 0, \dots, 0)}^*=
V^*$, and $L_{(1, 0, \dots, 0)}(\gl_N)^*=\ol{\C}^N$. 
Hence $\Pd_{-1}=\ol{V}^N$, and it follows that
$\Pd \simeq S_\omega(\ol{V}^N)$ 
as $\gl(V)\times \gl_N(\C)$-module algebra. 
Thus \eqref{eq:decomp-d} is equivalent to 
\beq\label{eq:decomp-d-1}
S_\omega(\ol{V}^N)=\sum_{\lambda\in P_{M_+|M_-;N}} L_{\lambda^\sharp}^*\ot L_\lambda(\gl_N)^*.
\eeq
\end{remark}

\subsection{Colour Schur-Weyl duality}\label{sect:SW}

We deduce a colour Schur-Weyl duality between 
$\gl(V)$ and $\Sym_N$ over $V^{\ot N}$ from part (2) of Theorem \ref{thm:commut}. 

\begin{theorem} \label{thm:SW}
Retain notation of Corollary \ref{cor:sym}, and let $S_\lambda$ be the simple $\C\Sym_N$ - module (isomorphic to the Specht module over $\C$) associated with the partition $\lambda$. 
\begin{enumerate}
\item
There is the following multiplicity free decomposition of $V^{\ot N}$ with respect to the joint action of $\gl(V)$ and $\Sym_N$, 
\beq\label{eq:gl-S}
V^{\ot N}= \sum_{\lambda} L_{\lambda^\sharp}\ot S_\lambda, 
\eeq
where  the sum is over the set $\{\lambda \in P_{M_+|M_-}\mid \lambda\vdash N\}$, and 
$\lambda^\sharp=(\lambda^+, \lambda^-)$ is uniquely determined by $\lambda$ as follows. 
The Young diagram of $\lambda^+$ is the sub-diagram of $\lambda$ consisting of the first $M_+$ rows of $\lambda$, 
and  ${\lambda^-}'$ is equal to the skew diagram $\lambda/\lambda^+$. 

\item As a consequence of part (1), the following relations hold.
\beq
\phantom{XX} \End_{\Sym_N}(V^{\ot N})= \pi_N(\U(\gl(V)), \quad  \End_{\gl(V)}(V^{\ot N})=\nu_N(\C\Sym_N). 
\eeq
\end{enumerate}
\end{theorem}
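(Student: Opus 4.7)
The plan is to combine the colour Howe duality of Theorem~\ref{thm:commut} with classical Schur--Weyl duality, extracting $V^{\otimes N}$ from the degree-$N$ component $S_\omega^N(V^N)$ of the symmetric $(\Gamma,\omega)$-algebra over $V^N=V\otimes\C^N$. Since $\C^N$ sits in degree $0$, the relations \eqref{eq:cycle-2}--\eqref{eq:cycle-3} force $\omega(\alpha,0)=\omega(0,\alpha)=1$, so the braiding $\tau_{V^N,V^N}$ splits as $\tau_{V,V}\otimes\tau_{\C^N,\C^N}$ under the evident regrouping isomorphism
\[
(V^N)^{\otimes N}\;\cong\;V^{\otimes N}\otimes(\C^N)^{\otimes N},
\]
with $\tau_{\C^N,\C^N}$ the ordinary flip. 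Consequently the representation $\nu_N\colon\C\Sym_N\to\End_\C((V^N)^{\otimes N})$ of Corollary~\ref{cor:sym} becomes, under this identification, the outer tensor product of the twisted $\Sym_N$-action on $V^{\otimes N}$ from the same corollary with the standard permutation action on $(\C^N)^{\otimes N}$. The $\gl(V)$- and $\gl_N(\C)$-actions live on the first and second tensor factors respectively, and all three of the $\gl(V)$-, $\gl_N(\C)$- and $\Sym_N$-actions commute pairwise.

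Applying $\Sigma^-(N)$ now amounts to taking $\Sym_N$-invariants, and so $S_\omega^N(V^N)\cong \bigl(V^{\otimes N}\otimes(\C^N)^{\otimes N}\bigr)^{\Sym_N}$. Restricting the colour Howe decomposition of Theorem~\ref{thm:commut}(2) to the degree-$N$ piece gives
\[
S_\omega^N(V^N)=\bigoplus_{\substack{\lambda\in P_{M_+|M_-;N}\\\lambda\vdash N}}L_{\lambda^\sharp}\otimes L_\lambda(\gl_N),
\]
while substituting the classical Schur--Weyl decomposition $(\C^N)^{\otimes N}=\bigoplus_{\mu\vdash N,\,\ell(\mu)\le N}L_\mu(\gl_N)\otimes S_\mu$ and using the self-duality of Specht modules over $\C$ yields
\[
\bigl(V^{\otimes N}\otimes(\C^N)^{\otimes N}\bigr)^{\Sym_N}=\bigoplus_{\mu\vdash N,\,\ell(\mu)\le N}L_\mu(\gl_N)\otimes\Hom_{\Sym_N}(S_\mu,V^{\otimes N}).
\]
Uniqueness of the $\gl_N(\C)$-isotypic decomposition now forces $\Hom_{\Sym_N}(S_\lambda,V^{\otimes N})\cong L_{\lambda^\sharp}$ when $\lambda\vdash N$ and $\lambda\in P_{M_+|M_-;N}$, and zero otherwise. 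Feeding this back into the tautological $\Sym_N$-isotypic decomposition $V^{\otimes N}=\bigoplus_\lambda\Hom_{\Sym_N}(S_\lambda,V^{\otimes N})\otimes S_\lambda$ delivers part~(1); since $\lambda\vdash N$ automatically gives $\ell(\lambda)\le N$, the partition range collapses to $\{\lambda\in P_{M_+|M_-}:\lambda\vdash N\}$, and the combinatorial description of $\lambda^\sharp=(\lambda^+,\lambda^-)$ transfers verbatim from that in Theorem~\ref{thm:commut}(2).

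Part~(2) is then formal: the decomposition in (1) is multiplicity-free as a $\gl(V)\times\Sym_N$-module, $V^{\otimes N}$ is semisimple over $\gl(V)$ by Remark~\ref{rmk:power-ss} and automatically over $\C\Sym_N$, so the double commutant theorem applied as in \cite[\S3.3.2]{GW} yields the two claimed identities. The step most likely to require care is the compatibility statement in the first paragraph: one must verify that the braided $\Sym_N$-representation on $(V^N)^{\otimes N}$ indeed decomposes under the rearrangement isomorphism as the outer tensor product of $\nu_N$ on $V^{\otimes N}$ and the standard permutation on $(\C^N)^{\otimes N}$, and that the $\gl_N(\C)$-action via the elements \eqref{eq:glN} of $\CW_\omega$ carries over to the diagonal $\gl_N(\C)$-action on $(\C^N)^{\otimes N}$, so that taking $\Sym_N$-invariants genuinely preserves the $\gl(V)\times\gl_N(\C)$-bimodule structure exploited to read off multiplicities.
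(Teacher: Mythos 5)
Your see-saw strategy is sound and is essentially a mirror image of the paper's: where you tensor with $(\C^N)^{\ot N}$ and take braided $\Sym_N$-invariants of $S^N_\omega(V^N)$, the paper extracts the $\gl_N$-weight-$(1^N)$ space of $\Px$ and uses the fact (Kostant) that the zero weight space of $L_\lambda(\gl_N)$ is $S_\lambda$. The compatibility checks you flag at the end (the braided $\Sym_N$-action factoring as an outer tensor product under the regrouping, and the $\gl_N(\C)$-action from \eqref{eq:glN} becoming the diagonal action on $(\C^N)^{\ot N}$) do go through because $\C^N$ sits in degree $0$, and the paper verifies the analogous facts explicitly.

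There is, however, a genuine circularity in how you invoke Theorem \ref{thm:commut}(2). At the point where Theorem \ref{thm:SW} is proved, Theorem \ref{thm:commut}(2) supplies only an \emph{abstract} multiplicity-free decomposition $\Px=\sum L_{\wt\lambda}\ot L_\lambda(\gl_N)$ over an undetermined set of $\lambda$ with an undetermined correspondence $\lambda\mapsto\wt\lambda$: the paper states explicitly in that proof that the index set $P_{M_+|M_-;N}$ and the formula for $\lambda^\sharp$ ``will be determined \ldots after proving Theorem \ref{thm:SW}'', and they are pinned down only in Theorem \ref{thm:H-wts}, whose proof uses Theorem \ref{thm:SW}. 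Your argument correctly yields $\Hom_{\Sym_N}(S_\lambda, V^{\ot N})\cong L_{\wt\lambda}$ (simple or zero) by matching $\gl_N$-isotypic components, but then asserts that the index set ``collapses to $\{\lambda\in P_{M_+|M_-}:\lambda\vdash N\}$'' and that ``the combinatorial description of $\lambda^\sharp$ transfers verbatim'' from Theorem \ref{thm:commut}(2) — which is exactly the content that cannot be assumed. Two independent inputs are needed and are missing from your proposal: (i) which $S_\lambda$ actually occur in $V^{\ot N}$, which the paper obtains from the Berele--Regev generalised Young rule \eqref{eq:Young} applied to the induced-module description of the type components $M_\alpha$, giving exactly $\lambda\in P_{M_+|M_-}$ with $\lambda\vdash N$; and (ii) the identification of the highest weight of $\Hom_{\Sym_N}(S_\lambda,V^{\ot N})$ as $\lambda^\sharp=(\lambda^+,\lambda^-)$, which the paper gets by exhibiting an explicit $\gl(V)$-highest weight vector ${\bf v}_{\lambda^\sharp}=C_\lambda(\wt{\bf v}_{\lambda^\sharp})$ via the Young symmetriser and the graded skew-symmetry Lemma \ref{lem:a-symm}. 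Without (i) and (ii) your argument proves only the abstract form of \eqref{eq:gl-S}, not the stated theorem.
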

\begin{remark}
Part (2) of the theorem was proved by Fischman and Montgomery
\cite[Theorem 2.15]{FM}, who used Scheunert's theorem \cite[Theorem 2]{Sch79} 
to reduce it to the double commutant property between the universal enveloping algebra of the 
general linear Lie superalgebra and the symmetric group \cite{BR}. 
\end{remark}
\begin{proof}[Proof of Theorem \ref{thm:SW}]
Note that part (2) immediately follows from part (1). 

Let us prove part (1).
Recall that the Weyl group of $\gl_N(\C)$ is $\Sym_N$, which will be taken as consisting of the matrices
permuting the elements of standard basis $\{f^{(r)}\mid r=1, 2, \dots, N\}$ of $\C^N$.  
Let $L_\lambda(\gl_N)_{(1^N)}\subset L_\lambda(\gl_N)$ be the ``zero weight'' space, 
that is, $\gl_N(\C)$-weight space 
of weight $(1, 1, \dots, 1)$. Then  
$L_\lambda(\gl_N)_{(1^N)}\ne 0$ if and only if $\lambda \vdash N$, and in this case 
$L_\lambda(\gl_N)_{(1^N)}\simeq S_\lambda$ as a simple module for 
$\Sym_N$ \cite{Ko} (also see \cite[\S2.4.5.3]{Ho}).

Denote by $\Px_{(1^N)}$  the  $\gl_N$-weight space of weight $(1, 1, \dots, 1)$ in $\Px$.  Then
\[
\baln
\Px_{(1^N)} &=\sum \C x(\gamma_1)_{i_1}^1 x(\gamma_2)_{i_2}^2\dots x(\gamma_N)_{i_N}^N
  & \simeq V^{\ot N} . 
  \ealn
\]
The $\Sym_N$-action on $\Px_{(1^N)}$ can be described as follows. Write $M(\gamma_a, i_a; \dots; \gamma_b, i_b)$ $= x(\gamma_a)_{i_a}^a x(\gamma_{a+1})_{i_{a+1}}^{a+1}\dots x(\gamma_b)_{i_b}^b$ for $a\le b$. 
Then for any $s_j$, 
\[
\baln
&s_j\cdot M(\gamma_1, i_1; \dots; \gamma_N, i_N)\\ 
&\phantom{X} =M(\gamma_1, i_1; \dots; \gamma_{j-1}, i_{j-1})
 x(\gamma_j)_{i_j}^{j+1}  x(\gamma_{j+1})_{i_{j+1}}^{j}M(\gamma_{j+2}, i_{j+2}; \dots; \gamma_N, i_N)\\
 &\phantom{X} = \omega(\gamma_j, \gamma_{j+1}) M(\gamma_1, i_1; \dots; \gamma_{j-1}, i_{j-1}; \gamma_{j+1}, i_{j+1}; \gamma_{j}, i_{j}; \gamma_{j+2}, i_{j+2}; \dots; \gamma_N, i_N).
 \ealn
\]
Under the isomorphism $\Px{(1^N)}\simeq V^{\ot N}$, this leads to the $\Sym_N$-action 
given by Corollary \ref{cor:sym}. 

Taking the $\gl_N(V)$-zero-weight spaces of both sides of \eqref{eq:decomp}, 
we obtain
\beq\label{eq:decomp-pf}
V^{\ot N}\simeq \sum_{\lambda} L_{\lambda^\sharp}\ot L_\lambda(\gl_N)_{(1^N)} 
= \sum_{\lambda} L_{\lambda^\sharp}\ot S_\lambda, 
\eeq
where $\lambda$ and $\lambda^\sharp$ uniquely determine each other 
by Theorem \ref{thm:commut}.
Now we determine the partitions $\lambda$ and associated $\lambda^\sharp$ which appear in the decomposition. 

We adopt  notation of Section \ref{sect:refined}. 
For any sequence ${\bf i}=(i_1, i_2, \dots, i_N)$ with $i_r\in\{1, 2, \dots, \dim V\}$ for all $r$, let ${\bf b}_{\bf i}:=b_{i_1}\ot  b_{i_2}\ot \dots \ot   b_{i_N}$, and call it a ``monomial'' for easy reference. We will refer to the subscripts $1, 2$ and etc.  as the position labels of $b_{i_1}$, $b_{i_2}$ and etc. in ${\bf b}_{\bf i}$. The elements ${\bf b}_{\bf i}$, for all  ${\bf i}$, form a basis of $V^{\ot N}$. 
Let $\alpha_j=\sum_{r=1}^{\dim V}\delta_{i_r, j}$ fo $j=1, 2, \dots, \dim V$, then  
there are $\alpha_i$ factors of $b_i$ in ${\bf b}_{\bf i}$. We say that the monomial ${\bf b}_{\bf i}$ is of type $\alpha=(\alpha_1, \dots, \alpha_{\dim V})$.
Denote ${\bf b}(\alpha)= b_1^{\ot \alpha_1}\ot b_2^{\ot \alpha_2}\ot \dots \ot  b_{\dim V}^{\ot \alpha_{\dim V}}$. Then the $\C\Sym_N$-submodule $M_\alpha:=\C\Sym_N({\bf b}(\alpha))$ of $V^{\ot N}$ 
is spanned by the monomials of type $\alpha$.

Let $\beta_j=\sum_{i=1}^{j} \alpha_i$ with $\beta_0=0$, where $\beta_{\dim V}=N$. For each $i=1, 2, \dots, \dim V$, we introduce the subgroup 
$H_i:= \Sym\{\beta_{i-1}+1, \beta_{i-1}+2, \dots, \beta_i\}$
of $\Sym_N$,  which is the symmetric group on the letters $\beta_{i-1}+1, \beta_{i-1}+2, \dots, \beta_i$. If $\beta_{i-1}=\beta_i$, then $H_i$ is the trivial group. 
Define  
$
H_{\alpha}=H_1\times H_2 \times \dots \times H_{\dim V}.  
$
For any $r$, we denote by $S_{(r)}$ and $S_{(1^r)}$ the $1$-dimensional $\C\Sym_r$-modules corresponding to the identity and sign representations respectively. Let $S_\alpha:=S_{(\alpha_1)}\ot \dots\ot  S_{(\alpha_{M_+})}\ot S_{\left(1^{\alpha_{\bar 1}}\right)}\ot \dots\ot S_{\left(1^{\bar{M}_-}\right)}$, 
where $\bar j = M_++j$. One can easily see that $M_\alpha$ is isomorphic to the 
induced module ${\rm Ind}_{H_\alpha}^{\Sym_N}(S_\alpha)$. A generalisation of Young's rule \cite[Lemma 3.23]{BR} states that  
\beq\label{eq:Young}
{\rm Ind}_{H_\alpha}^{\Sym_N}(S_\alpha) = \sum_{\substack{\lambda\in P_{M_+|M_-}\\ \lambda\vdash N}} k_\alpha(\lambda) S_\lambda, 
\eeq
where $S_\lambda$ is the simple $\C\Sym_N$-module associated with the partition $\lambda$, and $k_\alpha(\lambda)$ is the multiplicity of $S_\lambda$, which is some generalised Kostka number equal to the number of  semi-standard $(M_+, M_-)$-tableaux of shape $\lambda$ (in the sense of \cite[Definition 2.1]{BR}) and type $\alpha$ \cite[Definition 3.22]{BR}.  
As $V^{\ot N}=\sum_\alpha M_\alpha$, we obtain the $\C\Sym_N$-module isomorphism
\beq\label{eq:Sym-decomp}
V^{\ot N}\simeq \sum_{\substack{\lambda\in P_{M_+|M_-}\\ \lambda\vdash N}} k(\lambda) S_\lambda, 
\eeq
where the multiplicity  $k(\lambda)=\sum_\alpha k_\alpha(\lambda)$ of $S_\lambda$  is equal to the number of semi-standard $(M_+, M_-)$-tableaux of shape $\lambda$ by \cite[Lemma 3.23]{BR}.

We now determine $\lambda^\sharp$ for any given partition  $\lambda=(\lambda_1, \lambda_2, \dots )\vdash N$ in $P_{M_+|M_-}$. Note that the only condition which the depth $\lambda'_1$ of $\lambda$ needs to satisfy is that $\lambda'_1\le N$ as required by $\lambda\vdash N$.  
Let $\lambda^+=(\lambda_1, \lambda_2, \dots, \lambda_{M_+})$, and consider the skew diagram $\mu:=\lambda/\lambda^+=(\lambda_{\bar 1}, \lambda_{\bar 2}, \dots)$, where $\bar r=M_++r$. It is empty if $\lambda'_1\le M_+$; otherwise it is a partition, whose transpose $\lambda^-:=\mu'$ has depth $\le M_-$. Write $\lambda^-=(\lambda^-_1, \lambda^-_2, \dots, \lambda^-_{M_-})$, and define 
\beq\label{eq:lam-shp}
\lambda^\sharp=(\lambda^+, \lambda^-)\in\Z_+^{\dim V}. 
\eeq

We construct a monomial in $V^{\ot N}$ of type $\lambda^\sharp$ as follows. 
Let
\[
\baln
&\wt{\bf v}_{\lambda^\sharp}=\wt{\bf v}_1\ot \wt{\bf v}_2\ot \dots\ot \wt{\bf v}_{\lambda'_1}, \quad \lambda'_1=\text{depth of $\lambda$}, \\
&\wt{\bf v}_i=  b_i^{\ot \lambda_i}, \quad i\le M_+,  \\
&\wt{\bf v}_{\bar j}=b_{\bar1}\ot b_{\bar2}\ot\dots 
\ot b_{\bar{\lambda}_{\bar j}}, \quad {\bar j}=M_++ j, \ 1\le j\le M_-.
\ealn
\]
It is clear that the $\gl(V)$-weight of $\wt{\bf v}_{\lambda^\sharp}$ is equal to $\lambda^\sharp$. 

\medskip

Denote by $T_\lambda$ the canonical tableau of shape $\lambda$, that is,  the standard tableau 
with the first row of boxes filled with the numbers $1, 2, \dots, \lambda_1$ from left to right, the second row of boxes with 
$\lambda_1+1, \lambda_1+2, \dots, \lambda_1+\lambda_2$, and etc..  
Let $P_\lambda$ (resp.  $Q_\lambda$) be the row (resp. column) 
subgroup of $T_\lambda$ in $\Sym_N$, and  
denote by $C_\lambda=B_\lambda A_\lambda$ the Young symmetriser associated with $T_\lambda$, where 
\[
A_\lambda= \sum_{\sigma\in P_\lambda} \sigma, \quad 
B_\lambda= \sum_{\sigma\in P_\lambda} (-1)^{|\sigma|}\sigma.
\]
[We have adopted the definition of the Young symmetriser in \cite[\S 2, Chapter IV] {W}.]
For example, the  canonical tableau of shape $\lambda=(4, 3, 1)\vdash 8$ is
\[
\ytableausetup{mathmode, boxsize=2em}
\ytableaushort{
1234, 567, 8
}.
\]
The row subgroup is $P_\lambda=\Sym\{1,2,3, 4\}\times \Sym\{5, 6, 7\}$, and the column subgroup is $Q_\lambda=\Sym\{1,5,8\}\times \Sym\{2, 6\} \times \Sym\{3, 7\}$, where we have omitted the trivial groups $\Sym\{8\}$ and $\Sym\{4\}$ in $P_\lambda$ and $Q_\lambda$ respectively.

Return to a general partition $\lambda\in P_{M_+|M_-}$ of $N$. 
Let ${\bf v}_{\lambda^\sharp}=C_{\lambda}(\wt{\bf v}_{\lambda^\sharp})$. 
We can easily see that ${\bf v}_{\lambda^\sharp}\ne 0$ by inspecting the construction. 
It is not difficult to show that ${\bf v}_{\lambda^\sharp}$ is a $\gl(V)$-highest weight vector by using Lemma \ref{lem:a-symm}.  
Consider, for example, a positive root vector $\BE_{i t}$ of $\gl(V)$ which maps a $b_t$ with $t>M_+$ to a $b_i$ with $i\le M_+$. Because $\lambda\in P_{M_+|M_-}$, the position label of $b_t$ must appear in a column of $T_\lambda$ of length greater than $M_+$. This means that in $\BE_{i t}\cdot {\bf v}_{\lambda^\sharp}=C_\lambda (\BE_{i t}\cdot \wt{\bf v}_{\lambda^\sharp})$, the column skew symmetriser $B_\lambda$ skew symmetrizes,  in a $\Gamma$-graded fashion, a tensor product of elements of $B$ with at least two $b_i$'s, thus annihilates it by \eqref{eq:a-symm}. Hence $\BE_{i t}\cdot {\bf v}_{\lambda^\sharp}=0$. 

The $\gl(V)$-weight of ${\bf v}_{\lambda^\sharp}$ is the same as that of 
$\wt{\bf v}_{\lambda^\sharp}$, which is equal to $\lambda^\sharp$. Thus 
$\C\Sym_N({\bf v}_{\lambda^\sharp})$ is the subspace of $\gl(V)$-highest weight vectors of $L_{\lambda^\sharp}\ot S_\lambda\subset V^{\ot N}$. 

This completes the proof of the theorem. 
\end{proof}

Theorem \ref{thm:SW} and equation \eqref{eq:Sym-decomp} together imply the following result. 
\begin{corollary}

For any $\lambda\in P_{M_+|M_-}$, let $\lambda^\sharp$ be defined by \eqref{eq:lam-shp}. 
Then the dimension of the simple $\gl(V)$-module $L_{\lambda^\sharp}$ is given by 
\[\dim L_{\lambda^\sharp}= k(\lambda), \]
where $k(\lambda)$ is the number of  semi-standard $(M_+, M_-)$-tableaux of shape $\lambda$.
\end{corollary}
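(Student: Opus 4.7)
The plan is to extract the dimension formula by comparing two different decompositions of $V^{\otimes N}$ as a $\C\Sym_N$-module, where $N = |\lambda|$. Since every partition $\lambda \in P_{M_+|M_-}$ arises as a partition of some $N$, it suffices to fix $N = |\lambda|$ and work inside $V^{\otimes N}$.

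First I would invoke Theorem \ref{thm:SW}(1) and forget the $\gl(V)$-action, reading the decomposition purely as $\C\Sym_N$-modules:
\[
V^{\otimes N} \simeq \bigoplus_{\substack{\mu \in P_{M_+|M_-}\\ \mu \vdash N}} (\dim L_{\mu^\sharp}) \cdot S_\mu.
\]
Next I would invoke the independent computation \eqref{eq:Sym-decomp}, obtained via the generalised Young rule applied to the weight-type decomposition $V^{\otimes N} = \bigoplus_\alpha M_\alpha$, which gives
\[
V^{\otimes N} \simeq \bigoplus_{\substack{\mu \in P_{M_+|M_-}\\ \mu \vdash N}} k(\mu) \cdot S_\mu.
\]

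Comparing these two isomorphisms and using the fact that the simple $\C\Sym_N$-modules $S_\mu$ are pairwise non-isomorphic (so multiplicities are well-defined and can be read off, e.g., via Schur's lemma or character orthogonality), one concludes $\dim L_{\mu^\sharp} = k(\mu)$ for each $\mu \vdash N$ in $P_{M_+|M_-}$. Specialising to $\mu = \lambda$ gives the claim.

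I expect no real obstacle: every ingredient is already established in the excerpt, and the proof is a one-line multiplicity comparison. The only point deserving care is the implicit observation that every $\lambda \in P_{M_+|M_-}$ is covered, which follows by choosing $N = |\lambda|$ and noting that the map $\mu \mapsto \mu^\sharp$ from $\{\mu \in P_{M_+|M_-} : \mu \vdash N\}$ to dominant weights (as in \eqref{eq:lam-shp}) is injective, so no collapsing of summands can spoil the multiplicity comparison.
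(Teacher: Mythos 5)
Your proof is correct and is exactly the argument the paper intends: the corollary is stated immediately after the remark that Theorem \ref{thm:SW} and equation \eqref{eq:Sym-decomp} together imply it, and your multiplicity comparison of the two $\C\Sym_N$-module decompositions of $V^{\ot N}$ is precisely that deduction spelled out. (The injectivity of $\mu\mapsto\mu^\sharp$ is not actually needed, since the multiplicities are read off on the $S_\mu$ side, but including it does no harm.)
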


Note that $\dim L_{\lambda^\sharp}$ is the same as the dimension of the simple 
$\gl_{M_+|M_-}(\C)$-module with  highest weight $\lambda^\sharp$. 

\subsubsection{Simple tensor modules for $\gl(V(\Gamma, \omega))$}\label{sect:H-wts}

Now we give an explicit characterisation of  the simple $\gl(V)\times\gl_N(\C)$ submodules 
of $S_\omega(V^N)$.

\begin{theorem}\label{thm:H-wts}
Let $P_{M_+|M_-;N}=P_{M_+|M_-}\cap P_N$. 
A simple $\gl(V)\times \gl_N(\C)$-submodule $L_{\lambda^\sharp}\ot L_\lambda(\gl_N)$ appears in the decomposition 
\eqref{eq:decomp} of $S_\omega(V^N)$ if and only if $\lambda\in P_{M_+|M_-;N}$, where  
$\lambda^\sharp=(\lambda_+, \lambda_-)$ 
is  defined as follows.  
Write $\lambda=(\lambda_1, \lambda_2, \dots)$, then  
\[
\baln
&\lambda_+=(\lambda_1, \lambda_2, \dots, \lambda_{M_+}), 
&\lambda_-=(\theta(\lambda'_1-M_+), \theta(\lambda'_2-M_+), \dots, \theta(\lambda'_{M_-}-M_+)), 
\ealn
\]
where $\lambda'_j$ is the length of the $j$-th column 
of the Young diagram of $\lambda$ for all $j$, 
and 
$\theta$ is the function on $\R$ defined by $\theta(x)=0$ if $x\le 0$ and $\theta(x)= x$ if $x>0$. 
\end{theorem}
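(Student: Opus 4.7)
The plan is to refine Theorem~\ref{thm:commut}(2) by working in each piece of the $\Z_+$-grading $S_\omega(V^N) = \bigoplus_n S^n_\omega(V^N)$ and combining the colour Schur--Weyl duality of Theorem~\ref{thm:SW} with the classical Schur--Weyl duality for $\gl_N(\C)$. This should simultaneously pin down the index set $P_{M_+|M_-;N}$ and the assignment $\lambda \mapsto \lambda^\sharp$ in a single computation.

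First I would observe that $S^n_\omega(V^N) = \Sigma^-(n)\bigl((V^N)^{\otimes n}\bigr)$ is, by definition, the space of $\Sym_n$-invariants in $(V^N)^{\otimes n}$ under the representation $\nu_n$ of Corollary~\ref{cor:sym}. Because $\C^N$ carries the trivial $\Gamma$-grading, every braiding of the form $\tau_{\C^N, V}$ collapses to the ordinary flip, so the colour-braided $\Sym_n$-action on
\[
(V^N)^{\otimes n} \cong V^{\otimes n} \otimes (\C^N)^{\otimes n}
\]
factors as the diagonal of the colour-braided $\Sym_n$-action on $V^{\otimes n}$ and the standard permutation action on $(\C^N)^{\otimes n}$. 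Moreover this factorisation is compatible with the $\gl(V)$-action (which touches only the $V^{\otimes n}$ factor) and with the $\gl_N(\C)$-action (which touches only the $(\C^N)^{\otimes n}$ factor).

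Next I would apply Theorem~\ref{thm:SW} to $V^{\otimes n}$ and ordinary Schur--Weyl duality to $(\C^N)^{\otimes n}$, yielding
\[
V^{\otimes n} = \bigoplus_{\substack{\mu \vdash n\\ \mu \in P_{M_+|M_-}}} L_{\mu^\sharp} \otimes S_\mu, \qquad
(\C^N)^{\otimes n} = \bigoplus_{\substack{\lambda \vdash n\\ \lambda \in P_N}} L_\lambda(\gl_N) \otimes S_\lambda.
\]
Tensoring these and taking diagonal $\Sym_n$-invariants, the self-duality $S_\mu^* \cong S_\mu$ together with Schur's lemma forces $(S_\mu \otimes S_\lambda)^{\Sym_n} \cong \Hom_{\Sym_n}(S_\mu, S_\lambda)$, which is $\C$ if $\mu = \lambda$ and $0$ otherwise. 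Consequently
\[
S^n_\omega(V^N) = \bigoplus_{\substack{\lambda \vdash n\\ \lambda \in P_{M_+|M_-} \cap P_N}} L_{\lambda^\sharp} \otimes L_\lambda(\gl_N),
\]
and summing over $n$ gives precisely the asserted decomposition with index set $P_{M_+|M_-;N}$ and $\lambda^\sharp$ as in Theorem~\ref{thm:SW}. A short combinatorial check, translating the column lengths of the skew diagram $\lambda / \lambda^+$ into $\theta(\lambda'_j - M_+)$, then reconciles the formula here with the one in Theorem~\ref{thm:SW}.

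The main obstacle I foresee is Step~2, the factorisation of the colour-braided $\Sym_n$-action on $(V^N)^{\otimes n}$ as the diagonal of the two factor actions. It is morally forced by $\C^N$ sitting in degree $0$, but has to be tracked carefully through the braid relations of Lemma~\ref{lem:tau} to be sure that no stray commutative factors appear when reshuffling $V$- and $\C^N$-factors past one another. Once that bookkeeping is in hand, the remainder of the argument is a direct consequence of the two Schur--Weyl dualities and Schur's lemma for $\Sym_n$.
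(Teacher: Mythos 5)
Your argument is correct, and it takes a genuinely different route from the paper's. The paper proves Theorem \ref{thm:H-wts} by a stability/truncation argument: it first reads off the constituents of $S^{n}_\omega(V^{n})$ at the ``critical'' degree (where the degree equals the number of copies of $V$, so that the zero-weight space of $L_\lambda(\gl_n)$ is the Specht module and Theorem \ref{thm:SW} applies directly), and then transports the answer to $S^{n}_\omega(V^{N})$ for $n\ne N$ via the truncation functors $\ST_{N,k}$. You instead work uniformly in each degree $n$, identifying $S^n_\omega(V^N)$ with the diagonal $\Sym_n$-invariants of $V^{\otimes n}\otimes(\C^N)^{\otimes n}$, and then combine Theorem \ref{thm:SW} with classical Schur--Weyl duality for $\gl_N(\C)$ and Schur's lemma (self-duality of Specht modules) to force the two partitions to coincide. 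This is the colour analogue of deriving the Cauchy identity from Schur--Weyl, and it is arguably cleaner: it bypasses the truncation machinery entirely and reproves the multiplicity-freeness of \eqref{eq:decomp} as a byproduct rather than quoting it. The two points you flag as needing care are indeed the only ones: (i) the factorisation of the braided $\Sym_n$-action through the reshuffling $(V\otimes\C^N)^{\otimes n}\cong V^{\otimes n}\otimes(\C^N)^{\otimes n}$ is unproblematic precisely because $\C^N$ sits in degree $0$, so every braiding involving a $\C^N$ factor is the ordinary flip and no commutative factors appear, and the $\gl(V)$- and $\gl_N(\C)$-actions defined via the Weyl algebra agree with the tensor actions on the two factors (the paper itself uses this identification in proving Theorems \ref{thm:invariants}(4) and \ref{thm:SW}); and (ii) the translation of the column lengths of $\lambda/\lambda^+$ into $\theta(\lambda'_j-M_+)$ is the immediate combinatorial identity $(\lambda/\lambda^+)'_j=\max(\lambda'_j-M_+,0)$. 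With those two checks written out, your proof is complete.
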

Note that if $\lambda\vdash N$, the $\lambda^\sharp$ so defined coincides 
with that in Theorem \ref{thm:SW}.(1). 

To prove the theorem, we will need some functors between categories 
of tensor representations of general linear Lie algebras. 
Given any $N$, we consider the complete flag 
$\C^N\supset \C^{N-1}\supset \C^{N-2}\supset \dots \supset \C$, 
where $\C^k$ is the subspace of $\C^N$ spanned by the first $k$ standard basis elements of $\C^N$. 
Corresponding to the flag, there is the chain of subalgebras 
$\gl_N(\C)\supset \gl_{N-1}(\C)\supset \gl_{N-2}(\C)\supset \dots \supset \gl_1(\C)$. 
Denote by $\SP_k$ the category of tensor (also called polynomial) representations of $\gl_k(\C)$ for any $k$, i.e., direct sums of submodules of $(\C^k)^{\ot r}$ for all $r$. 
The set of weights of any object in $\SP_k$ is contained in $\Z_+^k$.  

Given any module $M_N\in \SP_N$, let $M_{N-1}$ be the subspace spanned by the weight vectors with weights $(\mu_1,\dots, \mu_{N-1}, 0)$, where $\mu=(\mu_1,\dots, \mu_{N-1})\in \Z_+^{N-1}$. Then $M_{N-1}$ is a $\gl_{N-1}(\C)$-submodule, 
and the restriction of $M_N$ to a $\gl_{N-1}(\C)$-module decomposes into a direct sum 
$M_N=M_{N-1}\oplus M_{N-1}^\bot$ of submodules.  
Define the truncation functor ${\ST}_{N, N-1}: \SP_N\lra \SP_{N-1}$, which maps any object $M_N$ to $M_{N-1}$, 
and any  morphism $\varphi: M_N\lra M'_N$ to the restriction  $\varphi|_{M_{N-1}}: M_{N-1} \lra M'_{N-1}$. 
This is an exact functor, which is essentially surjective (i.e., all objects of $\SP_{N-1}$ are images of objects of $\SP_N$).  
Compositing successive functors, we obtain truncation functors
\[
{\ST}_{N, k}= {\ST}_{k+1, k} \circ {\ST}_{k+2, k+1}\circ\dots\circ  {\ST}_{N, N-1}: \SP_N\lra \SP_k. 
\] 
Note in particular that for $\lambda\in P_N$, 
\[
{\ST}_{N, k}:  L_\lambda(\gl_N)\mapsto \left\{
\begin{array}{l l}
L_\lambda(\gl_k), & \text{if depth of $\lambda\le k$},\\
0, &\text{otherwise}.
\end{array}
\right.
\]

\begin{proof}[Proof of Theorem \ref{thm:H-wts}]
We find all $\lambda$ appearing in \eqref{eq:decomp}, and determine $\lambda^\sharp$ 
in the equation as a function of $\lambda$.  
To avoid confusing the $\lambda^\sharp$ in \eqref{eq:decomp} with that defined in the present theorem, 
we temporarily denote it by $\wt\lambda$.  

For any $k\ge 0$, we take the degree $N+k$ subspace $S^{N+k}_\omega(V^{N+k})$
of $S_\omega(V^{N+k})$. We obtain from  \eqref{eq:decomp} the following relation.
\beq\label{eq:decomp+}
S^{N+k}_\omega(V^{N+k})=\sum_{\text{some } \lambda\vdash N+k} L_{\wt\lambda}\ot L_\lambda(\gl_{N+k}).
\eeq
Since the zero weight space of $L_\lambda(\gl_{N+k})$ is isomorphic to 
the simple $\Sym_{N+k}$-module $S^\lambda$ because $\lambda\vdash N+k$,  it follows Theorem \ref{thm:SW} that $\lambda \in P_{M_+|M_-}$ and $\wt\lambda=\lambda^\sharp$. 
Furthermore, for any $\lambda \in P_{M_+|M_-}$ satisfying $\lambda\vdash N+k$, the submodule 
$L_{\lambda^\sharp}\ot L_\lambda(\gl_{N+k})$ appears on the right hand side of the above equation. 

Applying the truncation functor ${\ST}_{N+k, N}$  to \eqref{eq:decomp+}, we arrive at 
\beq\label{eq:decomp+1}
S^{N+k}_\omega(V^{N})=\sum_{\substack{\lambda\in P_{M_+|M_-;N}, \\  \lambda\vdash N+k}} L_{\lambda^\sharp}\ot L_\lambda(\gl_{N}), \quad \forall k\ge 0.
\eeq

Now we consider the degree $j$ subspace $S^j_\omega(V^N)$
of $S_\omega(V^N)$ for $j<N$. Equation \eqref{eq:decomp} leads to   
\beq\label{eq:decomp-}
S^j_\omega(V^N)=\sum_{\text{some } \lambda\vdash j} L_{\wt\lambda}\ot L_\lambda(\gl_N).
\eeq
Applying the truncation functor $\ST_{N, j}$ to \eqref{eq:decomp-}, we obtain
$
S^j_\omega(V^j)=\sum L_{\wt\lambda}\ot L_\lambda(\gl_j).
$
Note that any $L_\lambda(\gl_N)$ on the right hand side of \eqref{eq:decomp-} truncates to $L_\lambda(\gl_j)$ since $\lambda\vdash j$.
Thus it follows Theorem \ref{thm:SW} that
\beq\label{eq:decomp-1}
S^j_\omega(V^N)=\sum_{\substack{\lambda\in P_{M_+|M_-;N}, \\  \lambda\vdash j}} L_{\lambda^\sharp}\ot L_\lambda(\gl_N), \quad   j<N. 
\eeq
Equations \eqref{eq:decomp+1} and \eqref{eq:decomp-1} together imply that
\beq\label{eq:H-duality}
S_\omega(V^{N})=\sum_{\lambda\in P_{M_+|M_-;N}} L_{\lambda^\sharp}\ot L_\lambda(\gl_{N}),
\eeq
where $\lambda^\sharp$ is as defined in the statement of the theorem. 
\end{proof}

\begin{remark}\label{rmk:non-super}
If $\Gamma_R^-=\emptyset$, then $V_-=0$, and $\gl(V)=\gl(V_+)$ 
is a Lie colour algebra. 
A simple $\gl(V)$-module $L_\mu$  is finite dimensional 
if an only if its highest weight $\mu$ satisfies condition \eqref{eq:domin}. In this case, 
we can always find some  
$c\in \C$ and $\lambda\in P_{M_+}$ such that $\mu=(c, c, \dots, c)+ \lambda$. 
This in particular implies that
$L_\mu \simeq L_{(c, c, \dots, c)}\ot L_\lambda$, where $\dim L_{(c, c, \dots, c)}=1$.  
By Theorem \ref{thm:SW}, $L_\lambda\subset V^{\ot|\lambda|}$ is a simple tensor module.  
Since tensor powers of $V$ are semi-simple by Remark \ref{rmk:power-ss}, 
the tensor product of any number of 
finite dimensional simple modules is semi-simple.  
\end{remark}

\subsection{Fundamental theorems of invariant theory}\label{sect:FT}

We return to Remark \ref{rmk:Pd}. 
Let $\{\ol{f}^{(r)}\mid r=1, 2, \dots, N\}$ be the basis of $\ol{\C}^N$ dual to the standard basis for $\C^N$, thus $\ol{f}^{(r)}(f^{(s)})=\delta_{r s}$ for all $r, s$. 
Also let 
$\{\ol{e}(-\gamma)_i\mid 1\le i\le m_\gamma, \gamma\in\Gamma_R\}$ be the basis of $V^*$ dual to 
the basis $\{e(\gamma)_i\mid 1\le i\le m_\gamma, \gamma\in\Gamma_R\}$ of $V$, hence 
$\ol{e}(-\beta)_j(e(\alpha)_i)=\delta_{\alpha\beta}\delta_{ij}$ for all $i, j, \alpha, \beta$. 
Now
$\{\ol{e}(-\gamma)_i^r=\ol{e}(-\gamma)_i\ot \ol{f}^{(r)}\mid 1\le r \le N, 1\le i\le m_\gamma, \gamma \in\Gamma_R\}$ is a basis for $\ol{V}^N$. 

We denote the images of the elements $\ol{e}(-\gamma)_i^r$ in $S_\omega(\ol{V}^N)$
by $\ol{x}(-\gamma)_i^r$.  
Then the algebra isomorphism $S_\omega(\ol{V}^N)\simeq \Pd$ is given by 
$\ol{x}(-\gamma)_i^r\mapsto \frac{\partial}{\partial x(\beta)_j^s}$ for all $i, r, \gamma$. 

Consider $S_\omega(V^N\oplus \ol{V}^N)=S_\omega(V^N) \ot S_\omega(\ol{V}^N)$ 
as a module algebra for the Lie $(\Gamma, \omega)$-algebra
 $
 \fg=(\gl(V)\times \gl_N(\C))\times (\gl(V)\times \gl_N(\C)).
 $
 Then $\fg\supset \gl(V)\times \gl(V) \supset$ $\text{diagonal subalgebra $\gl(V)$}$. 
 Denote the subspace of invariants with respect the  action of the diagonal subalgebra $\gl(V)$ by 
\[
S_\omega(V^N\oplus\ol{V}^N)^{\gl(V)}
:=\left\{\left. z\in S_\omega(V^N\oplus\ol{V}^N)\right| X\cdot z= 0, \ \forall X\in \gl(V)\right\}.
\]
One can easily see that the following elements belong to  $S_\omega(V^N\oplus\ol{V}^N)^{\gl(V)}$. 
\beq\label{eq:zrs}
z_{r s} = \sum_{\gamma\in\Gamma_R}\sum_{i=1}^{m_\gamma} x(\gamma)_i^r \ol{x}(-\gamma)_i^s, \quad \forall r, s. 
\eeq
They have $\Gamma$-degree $0$, thus commute with all elements 
of $S_\omega(V^N\oplus\ol{V}^N)$. 

\begin{theorem}\label{thm:FFT}
The subalgebra $S_\omega(V^N\oplus\ol{V}^N)^{\gl(V)}$ is generated by the elements $z_{r s}$ for $r, s=1, 2, \dots, N$, and  
has the following multiplicity free decomposition as a $\gl_N(\C)\times \gl_N(\C)$-module algebra. 
\beq\label{eq:SFT}
S_\omega(V^N\oplus\ol{V}^N)^{\gl(V)}
=\sum_{\lambda\in P_{M_+|M_-;N}} L_\lambda(\gl_N)\ot L_\lambda(\gl_N)^*. 
\eeq
\end{theorem}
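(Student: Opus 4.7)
The plan is first to establish the $\gl_N(\C)\times\gl_N(\C)$-module decomposition \eqref{eq:SFT} from the two Howe dualities already proved, and then to verify that the $z_{r s}$ generate the invariant subalgebra.

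Since $S_\omega(V^N\oplus\ol{V}^N)=S_\omega(V^N)\ot S_\omega(\ol{V}^N)$ as module algebras, combining Theorem~\ref{thm:commut}(2) with Theorem~\ref{thm:Howe-d} yields
\[
S_\omega(V^N\oplus\ol{V}^N)=\bigoplus_{\lambda,\mu\in P_{M_+|M_-;N}}\bigl(L_{\lambda^\sharp}\ot L_{\mu^\sharp}^*\bigr)\ot\bigl(L_\lambda(\gl_N)\ot L_\mu(\gl_N)^*\bigr).
\]
The diagonal $\gl(V)$-action is semisimple on each component by Remark~\ref{rmk:power-ss}, and the canonical identification $L_{\lambda^\sharp}\ot L_{\mu^\sharp}^*\cong\Hom_\C(L_{\mu^\sharp},L_{\lambda^\sharp})$ intertwines it with the adjoint action. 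Schur's lemma then gives a one-dimensional invariant space when $\lambda^\sharp=\mu^\sharp$ and zero otherwise. The map $\lambda\mapsto\lambda^\sharp$ is injective on $P_{M_+|M_-;N}$, since $\lambda$ is recovered from $(\lambda^+,\lambda^-)$ by stacking the rows of $\lambda^+$ on top of the conjugate Young diagram of $\lambda^-$. Only the diagonal terms $\lambda=\mu$ therefore contribute, producing \eqref{eq:SFT}.

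For the generation claim, let $J$ denote the subalgebra of $S_\omega(V^N\oplus\ol{V}^N)^{\gl(V)}$ generated by the $z_{r s}$. Each $z_{r s}$ has $\Gamma$-degree zero, hence the $z_{r s}$ commute among themselves, and their span $\sum_{r,s}\C z_{r s}$ is a $\gl_N\times\gl_N$-submodule isomorphic to $L_{(1)}(\gl_N)\ot L_{(1)}(\gl_N)^*$. Thus $J$ is a $\gl_N\times\gl_N$-equivariant homomorphic image of the polynomial algebra $S(\C^N\ot\ol{\C}^N)=\bigoplus_{\lambda\in P_N}L_\lambda(\gl_N)\ot L_\lambda(\gl_N)^*$, and as a submodule of the right-hand side of \eqref{eq:SFT} it must be a direct sum $\bigoplus_{\lambda\in\mathcal S}L_\lambda(\gl_N)\ot L_\lambda(\gl_N)^*$ for some $\mathcal S\subseteq P_{M_+|M_-;N}$.

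The principal obstacle is showing $\mathcal S=P_{M_+|M_-;N}$, i.e.\ that $J$ meets every hook isotypic component. My plan is to restrict to the bi-degree $(n,n)$ piece and exploit colour Schur-Weyl duality. The symmetriser $\Sigma^-(n)\ot\Sigma^-(n)$, applied to the $\gl(V)$-invariant subspace of $V^{\ot n}\ot(V^*)^{\ot n}\ot(\C^N)^{\ot n}\ot(\ol{\C}^N)^{\ot n}$, surjects onto $\bigl(S^n_\omega(V^N)\ot S^n_\omega(\ol{V}^N)\bigr)^{\gl(V)}$. By Theorem~\ref{thm:SW}(2), the invariant subspace $(V^{\ot n}\ot(V^*)^{\ot n})^{\gl(V)}$ is generated over $\nu_n(\C\Sym_n)$, acting by tensor-slot permutations on the $V^*$ factors, by the element $(\iota^{-1}(\id_V))^{\ot n}$; tensoring with products of standard basis vectors of $\C^N$ and $\ol{\C}^N$ and symmetrising then yields monomials in the $z_{r s}$. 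Matching the Young symmetriser for each hook shape $\lambda\in P_{M_+|M_-;N}$ to the $L_\lambda\ot L_\lambda^*$-component, with careful bookkeeping of the commutative factors coming from Lemma~\ref{lem:a-symm} and equation~\eqref{eq:act-tensor-1}, will show $\mathcal S=P_{M_+|M_-;N}$ and complete the proof.
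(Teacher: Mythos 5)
Your derivation of the decomposition \eqref{eq:SFT} is exactly the paper's: tensor the two Howe dualities for $S_\omega(V^N)$ and $S_\omega(\ol{V}^N)$, identify $L_{\lambda^\sharp}\ot L_{\mu^\sharp}^*$ with $\Hom_\C(L_{\mu^\sharp},L_{\lambda^\sharp})$ carrying the adjoint action, and apply Schur's lemma together with the injectivity of $\lambda\mapsto\lambda^\sharp$ (a point the paper leaves implicit). This half is complete and correct.

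For the generation claim your route genuinely diverges from the paper's. The paper argues by cases: when $M_+\ge N$ it identifies the invariant algebra with $S(\C^N\ot\ol{\C}^N)$ via the classical $(\gl_N,\gl_N)$ Cauchy decomposition and reads off that the bidegree-$(1,1)$ elements $z_{rs}$ generate; when $M_+<N$ it realises the given space as a truncation of a larger $\hat V$ with $\hat M_+\ge N$ and pushes the generators down through the surjective equivariant maps $\I'\ot\ol{\I}'$, the main labour being the verification \eqref{eq:key-FFT} that the image of the invariants fills out the whole invariant algebra of the smaller space. You instead propose a uniform argument in all cases: symmetrise the tensor-level FFT supplied by colour Schur--Weyl duality (Theorem \ref{thm:SW}(2)). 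This is the mirror image of the paper's own proof of Theorem \ref{thm:invariants}(4), with the roles of $\gl(V)$ and $\gl_N$ exchanged, and it is a legitimate alternative that avoids both the case split and the truncation machinery. What each approach buys: yours is shorter and more conceptual once the tensor FFT is in hand; the paper's yields the truncation formalism it reuses for Corollary \ref{thm:FFT-1} and Theorem \ref{thm:HD-general}.

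The caveat is that your final paragraph is a plan, not a proof: the decisive step --- that applying $\Sigma^-(n)\ot\Sigma^-(n)$ to a permuted contraction tensored with basis vectors of $(\C^N)^{\ot n}\ot(\ol{\C}^N)^{\ot n}$ produces, up to a scalar, the monomial $z_{r_1 s_{\sigma(1)}}\cdots z_{r_n s_{\sigma(n)}}$ --- is asserted but never carried out, and it is exactly where the commutative factors must be tracked. The argument is not in danger (the $\C^N$ and $\ol{\C}^N$ slots carry degree $0$, so the reorderings there cost nothing, and the logical direction only requires the image of the tensor invariants to be \emph{contained in} the span of the monomials, so possible vanishing under antisymmetrisation is harmless), but as written the hardest part of the theorem is deferred to future bookkeeping. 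You should also say explicitly that the invariants are concentrated in bidegrees $(n,n)$ --- this follows from your own \eqref{eq:SFT}, since $L_{\lambda^\sharp}$ and $L_{\lambda^\sharp}^*$ sit in degrees $|\lambda|$ and $-|\lambda|$ --- since it is needed to conclude that spanning each bidegree-$(n,n)$ piece by monomials accounts for the entire invariant algebra.
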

\begin{proof}
It follows Theorems \ref{thm:invariants},  \ref{thm:H-wts} and \ref{thm:Howe-d} that
\[
\baln
S_\omega(V^N\oplus\ol{V}^N)&=S_\omega(V^N) \ot S_\omega(\ol{V}^N)\\
&\simeq \sum_{\lambda, \mu \in P_{M_+|M_-;N}}
L_{\lambda^\sharp}\ot L_{\mu^\sharp}^*\ot L_\lambda(\gl_N)\ot L_\mu(\gl_N)^*
\ealn
\]
as $\gl(V)\times\gl(V)\times\gl_N(\C)\times\gl_N(\C)$-module. Under the diagonal $\gl(V)$-action, 
\[
\baln
S_\omega(V^N\oplus\ol{V}^N)^{\gl(V)}
&\simeq \sum_{\lambda, \mu \in P_{M_+|M_-;N}}
(L_{\lambda^\sharp}\ot L_{\mu^\sharp}^*)^{\gl(V)}\ot L_\lambda(\gl_N)\ot L_\mu(\gl_N)^*\\
&\simeq \sum_{\lambda \in P_{M_+|M_-;N}}
 L_\lambda(\gl_N)\ot L_\lambda(\gl_N)^*. 
\ealn
\]
The equalities are isomorphisms of $\gl_N(\C)\times\gl_N(\C)$-modules.
This proves \eqref{eq:SFT}. 

Now we prove the first statement of the theorem. 

Assume that $M_+\ge N$. 
In this case, we have  $P_{M_+|M_-;N}=P_N$, and the last line of the above equation 
becomes $\sum_{\lambda \in P_N}
 L_\lambda(\gl_N)\ot L_\lambda(\gl_N)^*$, which is known to be isomorphic to 
$S(\C^N\ot\ol{\C}^N)$
as $\gl_N(\C)\times\gl_N(\C)$-module algebra. 
This shows that 
\beq
 S_\omega(V^N\oplus\ol{V}^N)^{\gl(V)}\simeq S(\C^N\ot\ol{\C}^N).
\eeq

Denote by $Z_{r s}$ the image of $f^{(r)}\ot \ol{f}^{(s)}$ in $S(\C^N\ot\ol{\C}^N)$ for any $r, s$. 
Then $S(\C^N\ot\ol{\C}^N)$ is the polynomial algebra in the variables $Z_{r s}$, 
and the above algebra isomorphism is given by 
\[
Z_{r s}\mapsto z_{r s}, \quad \forall r, s.
\] 
This proves the first statement of the theorem in the case $M_+\ge N$. 

Now we prove the case $M_+<N$. 

Let $B'$ be a subset of the ordered basis $B$ for $V$ such that $B'\supset B^-$. Denote $V'=\C B'$. Then 
$V'=\sum_{\alpha} V'_\alpha$. Let 
\[
\Gamma_R'=\{\alpha\in \Gamma_R\mid \dim{V'_\alpha}>0\}, \quad M'_+=\dim{V'\cap V_+}.
\]

We have the subalgebra $\gl(V')\subset \gl(V)$. We want to show that the 
first statement of the theorem is true for 
$S_\omega({V'}^N\oplus \ol{V'}^N)^{\gl(V')}$, where $M'_+<N$ is an allowed possibility. 

We introduce truncation ``functors'' sending $\gl(V)$-modules under consideration 
to $\gl(V')$-modules. This time we define the functors concretely as follows (cf. Section \ref{sect:H-wts}). 
Let $I'$ be the  projection map $I'(V)=V'$, which is the identity element of $\gl(V')\subset \gl(V)$. 
For each $r$, we have the map
\[
\baln
\I^\prime_r=(I'\ot\id_{\C^N})^{\ot r}:  (V^N)^{\ot r}\lra (V^{\prime N})^{\ot r}, 
\ealn
\]
which descends to a map on $S_\omega(V^N)_r$, 
where $S_\omega(V^N)_r$ is regarded as a subspace of $(V^N)^{\ot r}$. 
Define the map 
\beq\label{eq:I-funct}
\I^\prime: S_\omega(V^N)\lra S_\omega({V^\prime}^N), \quad  \text{$\I^\prime|_{S_\omega(V^N)_r}=\I^\prime_r$, $\forall r$.}
\eeq
Let $\ol{V'}=\sum_{\alpha>\alpha_c} V^*_{-\alpha}$, which is the dual space of $V'$. 
Also introduce the maps 
\beq
&&\ol{\I}^\prime_r=(-I'\ot\id_{\ol{\C}^N})^{\ot r}:  (\ol{V}^N)^{\ot r}\lra (\ol{V^\prime}^N)^{\ot r}, \\
&&\ol{\I}^\prime: S_\omega(\ol{V}^N)\lra S_\omega(\ol{V^\prime}^N), \quad \text{$\ol{\I}^\prime|_{S_\omega(\ol{V}^N)_r}=\ol{\I}^\prime_r$, $\forall r$.} \label{eq:Ibar-funct}
\eeq
It is important to note that 
$\I^\prime$ and $\ol\I^\prime$ are surjective algebra homomorphisms; and 
they commute with the actions of $\gl(V')\times \gl_N(\C)$
 on $S_\omega(V^N)$ and $S_\omega(\ol{V}^N)$ respectively.

We have 
{\small
\beq\label{eq:trun-diag}
\begin{tikzcd}
 \I^\prime(S_\omega(V^N)) \arrow[equal]{d}\arrow[equal]{r} &\sum\limits_{\lambda\in P_N}
		\I^\prime(L_{\lambda^\sharp} \ot L_\lambda(\gl_N)) \arrow[equal]{d}\\
S_\omega({V'}^N) \arrow[equal]{r} & \sum\limits_{\lambda\in P_{M_+'|M_-; N}} 
		L_{\lambda_{red}^\sharp}(\gl(V'))\ot L_\lambda(\gl_N)),
\end{tikzcd}
\eeq
}
where $\lambda_{red}^\sharp\in\Z_+^{M'_++M_-}$ associated with $\lambda=(\lambda_1, \lambda_2, \dots)\in P_{M_+'|M_-; N}$ is defined by  
\[
\baln
&\lambda_{red}^\sharp=(\lambda_+, \lambda_-) \quad \text{with}\\ 
&\lambda_+=(\lambda_1, \lambda_2, \dots, \lambda_{M'_+}), \\
&\lambda_-=(\theta(\lambda'_1-M'_+), \theta(\lambda'_2-M'_+), \dots, \theta(\lambda'_{M_-}-M'_+))
\ealn
\]
(cf. Theorem \ref{thm:H-wts}).
The map $\I^\prime$ does not affect the second factor when applied to  $L_{\lambda^\sharp} \ot L_\lambda(\gl_N)\subset S_\omega(V^N)$. Hence $\I^\prime(L_{\lambda^\sharp} \ot L_\lambda(\gl_N)) = M'(\lambda)\ot L_\lambda(\gl_N))$ for some $\gl(V')$-module $M'(\lambda)$. 
It follows the right column of the diagram \eqref{eq:trun-diag} that 
\[
\I^\prime(L_{\lambda^\sharp} \ot L_\lambda(\gl_N)) =\left\{
\begin{array}{l l}
L_{\lambda_{red}^\sharp}(\gl(V')) \ot L_\lambda(\gl_N), &\text{if $\lambda\in P_{M_+'|M_-; N}$}, \\
0, &\text{otherwise}, 
\end{array}
\right.
\]
By similarly considering the actions of $\ol{\I}^\prime$ on both sides of \eqref{eq:decomp-d-1}, 
we obtain 
\[
\ol{\I^\prime}(L_{\lambda^\sharp} ^*\ot L_\lambda(\gl_N)^*) =\left\{
\begin{array}{l l}
L_{\lambda_{red}^\sharp}(\gl(V'))^*\ot L_\lambda(\gl_N)^*, &\text{if $\lambda\in P_{M_+'|M_-; N}$}, \\
0, &\text{otherwise}. 
\end{array}
\right.
\]

If we regard $L_{\lambda^\sharp}$ as a $\gl(V)$-submodule of $V^{\ot |\lambda|}$, then 
$
M'(\lambda)\simeq {I^\prime}^{\ot |\lambda|}(L_{\lambda^\sharp}),
$
and similarly 
$
\ol{\I^\prime}(L_{\lambda^\sharp}^*\ot L_\lambda(\gl_N)^*) \simeq 
(-I')^{\ot |\lambda|}(L_{\lambda^\sharp} ^*)\ot L_\lambda(\gl_N)^*.
$
Hence 
\beq
{I^\prime}^{\ot |\lambda|}(L_{\lambda^\sharp})&\simeq& 
\left\{
\begin{array}{l l}
L_{\lambda_{red}^\sharp}(\gl(V')), &\text{if $\lambda\in P_{M_+'|M_-; N}$}, \\
0, &\text{otherwise};
\end{array}
\right.\\
{\ol{I^\prime}}^{\ot |\lambda|}(L_{\lambda^\sharp}^*)&\simeq& 
\left\{
\begin{array}{l l}
L_{\lambda_{red}^\sharp}(\gl(V'))^*, &\text{if $\lambda\in P_{M_+'|M_-; N}$}, \\
0, &\text{otherwise}. 
\end{array}
\right.
\eeq 

Let us write $Z:=\left(S_\omega(V^N)\ot S_\omega(\ol{V}^N)\right)^{\gl(V)}$, which has generators  
$z_{r s}$ for all $r, s$. 
Let $Z':=(\I'\ot \ol{\I^\prime})(Z)$, and introduce the elements 
\[
z'_{r s} := \sum_{\gamma\in\Gamma'_R}\sum_{i=1}^{\dim V'_\gamma} x(\gamma)_i^r \ol{x}(-\gamma)_i^s, \quad \forall r, s. 
\]
Then it is clear that 
\[
(\I'\ot \ol{\I^\prime})(z_{r s}) = z'_{r s},  \quad \forall r, s. 
\]
As $\I'\ot \ol{\I^\prime}$ commutes with the action of 
$\gl(V')\times \gl_N(\C)\times \gl(V')\times \gl_N(\C)$, 
we have $Z'\subset
\left(S_\omega({V'}^N)\ot S_\omega(\ol{V'}^N)\right)^{\gl(V')}$.
Also, since $\I'\ot \ol{\I^\prime}$ is an algebra homomorphism, 
we conclude that  $Z'$ is generated by $z'_{r s} $ for $r, s=1, 2, \dots, N$, 
as a subalgebra of $S_\omega({V'}^N)\ot S_\omega(\ol{V'}^N)$. 

Denote $\Omega_\lambda
= ({I^\prime}^{\ot |\lambda|}\ot {\ol{I^\prime}}^{\ot |\lambda|})(L_{\lambda^\sharp} \ot L_{\lambda^\sharp} ^*)^{\gl(V)}$,  then clearly $\dim \Omega_\lambda\le 1$.  
We have 
\[
\baln
Z'
\simeq \sum_{\lambda \in P_{M_+|M_-;N}} \Omega_\lambda\ot 
 L_\lambda(\gl_N)\ot L_\lambda(\gl_N)^*.
\ealn
\] 
Let  $\iota: L_{\lambda^\sharp} \ot L_{\lambda^\sharp} ^*\lra 
\End_\C(L_{\lambda^\sharp})$ be the canonical isomorphism. 
Then  
\[
(L_{\lambda^\sharp} \ot L_{\lambda^\sharp} ^*)^{\gl(V)}=\C \iota^{-1}(\id_{L_{\lambda^\sharp}}). 
\]
If ${I^\prime}^{\ot |\lambda|}(L_{\lambda^\sharp})= 0$,  then clearly $\Omega_\lambda=0$. 
Now ${I^\prime}^{\ot |\lambda|}(L_{\lambda^\sharp} )
=L_{\lambda_{red}^\sharp}(\gl(V'))\ne 0$
if and only if $\lambda \in \lambda \in P_{M'_+|M_-;N}$.
In this case, $\ol{I^\prime}^{\ot|\lambda|}(L_{\lambda^\sharp}^*)
=L_{\lambda_{red}^\sharp}(\gl(V'))^*\ne 0$. 
Thus as $\gl(V')$-modules,
\[
L_{\lambda^\sharp}  =L_{\lambda_{red}^\sharp}(\gl(V'))\oplus M_\bot, \quad 
L_{\lambda^\sharp}^* =L_{\lambda_{red}^\sharp}(\gl(V'))^*\oplus M_\bot^*
\]
for some submodule $M_\bot$ and its dual module $M_\bot^*$ such that  ${I^\prime}^{\ot |\lambda|}(M_\bot)=0$, and hence $\ol{I^\prime}^{\ot |\lambda|}(M_\bot^*)=0$. Now one has
\beq
({I^\prime}^{\ot |\lambda|}\ot {\ol{I^\prime}}^{\ot |\lambda|})(\iota^{-1}(\id_{L_{\lambda^\sharp}}))= 
\iota^{-1}(\id_{L_{\lambda_{red}^\sharp}(\gl(V')}).
\eeq
This is particularly easy to see if one expresses $\iota^{-1}(\id_{L_{\lambda^\sharp}})$ in terms of some bases of $L_{\lambda_{red}^\sharp}(\gl(V'))$ and $M_\bot$, and their dual bases.  

Hence, as $\gl_N(\C)\times\gl_N(\C)$-module, 
\beq\label{eq:key-FFT}
Z'
&\simeq& \sum_{\lambda \in P_{M'_+|M_-;N}} 
 L_\lambda(\gl_N)\ot L_\lambda(\gl_N)^*\\
 &\simeq& \sum_{\lambda \in P_{M'_+|M_-;N}} 
 \left(L_{\lambda_{red}^\sharp} \ot L_{\lambda_{red}^\sharp} ^*\right)^{\gl(V')}\ot L_\lambda(\gl_N)\ot L_\lambda(\gl_N)^*
 \nonumber\\
 &\simeq& 
 \left(S_\omega({V'}^N)\ot S_\omega(\ol{V'}^N)\right)^{\gl(V')} = S_\omega({V'}^N\oplus\ol{V'}^N)^{\gl(V')}. \nonumber
\eeq
Thus $S_\omega({V'}^N\oplus\ol{V'}^N)^{\gl(V')}$ is generated by $z'_{r s} $ for $r, s=1, 2, \dots, N$, 
proving the first statement of the theorem. 

This completes the proof of the theorem. 
\end{proof}

Given another positive integer $N'$, there is the $\gl(V)\times \gl_{N'}(\C)$-module algebra
$S_\omega(\ol{V}^{N'})$ with the set of generators
$\{\ol{x}(-\gamma)_i^s\mid  \gamma\in \Gamma_R,  1\le i\le m_\alpha, 1\le s\le N'\}$. 
We have the elements $z_{r s}$,  for $1\le r\le N$ and 
$1\le s\le N'$,  defined by the formula \eqref{eq:zrs}.
Now $S_\omega(V^N\oplus\ol{V}^{N'})=S_\omega(V^N)\ot S_\omega(\ol{V}^{N'})$. 
We can easily deduce from Theorem \ref{thm:FFT} the following result.

\begin{corollary}\label{thm:FFT-1}
Retain notation above. 
\begin{enumerate}
\item (First fundamental theorem)
The subalgebra  $S_\omega(V^N\oplus\ol{V}^{N'})^{\gl(V)}$ 
is generated by the elements $z_{r s}$ for $1\le r\le N, 1\le s\le N'$. 

\item (Second fundamental theorem)
$S_\omega(V^N\oplus\ol{V}^{N'})^{\gl(V)}$ has the following multiplicity free decomposition 
as $\gl_N(\C)\times \gl_{N'}(\C)$-module.
\beq\label{eq:SFT-1}
S_\omega(V^N\oplus\ol{V}^{N'})^{\gl(V)}
=\sum_{\lambda\in P_{M_+|M_-;N_{min}}} L_\lambda(\gl_N)\ot L_\lambda(\gl_{N'})^*, 
\eeq
where $N_{min}=min(N, N')$ is the minimum of  $N, N'$. 
\end{enumerate}
\end{corollary}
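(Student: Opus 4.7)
My plan is to first establish (2) by direct application of the colour Howe dualities, and then deduce (1) by a truncation argument that reduces the claim to the symmetric case $N = N' = \max(N,N')$ already handled in Theorem~\ref{thm:FFT}.

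For (2), I would factor $S_\omega(V^N \oplus \ol V^{N'}) = S_\omega(V^N) \ot S_\omega(\ol V^{N'})$ and apply Theorem~\ref{thm:commut}(2) together with Theorem~\ref{thm:Howe-d} to produce the $\gl(V) \times \gl(V) \times \gl_N(\C) \times \gl_{N'}(\C)$-module decomposition
\begin{equation*}
\bigoplus_{\substack{\lambda \in P_{M_+|M_-;N} \\ \mu \in P_{M_+|M_-;N'}}} \bigl(L_{\lambda^\sharp} \ot L_{\mu^\sharp}^*\bigr) \ot \bigl(L_\lambda(\gl_N) \ot L_\mu(\gl_{N'})^*\bigr).
\end{equation*}
Taking invariants under the diagonal $\gl(V)$-action, the canonical identification $(L_{\lambda^\sharp} \ot L_{\mu^\sharp}^*)^{\gl(V)} \simeq \Hom_{\gl(V)}(L_{\mu^\sharp}, L_{\lambda^\sharp})$ together with Schur's lemma for finite-dimensional simple $\gl(V)$-modules makes these spaces vanish unless $L_{\lambda^\sharp} \simeq L_{\mu^\sharp}$. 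Since $\lambda \mapsto \lambda^\sharp$ is manifestly injective on $P_{M_+|M_-}$ by the explicit recipe of Theorem~\ref{thm:H-wts}, this forces $\lambda = \mu$, and the remaining constraint reads $\lambda \in P_{M_+|M_-;N} \cap P_{M_+|M_-;N'} = P_{M_+|M_-;N_{\min}}$, yielding~\eqref{eq:SFT-1}.

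For (1), set $\bar N = \max(N,N')$ and use the coordinate projections $\C^{\bar N} \twoheadrightarrow \C^N$ and $\ol\C^{\bar N} \twoheadrightarrow \ol\C^{N'}$ onto the first $N$ (respectively first $N'$) components. These induce a $\gl(V)$-equivariant surjective algebra homomorphism
\begin{equation*}
\Psi : S_\omega(V^{\bar N} \oplus \ol V^{\bar N}) \twoheadrightarrow S_\omega(V^N \oplus \ol V^{N'})
\end{equation*}
sending $z^{(\bar N)}_{rs}$ to $z_{rs}$ when $r \le N$ and $s \le N'$, and to $0$ otherwise. By Theorem~\ref{thm:FFT} the source $\gl(V)$-invariants are generated by the $z^{(\bar N)}_{rs}$, so the image of $\Psi$ restricted to invariants lies in the subalgebra $A \subseteq S_\omega(V^N \oplus \ol V^{N'})^{\gl(V)}$ generated by the desired $z_{rs}$.

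The principal remaining difficulty is to show that $\Psi$ restricts to a surjection on $\gl(V)$-invariants, forcing $A$ to coincide with the full invariant subalgebra. By (2), each target isotypic summand $L_\lambda(\gl_N) \ot L_\lambda(\gl_{N'})^*$ is cyclic as a $\gl_N(\C) \times \gl_{N'}(\C)$-module, so by equivariance of $\Psi$ it suffices, for every $\lambda \in P_{M_+|M_-;N_{\min}}$, to exhibit a non-zero image of the $\gl_N \times \gl_{N'}$-highest weight vector of the $\lambda$-summand. The standard candidate is the classical polynomial $v_\lambda = \prod_k \delta_k^{\lambda_k - \lambda_{k+1}}$ in the leading principal minors $\delta_k = \det(z_{rs})_{1 \le r, s \le k}$, with $k$ bounded by the depth of $\lambda$ and hence by $N_{\min}$; because $\Psi$ fixes every such minor, $v_\lambda$ is the $\Psi$-image of the analogous polynomial in the minors $\delta^{(\bar N)}_k$, which is a non-zero $\gl_{\bar N} \times \gl_{\bar N}$-highest weight vector of the $\lambda$-summand in the source invariants by Theorem~\ref{thm:FFT}. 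The multiplicity-freeness of~\eqref{eq:SFT-1} then pins down $v_\lambda$ as a non-zero generator of the $\lambda$-summand of the target, completing the proof.
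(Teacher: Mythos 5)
Your proposal is correct, but it takes a different route from the paper's. The paper deduces both parts from Theorem~\ref{thm:FFT} at $N_{max}=\max(N,N')$ by applying the truncation functors $\ST_{N_{max},N_{min}}\times\id$ or $\id\times\ST_{N_{max},N_{min}}$ of Section~\ref{sect:H-wts} (extended to dual tensor representations): part (2) is the truncation of \eqref{eq:SFT}, and part (1) is obtained by ``similar reasoning as in the proof of the first part of Theorem~\ref{thm:FFT}'', i.e.\ by tracking the generators and the canonical invariants $\iota^{-1}(\id_{L_{\lambda^\sharp}})$ through the truncation maps. Your part (2) instead redoes directly, for $N\ne N'$, the computation that opens the proof of Theorem~\ref{thm:FFT}: tensor the two Howe decompositions, take diagonal $\gl(V)$-invariants, and invoke Schur's lemma together with the injectivity of $\lambda\mapsto\lambda^\sharp$; this is self-contained and perfectly valid (note $P_{M_+|M_-;N}\cap P_{M_+|M_-;N'}=P_{M_+|M_-;N_{\min}}$ as you say). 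Your part (1) replaces the paper's canonical-invariant bookkeeping with an explicit exhibition of joint highest weight vectors as products of leading principal minors $\delta_k$ of $(z_{rs})$ (which makes sense since the $z_{rs}$ have $\Gamma$-degree $0$ and honestly commute); this buys an explicit description of the highest weight vectors that the paper does not record.

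One step in part (1) needs a touch more justification: Theorem~\ref{thm:FFT} does not actually assert that the minor products are (non-zero) highest weight vectors of the source invariants — it only does so implicitly in the case $M_+\ge \bar N$, via the isomorphism with $S(\C^{\bar N}\ot\ol\C^{\bar N})$. For general $M_+$ you should argue as follows: by Theorem~\ref{thm:FFT}(1) the source invariant algebra is a $\gl_{\bar N}\times\gl_{\bar N}$-equivariant quotient of the free polynomial algebra $\C[Z_{rs}]$, whose $\lambda$-isotypic highest weight vectors are classically the minor products; since both source and target of this quotient map are multiplicity-free, the map kills exactly the components with $\lambda\notin P_{M_+|M_-;\bar N}$, so the minor products survive precisely for hook $\lambda$. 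With that supplement (and the observation that $k$ is bounded by the depth of $\lambda$, hence by $N_{\min}$), your argument closes.
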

\begin{proof}
This can be easily deduced from Theorem \ref{thm:FFT} by applying truncation functors.
Note that the truncation functors defined in Section \ref{sect:H-wts} can be generalised 
to the category of dual tensor representations without any change. 

To be more precise, we let
$N_{max}=max(N, N')$, and consider Theorem \ref{thm:FFT} for $N_{max}$. 
It already covers the case with $N=N'$.  By applying ${\ST}_{N_{max}, N_{min}}\times \id$
or $\id\times {\ST}_{N_{max}, N_{min}}$ to the decomposition of 
$
S_\omega(V^{N_{max}}\oplus\ol{V}^{N_{max}})^{\gl(V)}
$
given by Theorem \ref{thm:FFT}, 
we obtain \eqref{eq:SFT-1} for both cases with $N\ne N'$. 
Similar reasoning as in the proof of the first part of Theorem \ref{thm:FFT} proves 
part (1) of the corollary.
\end{proof}

We remark that analogous 
FFTs and SFTs were established in recent years for
Lie superalgebras \cite{DLZ, LZ17, LZ21, Zy1} 
and quantum (super)groups \cite{CW23, LZZ11, LZZ20, Zy},

\subsection{Colour Howe duality of type $(\gl(V(\Gamma, \omega)), \gl(V'(\Gamma, \omega)))$}
\subsubsection{Colour Howe duality of type $(\gl(V(\Gamma, \omega)), \gl(V'(\Gamma, \omega)))$}
We retain notation of Section \ref{sect:FT}. 
Consider
$S_\omega(\ol{V}^N\oplus V^N)=S_\omega(\ol{V}^N\oplus V^N)$
as module algebra for $\fg_N=$ $
 (\gl(V)\times \gl_N(\C))\times (\gl(V)\times \gl_N(\C)) 
 $ for all $N$.
Now $\fg_N\supset \gl_N(\C)\times \gl_N(\C)\supset \text{diagonal subalgebra $\gl_N(\C)$}$. 
Let $F(V)^{(N)}:=S_\omega(\ol{V}^N\oplus V^N)^{\gl_N(\C)}$ be the subspace of invariants with respect to the action of the diagonal subalgebra $\gl_N(\C)$, i.e., 
\[
F(V)^{(N)}
=\left\{\left. z\in S_\omega(\ol{V}^N\oplus V^N)\right| X\cdot z= 0, \ \forall X\in \gl_N(\C)\right\}.
\]
This is a $\gl(V)\times \gl(V)$-module subalgebra of $S_\omega(\ol{V}^N\oplus V^N)$.

\begin{lemma}\label{lem:at-N} Let $N$ be any fixed positive integer. The $\gl(V)\times \gl(V)$-module algebra 
$F(V)^{(N)}=S_\omega(\ol{V}^N\oplus V^N)^{\gl_N(\C)}$ is generated by 
$x(\beta, \alpha)_{j i}^{(N)}=\sum_{r=1}^N \ol{x}(-\beta)_j^r x(\alpha)_i^r$ for all $\alpha, \beta, i, j$, and 
decomposes into
\[
F(V)^{(N)}=\sum_{\lambda\in P_{M_+|M_-; N}} L_{\lambda^\sharp}^*\ot L_{\lambda^\sharp}. 
\]
\end{lemma}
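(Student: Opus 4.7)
The plan is to deduce both assertions from the two colour Howe dualities (Theorems \ref{thm:commut} and \ref{thm:Howe-d}) by passing to $\gl_N(\C)$-diagonal invariants. First identify $S_\omega(\ol{V}^N\oplus V^N)\simeq S_\omega(\ol{V}^N)\ot S_\omega(V^N)$ as a module algebra for $(\gl(V)\times\gl_N(\C))^{\times 2}$. Combining \eqref{eq:decomp} and \eqref{eq:decomp-d-1} gives
$$
S_\omega(\ol{V}^N\oplus V^N)\simeq \sum_{\lambda,\mu\in P_{M_+|M_-;N}} L_{\mu^\sharp}^*\ot L_{\lambda^\sharp}\ot L_\mu(\gl_N)^*\ot L_\lambda(\gl_N).
$$
Taking diagonal $\gl_N(\C)$-invariants and applying Schur's lemma, the factor $\left(L_\mu(\gl_N)^*\ot L_\lambda(\gl_N)\right)^{\gl_N(\C)}\simeq \Hom_{\gl_N(\C)}(L_\mu(\gl_N),L_\lambda(\gl_N))$ vanishes unless $\lambda=\mu$, in which case it is one-dimensional and spanned by the canonical invariant $\iota^{-1}(\id_{L_\lambda(\gl_N)})$. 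This yields $F(V)^{(N)}\simeq \sum_{\lambda\in P_{M_+|M_-;N}} L_{\lambda^\sharp}^*\ot L_{\lambda^\sharp}$, settling the decomposition claim.

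For generation, I first verify directly that each $x(\beta,\alpha)_{ji}^{(N)}$ is $\gl_N(\C)$-diagonal invariant: the sum over $r$ reproduces the canonical element $\id_{\C^N}\in\ol{\C}^N\ot \C^N$, attached to a basis vector of $V^*\ot V$ on which $\gl_N(\C)$ acts trivially. Let $A\subset F(V)^{(N)}$ be the subalgebra they generate; it is automatically $\gl(V)\times\gl(V)$-stable. To show $A=F(V)^{(N)}$, I would mirror the two-step strategy used in the proof of Theorem \ref{thm:FFT}: first treat a large-$N$ regime where the claim reduces to Weyl's classical FFT for $\GL_N$ applied to the polynomial algebra in the variables $x_a^r,\ol{x}_b^s$ (the $\gl_N(\C)$-action is ungraded, touching only the $r$-index, so graded commutativity poses no obstruction once one works in each $\Gamma$-homogeneous component); then reduce general $N$ to this regime via truncation functors analogous to $\I^\prime,\ol{\I}^\prime$ from \eqref{eq:I-funct}--\eqref{eq:Ibar-funct}, but now restricting the $\C^N$-factor rather than the $V$-factor.

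The main obstacle is this generation step. The decomposition follows essentially mechanically from the two Howe dualities, but establishing that the bilinear contractions suffice requires either justifying the graded Weyl FFT for $\gl_N(\C)$ indicated above, or explicitly constructing a highest weight vector for each summand $L_{\lambda^\sharp}^*\ot L_{\lambda^\sharp}$ inside $A$ as an appropriate product of the stated generators. Once such vectors are exhibited, matching multiplicities against the decomposition already established forces $A=F(V)^{(N)}$, completing the proof.
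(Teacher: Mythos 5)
Your proposal is correct and matches the paper's proof: the decomposition is obtained exactly as you describe, by tensoring the two colour Howe dualities (Theorems \ref{thm:commut}(2) and \ref{thm:Howe-d}) and applying Schur's lemma to the diagonal $\gl_N(\C)$-invariants, while the generation claim is settled by citing the first fundamental theorem of invariant theory for $\gl_N(\C)$. The large-$N$/truncation detour you flag as the main obstacle is unnecessary: the classical FFT for $\GL_N$ holds for any numbers of vectors and covectors (no stable range is needed for the FFT, only for the SFT), and the passage from tensor powers to the graded symmetric algebra is exactly the semisimplicity-plus-symmetriser argument already carried out in the proof of Theorem \ref{thm:invariants}(4).
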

\begin{proof}
It follows from the FFT of invariant theory for $\gl_N(\C)$ that the elements $x(\beta, \alpha)_{j i}^{(N)}$ 
generate $S_\omega(V^N\oplus \ol{V}^N)^{\gl_N(\C)}$. 
Using the colour  Howe dualities of type $(\gl(V), \gl_N(\C))$ on $S_\omega(V^N)$ and $S_\omega(\ol{V}^N)$
(see Theorem \ref{thm:commut} (2) and Theorem \ref{thm:Howe-d}), 
we obtain 
\[
\baln
F(V)^{(N)}&\simeq \sum_{\lambda, \mu\in P_{M_+|M_-; N}} L_{\lambda^\sharp}^*\ot \left(L_\lambda(\gl_N)^*\ot L_\mu(\gl_N)\right)^{\gl_N(\C)}\ot  L_{\mu^\sharp}\\
&\simeq\sum_{\lambda\in P_{M_+|M_-; N}} L_{\lambda^\sharp}^*\ot L_{\lambda^\sharp}, 
\ealn
\]
as $\gl(V)\times\gl(V)$-module. This proves the claimed decomposition. 
\end{proof}

For any $N, N'$ such that $N\le N'$, we have the surjective   
algebra homomorphism 
$f_{N, N'}: F(V)^{(N')}\lra F(V)^{(N)}$ such that $x(\beta, \alpha)_{j i}^{(N')}\mapsto x(\beta, \alpha)_{j i}^{(N)}$. 
This map  commutes with the $\gl(V)\times\gl(V)$-action. 
\begin{lemma}\label{lem:poly-alg}
There is an inverse system $(F(V)^{(N)}, f_{N, N'})$ of $\gl(V)\times\gl(V)$-module algebras, whose 
 inverse limit 
\[
F(V)=\lim_{\substack{ \longleftarrow\\ N\in\Z_{>0} }}  F(V)^{(N)} 
\]
has the following multiplicity free decomposition as $\gl(V)\times\gl(V)$-module.
\beq\label{eq:F(V)}
F(V)=\sum_{\lambda\in P_{M_+|M_-}} L_{\lambda^\sharp}^*\ot L_{\lambda^\sharp}. 
\eeq
\end{lemma}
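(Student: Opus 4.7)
The plan is to realise each connecting map $f_{N,N'}$ geometrically, combine Lemma \ref{lem:at-N} with Schur's lemma to pin down what it does to each summand, and finally invoke a compatible $\Z_+$-grading to show that the inverse system stabilises degree by degree.

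First, for $N\le N'$ I would consider the $\gl(V)$-equivariant projection $\bar{V}^{N'}\oplus V^{N'}\twoheadrightarrow \bar{V}^N\oplus V^N$ that kills the last $N'-N$ copies on both sides (projecting $\C^{N'}\twoheadrightarrow\C^N$ and $\bar\C^{N'}\twoheadrightarrow\bar\C^N$ along the last $N'-N$ coordinates). This extends functorially to a surjective $\gl(V)\times\gl(V)$-module algebra homomorphism $S_\omega(\bar V^{N'}\oplus V^{N'})\twoheadrightarrow S_\omega(\bar V^N\oplus V^N)$. Since $\gl_N(\C)\subset \gl_{N'}(\C)$, any $\gl_{N'}(\C)$-invariant is a fortiori $\gl_N(\C)$-invariant, so this homomorphism restricts to a well-defined surjection $F(V)^{(N')}\twoheadrightarrow F(V)^{(N)}$, which on generators sends $x(\beta,\alpha)_{ji}^{(N')}=\sum_{r=1}^{N'}\bar{x}(-\beta)_j^r x(\alpha)_i^r$ to $\sum_{r=1}^{N}\bar{x}(-\beta)_j^r x(\alpha)_i^r=x(\beta,\alpha)_{ji}^{(N)}$; this is precisely $f_{N,N'}$. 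The relation $f_{N,N'}\circ f_{N',N''}=f_{N,N''}$ for $N\le N'\le N''$ is immediate from composing projections, so $(F(V)^{(N)},f_{N,N'})$ is indeed an inverse system.

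Next, I would exploit Lemma \ref{lem:at-N}. Because each $f_{N,N'}$ is a surjective $\gl(V)\times\gl(V)$-morphism between multiplicity-free semisimple modules, Schur's lemma forces $f_{N,N'}$ to act as a nonzero scalar isomorphism on every summand $L_{\lambda^\sharp}^*\ot L_{\lambda^\sharp}$ with $\lambda\in P_{M_+|M_-;N}$ and as zero on every summand with $\lambda\in P_{M_+|M_-;N'}\setminus P_{M_+|M_-;N}$ (namely those $\lambda$ with $\lambda'_1>N$). The last step is then to pass to the inverse limit using the $\Z_+$-grading in which each generator $x(\beta,\alpha)_{ji}^{(N)}$ has degree $1$; this grading is preserved by $f_{N,N'}$. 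Tracing through the Howe duality argument underlying Lemma \ref{lem:at-N}, the summand $L_{\lambda^\sharp}^*\ot L_{\lambda^\sharp}$ sits in degree exactly $|\lambda|$, since it is extracted by pairing the degree-$|\lambda|$ pieces of $S_\omega(\bar V^N)$ and $S_\omega(V^N)$ containing $L_{\lambda^\sharp}^*\ot L_\lambda(\gl_N)^*$ and $L_{\lambda^\sharp}\ot L_\lambda(\gl_N)$ respectively, and then taking $\gl_N(\C)$-invariants. For fixed degree $d$, the contributing partitions are those $\lambda\in P_{M_+|M_-}$ with $|\lambda|=d$; any such $\lambda$ automatically satisfies $\lambda'_1\le d$, so the set of contributing partitions is stable as soon as $N\ge d$. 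Hence the degree-$d$ component of the inverse system stabilises to the finite-dimensional module $\bigoplus_{\lambda\in P_{M_+|M_-},\,|\lambda|=d}L_{\lambda^\sharp}^*\ot L_{\lambda^\sharp}$, and assembling over $d$ yields \eqref{eq:F(V)}.

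The main obstacle I expect is the bookkeeping in this last step: one must verify cleanly that the $\Z_+$-grading really is preserved by both the generators-to-generators prescription and the underlying projection of symmetric algebras, so that each graded piece of the inverse limit is unambiguously the stable common value and the multiplicity-free simple decomposition descends to $F(V)$ itself rather than only to some algebraic completion of it. Once the degree-$d$ stabilisation at $N\ge d$ is in hand, the Schur's lemma input immediately upgrades from each finite $F(V)^{(N)}$ to the limit, and the direct sum assertion \eqref{eq:F(V)} falls out.
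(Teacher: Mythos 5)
Your proposal is correct and follows essentially the same route as the paper, whose own proof is a two-line assertion that the composition property gives the inverse system and that the decomposition "follows from Lemma \ref{lem:at-N}". You have simply supplied the details the paper leaves implicit — the geometric realisation of $f_{N,N'}$ via the coordinate projections, the Schur's-lemma identification of its action on each multiplicity-free summand, and the degreewise stabilisation (each summand $L_{\lambda^\sharp}^*\ot L_{\lambda^\sharp}$ sitting in degree $|\lambda|$, stable once $N\ge|\lambda|$) that justifies reading the inverse limit as the direct sum rather than a completion, consistent with the paper's later identification $F(V)\simeq S_\omega(V^*\ot V)$.
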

\begin{proof}
For any $N\le N'\le N''$, we clearly have $f_{N, N''}=f_{N, N'}\circ f_{N', N''}$. Hence  we have the inverse system  
$(F(V)^{(N)}, f_{N, N'})$ of $\gl(V)\times\gl(V)$-module algebras. 
The decomposition \eqref{eq:F(V)} for the inverse limit follows from Lemma \ref{lem:at-N}.   
\end{proof}

We denote $ x(\beta, \alpha)_{j i}=\lim\limits_{\longleftarrow} x(\beta, \alpha)_{j i}^{(N)}$. 

The symmetric $(\Gamma, \omega)$-algebra $S_\omega(V^*\ot V)$ over $V^*\ot V$
 clearly has the structure of a $\gl(V)\times\gl(V)$-module algebra.
 
 \begin{lemma}\label{lem:at-infty}
 As $\gl(V)\times\gl(V)$-module algebra, $S_\omega(V^*\ot V)$ is isomorphic to $F(V)$, 
 and hence has the following multiplicity free decomposition
 \[
S_\omega(V^*\ot V)= \sum_{\lambda\in P_{M_+|M_-}} L_{\lambda^\sharp}^*\ot L_{\lambda^\sharp}. 
 \]
 
 \end{lemma}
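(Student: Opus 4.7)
The plan is to construct an explicit $\gl(V)\times\gl(V)$-equivariant algebra isomorphism $\Phi: S_\omega(V^*\ot V) \to F(V)$ and then read off the decomposition from \lemref{lem:poly-alg}. First I would define a degree-$0$ linear map $\phi: V^*\ot V \to F(V)$ on a homogeneous basis by $\phi(\ol{e}(-\beta)_j \ot e(\alpha)_i) = x(\beta,\alpha)_{ji}$; using the formula $x(\beta,\alpha)_{ji}^{(N)} = \sum_{r=1}^N \ol{x}(-\beta)_j^r x(\alpha)_i^r$ and the fact that the two copies of $\gl(V)$ act on $F(V)^{(N)}$ via $S_\omega(\ol V^N)$ (dual action) and $S_\omega(V^N)$ (standard action) respectively, one checks directly that $\phi$ intertwines the two actions.

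Since $F(V)$ is graded commutative, being an inverse limit of subalgebras of the graded commutative algebras $S_\omega(\ol V^N \oplus V^N)$, the universal property of the symmetric $(\Gamma,\omega)$-algebra extends $\phi$ uniquely to a $\Gamma$-graded algebra homomorphism $\Phi: S_\omega(V^*\ot V) \to F(V)$. Endowing $S_\omega(V^*\ot V)$ with the standard $\Z_+$-grading (generators in degree $1$) and $F(V)$ with the inherited grading $\deg x(\beta,\alpha)_{ji}=1$, the map $\Phi$ is $\Z_+$-graded and $\gl(V)\times\gl(V)$-equivariant. Surjectivity is immediate from the first statement of \lemref{lem:poly-alg}.

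The main obstacle is injectivity, which I would handle by a stabilisation argument at each $\Z_+$-degree. Fix $k$ and consider $\Phi_k: S_\omega^k(V^*\ot V) \to F(V)_k$. Any partition of $k$ has depth at most $k$, so for $N \ge k$ one has $P_{M_+|M_-;N}\cap\{\lambda : |\lambda|=k\} = P_{M_+|M_-}\cap\{\lambda : |\lambda|=k\}$; combined with \lemref{lem:at-N} and \lemref{lem:poly-alg}, this implies that the inverse-system projection $F(V)_k \to F(V)^{(N)}_k$ is a $\gl(V)\times\gl(V)$-isomorphism for every $N \ge k$. Hence it suffices to show that the composition $\iota_{k,N}: S_\omega^k(V^*\ot V) \to F(V)^{(N)}_k$ is injective for some (and hence every) $N \ge k$.

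To settle this final injectivity, I would compute the $\gl(V)\times\gl(V)$-character of $S_\omega^k(V^*\ot V)$ using the colour Schur--Weyl duality of \thmref{thm:SW} applied in parallel to $V^{\ot k}$ and $V^{*\ot k}$: by naturality of the braiding, the identification $(V^*\ot V)^{\ot k} \simeq V^{*\ot k} \ot V^{\ot k}$ via iterated braidings carries the $(\Gamma,\omega)$-graded total symmetriser $\Sigma^-(k)$ on $(V^*\ot V)^{\ot k}$ to the diagonal $\Sym_k$-symmetriser on $V^{*\ot k} \ot V^{\ot k}$. Combined with the $\Sym_k$-isotypic Schur--Weyl decompositions of $V^{\ot k}$ and $V^{*\ot k}$, this yields the multiplicity-free decomposition $S_\omega^k(V^*\ot V) \simeq \sum_{\lambda \in P_{M_+|M_-},\,|\lambda|=k} L_{\lambda^\sharp}^* \ot L_{\lambda^\sharp}$ as $\gl(V)\times\gl(V)$-module. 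Since \lemref{lem:at-N} provides exactly the same decomposition for $F(V)^{(N)}_k$ when $N \ge k$, the surjection $\iota_{k,N}$ is a map between isomorphic multiplicity-free semisimple $\gl(V)\times\gl(V)$-modules and must be an isomorphism. Injectivity of $\Phi_k$ follows for every $k$, and the claimed decomposition of $S_\omega(V^*\ot V)$ is obtained by transporting that of $F(V)$ through $\Phi$.
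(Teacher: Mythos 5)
Your proposal is correct, and its skeleton is the same as the paper's: the paper's proof simply defines the surjective $\gl(V)\times\gl(V)$-module algebra homomorphisms $g_N: S_\omega(V^*\ot V)\to F(V)^{(N)}$, $X(\beta,\alpha)_{ji}\mapsto x(\beta,\alpha)_{ji}^{(N)}$, notes they are compatible with the inverse system, and concludes at once that $S_\omega(V^*\ot V)\simeq F(V)$. What you add is precisely the step the paper leaves implicit: compatible surjections onto the terms of an inverse system do not by themselves give an isomorphism with the limit, and one needs injectivity. Your degree-wise stabilisation (for $N\ge k$ every $\lambda\vdash k$ has depth $\le N$, so $F(V)_k\to F(V)^{(N)}_k$ is an isomorphism) together with the observation that a surjection between finite-dimensional modules with identical multiplicity-free decompositions is an isomorphism closes this gap cleanly. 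The one genuinely different ingredient is your computation of the decomposition of $S^k_\omega(V^*\ot V)$ itself: transporting $\Sigma^-(k)$ through the shuffle isomorphism $(V^*\ot V)^{\ot k}\simeq V^{*\ot k}\ot V^{\ot k}$ to the diagonal $\Sym_k$-symmetriser and then applying \thmref{thm:SW} to both factors gives $S^k_\omega(V^*\ot V)\simeq\sum_{\lambda\vdash k}L_{\lambda^\sharp}^*\ot L_{\lambda^\sharp}$ directly. This is legitimate (the category is symmetric, so the coherence statement about the shuffle and the diagonal action holds, and $S_\mu\ot S_\lambda$ has a one-dimensional space of diagonal invariants exactly when $\mu=\lambda$), and it in fact yields the stated decomposition of $S_\omega(V^*\ot V)$ independently of the inverse-limit machinery; the paper instead inherits the decomposition from \lemref{lem:at-N} and \lemref{lem:poly-alg}, and later offers a second, coordinate-algebra proof via Theorem \ref{thm:P-W}. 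In short: same route, with a worthwhile extra argument that both repairs the injectivity step and gives an independent derivation of the decomposition.
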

\begin{proof}
Denote by $X(\beta, \alpha)_{j i}$ the image of $e(-\beta)_j\ot e(\alpha)_i$ in $S_\omega(V^*\ot V)$ for all $\alpha, \beta, i, j$. Then for any $N$, there is the surjective homomorphism of $\gl(V)\times\gl(V)$-module algebras
\[
g_N: S_\omega(V^*\ot V)\lra F(V)^{(N)}, \quad X(\beta, \alpha)_{j i}\mapsto x(\beta, \alpha)_{j i}^{(N)}, \quad \forall \alpha, \beta, i, j.
\] 
This shows that $S_\omega(V^*\ot V)$ is isomorphic to the inverse limit $F(V)$ of 
the inverse system $(F(V)^{(N)}, f_{N, N'})$. Now the second statement follows 
from Lemma \ref{lem:poly-alg}, completing the prove. 
\end{proof}

\begin{remark} 
Lemma \ref{lem:at-infty} can also be proved,  in a much simpler way, 
by  using the coordinate algebra of the $\Gamma$-graded ``general linear group''; 
see Section \ref{sect:coord-HD}. 
\end{remark}
 
Let $V$ and $V'$ be $\Gamma$-graded vector spaces with $M_\pm = \dim V_\pm$ 
 and $M'_\pm =\dim V'_\pm$.  Given any $\lambda \in P_{M_+|M_-}\cap P_{M'_+|M'_-}$, 
 we denote by $\lambda^\sharp(M_+|M_-)$ and $\lambda^\sharp(M'_+|M'_-)$ respectively 
 the associated $\gl(V)$-weight and $\gl(V')$-weight 
 as defined in Section \ref{sect:H-wts} (see Theorem \ref{thm:H-wts} in particular). 
 
 We have the following result.

 \begin{theorem} \label{thm:HD-general} 
 The $\gl(V)\times\gl(V')$-module algebra $S_\omega(V^*\ot V')$ has the following multiplicity free decomposition
 \[
 S_\omega(V^*\ot V')=\sum_{\lambda\in P_{M_+|M_-}\cap P_{M'_+|M'_-}} L_{\lambda^\sharp(M_+|M_-)}^*(\gl(V))\ot L_{\lambda^\sharp(M'_+|M'_-)}(\gl(V')). 
 \]
\end{theorem}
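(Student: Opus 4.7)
The plan is to generalise the argument of Lemma \ref{lem:at-infty} by coupling two colour Howe dualities of type $(\gl,\gl_N(\C))$ — one for $V$ acting on $\ol V^N$ and one for $V'$ acting on ${V'}^N$ — through a common auxiliary factor $\C^N$, then taking the diagonal $\gl_N(\C)$-invariants and passing to an inverse limit over $N$.

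First, for each $N\ge 1$ consider $S_\omega(\ol V^N\oplus{V'}^N)\simeq S_\omega(\ol V^N)\ot S_\omega({V'}^N)$ as a module algebra for $\big(\gl(V)\times\gl_N(\C)\big)\times\big(\gl(V')\times\gl_N(\C)\big)$. Applying Theorem \ref{thm:Howe-d} to $V$ and Theorems \ref{thm:commut}(2) and \ref{thm:H-wts} with $V'$ in place of $V$, we have the multiplicity-free decompositions $S_\omega(\ol V^N)\simeq\sum_{\lambda\in P_{M_+|M_-;N}}L^*_{\lambda^\sharp(M_+|M_-)}\ot L_\lambda(\gl_N)^*$ and $S_\omega({V'}^N)\simeq\sum_{\mu\in P_{M'_+|M'_-;N}}L_{\mu^\sharp(M'_+|M'_-)}\ot L_\mu(\gl_N)$. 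Since $\dim\big(L_\lambda(\gl_N)^*\ot L_\mu(\gl_N)\big)^{\gl_N(\C)}=\delta_{\lambda\mu}$ by Schur's lemma, taking invariants of the tensor product under the diagonal $\gl_N(\C)$ yields the $\gl(V)\times\gl(V')$-isomorphism $F(V,V')^{(N)}:=S_\omega(\ol V^N\oplus{V'}^N)^{\gl_N(\C)}\simeq\sum_{\lambda}L^*_{\lambda^\sharp(M_+|M_-)}\ot L_{\lambda^\sharp(M'_+|M'_-)}$, where $\lambda$ runs over $P_{M_+|M_-;N}\cap P_{M'_+|M'_-;N}$.

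Second, for $N\le N'$ truncation of the auxiliary index from $\{1,\dots,N'\}$ to $\{1,\dots,N\}$ produces surjective $\gl(V)\times\gl(V')$-equivariant algebra homomorphisms $f_{N,N'}:F(V,V')^{(N')}\lra F(V,V')^{(N)}$, yielding an inverse system exactly as in Lemma \ref{lem:poly-alg}. Its inverse limit $F(V,V'):=\lim\limits_\leftarrow F(V,V')^{(N)}$ inherits the $\gl(V)\times\gl(V')$-decomposition $\sum_{\lambda\in P_{M_+|M_-}\cap P_{M'_+|M'_-}}L^*_{\lambda^\sharp(M_+|M_-)}\ot L_{\lambda^\sharp(M'_+|M'_-)}$, since each summand stabilises once $N\ge\lambda'_1$. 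Third, identify $F(V,V')$ with $S_\omega(V^*\ot V')$: by the classical first fundamental theorem of invariant theory for $\gl_N(\C)$, the algebra $F(V,V')^{(N)}$ is generated by the contraction elements $x(\beta,\alpha)_{ji}^{(N)}=\sum_{r=1}^{N}\ol x(-\beta)_j^r\,x'(\alpha)_i^r$ indexed by $\beta\in\Gamma_R(V)$, $\alpha\in\Gamma_R(V')$, $1\le j\le\dim V_\beta$, $1\le i\le\dim V'_\alpha$, where the $\ol x$'s (resp.~$x'$'s) are the generators of $S_\omega(\ol V^N)$ (resp.~$S_\omega({V'}^N)$). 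Writing $X(\beta,\alpha)_{ji}$ for the image of $\ol e(-\beta)_j\ot e'(\alpha)_i$ in $S_\omega(V^*\ot V')$, the assignment $X(\beta,\alpha)_{ji}\mapsto x(\beta,\alpha)_{ji}^{(N)}$ defines a surjective $\gl(V)\times\gl(V')$-equivariant algebra homomorphism $g_N$ compatible with the $f_{N,N'}$, and its inverse limit provides the required algebra isomorphism $S_\omega(V^*\ot V')\simeq F(V,V')$, exactly in the manner of Lemma \ref{lem:at-infty}.

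The main technical obstacle will be the careful bookkeeping of commutative factors. The generators $x(\beta,\alpha)_{ji}^{(N)}$ carry nonzero $\Gamma$-degree $\alpha-\beta$, so the classical $\gl_N(\C)$-FFT must be invoked on each $\Gamma$-homogeneous isotypic component separately; moreover, one has to verify that the $\omega$-signs arising from the graded symmetric algebra structure of $S_\omega(V^*\ot V')$ agree with those produced by the multiplication in $S_\omega(\ol V^N\oplus{V'}^N)$ under $g_N$. Once multiplicativity and $\gl(V)\times\gl(V')$-equivariance of $g_N$ are verified with the correct signs, surjectivity combined with the multiplicity-free decomposition above forces $g_N$ to induce the claimed isomorphism in the inverse limit, whence the $\gl(V)\times\gl(V')$-decomposition of $S_\omega(V^*\ot V')$ is identified with the one asserted in the theorem.
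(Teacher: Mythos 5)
Your proposal is correct, and it arrives at the theorem by a route that differs from the paper's in its final step. The paper first establishes the ``diagonal'' case: Lemmas \ref{lem:at-N}--\ref{lem:at-infty} couple the two $(\gl(\hat V),\gl_N(\C))$ dualities on $S_\omega(\ol{\hat V}{}^N)$ and $S_\omega(\hat V^N)$ for a \emph{single} space $\hat V$, take diagonal $\gl_N(\C)$-invariants, pass to the inverse limit, and identify the result with $S_\omega(\hat V^*\ot\hat V)$; the mixed case $S_\omega(V^*\ot V')$ is then deduced by choosing $\hat V\supseteq V, V'$ and applying the truncation maps $\J=(J\ot J')^{\ot\bullet}$ to the decomposition \eqref{eq:hat-V}. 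You instead run the entire construction directly on the mixed pair: $S_\omega(\ol V^N)\ot S_\omega({V'}^N)$, Schur's lemma on $(L_\lambda(\gl_N)^*\ot L_\mu(\gl_N))^{\gl_N(\C)}=\delta_{\lambda\mu}\C$, the classical FFT for $\gl_N(\C)$ to generate the invariants by the contractions $x(\beta,\alpha)_{ji}^{(N)}$, the inverse system $f_{N,N'}$, and the maps $g_N$ identifying the limit with $S_\omega(V^*\ot V')$. Nothing in those steps uses $V=V'$, so the argument goes through verbatim; what it buys is a symmetric, self-contained proof with no need to embed $V$ and $V'$ in a common $\hat V$ or to track the truncation operators $J, J'$ and the reduced weights $\lambda^\sharp_{red}$, at the cost of re-verifying the Schur-lemma and FFT-generation steps (which the paper gets for free by citing Lemma \ref{lem:at-infty}). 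Your flagged concerns are the right ones but are mild: the contractions $x(\beta,\alpha)_{ji}^{(N)}$ are $\Gamma$-homogeneous of degree $\alpha-\beta$, so graded commutativity of $S_\omega(\ol V^N\oplus{V'}^N)$ already forces the relations defining $S_\omega(V^*\ot V')$, making $g_N$ well defined; and injectivity of the limit map follows, exactly as in the paper's Lemma \ref{lem:at-infty}, from the stabilisation of each $\Z$-graded piece once $N$ exceeds the degree.
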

\begin{proof}
 This follows from Lemma \ref{lem:at-infty} by using truncation arguments. 
 To indicate the pertinent steps of the proof, 
 let $\hat{V}$ be a $\Gamma$-graded vector space with $\hat{M}_\pm =\dim \hat{V}^\pm$. 
It follows \eqref{eq:F(V)} and Lemma \ref{lem:at-infty} that 
\beq\label{eq:hat-V}
S_\omega(\hat{V}^*\ot\hat{V})= \sum_{\lambda\in P_{\hat{M}_+|\hat{M}_-}} L_{\lambda^\sharp(\hat{M}_+|\hat{M}_-)}^*(\gl(\hat{V}))\ot L_{\lambda^\sharp(\hat{M}_+|\hat{M}_-)}(\gl(\hat{V})). 
\eeq

Assume that $V\subset \hat{V}\supset  V'$, thus $\hat{V}= V\oplus V_\bot$ and $\hat{V}= V'\oplus V'_\bot$ for some subspaces $V_\bot$ and $V'_\bot$ of $\hat{V}$. The general linear Lie $(\Gamma, \omega)$-algebra $\gl(\hat{V})$ contains the subalgebras $\gl(V)$ and $\gl(V')$. 
 Let $J\in \gl(\hat{V})$ be the element which acts on $V$ by the identity map, and annihilates $V_\bot$. 
 We also have $J'\in \gl(\hat{V})$, which is similarly defined.   
 Now $J$ also acts on $\hat{V}^*$ since $\gl(\hat{V})$ does. 
 
For each $r$, we have the  surjective map
 $
 (J\ot J')^{\ot r}:  (\hat{V}^*\ot\hat{V})^{\ot r}\lra (V^*\ot V')^{\ot r},
 $
 which commutes with the action of $\gl(V)\times \gl(V')$. It restricts to a surjective map 
 \beq\label{eq:JJ-r}
 (J\ot J')^{\ot r}: S^r_\omega(\hat{V}^*\ot\hat{V})\lra S^r_\omega(V^*\ot V').
 \eeq  
 [The homogeneous component  of degree $r$ of a symmetric $(\Gamma, \omega)$-algebra over a vector space is regarded as an $\Sym_r$-submodule of the $r$-th power of the vector space.]  
 For any $\lambda\vdash r$, the  $(J\ot J')^{\ot r}$  image of   
 $L_{\lambda^\sharp(\hat{M}_+|\hat{M}_-)}^*(\gl(\hat{V}))\ot L_{\lambda^\sharp(\hat{M}_+|\hat{M}_-)}(\gl(\hat{V}))$ $\subset S^r_\omega(\hat{V}^*\ot\hat{V})$ is  
$
 L_{\lambda^\sharp(M_+|M_-)}^*(\gl(V))\ot L_{\lambda^\sharp(M'_+|M'_-)}(\gl(V')). 
$
 
Let $\J: S_\omega(\hat{V}^*\ot\hat{V})\lra S_\omega(V^*\ot V')$ be the surjective algebra homomorphism which restricts to the maps \eqref{eq:JJ-r} on homogeneous components.    
Applying $\J$ to \eqref{eq:hat-V}, we easily obtain the theorem. 
 \end{proof}

\subsubsection{Generalisation of Theorem \ref{thm:FFT}}
 Retain notation of Theorem \ref{thm:HD-general}. 
 Let $V''$ be another finite dimensional $\Gamma$-graded vector space, and denote 
 $M''_\pm =\dim V''_\pm$. 
We write $m_\alpha=\dim V_\alpha$ as before, and denote $m'_\alpha=\dim V'_\alpha$, 
and $m''_\alpha=\dim V''_\alpha$. Let 
\[
\baln
&\Gamma_R(V)=\{\alpha\in \Gamma\mid \dim V_\alpha>0\},\\ 
&\Gamma_R(V')=\{\alpha\in \Gamma\mid \dim V'_\alpha>0\},\\
&\Gamma_R(V'')=\{\alpha\in \Gamma\mid \dim V''_\alpha>0\}.
\ealn
\]
We have the following 
 \[
 \baln
\text{basis  for $V$}: & \text{ $\{e(\alpha)_i\mid 1\le i\le m_\alpha, \alpha\in \Gamma_R(V)\}$}, \\
\text{dual basis  for $V^*$}: & \text{ $\{\ol{e}(-\alpha)_i\mid 1\le i\le m_\alpha, \alpha\in \Gamma_R(V)\}$};\\
\text{basis  for $V'$}: & \text{ $\{e'(\alpha)_i\mid 1\le i\le m_\alpha, \alpha\in \Gamma_R(V')\}$}, \\
\text{dual basis  for ${V'}^*$}: & \text{ $\{\ol{e}'(-\alpha)_i\mid 1\le i\le m''_\alpha, \alpha\in \Gamma_R(V')\}$};\\
\text{basis  for $V''$}: & \text{ $\{e''(\alpha)_i\mid 1\le i\le m_\alpha, \alpha\in \Gamma_R(V'')\}$}, \\
\text{dual basis  for ${V''}^*$}: & \text{ $\{\ol{e}''(-\alpha)_i\mid 1\le i\le m''_\alpha, \alpha\in \Gamma_R(V'')\}$}.
 \ealn
 \]
Let $pr: T(V^*\ot V'')\lra S_\omega (V^*\ot V'')$ and $pr': T({V''}^*\ot V')\lra S_\omega ({V''}^*\ot V')$ be the canonical surjections, and denote 
\[
\baln
&x(\alpha, \beta)_{i j}=pr\big( \ol{e}(-\alpha)_i\ot e''(\beta)_j\big), \quad 
y(\beta, \gamma)_{j k}=pr'\big( \ol{e}''(-\beta)_j\ot e'(\gamma)_k\big), \\
&z(\alpha, \gamma)_{i k}= \sum_{\beta\in \Gamma_R(V'')}\sum_{j=1}^{m''_\beta} x(\alpha, \beta)_{i j}y(\beta, \gamma)_{j k},
\ealn
\]
for  $\alpha\in \Gamma_R(V), 
1\le i\le m_\alpha, \  \beta \in \Gamma_R(V''),  1\le j\le m''_\beta, \ 
\gamma \in \Gamma_R(V'),  1\le k\le m'_\gamma. $

Let $S_\omega(V, V', V'')=S_\omega (V^*\ot V'')\ot S_\omega ({V''}^*\ot V')= S_\omega(V^*\ot V''\oplus {V''}^*\ot V')$, and consider the elements 
\[
\baln
&z(\alpha, \gamma)_{i k}= \sum_{\beta\in \Gamma_R(V'')}\sum_{j=1}^{m''_\beta} x(\alpha, \beta)_{i j}y(\beta, \gamma)_{j k}, 
\ealn
\]
for  $\alpha\in \Gamma_R(V), 
1\le i\le m_\alpha, \  \gamma \in \Gamma_R(V'),  1\le k\le m'_\gamma. $ 

We have the following result.  
\begin{corollary}\label{cor:gFFT}
 Retain notation above, and let $Z_{V''}(V^*, V'):=S_\omega(V, V', V'')^{\gl(V'')}$. 
 \begin{enumerate}[i)]
 \item (FFT)
 $Z_{V''}(V^*, V')$ is generated by the elements 
$z(\alpha, \gamma)_{i k}$, for $\alpha\in \Gamma_R(V), 
 1\le i\le m_\alpha, \  \gamma \in \Gamma_R(V'),  1\le k\le m'_\gamma$.  

\item (SFT)
As a $\gl(V)\times \gl(V')$-module algebra, it has the following multiplicity free decomposition 
\beq\label{eq:gSFT}
Z_{V''}(V^*, V')
=\sum_{\lambda} L_{\lambda^\sharp(M_+|M_-)}(\gl(V))^*\ot L_{\lambda^\sharp(M'_+|M'_-)}(\gl(V')), 
\eeq
where the sum is over all $\lambda \in P_{M_+|M_-}\cap P_{M''
_+|M''_-}\cap P_{M'_+|M'_-}$. 
\end{enumerate}
\end{corollary}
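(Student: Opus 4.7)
The plan is to deduce (ii) first from two applications of Theorem \ref{thm:HD-general}, and then obtain (i) by constructing a natural algebra map from $S_\omega(V^*\otimes V')$ to $Z_{V''}(V^*,V')$ and verifying its surjectivity via an explicit highest-weight-vector argument.

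For (ii), I would apply Theorem \ref{thm:HD-general} separately to the $\gl(V)\times\gl(V'')$-module algebra $S_\omega(V^*\otimes V'')$ and to the $\gl(V'')\times\gl(V')$-module algebra $S_\omega({V''}^*\otimes V')$. Each yields a multiplicity-free decomposition indexed respectively by partitions in $P_{M_+|M_-}\cap P_{M''_+|M''_-}$ and $P_{M''_+|M''_-}\cap P_{M'_+|M'_-}$. Tensoring these decompositions and extracting $\gl(V'')$-invariants, I would use the identity $(L_\mu\otimes L_\nu^*)^{\gl(V'')}\simeq \C\cdot\delta_{\mu,\nu}$ for finite-dimensional simple $\gl(V'')$-modules $L_\mu, L_\nu$ (the canonical invariant of Section \ref{sect:Hopf-dual} together with Schur's lemma). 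This collapses the double sum to a single sum indexed by $\lambda\in P_{M_+|M_-}\cap P_{M''_+|M''_-}\cap P_{M'_+|M'_-}$, yielding \eqref{eq:gSFT}.

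For (i), I would construct a $\gl(V)\times\gl(V')$-equivariant algebra homomorphism
\[
\phi\colon S_\omega(V^*\otimes V') \longrightarrow Z_{V''}(V^*,V')
\]
sending each generator $X(\alpha,\gamma)_{ik}$ (image of $\ol e(-\alpha)_i\otimes e'(\gamma)_k$) to $z(\alpha,\gamma)_{ik}$. This is well-defined since the $z(\alpha,\gamma)_{ik}$ are homogeneous elements of the graded-commutative $(\Gamma,\omega)$-algebra $S_\omega(V,V',V'')$ and therefore $\omega$-commute automatically; the universal property of the symmetric $(\Gamma,\omega)$-algebra $S_\omega(V^*\otimes V')$ then produces $\phi$. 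Its image is exactly the subalgebra generated by the $z(\alpha,\gamma)_{ik}$, and it sits inside $Z_{V''}(V^*,V')$ by construction. Theorem \ref{thm:HD-general} gives $S_\omega(V^*\otimes V')=\bigoplus_{\lambda\in P_{M_+|M_-}\cap P_{M'_+|M'_-}} L_{\lambda^\sharp(M_+|M_-)}^*\otimes L_{\lambda^\sharp(M'_+|M'_-)}$; together with (ii), equivariance and multiplicity-freeness force $\phi$ to act on each isotypic component either as an isomorphism or as zero. The FFT thus reduces to verifying nonvanishing of $\phi$ on each component indexed by $\lambda\in P_{M_+|M_-}\cap P_{M''_+|M''_-}\cap P_{M'_+|M'_-}$.

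This nonvanishing is the main obstacle, and I plan to handle it by adapting the explicit highest-weight-vector construction from the proof of Theorem \ref{thm:SW}. For each allowed $\lambda$, a $\gl(V)\times\gl(V')$-highest weight vector in the $\lambda$-component of $S_\omega(V^*\otimes V')$ can be realised as $C_\lambda(\tilde v_\lambda)$ for a suitable monomial $\tilde v_\lambda$ in the $X(\alpha,\gamma)_{ik}$ and the Young symmetriser $C_\lambda$ of the canonical tableau $T_\lambda$. Its image under $\phi$ is the same Young-symmetrised expression in the $z(\alpha,\gamma)_{ik}$, which on expansion via the definition of $z$ becomes a Young-symmetrisation involving also the $V''$-factors $\ol e''(-\beta)_j$ and $e''(\beta)_j$. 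By a direct application of Lemma \ref{lem:a-symm}, the hook condition $\lambda\in P_{M''_+|M''_-}$ is precisely what prevents a column of $T_\lambda$ from forcing a skew-symmetrisation of two equal factors drawn from $V''_+$, or a row from forcing a symmetrisation of two equal factors drawn from $V''_-$, and hence is exactly the criterion for nonvanishing of the image. The delicate bookkeeping is the simultaneous Young-symmetrisation over $V^*$, $V'$, and $V''$, which I plan to handle in the same spirit as the corresponding step in the proof of Theorem \ref{thm:SW}.
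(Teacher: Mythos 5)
Your part (ii) is correct, and it is a genuinely uniform variant of the paper's argument: you tensor two instances of Theorem \ref{thm:HD-general} and extract the diagonal $\gl(V'')$-invariants via $(L_{\lambda^\sharp}(\gl(V''))\ot L_{\mu^\sharp}(\gl(V''))^*)^{\gl(V'')}\simeq\C\,\delta_{\lambda\mu}$, which is exactly the device used in Lemma \ref{lem:at-N} and Theorem \ref{thm:FFT}, and it avoids the paper's split into the case $P_{M_+|M_-}\subset P_{M''_+|M''_-}\supset P_{M'_+|M'_-}$ followed by truncation. The reduction of (i) to a nonvanishing statement via the equivariant map $\phi$ and multiplicity-freeness is also sound.

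The genuine gap is in your proposed nonvanishing verification. The highest-weight-vector construction of Theorem \ref{thm:SW} lives in the tensor power $V^{\ot N}$, where $\Sym_N$ acts by the braiding; it does not transplant to the symmetric algebra. Concretely, $S^r_\omega(V^*\ot V')=\Sigma^-(r)\big((V^*\ot V')^{\ot r}\big)$ is precisely the subspace on which every $\sigma\in\Sym_r$ acts as the identity, so the Young symmetriser $C_\lambda=B_\lambda A_\lambda$ acts on any ``monomial in the $X(\alpha,\gamma)_{ik}$'' by the scalar $|P_\lambda|\sum_{\sigma\in Q_\lambda}(-1)^{|\sigma|}$, which vanishes as soon as $\lambda$ has a column of length at least $2$. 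Thus $C_\lambda(\wt v_\lambda)=0$ for essentially all $\lambda$ of interest, and the element whose image under $\phi$ you intend to test is zero before $\phi$ is even applied. Highest weight vectors in $S_\omega(V^*\ot V')$ must instead be built from products of graded minors (Young-symmetrising the $V^*$-indices and the $V'$-indices separately through the two commuting $\Sym_r$-actions on $(V^*)^{\ot r}\ot (V')^{\ot r}$), and showing that their images under $\phi$ are nonzero exactly when $\lambda\in P_{M''_+|M''_-}$ is the real content you have deferred. The paper sidesteps this entirely: it first treats a $V''$ large enough that the third hook condition is vacuous, then truncates, and the computation culminating in \eqref{eq:key-FFT} shows that the truncation maps carry the canonical invariant $\iota^{-1}(\id_{L_{\lambda^\sharp}})$ to the canonical invariant of the truncated module, which delivers both generation and nonvanishing without any explicit highest weight vectors. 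I would recommend replacing your step (c) by that truncation argument, or by a genuine minor construction with the vanishing criterion of Lemma \ref{lem:a-symm} applied to the separate index sets rather than to $C_\lambda$ acting on the symmetric algebra.
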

\begin{proof}
The case with 
$ P_{M_+|M_-}\subset P_{M''_+|M''_-}\supset P_{M'_+|M'_-}$ easily follows from Theorem \ref{thm:HD-general}. Truncation arguments can prove the other cases. 
We omit the details. 
\end{proof}

\subsection{Example: a Lie $q$-superalgebra $\gl_q(m|n)$}\label{sect:q-alg}

We consider a very simple example of the colour Howe duality, 
which has much similarity to the quantum Howe duality for  
quantum general linear (super)groups \cite{WZ, Z03, Zy}.
The commutative factor involved is a generalisation of that for superalgebras 
depending on a parameter $q\in \C^*$. 
The fact that this rather simplistic example can accounts for aspects 
of quantum supergroups and related non-commutative geometry illustrates 
the power of the theory of Lie $(\Gamma, \omega)$-algebras. 

Let us first discuss some commutative factors, which will be needed below. 

\begin{example}\label{eg:factors}
Since any abelian group $\Gamma$ is a $\Z$-module by definition, we consider $\Z$-bilinear  forms $\Gamma \times \Gamma \lra \Z$. 
\begin{enumerate}[a).]
\item \label{a)}   Any symmetric or skew symmetric $\Z$-bilinear  form $(\ , \ )_0: \Gamma \times \Gamma \lra \Z$ leads to a commutative factor
\beq\label{eq:factor-s}
\omega_0: \Gamma\times \Gamma \lra \{1, -1\}, \quad \omega_0(\alpha, \beta) = (-1)^{(\alpha, \beta)_0}, \quad \alpha, \beta\in\Gamma. 
\eeq

A familiar special case is $\Gamma=\Z_2$, and $\omega=\omega_0$ with $(\alpha, \beta)_0=\alpha\beta$, where
algebras over $\Vect$ are superalgebras. 

Another interesting special case is $\Gamma=\Z^n$ for some $n\ge 1$, 
and $\omega=\omega_0$ with $(\alpha, \beta)_0=\sum_{i=1}^n \alpha_i \beta_i$
for any $\alpha= (\alpha_1, \dots, \alpha_n)$ and $\beta = (\beta_1, \dots, \beta_n)$. 
Algebras over $\Vect$ in this case are the $\Z^n$-graded algebras studied in \cite[Appendix A]{Z24}, 
which provide a suitable framework for Green's ansatz for parafermions \cite{G}. 

\item \label{b)}
Fix $q\in\C^*=\C\backslash\{0\}$ such that $q\ne 1$.  Any skew symmetric $\Z$-bilinear form
$
( \ , \  ): \Gamma\times \Gamma \lra \Z
$
leads to a commutative factor
 \beq\label{eq:factor-prime}
\omega_q: \Gamma\times \Gamma \lra \C^*, \quad \omega_q(\alpha, \beta) = q^{(\alpha, \beta)}, \quad \alpha, \beta\in\Gamma.
\eeq
Algebras over $\Vect$ in this case are $q$-algebras analogous to those in the context of quantum groups. 

\item 
The product of the above commutative factors is also a commutative factor. 
\beq \label{eq:factor}
\wt\omega_q: \Gamma\times \Gamma \lra \C^*, \quad  \wt\omega_q(\alpha, \beta)=  \omega_0(\alpha, \beta)\omega_q(\alpha, \beta),
\quad \alpha, \beta\in\Gamma.
\eeq

We will study a special case of this, which 
provides an alternative setting for the quantised coordinate algebra 
of $\C^{m|n}$ \cite{Z98, Zy} independent of the theory of quantum supergroups \cite{BGZ, Z93, Z98}.  
\end{enumerate}

\end{example}

Fix non-negative integers $m, n$. 
Let $\Gamma=\Z^{m+n}$, and choose the ordered $\Z$-basis 
$(\varepsilon_1,  \varepsilon_2, \dots, \varepsilon_{m+n})$ 
with $\varepsilon_i = (\underbrace{0, \dots, 0}_{i-1}, 1, 0, \dots, 0)$ for $i=1, 2, \dots, m+n$. 
Write $[\varepsilon_i]=0$ if $i\le m$ and $1$ otherwise. 

Introduce $\Z$-bilinear forms $( \ , \ )_0,\  ( \ , \ ): \Z^{m+n}\times\Z^{m+n}\lra \Z$ such that 
\beq\label{eq:q-form}
(\varepsilon_i, \varepsilon_j)_0&=&
\left\{
\begin{array}{l l}
1, & \text{both $i, j>m$},\\
0, &\text{otherwise}, 
\end{array}
\right.\\
(\alpha, \beta) &=& \begin{pmatrix}
\alpha_1 &  \alpha_2 & \dots & \alpha_{m+n}
\end{pmatrix}J
\begin{pmatrix}
\beta_1 \\
\beta_2 \\
\vdots \\
\beta_{m+n} \\
\end{pmatrix},
\eeq
for $\alpha= \sum_{i=1}^{m+n}
\alpha_i \varepsilon_i$ and $\beta=
\sum_{i=1}^{m+n}
\beta_i \varepsilon_i$, 
where 
\[
J=\begin{pmatrix}
	0&	1&	1&	\dots & 1\\
-1&		0&	1&	\dots & 1\\
-1&		-1&	0&	\dots & 1\\
\dots & \dots & \dots & \dots & \dots \\
-1 & -1& -1& \dots & 0
\end{pmatrix}.
\]
%
Clearly $(\ , \ )_0$ is symmetric and $(\ , \ )$ is skew symmetric. 
By part $c)$ of Example \ref{eg:factors}, we have the following commutative factor  
for any given $q\in\C^*$.  
\beq\label{eq:q-factor}
\omega: \Z^{m+n}\times\Z^{m+n}\lra\C^*, \quad
\omega(\alpha, \beta)= (-1)^{(\alpha, \beta)_0} q^{(\alpha, \beta)}.
\eeq

Consider the $\Z^{m+n}$-graded vector space 
$V=\sum_{\alpha\in\Z^{m+n}} V_\alpha$ such that  
$V_\alpha=\C$ if $\alpha= \varepsilon_i$ for all $i$, and $0$ otherwise.  
Then $\Z^{m+n}_R= \{\varepsilon_1,  \varepsilon_2, \dots, \varepsilon_{m+n}\}$.  
We order its elements by $\varepsilon_i< \varepsilon_{i+1}$ for all valid $i$. 

\begin{definition}
Retain notation above. 
Denote by $\gl_q(m|n)$ the general linear Lie $(\Z^{m+n}, \omega)$-algebra of $V$, 
and refer to it as the general linear Lie $q$-superalgebra. 
\end{definition}

Justification for the terminology is that $\gl_q(m|n)$ 
has a close connection to the quantum general linear supergroup \cite{Z93, Z98}, 
as we will see from Remark \ref{rmk:q-quantum}. 

To describe the structure of $\gl_q(m|n)$, 
we write $\alpha_{i j}=\varepsilon_i-\varepsilon_j$ for all $i, j$. 
Now $\gl_q(m|n)$ has a basis  $\{ \SE_{i j}\mid i, j=1, 2, \dots, m+n \}$, whose elements obey the following generalised commutation relations
\beq
[\SE_{i j},  \SE_{k\ell} ]= \delta_{j k} \SE_{i \ell} 
-(-1)^{([\varepsilon_i]+[\varepsilon_j])([\varepsilon_k]+[\varepsilon_\ell])} q^{(\alpha_{i j},  \alpha_{k \ell})}  \delta_{\ell i} \SE_{k j}, 
\eeq 
The  skew $\omega$-symmetry of the generalised Lie bracket is given by
\[
[\SE_{i j},  \SE_{k\ell} ]= -(-1)^{([\varepsilon_i]+[\varepsilon_j])([\varepsilon_k]+[\varepsilon_\ell])} q^{(\alpha_{i j}, \alpha_{k\ell})} [\SE_{k\ell}, \SE_{i j}], 
\]
and the generalised Jacobian identity is given by 
\[
[\SE_{i j},  [\SE_{k\ell}, \SE_{h g}]] = [[\SE_{i j},  \SE_{k\ell}], \SE_{h g}] 
+ (-1)^{([\varepsilon_i]+[\varepsilon_j])([\varepsilon_k]+[\varepsilon_\ell])} q^{(\alpha_{i j}, \alpha_{k\ell})}  [\SE_{k\ell}, [\SE_{i j},  \SE_{h g}]].
\]

Note that
$\Px=S_\omega(V^N)$  in the present case is a graded commutative associative $(\Z^{m+n}, \omega)$-algebra. 
It is  generated by $x_i^r:=x(\varepsilon_i)^r$, for $1\le i \le m+n$ and $1\le r\le N$, with $d(x_i^r)= \varepsilon_i$. 
[Here $x_i^r$ and $x(\varepsilon_i)^r$ are symbols, not $r$-th powers.]
The corresponding graded commutative $(\Z^{m+n}, \omega)$-algebra $\Pd$ is generated by the elements  $\partial_{i, r}=\frac{\partial}{\partial x_ i^r}$ 
for $1\le i\le m+n$ and $1\le r\le m+n$, where $\partial_{i, r}$  
are of degree $-\varepsilon_i$ for all $r$. 
The Weyl $(\Gamma, \omega)$-algebra $\CW_\omega$ is 
generated by the subalgebras $\Px$ and $\Pd$, subject to the relations \eqref{eq:CCR}. 
The complete set of defining relations of $\CW_\omega$ are  
\beq
&x_i^r x_i^s =  (-1)^{[\varepsilon_i]} x_i^s x_i^r, 
\quad x_i^r x_j^s =  (-1)^{[\varepsilon_i] [\varepsilon_j]} q x_j^s x_i^r, \quad \forall r, s,  i<j;
\label{eq:x-relations}\\
&\partial_{i, r} \partial_{i, s} =  (-1)^{[\varepsilon_i]} \partial_{i, s} \partial_{i, r}, 
\quad \partial_{i, r} \partial_{j, s} =  (-1)^{[\varepsilon_i] [\varepsilon_j]} q \partial_{j, s} \partial_{i, r}, \quad \forall r, s,  i<j;\\
&\partial_{i, r} x_i^s - (-1)^{[\varepsilon_i][\varepsilon_j]} x_i^s \partial_ {i, r} =\delta_{r s} , 
\quad \forall i,  r, s,\\
&\partial_{i, r} x_j^s - (-1)^{[\varepsilon_i][\varepsilon_j]} q^{-1} x_j^s \partial_ {i, r} = 0, 
\quad \forall r, s,  i<j. 
\eeq

Now we have the pair of Lie colour (super)algebras in $\CW_\omega$, 
\beq
&&\text{$\gl_N(\C)$  spanned by }  E^{r s} := \sum_{i=1}^{m+n} x_ i^r \partial_{i, s}, \quad r, s = 1, 2, \dots, N, \\
&&\text{$\gl_q(m|n)$ spanned by } 
\SE_{i j} =  \sum_{r=1}^N x_ i^r \partial_ {j, r}, \quad i, j=1, 2, \dots, m+n, 
\eeq
which commute with each other. Here $\gl_N(\C)$ is the usual general linear Lie algebra.

Theorems \ref{thm:commut} and \ref{thm:H-wts} immediately leads to the following result. 
\begin{lemma}\label{lem:q-alg}
The $q$-commutative superalgebra $\Px$ is a  $\U(\gl_q(m|n))\ot \U(\gl_N(\C))$
module algebra, which has the following multiplicity free decomposition.
\[
\Px=\sum_{\substack{\lambda\in P_{m|n, N}}} L_{\lambda^\sharp}\ot L_\lambda(\gl_{N}).
\]
\end{lemma}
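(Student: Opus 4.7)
The plan is to recognise this as a direct specialisation of the general colour Howe duality already established in Theorems \ref{thm:commut} and \ref{thm:H-wts}, so the work lies almost entirely in checking that the hypotheses match and in identifying the parameters. First I would verify that the $\Z^{m+n}$-graded vector space $V$ constructed in the example satisfies $V_+ = \sum_{i\le m} V_{\varepsilon_i}$ and $V_- = \sum_{i>m} V_{\varepsilon_i}$, so that $M_+=m$ and $M_-=n$. This is a one-line computation: from \eqref{eq:q-factor}, $\omega(\varepsilon_i,\varepsilon_i) = (-1)^{(\varepsilon_i,\varepsilon_i)_0} q^{(\varepsilon_i,\varepsilon_i)} = (-1)^{[\varepsilon_i]}$, since $(\varepsilon_i,\varepsilon_i)=0$ by skew-symmetry of $(\ ,\ )$, and $(\varepsilon_i,\varepsilon_i)_0 = [\varepsilon_i]$. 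Hence $\varepsilon_i \in \Gamma^+$ iff $i\le m$, giving the claimed $M_\pm$.

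Next I would observe that the definitions in this example are literally the specialisations of those in Section \ref{sect:Weyl}: the generators $x_i^r, \partial_{i,r}$ are the $x(\varepsilon_i)_1^r, \partial/\partial x(\varepsilon_i)_1^r$ (since each $m_{\varepsilon_i}=1$), and the explicit relations displayed in the example are precisely the commutation relations \eqref{eq:CCR} evaluated for this $(\Gamma,\omega)$. Similarly, the generators of $\gl_q(m|n)$ and $\gl_N(\C)$ given in the example coincide with the elements $\SE(\alpha,\beta)_{ij}$ of \eqref{eq:glV} and $E^{rs}$ of \eqref{eq:glN}. Therefore Theorem \ref{thm:invariants} applies verbatim and gives the commuting actions of $\U(\gl_q(m|n))$ and $\U(\gl_N(\C))$ on $\Px$, and makes $\Px$ a module algebra for their tensor product (since both act by $(\Gamma,\omega)$-derivations on the generators of $\Px$).

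Then the multiplicity-free decomposition is read off from Theorem \ref{thm:commut}(2) together with the explicit characterisation of highest weights in Theorem \ref{thm:H-wts}: the simple summands are indexed by $\lambda\in P_{M_+|M_-;N} = P_{m|n,N}$, with the $\gl(V)$-highest weight $\lambda^\sharp$ constructed from $\lambda$ exactly as stated there. Since here $\gl(V) = \gl_q(m|n)$, this gives the desired decomposition
\[
\Px = \sum_{\lambda\in P_{m|n,N}} L_{\lambda^\sharp}\ot L_\lambda(\gl_N).
\]

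There is really no hard step: the only thing that could trip one up is making sure the sign factor $(-1)^{(\alpha,\beta)_0}$ and the $q$-power $q^{(\alpha,\beta)}$ combine correctly into a commutative factor of the type $(\Gamma,\omega)$ covered by the general theory, and that the distinguished order on $\Gamma_R = \{\varepsilon_1,\dots,\varepsilon_{m+n}\}$ chosen in the example (with $\Gamma_R^+$ before $\Gamma_R^-$) matches the distinguished order fixed in Section \ref{sect:modules}. Both points are immediate. The statement thus follows with essentially no further computation.
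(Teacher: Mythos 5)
Your proposal is correct and follows exactly the route the paper takes: the paper's entire proof is the observation that Theorems \ref{thm:commut} and \ref{thm:H-wts} specialise immediately to this example, and your parameter checks ($M_+=m$, $M_-=n$ via $\omega(\varepsilon_i,\varepsilon_i)=(-1)^{[\varepsilon_i]}$, the distinguished order, and the identification of the generators with \eqref{eq:glN}--\eqref{eq:glV}) are precisely the verifications implicit in that one-line argument.
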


\begin{remark}\label{rmk:q-quantum}
One can see from the relations \eqref{eq:x-relations} that in the $N=1$ case,  $\Px$ coincides with
the quantum coordinate algebra of the superspace $\C^{m|n}$ \cite{Z98, Zy}  (also see \cite[\S6]{M}). 
This fact connects $\gl_q(m|n)$ to the quantum general linear supergroup \cite{Z93, Z98} (also see \cite[\S6]{M}).
\end{remark}

\begin{remark}
Lemma \ref{lem:q-alg} remains valid for $q$ being a root of unity. This is very different from the case of quantum groups \cite{D, J} and quantum supergroups \cite{BGZ, Y91, Y94, ZGB, Z93}, 
where the analysis of the analogue of $\C_\omega[{\bf x}]$ at roots of unity becomes exceedingly difficult. 
\end{remark} 

\begin{remark}
The general linear Lie $q$-superalgebra $\gl_q(m|n)$ for generic $q$ may be regarded as a
deformation of the general linear Lie superalgebra $\gl(\C^{m|n})$ in the category of 
Lie $(\Gamma, \omega)$-algebras. 
We recover from the lemma the usual $(\gl_{m|n}(\C), \gl_N(\C))$ Howe duality  
when we specialise $q\to 1$.
\end{remark} 

We mention that a different connection between quantum groups and Lie  colour algebras was discovered in \cite{AlI}.

\section{Unitarisable modules for general linear Lie $(\Gamma, \omega)$-algebras}

We study unitarisable modules for general linear Lie $(\Gamma, \omega)$-algebras.
The main result obtained is the classification of unitarisable modules 
with respect to two $\ast$-structures referred to as ``compact". 

A general theory of $\ast$-structures and their unitarisable modules will be briefly discussed for  
various types of algebras over $\Vect$. It is interesting in its own right. 
The guiding example for building such a theory is the Weyl  
$(\Gamma, \omega)$-algebra as an associative $\ast$-$(\Gamma, \omega)$-algebra and the Fock space as a unitarisable module for it, see Example \ref{eg:Fock}. 

\subsection{Hopf $\ast$-$(\Gamma, \omega)$-algebras and unitarisable modules}\label{sect:ast-Hopf}

As we will see presently, the existence of any $\ast$-structure requires
the commutative factor have the Unit Modulus Property (see \eqref{eq:inv-ast}).
For Hopf  $(\Gamma, \omega)$-algebras, 
it further requires the Antipode Conditions \ref{assum}. 

\subsubsection{Associative $\ast$-$(\Gamma, \omega)$-algebras and unitarisable modules}
Let $\Gamma$ be an abelian group with a commutative factor $\omega: \Gamma\times \Gamma\lra \C^*$.
In order to define $\ast$-structures for any given type of algebras over $\Vect$, we need to assume the following 

\smallskip

\noindent{\bf Unit Modulus Property} of the commutative factor $\omega$:
\beq\label{eq:inv-ast}
\omega(\alpha, \beta)^* = \omega(\alpha, \beta)^{-1}, \quad \forall \alpha, \beta\in\Gamma,
\eeq
where $\omega(\alpha, \beta)^*$ is the complex conjugate of $\omega(\alpha, \beta)$. 

The necessity of the condition will be explained in Remark \ref{rmk:ump}.

\begin{example} 
The commutative factor in Example \ref{eg:factors}.(a) has the unit modulus property.  
However commutative factors in Examples \ref{eg:factors}.(b), (c) have the property only for $|q|=1$. 
\end{example}

\begin{definition}\label{def:ast-asso}
A $\ast$-structure for an associative $(\Gamma, \omega)$-algebra $A=\sum_{\alpha\in\Gamma} A_\alpha$ is a $\C$-conjugate linear involution
$\ast: A \lra A $ such that   
\begin{enumerate}
\item $\ast(A _\alpha) =A _{-\alpha}$ for all $\alpha\in\Gamma$; and 
\item 
$\ast(x y) = \ast(y)\ast(x)$ for all $x, y\in A $.  
\end{enumerate}
Call $A$ an associative $\ast$-$(\Gamma, \omega)$-algebra if it is equipped with such a $\ast$-structure.
\end{definition}
The requirement that $\ast$ be $\C$-conjugate linear means that $\ast(c x)=c^* \ast(x)$ for all $c\in\C$ and $x\in A$, where $c^*$ is the complex conjugate of $c$. Also $^\ast$ being an involution means that $\ast(\ast(x))=x$ for all $x\in A$. Note that $\ast(1)=1$ by condition (2).

\begin{remark}\label{rmk:ump}
The unit modulus property of the commutative factor 
arises from the requirement that $\ast$-conjugates of $\omega$-commuting elements
be $\omega$-commuting. 
Consider $\omega$-commuting elements $x$ with $d(x)=\alpha$ and $y$ with $d(y)=\beta$ in an 
associative $\ast$-$(\Gamma, \omega)$-algebra $A$. Then 
$x y = \omega(\alpha, \beta) y x$ implies
\[
 \ast(y) \ast(x) = \omega(\alpha, \beta)^*  \ast(x) \ast(y). 
\] 
However, we also have 
$
 \ast(y) \ast(x) = \omega(-\beta, -\alpha)  \ast(x) \ast(y),  
$ 
since $d(\ast(x))=-\alpha$ and $d(\ast(y))=-\beta$. 
These relations together require
\[
\omega(\alpha, \beta)^* = \omega(-\beta, -\alpha)= \omega(\beta, \alpha)= \omega(\alpha, \beta)^{-1}.
\]

\end{remark}

Let $A$ and $B$ be  $\ast$-$(\Gamma, \omega)$-algebras, and let 
$\varphi: A\lra B$ be a $(\Gamma, \omega)$-algebra homomorphism. Call  $\varphi$ a $\ast$-$(\Gamma, \omega)$-algebra homomorphism if 
\beq
\ast\circ\varphi = \varphi  \circ \ast. 
\eeq

\medskip
\noindent
{\bf Notation}. It is customary to write $\ast(x)$ as $x^+$, and we shall adopt this notation hereafter. 

\medskip

Recall that the tensor product $A\ot_\C B$ of any two associative $(\Gamma, \omega)$-algebras $A$ and $B$  is again an 
associative $(\Gamma, \omega)$-algebra with the multiplication defined by \eqref{eq:multip-tensor}. 

\begin{lemma} \label{lem:tensor-ast}
Let $A$ and $B$ be associative $\ast$-$(\Gamma, \omega)$-algebras. Then $A\ot_\C B$ is an associative $\ast$-$(\Gamma, \omega)$-algebra with the $\ast$-structure 
\beq\label{eq:ast-tensor}
 \ast: A\ot B\lra A\ot B, \quad  (x\ot y)^+=\omega(d(y^+), d(x^+))x^+\ot y^+. 
\eeq
[Observe that this is equivalent to
$((x\ot 1)(1\ot y))^+= (1\ot y^+)  (x^+\ot 1)$.]

\begin{proof}
The map clearly satisfies condition (1) of Definition \ref{def:ast-asso}.

As $d(u^+)=-d(u)$ for any homogeneous $u$,  we have  
$
(x\ot y)^+= \omega(d(y), d(x))x^+\ot y^+ 
$
for any $x\ot y$ of $A\ot B$.
Let $P=u\ot v$ and $Q=x\ot y$ be elements of $A\ot B$. We have
\[
\baln
(PQ)^+
&=\omega(d(v), d(x))^{-1} (u x \ot  v y)^+ \\
&=\omega(d(v), d(u)) \omega(d(y), d(x)) \omega(d(y), d(u))x^+u^+\ot y^+ v^+, \\
Q^+ P^+
&=  \omega(d(y), d(x)) \omega(d(v), d(u))  (x^+\ot y^+) (u^+ \ot v^+) \\
&=  \omega(d(y), d(x)) \omega(d(v), d(u)) \omega(d(y), d(u)) x^+u^+\ot y^+ v^+.
\ealn
\]
Hence $(PQ)^+= Q^+ P^+$, proving condition (2).   This proves the lemma. 
\end{proof}
\end{lemma}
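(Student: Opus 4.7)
The plan is to verify, for the formula in \eqref{eq:ast-tensor}, the four requirements needed for a $\ast$-structure on $A\ot_\C B$: conjugate linearity, the degree condition $\ast((A\ot B)_\alpha)\subset (A\ot B)_{-\alpha}$, the involution property, and anti-multiplicativity. It is convenient to first observe that \eqref{eq:ast-tensor} can be rewritten as $(x\ot y)^+ = \omega(d(y), d(x))\, x^+\ot y^+$, because $d(x^+) = -d(x)$, $d(y^+) = -d(y)$, and the bilinearity relations \eqref{eq:cycle-2}--\eqref{eq:cycle-3} (used through $\omega(\alpha + (-\alpha), \beta) = 1$ and its analogue in the second slot) force $\omega(-\alpha, -\beta) = \omega(\alpha, \beta)$. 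With this simplified form, conjugate linearity and the degree condition are immediate from the corresponding properties of $\ast$ on $A$ and on $B$.

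For the involution property, applying $\ast$ twice to $x\ot y$ produces the scalar $\omega(d(y), d(x))^*\,\omega(d(y^+), d(x^+)) = \omega(d(y), d(x))^*\,\omega(d(y), d(x))$ in front of $(x^+)^+\ot (y^+)^+ = x\ot y$; by the Unit Modulus Property \eqref{eq:inv-ast} this scalar is $\omega(d(y), d(x))^{-1}\omega(d(y), d(x)) = 1$. This step is exactly where the Unit Modulus Property enters, which is reassuring given the role it played in motivating the definition.

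The substantive step is the anti-multiplicativity $(PQ)^+ = Q^+P^+$ for $P = u\ot v$ and $Q = x\ot y$. My strategy is to expand both sides into scalar multiples of the same vector $x^+u^+\ot y^+v^+$, invoking the anti-homomorphism property of $\ast$ on each of $A$ and $B$, and then to compare the two scalars. On the left, one $\omega$-factor arises from the tensor-product multiplication \eqref{eq:multip-tensor} in $A\ot B$ and is complex-conjugated under $\ast$, while a second arises from applying the new $\ast$ to the product $ux\ot vy$, producing $\omega(d(v)+d(y), d(u)+d(x))$. On the right, one factor appears from each of the two applications of $\ast$ to $P$ and $Q$, and a third from multiplying $x^+\ot y^+$ against $u^+\ot v^+$. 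Expanding the left-hand factor via \eqref{eq:cycle-2}--\eqref{eq:cycle-3} into four pieces, one of them cancels against the other left-hand factor through the pair $\omega(d(x), d(v))\,\omega(d(v), d(x)) = 1$ from \eqref{eq:cycle-1}, and the remaining three factors then match the right-hand side term by term.

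The only real obstacle here is careful bookkeeping of the $\omega$-factors, which requires the Unit Modulus Property \eqref{eq:inv-ast} together with all three cocycle identities \eqref{eq:cycle-1}--\eqref{eq:cycle-3}; conceptually the formula in \eqref{eq:ast-tensor} is the unique one dictated by requiring $\ast$ to be anti-multiplicative and to send the embedded factors $x\ot 1$ and $1\ot y$ to $x^+\ot 1$ and $1\ot y^+$ respectively, as the parenthetical remark following \eqref{eq:ast-tensor} already suggests. One could alternatively deduce this lemma abstractly from the fact that in the symmetric monoidal category $\Vect$ the braiding $\tau$ is an involution (Lemma \ref{lem:tau}(1)) and that $\ast$ is a morphism of algebras from $A$ and $B$ to their ``opposites'', but the direct verification above is shorter and self-contained.
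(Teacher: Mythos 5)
Your proposal is correct and follows essentially the same route as the paper: rewrite $(x\ot y)^+=\omega(d(y),d(x))\,x^+\ot y^+$ using $d(u^+)=-d(u)$, then verify anti-multiplicativity by expanding $(PQ)^+$ and $Q^+P^+$ into scalar multiples of $x^+u^+\ot y^+v^+$ and matching the $\omega$-factors via \eqref{eq:cycle-1}--\eqref{eq:cycle-3} and the Unit Modulus Property. You are in fact slightly more complete than the paper, which checks only conditions (1) and (2) of Definition \ref{def:ast-asso} and leaves the conjugate-linearity and involution properties implicit.
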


\begin{definition}
An $A $-module $V$ is unitarisable if there exists a positive definite sesquilinear form 
$\langle \ , \ \rangle: V\times V \lra \C$, which is contravariant in the sense that  
\beq\label{eq:contravar}
\langle x v,  v' \rangle = \langle v,  x^+ v' \rangle, \quad \forall x\in A, \ v, v' \in V.
\eeq
If a unitarisable module $V=\sum_{\alpha}V_\alpha$ is 
a $\Gamma$-graded $A$-module, and the positive definite contravariant 
sesquilinear form satisfies $\langle V_\alpha, V_\beta \rangle=\{0\}$ for all $\alpha\ne\beta$, 
we call $V$ a $\Gamma$-graded unitarisable $A$-module.
\end{definition}

\begin{remark}
Throughout this section,  $\langle \ , \  \rangle$ denotes a sesquilinear form. 
There should not be any danger of confusing it with the notation for dual space pairing used in other sections, 
as the distinction will be clear from the context. 
\end{remark}

\begin{lemma} 
Unitarisable modules for any associative $\ast$-$(\Gamma, \omega)$-algebra are semi-simple. 
\end{lemma}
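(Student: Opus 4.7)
The plan is to mimic Weyl's classical unitarian trick: given a unitarizable module $V$ with positive-definite contravariant sesquilinear form $\langle\,,\,\rangle$, for any submodule $W\subset V$ I will take the orthogonal complement
$$W^\perp=\{v\in V\mid \langle w,v\rangle=0\ \forall w\in W\}$$
and show (i) that $W^\perp$ is an $A$-submodule and (ii) that $V=W\oplus W^\perp$. Semi-simplicity then follows by the usual Zorn's lemma (or induction in the finite-dimensional case) argument: every submodule admits a complementary submodule, so $V$ decomposes as a direct sum of simples.

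For step (i) I will use contravariance directly: if $v\in W^\perp$ and $x\in A$, then for any $w\in W$ we have $\langle w,xv\rangle=\langle x^{+}w,v\rangle$, and $x^{+}w\in W$ by $A$-stability of $W$, so $\langle x^{+}w,v\rangle=0$. This gives $xv\in W^\perp$. In the $\Gamma$-graded setting I will further observe, using $\langle V_\alpha,V_\beta\rangle=\{0\}$ for $\alpha\neq\beta$, that if $W$ is a graded submodule then $W^\perp$ is automatically graded, so no compatibility issue with the $\Gamma$-grading arises. Notice that contravariance is consistent with the $\Gamma$-grading precisely because $\ast$ sends $A_\alpha$ to $A_{-\alpha}$: for $x\in A_\alpha$, $v\in V_\beta$, $v'\in V_\gamma$, both sides of \eqref{eq:contravar} are forced to vanish unless $\gamma=\alpha+\beta$.

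Step (ii) is the standard orthogonal-decomposition argument for positive-definite forms: positive definiteness gives $W\cap W^\perp=\{0\}$, and for any $v\in V$ one projects onto $W$ by choosing an orthonormal basis of $W$ (in the finite-dimensional case, or more generally assuming the modules under consideration are locally finite so that the relevant orthogonal projection exists). The residue $v-\mathrm{pr}_W(v)$ then lies in $W^\perp$, giving $V=W\oplus W^\perp$. Combining (i) and (ii), any submodule is a direct summand, which is equivalent to semi-simplicity.

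The only genuine issue is the infinite-dimensional case, where $V=W\oplus W^\perp$ does not automatically hold purely algebraically without completeness. I expect this to be handled by the tacit convention (implicit throughout the paper, where all modules are $\Gamma$-graded and typically of finite homogeneous multiplicities) that one restricts to modules on which orthogonal projection onto each submodule is well-defined—e.g.\ those that are direct sums of finite-dimensional $\fh$-weight spaces, as in the Fock space example of Definition \ref{rmk:F-space}. Under such a hypothesis the argument is routine; otherwise one interprets ``semi-simple'' in the pre-Hilbert/algebraic sense appropriate to the category of modules being considered.
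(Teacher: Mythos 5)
Your proposal is correct and follows essentially the same route as the paper: form the orthogonal complement $W^\perp$, use contravariance together with $\ast(A_\alpha)=A_{-\alpha}$ to see that $W^\perp$ is a submodule, and use positive definiteness to get $W\cap W^\perp=\{0\}$ and hence $V=W\oplus W^\perp$. In fact you are more careful than the paper, which passes directly from $W\cap W^\perp=\{0\}$ to $V=W\oplus W^\perp$ without comment; your remark that this step needs a local-finiteness (or finite-dimensionality) hypothesis in the infinite-dimensional case is a legitimate refinement rather than a deviation.
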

\begin{proof}
The is a standard fact, and its proof is well known. 
Let $A$ be a $\ast$-$(\Gamma, \omega)$-algebra, and let $V$ be a unitarisable $A$-module. 
If $V_0\subset V$ is a submodule, let $V^\bot = \{v\in V\mid \langle V_0,  v \rangle =\{0\}\}$. Then $V^\bot$ is a submodule of $V$. Positive definiteness of the sesquilinear implies that $V^\bot\cap V_0=\{0\}$. Hence $V=V_0\oplus V^\bot$.
\end{proof}

\begin{theorem}\label{thm:tensor-prod}
Let $A$ and $B$ be  associative $\ast$-$(\Gamma, \omega)$-algebras, and let $V$ and $W$ be $\Gamma$-graded unitarisable modules for $A$ and $B$ respectively. Then the tensor product $V\ot_\C W$ is a $\Gamma$-graded unitarisable $A\ot B$-module.
\end{theorem}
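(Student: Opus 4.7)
The plan is to define the contravariant sesquilinear form on $V\ot_\C W$ as the tensor product of the given forms on $V$ and $W$. Adopting the convention that sesquilinear forms are conjugate-linear in the first slot and linear in the second, for homogeneous $v,v'\in V$ and $w,w'\in W$ I would set
\[
\la v\ot w,\, v'\ot w'\ra_{V\ot W} := \la v, v'\ra_V\, \la w, w'\ra_W,
\]
and extend sesquilinearly. No $\omega$-twist is required in the definition itself; that this yields a well-defined sesquilinear form on $V\ot_\C W$ follows from the universal property of the tensor product applied to each slot.

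Grading compatibility and positive definiteness are then immediate. For grading, the assumption that $V$ and $W$ are $\Gamma$-graded unitarisable forces the right-hand side to vanish unless $d(v)=d(v')$ and $d(w)=d(w')$; in particular $\la (V\ot W)_\mu,\, (V\ot W)_\nu\ra_{V\ot W}=0$ whenever $\mu\ne\nu$. Positive definiteness follows from choosing orthonormal bases of each homogeneous component $V_\alpha$ and $W_\beta$ and observing that their tensor product is an orthonormal basis of $V\ot W$ for $\la\cdot,\cdot\ra_{V\ot W}$.

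The main calculation is contravariance. By \eqref{eq:act-tensor-1}, $(a\ot b)(v\ot w)=\omega(d(b),d(v))\, av\ot bw$, so sesquilinearity in the first slot together with the Unit Modulus Property \eqref{eq:inv-ast} yields
\[
\la (a\ot b)(v\ot w),\, v'\ot w'\ra = \omega(d(b),d(v))^{-1}\la av, v'\ra_V\, \la bw, w'\ra_W.
\]
On the other side, \lemref{lem:tensor-ast} gives $(a\ot b)^+=\omega(d(b),d(a))\, a^+\ot b^+$, and \eqref{eq:act-tensor-1} then contributes a further $\omega(-d(b),d(v'))$. Because the pairings are nonzero, one has $d(v')=d(a)+d(v)$, and the cocycle relations \eqref{eq:cycle-2}--\eqref{eq:cycle-3} force these two factors to collapse to the same scalar $\omega(d(b),d(v))^{-1}$; contravariance of $\la\cdot,\cdot\ra_V$ and $\la\cdot,\cdot\ra_W$ then identifies the remaining matrix-coefficient factors. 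Hence contravariance holds on homogeneous tensors, and the general case follows by linearity.

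The only real obstacle is the bookkeeping of commutative factors. The conceptual content is that the $\omega(d(b),d(v))$ produced by the braided tensor-product action, once it crosses into the conjugate-linear slot and is inverted via the Unit Modulus Property, exactly cancels the $\omega$-twists supplied by the braided $\ast$-structure on $A\ot B$; this cancellation is precisely why the naive tensor product of sesquilinear forms works without modification.
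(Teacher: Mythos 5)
Your proposal is correct and follows essentially the same route as the paper: the untwisted product form $\langle v\ot w, v'\ot w'\rangle = \langle v,v'\rangle_V\langle w,w'\rangle_W$, positive definiteness read off directly, and contravariance verified by matching the $\omega(d(b),d(v))$ factor from the braided action \eqref{eq:act-tensor-1} against the $\omega(d(b),d(a))$ factor from the braided $\ast$-structure of \lemref{lem:tensor-ast}, using $d(v')=d(a)+d(v)$ on nonvanishing pairings. The cancellation you describe is exactly the computation in the paper's proof.
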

\begin{proof}
Now $V\ot_\C W$ is a $\Gamma$-graded $A\ot B$-module with the action defined by \eqref{eq:act-tensor}. 
Denote by
$\langle \ , \ \rangle_V$ and $\langle \ , \ \rangle_W$ the positive definite contravariant  sesquilinear forms on 
$V$ and  $W$ respectively. 
Define a sesquilinear form $\Langle \ , \ \Rangle$ on $V\ot W$ by
\beq
\Langle v\ot w, v'\ot w' \Rangle = \langle v, v' \rangle_V \langle w, w' \rangle_W, \quad v, v'\in V, \ w, w'\in W. 
\eeq
This is clearly positive definite. Now we show that the form is contravariant with respect to the $A\ot B$-action. 

For $x, y\in  A\ot B$,  $v, v'\in V$ and $w, w'\in W$, 
\[
\baln
&\Langle (x\ot y)(v\ot w), v'\ot w' \Rangle \\
&=\omega(d(y), d(v))^{-1} \Langle xv\ot y w, v'\ot w' \Rangle \\
&=\omega(d(y), d(v))^{-1} \langle xv, v' \rangle_V \langle  y w,  w' \rangle_W\\
&=\omega(d(y), d(v))^{-1} \langle v, x^+ v' \rangle_V \langle  w, y^+ w' \rangle_W\\
&=\omega(d(y), d(v))^{-1} \Langle v\ot w, x^+ v'\ot y^+ w' \Rangle.
\ealn
\]
Note that $\omega(d(y), d(v))^{-1} \Langle v\ot w, x^+ v'\ot y^+ w' \Rangle$ can be rewritten as 
\[
\baln
&\omega(d(y), d(v))^{-1}  \omega(d(y), d(v')) \Langle v\ot w,( x^+\ot y^+) (v'\ot  w') \Rangle\\
&= \omega(d(y), d(x))  \Langle v\ot w,( x^+\ot y^+) (v'\ot  w') \Rangle\\
&=  \Langle v\ot w,( x\ot y)^+(v'\ot  w') \Rangle. 
\ealn
\]
Hence 
$
\Langle (x\ot y)(v\ot w), v'\ot w' \Rangle =  \Langle v\ot w,( x\ot y)^+(v'\ot  w') \Rangle. 
$

This proves the contravariance of the positive definite sesquilinear $\Langle \ , \  \Rangle$, 
showing that $V\ot W$ is a $\Gamma$-graded unitarisable $A\ot B$-module. 
\end{proof}

\begin{example}\label{eg:Fock}

Any theory of $\ast$-$(\Gamma, \omega)$-algebras and unitarisable modules should account for
the following primary example.
Assume that the commutative factor $\omega$ satisfies the unit modulus property. Then   
the Weyl $(\Gamma, \omega)$-algebra $\CW_\omega$ introduced in Section \ref{sect:Weyl} 
is a $\ast$-$(\Gamma, \omega)$-algebra with the $\ast$-structure 
\[
\ast(x(\alpha)_i^r) = \frac{\partial}{\partial x(\alpha)_ i^r},  \quad \ast\left(\frac{\partial}{\partial x(\alpha)_ i^r}\right)=x(\alpha)_i^r, \quad \forall \alpha, i, r.
\]
The Fock space $\CF_\omega$ is a unitarisable $\CW_\omega$-module. 

\end{example}

\subsubsection{Hopf  $\ast$-$(\Gamma, \omega)$-algebras}
Let us introduce the following conditions on the antipode of a given 
Hopf $(\Gamma, \omega)$-algebra $(A, \mu; \Delta, \varepsilon, S)$.
\begin{assumption} \label{assum} {\ } 
\begin{enumerate}[a)]
\item 
The antipode $S$ is bijective; and
\item there exists a group-like element $K\in A$ such that  
\beq
S^2(x)= K^2 x K^{-2}, \quad \forall x \in A. 
\eeq
\end{enumerate}
[The element $K$ being group-like means that  $\Delta(K)=K\ot K$ and $\varepsilon(K)=1$.]
\end{assumption}

\begin{definition}
Let $(A, \mu; \Delta, \varepsilon, S)$ be a Hopf $(\Gamma, \omega)$-algebra, which satisfies the 
Antipode Conditions \ref{assum}. Assume that 
$(A, \mu)$ is an associative $\ast$-$(\Gamma, \omega)$-algebra with the $\ast$-structure $\ast: A\lra A$.
Call $(A, \mu; \Delta, \varepsilon, S, \ast)$ a Hopf $\ast$-$(\Gamma, \omega)$-algebra 
if  $\Delta$ and $\varepsilon$ are 
$\ast$-$(\Gamma, \omega)$-algebra homomorphisms, that is, 
\beq\label{eq:Hopf-ast}
\ast\circ \Delta = \Delta\circ \ast, \quad \ast\circ \varepsilon = \varepsilon \circ \ast;
\eeq
and $K$ is self-conjugate in the sense that
\beq \label{eq:K-real}
\ast(K)=K.
\eeq
\end{definition}

Note that the first relation of \eqref{eq:Hopf-ast} means that  for all $x\in A$, 
\[
\sum \omega(d(x_{(2)}), d(x_{(1)})) \ast(x_{(1)})\ot \ast(x_{(2)})= \sum \ast(x)_{(1)}\ot \ast(x)_{(2)},
\]
where $\Delta(x)=\sum x_{(1)}\ot x_{(2)}$ and $\Delta(\ast(x)) =\sum \ast(x)_{(1)}\ot \ast(x)_{(2)}$  in Sweedler's  notation. 

\begin{lemma} The following relations hold for any Hopf $\ast$-$(\Gamma, \omega)$-algebra. 
\beq\label{eq:ast-S}
S\circ \ast \circ S\circ \ast = \ast\circ  S\circ \ast\circ  S = \id_A. 
\eeq
\end{lemma}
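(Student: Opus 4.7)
The plan is to reduce both asserted identities to the single equation $T := \ast \circ S \circ \ast = S^{-1}$, which makes sense since $S^{-1}$ exists by Antipode Condition~\ref{assum}(a). Granting this, $S \circ T = \id_A$ is precisely the first asserted identity $S \circ \ast \circ S \circ \ast = \id_A$, while $T \circ S = \id_A$ is the second, $\ast \circ S \circ \ast \circ S = \id_A$.

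To establish $T = S^{-1}$, I will show that $T$ satisfies the antipode axioms for the opposite Hopf $(\Gamma, \omega)$-algebra $A^{\rm cop}$, whose comultiplication is $\Delta^{\rm op} = \tau_{A,A} \circ \Delta$; in Sweedler notation,
\[
\Delta^{\rm op}(x) = \sum_{(x)} \omega(d(x_{(1)}), d(x_{(2)})) \, x_{(2)} \otimes x_{(1)}.
\]
The key input is the compatibility $\Delta \circ \ast = \ast \circ \Delta$ coupled with the twisted $\ast$-structure on $A \otimes A$ from Lemma~\ref{lem:tensor-ast}; a direct computation using $\omega(-\alpha, -\beta) = \omega(\alpha, \beta)$ yields
\[
\Delta(\ast(x)) = \sum_{(x)} \omega(d(x_{(2)}), d(x_{(1)})) \, \ast(x_{(1)}) \otimes \ast(x_{(2)}).
\]
Substituting $y = \ast(x)$ into the antipode identity $\sum S(y_{(1)}) y_{(2)} = \varepsilon(y) 1$ and applying $\ast$ to both sides, the unit modulus property~\eqref{eq:inv-ast}, the anti-multiplicativity of $\ast$, and $\varepsilon(\ast(x)) = \overline{\varepsilon(x)}$ combine to give
\[
\sum_{(x)} \omega(d(x_{(1)}), d(x_{(2)})) \, x_{(2)} T(x_{(1)}) = \varepsilon(x) 1.
\]
The analogous manipulation starting from $\sum y_{(1)} S(y_{(2)}) = \varepsilon(y) 1$ yields
\[
\sum_{(x)} \omega(d(x_{(1)}), d(x_{(2)})) \, T(x_{(2)}) x_{(1)} = \varepsilon(x) 1.
\]

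These two identities are exactly the two antipode axioms for $A^{\rm cop}$. A short direct check --- first observing that $S^{-1}$ is itself an $\omega$-anti-homomorphism with the same twist as $S$ (by inverting $S(xy) = \omega(d(x), d(y)) S(y) S(x)$), and then applying $S^{-1}$ to the original antipode identities $\sum x_{(1)} S(x_{(2)}) = \varepsilon(x) 1$ and $\sum S(x_{(1)}) x_{(2)} = \varepsilon(x) 1$ --- will show that $S^{-1}$ also satisfies both relations. Since the antipode of the bialgebra $A^{\rm cop}$ is unique, we conclude $T = S^{-1}$. The main obstacle will be careful book-keeping of the commutative factors through the two successive applications of $\ast$: the factor $\omega(d(x_{(2)}), d(x_{(1)}))$ produced by the twisted $\ast$ on $A \otimes A$ must be inverted by the second $\ast$ (via the unit modulus property) and, together with the reversal from anti-multiplicativity, reassembled into precisely the factor $\omega(d(x_{(1)}), d(x_{(2)}))$ demanded by $\Delta^{\rm op}$.
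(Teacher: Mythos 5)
Your proposal is correct, but it proves the lemma by a genuinely different route than the paper. The paper's proof is a direct computation: it applies $\ast$ to the identity $\sum S(x_{(1)})x_{(2)}\ot x_{(3)}=1\ot x$, then applies $\mu(S\ot\id)$, and grinds through the commutative factors to arrive at $S\circ\ast\circ S=\ast$, from which both asserted identities follow by composing with the involution $\ast$; notably, this derivation never invokes the assumed bijectivity of $S$ (indeed it re-proves it, since $\ast\circ S\circ\ast$ emerges as a two-sided inverse). You instead set $T=\ast\circ S\circ\ast$, show that $T$ satisfies the two antipode axioms for the co-opposite bialgebra $A^{\rm cop}$, observe that $S^{-1}$ (which exists by Antipode Condition~\ref{assum}(a)) satisfies the same axioms, and conclude $T=S^{-1}$ by uniqueness of the antipode. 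All the ingredients check out: the formula you quote for $\Delta(\ast(x))$ is exactly the paper's unpacking of $\ast\circ\Delta=\Delta\circ\ast$; applying $\ast$ to the antipode identities and using the unit modulus property does produce the factor $\omega(d(x_{(1)}),d(x_{(2)}))$ matching $\Delta^{\rm op}$; $S^{-1}$ inherits the $\omega$-anti-multiplicativity of $S$ since the antipode has degree $0$; and the convolution-algebra uniqueness argument goes through without braiding complications because all maps involved ($T$, $S^{-1}$, $\id$) are homogeneous of degree $0$. Two small points worth making explicit in a written version: you should verify (or at least remark) that $\Delta^{\rm op}=\tau_{A,A}\circ\Delta$ is indeed an algebra homomorphism in the $(\Gamma,\omega)$-setting, which follows from $\tau_{A,A}$ being an algebra automorphism of $A\ot A$; and you should note that the uniqueness argument is being run for degree-$0$ elements of the convolution algebra, where the tensor-product twist \eqref{eq:sign} is invisible. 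What your approach buys is conceptual clarity and brevity of bookkeeping (the commutative factors appear only in the single application of $\ast$ to the antipode identity); what it costs is reliance on $S^{-1}$ and on the $A^{\rm cop}$ machinery, whereas the paper's computation is self-contained and simultaneously establishes invertibility of $S$.
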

\begin{proof}
Since $\mu(S\ot\id)\Delta=\id$, we have 
\[
\baln
\sum S(x_{(1)}) x_{(2)}\ot  x_{(3)}=1\ot x, \quad x\in A.  
\ealn
\]
Applying $\ast$ to both sides, we obtain
\[
\baln
& \sum  \omega(d(x_{(3)}^+), d(x_{(1)}^+)+d(x_{(2)}^+)) x_{(2)}^+ S(x_{(1)})^+\ot  x_{(3)}^+=1\ot x^+. 
\ealn
\]
Then by applying $\mu(S\ot\id)$ to both sides of this equation, we obtain  
\[
\baln
&\sum  \omega(d(x_{(3)}^+), d(x_{(1)}^+)+d(x_{(2)}^+))  \omega(d(x_{(2)}^+), d(x_{(1)}^+))
S(S(x_{(1)})^+) S(x_{(2)}^+) x_{(3)}^+=x^+. 
\ealn
\]
Note that
\[
\baln
& \omega(d(x_{(3)}^+), d(x_{(1)}^+)+d(x_{(2)}^+))  \omega(d(x_{(2)}^+), d(x_{(1)}^+))\\
&= \omega(d(x_{(3)}^+)+d(x_{(2)}^+), d(x_{(1)}^+))  \omega(d(x_{(3)}^+), d(x_{(2)}^+))\\
&=\omega(d(x^+)-d(x_{(1)}^+), d(x_{(1)}^+))  \omega(d(x_{(3)}^+), d(x_{(2)}^+)). 
\ealn
\]
Denote $\omega_{(1)}=\omega(d(x^+)-d(x_{(1)}^+), d(x_{(1)}^+))$. We can re-write the previous equation as 
\beq \label{eq:S-ast-proof}
&\sum  \omega_{(1)}  \omega(d(x_{(3)}^+), d(x_{(2)}^+))S(S(x_{(1)})^+) S(x_{(2)}^+)x_{(3)}^+=x^+. 
\eeq
Let us simplify the left hand side. 
\[
\baln 
\text{LHD  of \eqref{eq:S-ast-proof}}&=\sum  \omega_{(1)}  S(S(x_{(1)})^+) 
\mu(S\ot\id)(x_{(2)}\ot x_{(3)})^+\\ 
&=\sum  \omega_{(1)}  S(S(x_{(1)})^+) 
\mu(S\ot\id)\ast\Delta(x_{(2)})\\ 
&=\sum  \omega_{(1)}  S(S(x_{(1)})^+) 
\mu(S\ot\id)\Delta(x_{(2)}^+) \\ 
&=\sum  \omega_{(1)}  S(S(x_{(1)})^+) 
\varepsilon(x_{(2)}^+)\\ 
&=\sum  \omega(d(x^+)-d(x_{(1)}^+), d(x_{(1)}^+))  S(S(x_{(1)} \varepsilon(x_{(2)}))^+). 
\ealn
\]
In the final expression, we have $d(x_{(1)})+d(x_{(2)}) =d(x)$ and $d(x_{(2)})=0$. Hence 
\[
\omega(d(x^+)-d(x_{(1)}^+), d(x_{(1)}^+))=\omega(d(x^+)-d(x^+), d(x^+))=1.
\]
This leads to 
\[
\baln
\text{LHD  of \eqref{eq:S-ast-proof}}&=\sum S(S(x_{(1)} \varepsilon(x_{(2)}))^+) =S\circ \ast \circ S (x).
\ealn
\]
Therefore, 
\[
S\circ \ast \circ S (x) = \ast(x), \quad x\in A, 
\]
which implies  the lemma. 
\end{proof}

\begin{example}

A $\ast$-structure for the universal enveloping algebra $\U(\fg)$ of a Lie $(\Gamma, \omega)$-algebra $\fg$ is assumed to satisfy the condition
\beq
\ast(\fg)=\fg, 
\eeq
where $\fg$ is regarded  as  embedded in $\U(\fg)$ canonically.  
It is easy to verify that 
\[
[X, Y]^+=[Y^+, X^+], \quad \forall X, Y\in\fg. 
\]

If  $\U(\fg)$ has a $\ast$-structure, it follows Lemma \ref{lem:tensor-ast} that there is a natural $\ast$-structure for $\U(\fg)\ot \U(\fg)$. 
For any $X\in\fg$, we have $X^+\in\fg$, and hence
$
\Delta(X)^+= \Delta(X^+). 
$
Since $\U(\fg)$ is generated by $\fg$, 
\beq
\Delta(u)^+= \Delta(u^+), \quad \forall u\in\U(\fg). 
\eeq
\end{example}

\begin{remark}\label{rmk:ast'}
If $\ast: A\lra A$ is a $\ast$-structure for a Hopf $(\Gamma, \omega)$-algebra $A$, then $\ast': A\lra A$, given  by $\ast'(x)=\omega(d(x), d(x)) \ast(x)$ for $x\in A$,  also defines a $\ast$-structure by noting that $\omega(\alpha, \alpha)=1$ or $-1$ for any $\alpha\in\Gamma$.  
\end{remark}

\subsection{Operations on unitarisable modules}\label{sect:Hopf-mod-u}

We have the following result for any given Hopf $\ast$-$(\Gamma, \omega)$-algebra 
$(A, \mu; \Delta, \varepsilon, S, \ast)$. 
\begin{theorem} \label{thm:uni-s-s}
Tensor products of unitarisable $\Gamma$-graded modules for a Hopf $\ast$-$(\Gamma, \omega)$-algebra $A$ are unitarisable
$\Gamma$-graded $A$-modules. 
\end{theorem}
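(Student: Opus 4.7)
The plan is to deduce this from Theorem~\ref{thm:tensor-prod} by pulling the $A$-action on $V\ot W$ back along the co-multiplication $\Delta\colon A\lra A\ot A$, and then observing that $\Delta$ being a $\ast$-$(\Gamma,\omega)$-algebra homomorphism is precisely what is needed to transfer contravariance.

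First I would take the data: let $V$ and $W$ be $\Gamma$-graded unitarisable $A$-modules with positive definite contravariant sesquilinear forms $\langle\ ,\ \rangle_V$ and $\langle\ ,\ \rangle_W$ respectively, where by hypothesis $\langle V_\alpha,V_\beta\rangle_V=\{0\}$ and $\langle W_\alpha,W_\beta\rangle_W=\{0\}$ for $\alpha\ne\beta$. By Lemma~\ref{lem:tensor-ast}, $A\ot A$ is an associative $\ast$-$(\Gamma,\omega)$-algebra with the $\ast$-structure given by \eqref{eq:ast-tensor}. Theorem~\ref{thm:tensor-prod} then tells me that the tensor product $V\ot_\C W$ is a $\Gamma$-graded unitarisable $A\ot A$-module under the action \eqref{eq:act-tensor-1}, with the positive definite contravariant sesquilinear form
\[
\Langle v\ot w, v'\ot w'\Rangle=\langle v,v'\rangle_V\,\langle w,w'\rangle_W.
\]
Note that this form satisfies $\Langle (V\ot W)_\alpha,(V\ot W)_\beta\Rangle=\{0\}$ for $\alpha\ne\beta$ because its factors do.

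Next I would pull the action back through the co-multiplication. Since $\Delta\colon A\lra A\ot A$ is a homomorphism of $(\Gamma,\omega)$-algebras, it induces on $V\ot W$ the structure of a $\Gamma$-graded $A$-module via $x\cdot(v\ot w):=\Delta(x)(v\ot w)$ for $x\in A$. It remains to check that $\Langle\ ,\ \Rangle$ is contravariant for this action. Using Theorem~\ref{thm:tensor-prod} (contravariance on $A\ot A$) and then the first identity of \eqref{eq:Hopf-ast} (that $\Delta$ intertwines $\ast$), I compute, for any $x\in A$ and $v,v'\in V$, $w,w'\in W$,
\[
\Langle x\cdot(v\ot w),v'\ot w'\Rangle
=\Langle\Delta(x)(v\ot w),v'\ot w'\Rangle
=\Langle v\ot w,\Delta(x)^+(v'\ot w')\Rangle.
\]
By \eqref{eq:Hopf-ast}, $\Delta(x)^+=\Delta(x^+)$, so the right-hand side equals $\Langle v\ot w,\,x^+\cdot(v'\ot w')\Rangle$, which establishes contravariance. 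Positive definiteness of $\Langle\ ,\ \Rangle$ is inherited from $\langle\ ,\ \rangle_V$ and $\langle\ ,\ \rangle_W$. Hence $V\ot W$ is a $\Gamma$-graded unitarisable $A$-module, and iterating this over any finite tensor product handles the general case by associativity of $\ot_\C$ together with co-associativity of $\Delta$.

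There is really no hard step here: the statement is essentially a packaging of Theorem~\ref{thm:tensor-prod}, Lemma~\ref{lem:tensor-ast}, and the compatibility \eqref{eq:Hopf-ast}. The only subtlety worth emphasising is that the commutative-factor bookkeeping in \eqref{eq:ast-tensor} and in \eqref{eq:act-tensor-1} must match, which is automatic once one uses the explicit formulae and the Unit Modulus Property \eqref{eq:inv-ast}; this is exactly why the proof of Theorem~\ref{thm:tensor-prod} goes through in the first place. The Antipode Conditions~\ref{assum} and the relation \eqref{eq:K-real} play no role in this argument (they are needed elsewhere, e.g.\ to handle dual modules).
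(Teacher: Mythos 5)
Your proposal is correct and follows essentially the same route as the paper: apply Theorem~\ref{thm:tensor-prod} to obtain a unitarisable $A\ot A$-module, then restrict along $\Delta$, using the fact that $\Delta$ intertwines $\ast$ (equivalently, that $\Delta(A)$ is a $\ast$-subalgebra of $A\ot A$) to transfer contravariance. Your version merely spells out the contravariance computation that the paper leaves implicit.
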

\begin{proof}
If  $V$ and $W$ are $\Gamma$-graded unitarisable $A$-modules, the tensor product $V\ot W$ is a $\Gamma$-graded unitarisable $A\ot A$-module by Theorem \ref{thm:tensor-prod}. As $\Delta(A)$ is a $\ast$-subalgebra of $A\ot A$, the restriction of $V\ot W$ to $\Delta(A)$ is a $\Gamma$-graded unitarisable module. This proves the theorem.
\end{proof}

Assume that the antipode of the 
Hopf $\ast$-$(\Gamma, \omega)$-algebra $A$ satisfies the Antipode Conditions \ref{assum}.
Given a unitarisable module $V$ with the positive definite $A$-invariant sesquilinear form $\langle \ , \ \rangle$, we denote $V^+:=\{v^+:=\langle v, - \rangle\mid v\in V\}$, which is canonically embedded in the dual space $V^*$ of $V$. Thus we shall regard $V^+$ as a subspace of $V^*$ so that $v^+(w)=\langle v, w \rangle$ for all $w\in V$. Note that $deg(v^+)=-deg(v)$. 

Now for all $w\in V$, 
\[
\baln
(x\cdot v^+)(w) &= \omega(d(x), d(v^+))v^+(S(x)\cdot w) 
= \omega(d(x), d(v^+)) \langle v, S(x)\cdot w \rangle \\
&= \omega(d(x), d(v^+)) \langle \ast S(x)\cdot v,  w \rangle 
= \omega(d(x), d(v^+)) (S(x)^+\cdot v)^+(w). 
\ealn
\]
Hence 
\beq\label{eq:act-dual}
x\cdot v^+ = \omega(d(x), d(v^+)) (S(x)^+\cdot v)^+, \quad  v\in V. 
\eeq

Define a sequilinear form $\langle \ , \ \rangle': V^+\ot V^+\lra \C$ by
\beq\label{eq:form-dual}
\langle v^+ , w^+ \rangle' = \langle K^2\cdot w, v \rangle, \quad \forall v, w\in V. 
\eeq
Clearly this is positive definite as $\langle v^+ , v^+ \rangle'=\langle K\cdot v , K \cdot v\rangle$, and $K\cdot v\ne 0$ if $ v\ne 0$. 

It will become clear in the proof of the lemma below that condition \ref{assum}.(b) on $K^2$ is required by the contravariance of the sequilinear form. 

\begin{lemma}  \label{lem:unit-dual}
Let $A$ be a Hopf $(\Gamma, \omega)$-algebra with $\ast$-structures $\ast, \ast': A\lra A$, 
where $\ast'$ is related to $\ast$ as described in Remark \ref{rmk:ast'}.
Assume that the antipode satisfies  
the Antipode Conditions \ref{assum}. Let  $V$ be a unitarisable $\Gamma$-graded 
$A$-module with respect to $\ast$.  
Then $V^+$ is a unitarisable $\Gamma$-graded $A$-module for $\ast'$ 
with the positive definite contravariant sequilinear form defined by \eqref{eq:form-dual}. 
\end{lemma}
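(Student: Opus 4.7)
The plan is to verify three properties of the sesquilinear form $\langle v^+, w^+\rangle' := \langle K^2 \cdot w, v\rangle$ on $V^+$: positive-definiteness, compatibility with the $\Gamma$-grading, and contravariance against the $\ast'$-action of $A$.

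I begin by observing that the group-like element $K$ is necessarily homogeneous of degree $0$, since $\Delta(K)=K\otimes K$ forces $d(K)=2d(K)$. Combining this with the hypothesis $K^+=K$ from \eqref{eq:K-real} and the contravariance of the original form gives
\begin{equation*}
\langle v^+, v^+\rangle' \;=\; \langle K^2 v, v\rangle \;=\; \langle Kv, K^+ v\rangle \;=\; \langle Kv, Kv\rangle,
\end{equation*}
which is non-negative and vanishes only when $Kv=0$; since $K$ is group-like with inverse $S(K)=K^{-1}$, it is invertible on $V$, and positive-definiteness follows. The grading condition is equally immediate: because $d(K)=0$, the expression $\langle K^2 w, v\rangle$ vanishes unless $d(v)=d(w)$, i.e.\ unless $d(v^+)=d(w^+)$.

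The substantive step is contravariance. Applying \eqref{eq:act-dual}, the definition \eqref{eq:form-dual}, the original contravariance against $\ast$, and the unit-modulus property $\omega^{\ast}=\omega^{-1}$, I would unpack both sides of $\langle x\cdot v^+, w^+\rangle' = \langle v^+, \ast'(x)\cdot w^+\rangle'$; tracking the degree constraint $d(v)=d(x)+d(w)$ that is needed for a nonzero contribution, the desired identity reduces to the operator equality
\begin{equation*}
K^2\, S(\ast'(x))^+ \;=\; \omega(d(x),d(x))\, S(x)\, K^2 \qquad \text{on } V.
\end{equation*}
The Antipode Conditions \ref{assum} rewrite $K^{-2}S(x)K^2 = S^{-1}(x)$, so the right-hand side equals $\omega(d(x),d(x))\,S^{-1}(x)$. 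For the left-hand side, I would use $\ast'(x)=\omega(d(x),d(x))x^+$, the $\C$-linearity of $S$, unit modulus, and the relation $\ast\circ S\circ\ast = S^{-1}$ extracted from \eqref{eq:ast-S}, to obtain $S(\ast'(x))^+ = \omega(d(x),d(x))\,(S(x^+))^+ = \omega(d(x),d(x))\,S^{-1}(x)$, matching the right-hand side.

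The main obstacle will be the bookkeeping of commutative factors and degrees in the contravariance step. The conceptual point is that the $K^2$ (rather than $K$) in the definition of $\langle\,,\,\rangle'$ is precisely what absorbs the inner automorphism $S^2 = \mathrm{Ad}(K^2)$ arising when one moves $S(x)$ past $K^{\pm 2}$, while the twist $\omega(d(x),d(x))$ built into $\ast'$ compensates for the same factor produced by $\ast$-conjugation of scalars; both modifications are forced rather than optional.
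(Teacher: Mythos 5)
Your proposal is correct and follows essentially the same route as the paper's proof: positive-definiteness via $\langle v^+,v^+\rangle'=\langle Kv,Kv\rangle$, then contravariance by unpacking \eqref{eq:act-dual} and \eqref{eq:form-dual}, invoking $S(x)K^2=K^2S^{-1}(x)$ from the Antipode Conditions and $\ast\circ S\circ\ast=S^{-1}$ from \eqref{eq:ast-S}, and finally using the degree constraint $d(v)=d(x)+d(w)$ to collapse the commutative factors to $\omega(d(x),d(x))$. Your reformulation as the operator identity $K^2S(x^+)^+=S(x)K^2$, and your explicit checks that $d(K)=0$ and that $K$ is invertible, are only presentational refinements of the same argument.
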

\begin{proof}
The sesquilinear form on $V^+$ is defined by \eqref{eq:form-dual}, which is positive definite. Now 
for any $v^+, w^+\in V^+$ and $x\in A$, 
\[
\baln
\langle x\cdot v^+ , w^+ \rangle' &= \langle\omega(d(x), d(v^+)) (S(x)^+\cdot v)^+, w^+ \rangle' 
	&\quad &\text{by \eqref{eq:act-dual}}\\
&= \omega(d(x), d(v^+))^{-1} \langle K^2 \cdot w, S(x)^+\cdot v\rangle. 
	&\quad &\text{by \eqref{eq:form-dual}}
\ealn
\]
Using properties of the sesquilinear form on $V$ and $S(x)K^2= K^2 S^{-1}(x)$, we obtain 
\[
\baln
 \langle K^2 \cdot w, S(x)^+\cdot v\rangle 
&=  \langle S(x) K^2 \cdot w,  v\rangle 
= \langle  K^2 S^{-1}(x) \cdot w,  v\rangle.
\ealn
\]
Now
\[
\baln
 \langle  K^2 S^{-1}(x) \cdot w,  v\rangle 
&=  \langle v^+,  (S^{-1}(x) \cdot w)^+\rangle' 
	&\quad &\text{by \eqref{eq:form-dual}}\\
&= \langle v^+,  (S(x^+)^+ \cdot w)^+\rangle' 	
	&\quad &\text{by \eqref{eq:ast-S}}\\ 
&=  \omega(d(x), d(w^+))\langle v^+, x^+\cdot w^+\rangle'. 
	&\quad &\text{by \eqref{eq:form-dual}}.
\ealn
\]
Hence 
\[
\langle x\cdot v^+ , w^+ \rangle'  
=  \omega(d(x), d(v^+))^{-1} \omega(d(x), d(w^+))\langle v^+, x^+\cdot w^+\rangle'. 
\]
Note that the right hand side is non-zero only if $d(v)=d(x)+ d(w)$. Under this condition,  
$
\omega(d(x), d(v^+))^{-1} \omega(d(x), d(w^+))
=\omega(d(x), d(x)).
$
Hence 
\[
\langle x\cdot v^+ , w^+ \rangle' =  \langle v^+, \omega(d(x), d(x)) x^+\cdot w^+\rangle' = \langle v^+, \ast'(x)\cdot w^+\rangle'.
\]
This shows that the form \eqref{eq:form-dual} is contravariant with respect to $\ast'$, proving the lemma. 
\end{proof}

\subsection{Unitarisable  $\gl(V(\Gamma, \omega))$-modules for  compact $\ast$-structures}

Throughout this section,  we write
 $V=V(\Gamma, \omega)$, $\gl(V)=\gl(V(\Gamma, \omega))$ and $\U=\U(\gl(V))$. 

\subsubsection{$\ast$-structures of $\U(\gl(V))$}\label{sect:V-power}
As a Hopf $(\Gamma, \omega)$-algebra, $\U$ fulfils the Antipode Conditions \ref{assum} 
with $K=1$, 
since the antipode satisfies $S^2=\id_\U$. 

Let us first consider $\ast$-structures for $\U$ as a Hopf $(\Gamma, \omega)$-algebra. 
We assume that the commutative factor $\omega$ satisfies the ``Unit Modulus Condition''. 

As usual, we choose a distinguished order for $\Gamma_R$. 

We embed $\gl(V)$ in $\U$ in the canonical way, thus the elements $E(\alpha, \beta)_{i j}$ 
are regarded as generators of $\U$. 

\begin{lemma}  Fix $\alpha_c\in \Gamma_R^+$ and $p\in\{1, 2, \dots, m_{\alpha_c}\}$, 
and let 
\[
\chi(\beta, j) = \left\{
\begin{array}{l l}
1, &\forall j, \text{ if } \beta> \alpha_c,  \\
-1, &\forall j, \text{ if } \beta< \alpha_c,  \\
1, & \forall j\ge p, \text{ if } \beta= \alpha_c,  \\
-1, & \forall j< p, \text{ if } \beta= \alpha_c.
\end{array}
\right. 
\]
Then there exist  the following  $\ast$-structures $\ast, \ast': \U(\gl(V))\lra \U(\gl(V))$.
\beq
\ast(E(\alpha, \beta)_{i j}) &=&  \chi(\alpha, i) \chi(\beta, j)E(\beta, \alpha)_{j i}, \quad \forall \alpha, \beta, i, j;  
\label{eq:non-cpc}\\
\ast'(E(\alpha, \beta)_{i j}) &=& \omega(\alpha-\beta, \alpha-\beta) \ast(E(\beta, \alpha)_{j i}),\quad \forall \alpha, \beta, i, j. 
\label{eq:non-cpc-1}
\eeq
\end{lemma}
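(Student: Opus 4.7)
The approach is to define $\ast$ first on the generating subspace $\gl(V) \subset \U(\gl(V))$ by the stated formula on matrix units, and then to extend conjugate-linearly to an involutive algebra anti-homomorphism on $\U$ via its universal property. The three immediate properties to record are that $\ast$ has the correct degree behaviour $\ast(\gl(V)_{\alpha-\beta}) = \gl(V)_{\beta-\alpha}$ (because $E(\beta,\alpha)_{j i}$ has degree $-(\alpha-\beta)$), that $\ast$ is involutive (because each $\chi(\alpha,i) \in \{\pm 1\}$ so $\chi(\alpha,i)^2 = 1$), and that $\ast$ must satisfy $\ast([X,Y]) = [\ast(Y),\ast(X)]$ for homogeneous $X, Y \in \gl(V)$.

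The only nontrivial step is the bracket identity. I will apply $\ast$ to both sides of \eqref{eq:CR} and independently compute $[\ast(E(\gamma,\delta)_{k\ell}), \ast(E(\alpha,\beta)_{i j})]$ using \eqref{eq:CR} in the swapped indices. Each side produces exactly two Kronecker-delta terms. Matching the first terms requires $\chi(\beta,j)\chi(\gamma,k) = 1$ on the locus $\beta=\gamma,\, j=k$, which holds trivially since $\chi$ is $\{\pm 1\}$-valued. Matching the second terms requires the analogous statement at $\alpha = \delta,\, \ell = i$, together with the commutative-factor equality $\omega(\alpha-\beta,\gamma-\delta)^{-1} = \omega(\delta-\gamma,\beta-\alpha)$; this is a one-line consequence of the unit modulus condition \eqref{eq:inv-ast} combined with bilinearity \eqref{eq:cycle-2}--\eqref{eq:cycle-3} and \eqref{eq:cycle-1}. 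Notably, the specific chamber form of $\chi$ in terms of $\alpha_c$ and $p$ plays no role in the existence argument --- any $\{\pm 1\}$-valued $\chi$ gives a $\ast$-structure --- so its precise shape is presumably dictated by the subsequent notion of ``compact'' $\ast$-structure.

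For the Hopf $\ast$-structure conditions, I will verify $\Delta \circ \ast = \ast \circ \Delta$ on the primitive generators $E(\alpha,\beta)_{i j}$. Using \eqref{eq:ast-tensor} for $\ast$ on $\U \ot \U$, both $\Delta(\ast(E(\alpha,\beta)_{i j}))$ and $\ast(\Delta(E(\alpha,\beta)_{i j}))$ collapse to $\chi(\alpha,i)\chi(\beta,j)(E(\beta,\alpha)_{j i} \ot 1 + 1 \ot E(\beta,\alpha)_{j i})$, because the braiding factor from \eqref{eq:ast-tensor} is trivial when one tensorand is $1 \in \U_0$. Compatibility with $\varepsilon$ is automatic since $\varepsilon$ annihilates all $E(\alpha,\beta)_{i j}$. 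The Antipode Conditions \ref{assum} are then trivially satisfied with $K = 1$, since $S^2 = \id_\U$ for a universal enveloping algebra and $\ast(1) = 1$.

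Finally, the existence of $\ast'$ is deduced from that of $\ast$ via Remark \ref{rmk:ast'}: the twist $x \mapsto \omega(d(x),d(x))\ast(x)$ is automatically a $\ast$-structure whenever $\ast$ is one, and its values on the matrix unit basis give the stated formula (noting that $\omega(\alpha-\beta,\alpha-\beta) \in \{\pm 1\}$). The principal obstacle in the whole argument is not conceptual but clerical: ensuring that every commutative-factor manipulation arising in the bracket check reduces correctly to identities available from \eqref{eq:cycle-1}--\eqref{eq:cycle-3} and \eqref{eq:inv-ast}. Given these tools, no genuine difficulty arises.
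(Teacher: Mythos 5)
Your proposal is correct and follows essentially the same route as the paper: deduce $\ast'$ from $\ast$ via Remark \ref{rmk:ast'}, note that compatibility with $\Delta$ is immediate because the generators are primitive, and reduce everything to checking that $\ast$ preserves the defining relations \eqref{eq:CR}, where the two Kronecker-delta terms match using $\chi^2=1$ and $\omega(\delta-\gamma,\beta-\alpha)=\omega(\alpha-\beta,\gamma-\delta)^{-1}=\omega(\alpha-\beta,\gamma-\delta)^{*}$. Your side observation that any $\{\pm1\}$-valued $\chi$ works is also consistent with the paper's argument, which nowhere uses the chamber structure of $\chi$.
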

\begin{proof}
If \eqref{eq:non-cpc} defines a $\ast$-structure, so does also  \eqref{eq:non-cpc-1} by Remark \ref{rmk:ast'}.  

The simplicity of the co-multiplication $\Delta(E(\alpha, \beta)_{i j})=E(\alpha, \beta)_{i j}\ot 1 + 1\ot E(\alpha, \beta)_{i j}$ makes it very easy to check that 
the map $\ast$ satisfies \eqref{eq:Hopf-ast}, if we can show that it is indeed a 
$\ast$-structure for $\U$ as an associative $(\Gamma, \omega)$-algebra. Thus we only need to verify that
the map $\ast$  preserves the commutation relations 
\[
\baln
&E(\alpha, \beta)_{i j} E(\gamma, \delta)_{k \ell}  - \omega(\alpha-\beta, \gamma-\delta) E(\gamma, \delta)_{k \ell} E(\alpha, \beta)_{i j}\\
&=\delta_{\gamma\beta}\delta_{k j}E( \alpha, \delta)_{i \ell } - \omega(\alpha-\beta, \gamma-\delta)\delta_{\alpha \delta}\delta_{i\ell}E(\gamma, \beta)_{k j}
\ealn
\]
for all $\alpha, \beta, \gamma, \delta$ and $i, j, k, \ell.$  
We denote the two sides of the above equation by $\text{LHS}$ and $\text{RHS}$ respectively. Straightforward calculations yield
\[
\baln
\ast(\text{LHS})
&= \chi(\alpha, i)\chi(\beta, j) \chi(\gamma, k)\chi(\delta, \ell)  \\
&\times (E(\delta, \gamma)_{\ell k} E(\beta, \alpha)_{j i} - \omega(\delta-\gamma, \beta-\alpha)  E(\beta, \alpha)_{j i}E(\delta, \gamma)_{\ell  k})\\
&= \chi(\alpha, i)\chi(\beta, j) \chi(\gamma, k)\chi(\delta, \ell) [E(\delta, \gamma)_{\ell k},  E(\beta, \alpha)_{j i}].
\ealn
\]
By using the commutation relations of $\gl(V)$, we can express the last line as 
\[
\baln
&\chi(\alpha, i)\chi(\beta, j) \chi(\gamma, k)\chi(\delta, \ell)\\
&\times \left(\delta_{\gamma\beta}\delta_{k j}E(\delta, \alpha)_{\ell i} - \omega(\delta-\gamma, \beta-\alpha)\delta_{\alpha \delta}\delta_{i\ell}E(\beta, \gamma)_{j k}\right)\\
&= \chi(\alpha, i)\chi(\delta, \ell) \delta_{\gamma\beta}\delta_{k j}E(\delta, \alpha)_{\ell i} \\
&-  \chi(\beta, j) \chi(\gamma, k) \omega(\alpha-\beta, \gamma-\delta)^{-1}\delta_{\alpha \delta}\delta_{i\ell}E(\beta, \gamma)_{j k}\\
&=\ast(\text{RHS}).
\ealn
\]
This proves the lemma.
\end{proof}

If $\alpha_c$ is the minimal element of $\Gamma_R$ and $p=1$, then $\ast, \ast': \U\lra \U$ are given by 
\beq
&&\ast(E(\alpha, \beta)_{i j}) = E(\beta, \alpha)_{j i},\quad \forall i, j, \alpha, \beta,  \label{eq:cpct}\\ 
&&\ast'(E(\alpha, \beta)_{i j}) = \omega(\alpha, \alpha) \omega(\beta, \beta) E(\beta, \alpha)_{j i},\quad \forall i, j, \alpha, \beta. \label{eq:cpct'}
\eeq
Call them the compact $\ast$-structures of type I and type II respectively. 

The reason for the terminology is that 
for the usual general linear Lie algebra, the $\ast$-structures (which coincide) 
correspond to the compact real form.  

\subsubsection{Unitarisable $\gl(V)$-modules for the compact $\ast$-structures}\label{sect:unit-mods}
We consider unitarisable modules with respect to the compact $\ast$-structures.

Let $L_\lambda$ be a simple $\gl(V)$-module with highest weight $\lambda$. 
Let $v^+_\lambda$ be a $\gl(V)$-highest weight vector which generates $L_\lambda$.  
[Highest weight vectors are non-zero.] 
We can define a sesquilinear form on $L_\lambda$
\beq\label{eq:hw-form}
\langle \ , \ \rangle: L_\lambda\times L_\lambda\lra \C, 
\eeq
by setting $\langle v^+_\lambda, v^+_\lambda \rangle=1$ and requiring 
it be contravariant with respective to the compact $\ast$-structure of type I. 
This  defines the form uniquely. Unitarisability of $L_\lambda$ with respective to the compact $\ast$-structure of type I 
means positive definiteness of  the form.

Assume that $L_\lambda$ is unitarisable. 
As $\ast(E(\alpha, \alpha)_{i i})= E(\alpha, \alpha)_{i i}$ for all $\alpha$ and $i$, 
it is evident that the highest weight $\lambda$ must be real, i.e, 
$\lambda$ belongs to $\fh^*_\R= \sum_{i, \alpha}\R \varepsilon(\alpha)_i$. 

Consider any of the subalgebras $\gl(V_{\alpha, i; \beta,j})\simeq \gl_2(\C)$ described in Lemma \ref{lem:subalgs}.  
The highest weight vector $v^+_\lambda$ generates a $\gl(V_{\alpha, i; \beta,j})$-submodule 
$\U(\gl(V_{\alpha, i; \beta,j}))v^+_\lambda\subset L_\lambda$, 
which is unitarisable with respect to the compact real form of  $\gl_2(\C)$. This is true for any $\alpha, \beta\in \Gamma_R^+$ or $\Gamma_R^-$, and any $i, j$,  thus we must have $\lambda\in\Lambda_{M_+|M_-}$. 

The above discussion is also valid for the type II compact $\ast$-structure. 
Thus we have the following necessary condition for unitarisability.  

\begin{lemma} \label{lem:unit-fd}
If a simple $\gl(V)$-module $L_\lambda$ is unitarisable with respect 
to any of the compact $\ast$-structures, then $\lambda\in\Lambda_{M_+|M_-}\cap\fh^*_\R$, 
and hence $\dim L_\lambda<\infty$.  
\end{lemma}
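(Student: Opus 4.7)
The plan is to extract from the unitarisability hypothesis two separate constraints on $\lambda$: reality and dominance, and then invoke the already-established classification in Theorem \ref{thm:P}(2) to upgrade dominance into finite dimensionality. The reasoning closely parallels the standard argument for unitarisable highest weight modules of $\gl_n(\C)$, but care is needed to track how the two compact $\ast$-structures act on the relevant $\gl_2(\C)$-subalgebras.

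First I would fix a $\gl(V)$-highest weight vector $v^+_\lambda$ generating $L_\lambda$ and equip $L_\lambda$ with the unique sesquilinear form determined by $\langle v^+_\lambda, v^+_\lambda\rangle=1$ and contravariance with respect to the chosen compact $\ast$-structure. Positive definiteness of this form is precisely the unitarisability assumption. Applying contravariance to the Cartan generators, and using that both \eqref{eq:cpct} and \eqref{eq:cpct'} fix each $E(\alpha,\alpha)_{ii}$, one sees $\lambda(E(\alpha,\alpha)_{ii})\in\R$ for every $\alpha\in\Gamma_R$ and $1\le i\le m_\alpha$; hence $\lambda\in\fh^*_\R$.

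Next I would restrict, for each pair $(\alpha,i)$, $(\beta,j)$ with $\alpha,\beta$ both in $\Gamma_R^+$ or both in $\Gamma_R^-$ (and $(\alpha,i)\ne(\beta,j)$), the action to the subalgebra $\gl(V_{\alpha,i;\beta,j})\simeq \gl_2(\C)$ of Lemma \ref{lem:subalgs}. On this subalgebra both compact $\ast$-structures reduce, after inspection of \eqref{eq:cpct} and \eqref{eq:cpct'}, to the standard compact $\ast$-structure $E_{ab}\mapsto E_{ba}$ of $\gl_2(\C)$ (the extra sign in \eqref{eq:cpct'} being trivial here because $\omega(\alpha,\alpha)\omega(\beta,\beta)=1$ in both of these cases). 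The submodule $\U(\gl(V_{\alpha,i;\beta,j}))v^+_\lambda\subset L_\lambda$ inherits the restricted sesquilinear form, which remains positive definite and contravariant, and contains a $\gl_2$-highest weight vector of weight $(\lambda(E(\alpha,\alpha)_{ii}),\lambda(E(\beta,\beta)_{jj}))$. The classical classification of unitarisable highest weight $\gl_2(\C)$-modules forces this submodule to be finite dimensional and the corresponding integrality/positivity condition $\lambda(E(\alpha,\alpha)_{ii})-\lambda(E(\beta,\beta)_{jj})\in\Z_+$ to hold. Combining these inequalities over all admissible pairs exactly gives conditions (i) and (ii) of \eqref{eq:domin}, i.e.\ $\lambda\in\Lambda_{M_+|M_-}$.

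Having established $\lambda\in\Lambda_{M_+|M_-}\cap\fh^*_\R$, the finite-dimensionality $\dim L_\lambda<\infty$ is then immediate from Theorem \ref{thm:P}(2). The step I expect to require the most care is the reduction to $\gl_2(\C)$: one must verify that when $\alpha=\beta$ the commutative factor contribution collapses trivially, and when $\alpha\ne\beta$ (both in $\Gamma_R^+$ or both in $\Gamma_R^-$) the $\omega$-commutator on $\gl(V_{\alpha,i;\beta,j})$ genuinely agrees with the ordinary commutator on $\gl_2(\C)$, so that the classical $\gl_2$ unitarity argument applies verbatim. Once this is done, the remainder of the argument is essentially bookkeeping.
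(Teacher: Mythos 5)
Your proposal is correct and follows essentially the same route as the paper: normalise the contravariant form on a highest weight vector, deduce reality of $\lambda$ from self-adjointness of the Cartan generators, and restrict to the $\gl_2(\C)$-subalgebras $\gl(V_{\alpha,i;\beta,j})$ of Lemma \ref{lem:subalgs} to obtain the dominance conditions \eqref{eq:domin}, with finite dimensionality then supplied by Theorem \ref{thm:P}(2). Your explicit check that $\omega(\alpha,\alpha)\omega(\beta,\beta)=1$ in the relevant cases, so that both compact $\ast$-structures restrict to the standard one on these $\gl_2$'s, is a useful detail the paper only asserts implicitly.
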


We have the following result. 

\begin{theorem}\label{thm:unit-tensor}
Tensor powers $V^{\ot r}$ (for all $r\in\Z_+$) of the natural $\gl(V)$-module are unitarisable
for the type I compact $\ast$-structure \eqref{eq:cpct}. Also, 
tensor powers ${V^*}^{\ot r}$ (for all $r\in\Z_+$) of the dual module $V^*$ of $V$ are unitarisable
for the type II compact $\ast$-structure \eqref{eq:cpct'}. 
\end{theorem}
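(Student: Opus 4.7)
The plan is to establish unitarisability of the natural module $V$ itself with respect to the type I compact $\ast$-structure, then deduce unitarisability of all tensor powers $V^{\ot r}$ via Theorem \ref{thm:uni-s-s}. The second statement will follow from Lemma \ref{lem:unit-dual} combined with another application of Theorem \ref{thm:uni-s-s}.

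First I would construct a positive definite $\Gamma$-graded contravariant sesquilinear form on $V$ by declaring the homogeneous basis $\{b_a \mid 1\le a \le \dim V\}$ of Section \ref{sect:refined} to be orthonormal and extending sesquilinearly. This form is automatically $\Gamma$-graded, since $\langle b_a, b_b \rangle = \delta_{a b}$ and distinct basis vectors of differing degree are orthogonal. Contravariance with respect to $\ast(\BE_{a b}) = \BE_{b a}$ reduces on the basis to the identity
\[
\langle \BE_{a b} b_c, b_d \rangle = \delta_{b c}\delta_{a d} = \langle b_c, \BE_{b a} b_d \rangle,
\]
which is immediate. Since $\U(\gl(V))$ is a Hopf $\ast$-$(\Gamma,\omega)$-algebra whose antipode satisfies $S^2 = \id$, the Antipode Conditions \ref{assum} hold trivially with $K=1$, and Theorem \ref{thm:uni-s-s} then shows $V^{\ot r}$ is unitarisable with respect to $\ast$ for every $r \ge 0$.

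For the dual statement, I would first observe that the type II compact $\ast$-structure \eqref{eq:cpct'} is precisely the $\ast'$ related to the type I by $\ast'(x) = \omega(d(x), d(x)) \ast(x)$, as in Remark \ref{rmk:ast'}. This follows from
\[
\omega(\alpha - \beta, \alpha - \beta) = \omega(\alpha, \alpha)\omega(\beta, \beta),
\]
a consequence of the cocycle identities \eqref{eq:cycle-1}--\eqref{eq:cycle-3}. Applying Lemma \ref{lem:unit-dual} (with $K=1$) to the unitarisable module $V$ of the previous paragraph, the graded dual $V^+$ becomes a unitarisable $\gl(V)$-module for $\ast'$, equipped with the positive definite form $\langle v^+, w^+ \rangle' = \langle w, v \rangle$. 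Because $V$ is finite dimensional, $V^+ = V^*$ as vector spaces, and by \eqref{eq:act-dual} combined with the contravariance of the form on $V$ the $\gl(V)$-action on $V^+$ coincides with the canonical dual module action \eqref{eq:dual-mod}; hence $V^* = V^+$ as $\gl(V)$-modules. A further application of Theorem \ref{thm:uni-s-s} then yields that $(V^*)^{\ot r}$ is unitarisable with respect to $\ast'$ for every $r \ge 0$.

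Essentially no obstacle of substance remains; the only delicate point is the bookkeeping needed to verify that the identification $V^+ \simeq V^*$ is compatible with the grading, the module structure, and the dual space pairing. All commutative-factor calculations are controlled by the unit modulus property \eqref{eq:inv-ast} of $\omega$, and no ingredient beyond Theorem \ref{thm:uni-s-s} and Lemma \ref{lem:unit-dual} is required.
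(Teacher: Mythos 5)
Your proposal is correct and follows essentially the same route as the paper: declare the homogeneous basis of $V$ orthonormal, check contravariance for the type I $\ast$-structure directly, iterate Theorem \ref{thm:uni-s-s}, and use Lemma \ref{lem:unit-dual} for the duals. The only (immaterial) difference is that you dualise $V$ first and then take tensor powers for $\ast'$, whereas the paper applies Lemma \ref{lem:unit-dual} to the already-unitarised $V^{\ot r}$; both orderings work since $\ast'$ is itself a valid $\ast$-structure by Remark \ref{rmk:ast'}.
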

 \begin{proof}
The general linear  Lie $(\Gamma, \omega)$-algebra $\gl(V)$ naturally acts on $V$. 
Define a positive definite sesquilinear form $\langle \  , \rangle: V\times V \lra \C$ by
\[
\langle e(\alpha)_i, e(\beta)_j \rangle = \delta_{\alpha \beta} \delta_{i j},
\]
for all $\alpha, \beta\in\Gamma_R$, and $i=1, 2, \dots, m_\alpha$, $j=1, 2, \dots, m_\beta$.  
Then for any $E(\chi, \gamma)_{k \ell}$, 
\[
\baln
\langle E(\chi, \gamma)_{k \ell}e(\alpha)_i, e(\beta)_j \rangle 
&= \delta_{\ell i} \delta_{\gamma \alpha} \langle e(\chi)_k, e(\beta)_j \rangle 
= \delta_{\gamma \alpha} \delta_{\chi \beta} \delta_{k j} \delta_{\ell i}, \\
\langle e(\alpha)_i, E(\chi, \gamma)_{k \ell}^+ e(\beta)_j \rangle 
&= \langle e(\alpha)_i, E(\gamma, \chi)_{ \ell k} e(\beta)_j \rangle\\
&=  \delta_{\chi \beta} \delta_{k j}  \langle e(\alpha)_i, e(\gamma)_\ell\rangle 
= \delta_{\gamma \alpha} \delta_{\chi \beta} \delta_{k j} \delta_{\ell i}.
\ealn
\]
Hence the sesquilinear form $\langle \  , \ \rangle$ is contravariant with respect to the type I compact $\ast$-structure, proving that $V$ is a $\Gamma$-graded unitarisable module.

Each tensor power $V^{\ot r}$, for any $r\in\Z_+$, has a natural $\Gamma$-graded $\gl(V)$-module structure. 
By iterating Theorem \ref{thm:uni-s-s}, we easily show that the tensor powers $V^{\ot r}$ as a $\gl(V)$-modules are unitarisable with respect to the type I compact $\ast$-structure. 
The  positive definite  contravariant sesquilinear form  
$
\Langle \  , \Rangle^{(r)}: V^{\ot r}\times V^{\ot r} \lra \C
$
on $V^{\ot r}$ is given by
\beq
&&\Langle e(\alpha_1)_{i_1}\ot \dots\ot e(\alpha_r)_{i_r},  e(\beta_1)_{j_1}\ot \dots\ot e(\beta_r)_{j_r}\Rangle^{(r)}\\
&&=\prod_{s=1}^r  \langle e(\alpha_s)_{i_s},  e(\beta_s)_{j_s}\rangle =\prod_{s=1}^r \delta_{\alpha_s \beta_s}\delta_{i_s j_s}. \nonumber
\eeq

It follows  Lemma \ref{lem:unit-dual} that 
${V^*}^{\ot r}$, for all $r\in\Z_+$,  are unitarisable
with respect to the type II compact $\ast$-structure defined by \eqref{eq:cpct'}.  
\end{proof}

If $V=V_+$ or $V_-$, then the two compact $\ast$-structures coincide. The lemma implies the following result. 

\begin{corollary} \label{cor:non-super-u}
If $V=V_+$ or $V_-$, then every finite dimensional simple $\gl(V)$-module 
with real highest weight is unitarisable for the compact $\ast$-structure. 
\end{corollary}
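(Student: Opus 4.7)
The plan is to reduce to Theorem~\ref{thm:unit-tensor} via the decomposition of simple modules given by Remark~\ref{rmk:non-super}. First I would observe that when $V=V_+$ or $V=V_-$, the two compact $\ast$-structures \eqref{eq:cpct} and \eqref{eq:cpct'} coincide, since $\omega(\alpha,\alpha)\omega(\beta,\beta)=1$ for all $\alpha,\beta$ lying in a common $\Gamma_R^\pm$. Hence it suffices to produce a unitarising form for the type I compact $\ast$-structure.

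Take the case $V=V_+$ (the case $V=V_-$ is handled identically, using the obvious analogue of Remark~\ref{rmk:non-super}). Let $L_\mu$ be a finite dimensional simple $\gl(V)$-module with real highest weight $\mu$. By Remark~\ref{rmk:non-super} we may write $\mu=(c,c,\dots,c)+\lambda$ with $c\in\C$ and $\lambda\in P_{M_+}$, and the reality of $\mu$ together with the integrality of $\lambda$ forces $c\in\R$. Then $L_\mu\simeq L_{(c,\dots,c)}\otimes L_\lambda$, where $L_{(c,\dots,c)}$ is one-dimensional. On $L_{(c,\dots,c)}$ the Cartan generators $E(\alpha,\alpha)_{ii}$, which are self-adjoint under the compact $\ast$-structure, act by the real scalar $c$, while all root vectors act by zero; so the form $\langle v,v\rangle:=1$ on a chosen generator is positive definite and contravariant, and $L_{(c,\dots,c)}$ is unitarisable.

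For $L_\lambda$, Theorem~\ref{thm:SW} embeds it as a direct summand of $V^{\otimes|\lambda|}$, whose semisimplicity is recorded in Remark~\ref{rmk:power-ss}. By Theorem~\ref{thm:unit-tensor}, $V^{\otimes|\lambda|}$ carries a positive definite contravariant sesquilinear form for the compact $\ast$-structure, and its restriction to the direct summand $L_\lambda$ (orthogonal complement in the unitarisable ambient module) provides such a form on $L_\lambda$. Finally, applying Theorem~\ref{thm:uni-s-s} to the tensor product $L_{(c,\dots,c)}\otimes L_\lambda$ shows that $L_\mu$ is unitarisable. No genuine obstacle appears; the only point requiring care is verifying the $V=V_-$ analogue of Remark~\ref{rmk:non-super}, which follows by the argument of Section~\ref{sect:hwv-0} with the skew-symmetriser $\Sigma^+$ playing the role previously played by the symmetriser $\Sigma^-$, so that $\gl(V_-)$-highest weight vectors are produced inside $V_-^{\otimes r}$ in exactly the same manner.
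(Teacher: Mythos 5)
Your proof is correct and follows essentially the same route as the paper: decompose $L_\mu\simeq L_{(c,\dots,c)}\otimes L_\lambda$ using the observation of Remark~\ref{rmk:non-super} (equivalently Section~\ref{sect:pf-fd}), invoke Theorem~\ref{thm:unit-tensor} for the simple tensor factor, and Theorem~\ref{thm:uni-s-s} for the tensor product. One small slip in your closing parenthetical: in the paper's conventions (Lemma~\ref{lem:a-symm}, Section~\ref{sect:hwv-0}) it is $\Sigma^-$, not $\Sigma^+$, that annihilates tensors with a repeated $V_-$-factor and hence yields the $\gl(V_-)$-highest weight vectors in $V_-^{\otimes r}$, while $\Sigma^+$ plays that role for $V_+$; the needed $V_-$ analogue of Remark~\ref{rmk:non-super} is in any case already established there, so this does not affect the validity of your argument.
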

\begin{proof}
We have seen in Section \ref{sect:pf-fd} that if $V=V_+$ or $V_-$, every finite dimensional simple $\gl(V)$-module 
is the tensor product of a $1$-dimensional module with a simple tensor module. 
Thus Theorem \ref{thm:unit-tensor} and Lemma \ref{lem:unit-fd}  imply the corollary.
\end{proof}

\begin{remark}
Unitarisability of tensor modules of the general linear Lie superalgebra was proved in \cite{GZ}. 
Theorem \ref{thm:unit-tensor} generalises this fact to general linear Lie $(\Gamma, \omega)$-superalgebras.
\end{remark}

We find it quite amazing that $V^{\ot r}$, for all $r$, 
are untitarisable even when the commutative factor is genuinely complex valued, 
as the $\gl(V)$-action on $V^{\ot r}$ involves the commutative factor in a complicated way. 
For example, if $v_i$ (with $1\le i\le r$) are homogeneous elements of $V$ of degrees $\zeta_i$ respectively,  
then for any  $X\in \gl(V)_\alpha$, 
\[
X(v_1\ot v_2\ot\dots\ot v_r) = \sum_{j=1}^r \omega(\alpha, \zeta^{(j)}) v_1\ot\dots \ot v_{j-1}\ot X(v_j)\ot v_{j+1}\ot\dots \ot v_r, 
\]
where $\zeta^{(j)}= \sum_{i=1}^{j-1}\zeta_i$ for $1\le j\le r$.

\subsubsection{Classification theorem}\label{sect:fd-unit}

\begin{theorem}\label{thm:class-unit}
A simple highest weight $\gl(V)$-module $L_\lambda$ is unitarisable for the type I compact $\ast$-structure 
if and only if 
\begin{enumerate}
\item $\lambda\in\Lambda_{M_+|M_-}\cap \fh^*_\R$; and 
\item one of the following is true  (in the notation of Section \ref{sect:typical}).
\begin{enumerate}
\item either $L_\lambda$ is typical, and 
\beq\label{eq:tp-ast}
(\lambda+\rho, \varepsilon_{M_+}-\varepsilon_{\ol{M_-}})>0; 
\eeq
\item or $L_\lambda$ is atypical, 
and there exists $r\le M_-$ such that 
\beq\label{eq:atp-ast}
(\lambda+\rho, \varepsilon_{M_+}-\varepsilon_{\ol{r}})=0, \quad (\lambda, \varepsilon_{\ol{r}} - \varepsilon_{\ol{M_-}})=0.
\eeq
\end{enumerate}
\end{enumerate}
\end{theorem}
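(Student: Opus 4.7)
The plan is to reduce the analysis to the generalised Verma module $V(L^0_\lambda(\fk))$ and its unique contravariant sesquilinear form, and then to establish positivity by a level-by-level analysis anchored at the top $\Z$-graded component.

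\textbf{Step 1 (Necessity of condition (1)).} By Lemma \ref{lem:unit-fd}, any unitarisable simple highest weight module $L_\lambda$ has $\lambda\in\Lambda_{M_+|M_-}\cap\fh^*_\R$. From now on I will assume this throughout.

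\textbf{Step 2 (Unitarisation of the $\fk$-piece).} Decompose $L^0_\lambda(\fk)\simeq L^+_{\lambda^+}\ot L^-_{\lambda^-}$, each factor being a finite dimensional simple module over the Lie colour algebra $\gl(V_\pm)$ with real highest weight. By Corollary \ref{cor:non-super-u}, each $L^\pm_{\lambda^\pm}$ is unitarisable for the (unique) compact $\ast$-structure on $\gl(V_\pm)$, so that Theorem \ref{thm:uni-s-s} produces a positive definite $\fk$-contravariant sesquilinear form on $L^0_\lambda(\fk)$, normalised by $\langle v^+,v^+\rangle=1$ on the highest weight vector.

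\textbf{Step 3 (Lifting to $V(L^0_\lambda(\fk))$).} Extend this form uniquely to a contravariant sesquilinear form $\langle\cdot,\cdot\rangle$ on $V(L^0_\lambda(\fk))=\U(\ol\fv)\ot L^0_\lambda(\fk)$ by the rule $\langle \ol X\cdot u, w\rangle=\langle u, \ast(\ol X)\cdot w\rangle$. Since $\ast$ swaps $\fv$ and $\ol\fv$ and $\fv$ annihilates $L^0_\lambda(\fk)$, the $\Z$-graded components of \eqref{eq:Z-grad} are pairwise orthogonal. Its radical is a $\gl(V)$-submodule, and in every case it coincides with the unique maximal proper submodule of $V(L^0_\lambda(\fk))$ precisely when the induced form on $L_\lambda$ is nondegenerate; the question is whether it is positive definite.

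\textbf{Step 4 (Key computation at the top level).} In the top component $V(L^0_\lambda(\fk))_{-d_{max}}=\C\ol\BE\ot L^0_\lambda(\fk)$, using $\ast(\BE_{\ol r, i})=\BE_{i,\ol r}$ and the order reversal of $\ast$, one checks $\ast(\ol\BE)=\BE$. Therefore
\[
\langle \ol\BE v^+, \ol\BE v^+\rangle=\langle v^+,\BE\ol\BE v^+\rangle=\chi(\lambda),
\]
with $\chi(\lambda)=\prod_{\Upsilon\in\Phi^+_1}(\lambda+\rho,\Upsilon)$ by \eqref{eq:chi}. Using \eqref{eq:rho-ir} and $\lambda\in\Lambda_{M_+|M_-}$, the factors
\[
(\lambda+\rho,\varepsilon_i-\varepsilon_{\ol r})=\lambda_i+\lambda_{\ol r}+M_+-i-r+1
\]
are monotone decreasing in $i$ and $r$, so their minimum occurs at $(i,r)=(M_+,M_-)$.

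\textbf{Step 5 (Typical case).} If $L_\lambda$ is typical, $L_\lambda=V(L^0_\lambda(\fk))$ by definition and $\chi(\lambda)\ne 0$. Positivity at the top component forces each of the $M_+M_-$ factors to be strictly positive, which by monotonicity is equivalent to \eqref{eq:tp-ast}. Conversely, assuming \eqref{eq:tp-ast}, I would use the $\fk$-module isomorphism $\U(\ol\fv)\simeq\bigwedge^\bullet_\omega\ol\fv$ from the PBW theorem (Theorem \ref{thm:PBW}) to decompose $V(L^0_\lambda(\fk))$ as $\fk$-module and compute the form on each $V(L^0_\lambda(\fk))_{-d}$ by a downward induction on $d$; the $\fv\cdot\ol\fv$-action (controlled by commutation formulas generalising \eqref{eq:BE-3} and the argument in the proof of Lemma \ref{lem:typical}) produces the necessary positivity factors from $(\lambda+\rho,\Upsilon)$ for $\Upsilon\in\Phi^+_1$.

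\textbf{Step 6 (Atypical case).} If $L_\lambda$ is atypical, $\chi(\lambda)=0$, so $\ol\BE v^+$ lies in the radical and hence in the maximal proper submodule $N$. The condition $(\lambda+\rho,\varepsilon_{M_+}-\varepsilon_{\ol r})=0$ in \eqref{eq:atp-ast} pins down the smallest $r$ at which vanishing occurs, while $(\lambda,\varepsilon_{\ol r}-\varepsilon_{\ol{M_-}})=0$ means $\lambda_{\ol r}=\lambda_{\ol{r+1}}=\cdots=\lambda_{\ol{M_-}}$. Combining with the monotonicity of Step 4, this is equivalent to the pattern
\[
(\lambda+\rho,\varepsilon_{M_+}-\varepsilon_{\ol s})=0\ (r\le s\le M_-),\qquad
(\lambda+\rho,\varepsilon_i-\varepsilon_{\ol s})>0\ (i<M_+\text{ or }s<r),
\]
i.e.\ the vanishing factors form precisely one ``atypical column'' at $i=M_+$. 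The plan is then to show that the factors indexed by the vanishing pairs $(M_+,\ol s)$ are exactly what is needed to identify the radical of the form on $V(L^0_\lambda(\fk))$ with $N$: every $(M_+,\ol s)$ with $r\le s\le M_-$ produces a null singular vector, and together these generate $N$ over $\U(\gl(V))$. The surviving ``strictly positive'' factors then yield a positive definite form on the quotient $L_\lambda$, by the same level-by-level induction as in the typical case.

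\textbf{Main obstacle.} The hard step is Step 6. While it is straightforward to exhibit null vectors from the vanishing factors, showing that the radical exhausts the maximal submodule $N$ and, equivalently, that the induced form on $L_\lambda$ is strictly positive definite requires explicit control of singular vectors of $V(L^0_\lambda(\fk))$ corresponding to odd reflections at the atypical roots $\varepsilon_{M_+}-\varepsilon_{\ol s}$. This is parallel to the analysis of Kac modules for type I Lie superalgebras and is expected to go through by adapting the generalised commutation identities \eqref{eq:BE-0}--\eqref{eq:BE-3} together with an inductive argument on the depth in the $\Z$-grading of $V(L^0_\lambda(\fk))$.
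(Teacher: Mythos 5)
Your proposal has the right skeleton for the necessity direction but leaves the sufficiency direction essentially unproved, and the route you choose for it is not the one that works in this paper. Two concrete problems. First, in Step 5 you claim that positivity of the top-level norm $\langle \ol\BE v^+,\ol\BE v^+\rangle=\chi(\lambda)=\prod_{\Upsilon\in\Phi^+_1}(\lambda+\rho,\Upsilon)$ ``forces each of the $M_+M_-$ factors to be strictly positive''. It does not: positivity of a product does not give positivity of each factor. The necessity of \eqref{eq:tp-ast} comes from the norms of the \emph{intermediate} vectors $v_s=\BE_{\ol s,M_+}\cdots\BE_{\ol 1,M_+}v^+$, which satisfy the recursion $\langle v_s,v_s\rangle=(\lambda+\rho,\varepsilon_{M_+}-\varepsilon_{\ol s})\langle v_{s-1},v_{s-1}\rangle$; it is the positivity of every partial product that forces each factor to be nonnegative. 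The same chain of vectors is also what produces the second atypicality condition $(\lambda,\varepsilon_{\ol r}-\varepsilon_{\ol{M_-}})=0$, via $0\le\langle\BE_{\ol s,M_+}v_{r-1},\BE_{\ol s,M_+}v_{r-1}\rangle=(\lambda,\varepsilon_{\ol r}-\varepsilon_{\ol s})\langle v_{r-1},v_{r-1}\rangle$ combined with $(\lambda,\varepsilon_{\ol r}-\varepsilon_{\ol s})\le 0$ from dominance; in your Step 6 this condition is merely restated, not derived from unitarisability.

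Second, and more seriously, your sufficiency argument (Steps 5--6) amounts to computing the Shapovalov-type form on every $\Z$-graded layer of $V(L^0_\lambda(\fk))$ and identifying its radical with the maximal submodule via ``odd reflections'' --- a step you yourself flag as the main obstacle. The paper avoids this entirely. The key idea you are missing is the reduction to tensor modules: writing $\lambda=t\SE+\mathring\lambda$ with $L_{t\SE}$ one-dimensional, condition \eqref{eq:atp-ast} forces $\mathring\lambda=\mu^\sharp$ for a hook partition $\mu$, so $L_{\mathring\lambda}$ is a simple tensor module and is unitarisable by Theorem \ref{thm:unit-tensor} --- no singular-vector analysis is needed. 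In the typical case one writes $\mathring\lambda=\lceil\mathring\lambda\rceil-a\SE_+$ with $a\in[0,1)$ and $\lceil\mathring\lambda\rceil$ a tensor weight, and the continuous deformation is handled by the Casimir criterion of Lemma \ref{lem:casimir} together with the weight-positivity Lemma \ref{lem:wt-crit} (this is Lemma \ref{lem:key-tp}). Without these inputs your plan does not close; with them, the Kac-module radical computation you propose becomes unnecessary.
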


\begin{proof}
Recall from Lemma \eqref{lem:unit-fd} that if a simple $\gl(V)$-module $L_\lambda$ is unitarisable 
for any one of the compact $\ast$-structures, then $\lambda\in\Lambda_{M_+|M_-}\cap \fh^*_\R$. 
In particular, $\dim L_\lambda<\infty$.
Observe the following fact. Since $\lambda\in\Lambda_{M_+|M_-}\cap \fh^*_\R$,  
for $1\le j< M_+$ and $1\le s<M_-$, 
\beq\label{eq:rho-order}
(\lambda+\rho, \varepsilon_{j} - \varepsilon_{\ol{s}}) >(\lambda+\rho, \varepsilon_{M_+} - \varepsilon_{\ol{s}})> (\lambda+\rho, \varepsilon_{M_+} - \varepsilon_{\ol{M_-}}).
\eeq

Assume unitarisability of $L_\lambda$ for the compact $\ast$-structure of type I. 
Then the sesquilinear form \eqref{eq:hw-form} for $L_\lambda$ is contravariant  and  positive definite. 
Consider the following vectors
$
v_s:= \BE_{ \ol{s}, M_+} \BE_{  \ol{s}-1, M_+}\dots \BE_{ \ol{1}, M_+}v^+
$
 for $s=1, 2, \dots, M_-$. 
Note that  $\BE_{ M_+, \ol{t}} v_s=0$ for all $t>s$. Hence 
\[
\baln
\langle v_s, v_s\rangle 
&= \langle v_{s-1},   \BE_{ M_+, \ol{s}} \BE_{ \ol{s}, M_+} v_{s-1}\rangle
=\langle v_{s-1}, (\BE_{M_+, M_+}+ \BE_{\ol{s}\,  \ol{s}}) v_{s-1}\rangle\\
&= ((\lambda, \varepsilon_{M_+} - \varepsilon_{\ol{s}}) -(s-1))  \langle v_{s-1},  v_{s-1}\rangle\\
\ealn
\]
Since $(\rho, \varepsilon_{M_+} - \varepsilon_{\ol{s}})=-s+1$ by \eqref{eq:rho-ir}, we obtain
\beq\label{eq:ind-vs}
\langle v_s, v_s\rangle  = (\lambda+\rho, \varepsilon_{M_+} - \varepsilon_{\ol{s}}) \langle v_{s-1}, v_{s-1}\rangle.
\eeq

If $L_\lambda$ is typical, 
\eqref{eq:ind-vs} requires $(\lambda+\rho, \varepsilon_{M_+} - \varepsilon_{\ol{M_-}})>0$. 
This is a necessary and sufficient condition for $\lambda\in\Lambda_{M_+|M_-}\cap \fh^*_\R$ to be typical because of \eqref{eq:rho-order}.

If $L_\lambda$ is atypical, there must exist an $r$ such that $(\lambda+\rho, \varepsilon_{M_+} - \varepsilon_{\ol{r}})=0$, 
as otherwise $L_\lambda$ would be typical.  Now
$
\langle v_{r-1}, v_{r-1}\rangle = \prod_{s=1}^{r-1}(\lambda+\rho, \varepsilon_{M_+} - \varepsilon_{\ol{s}})>0.
$
Thus for all $s>r$, we have  
\[
\baln
0&\le \langle \BE_{\ol{s}, M_+}v_{r-1}, \BE_{\ol{s}, M_+} v_{r-1}\rangle/\langle v_{r-1}, v_{r-1}\rangle  \\
&= (\lambda, \varepsilon_{M_+} - \varepsilon_{\ol{s}}) -r+1\\
&= (\lambda+\rho, \varepsilon_{M_+} - \varepsilon_{\ol{r}}) + (\lambda, \varepsilon_{\ol{r}}-\varepsilon_{\ol{s}})\\
&=(\lambda, \varepsilon_{\ol{r}}-\varepsilon_{\ol{s}}). 
\ealn
\]
But $(\lambda, \varepsilon_{\ol{r}}-\varepsilon_{\ol{s}})\le 0$ since $\lambda\in\Lambda_{M_+|M_-}$. Hence 
$(\lambda, \varepsilon_{\ol{r}}-\varepsilon_{\ol{s}})=0$ for all $s\le r$, which is equivalent to  
$(\lambda, \varepsilon_{\ol{r}} - \varepsilon_{\ol{M_-}})=0$ for $\lambda\in\Lambda_{M_+|M_-}$. 

This proves the necessity of the conditions. 

We now prove sufficiency. 
Denote $\SE_+=\sum_{1\le i\le M_+} \varepsilon_i$, $\SE_-=\sum_{1\le s\le M_-} \varepsilon_ {\ol{s}}$, 
and $\SE=\SE_+-\SE_-$.  For $\lambda\in \Lambda_{M_+|M_-}\cap \fh^*_\R$, let 
\beq\label{eq:trans-wt}
{\mathring\lambda}= \lambda + t\SE, \quad 
t=\left\{
\begin{array}{l l} 
\lambda_{\ol{M_-}}, &\text{if \eqref{eq:tp-ast} is satisfied}, \\
\lambda_{\ol{r}},  &\text{if \eqref{eq:atp-ast} is satisfied},
\end{array}
\right.
 \eeq
 and write ${\mathring\lambda}=({\mathring\lambda}_1,  \dots, {\mathring\lambda}_{M_+}, 
{\mathring\lambda}_{\ol{1}}, \dots, {\mathring\lambda}_{\ol{M_-}})$. 
Then $L_\lambda=L_{t\SE}\ot L_{\mathring\lambda}$. 
Since $L_{t\SE}$ is $1$-dimensional with real weight $t\SE$, it is unitarisable. 
Thus we only need to show that $L_{\mathring\lambda}$ is untarisable. 

Let us first treat case (b), which is much easier. 

\noindent\underline{Case (b)}: In this case,  \eqref{eq:atp-ast} is satisfied,  and we have
\[
\baln
&{\mathring\lambda}_i=\lambda_i+ \lambda_{\ol{r}},  \quad 1\le i\le M_+, \\ 
&{\mathring\lambda}_{\ol{s}}=\lambda_{\ol{s}}-\lambda_{\ol{r}}, \quad1\le  s\le M_-,   \ \text{with} \\
&{\mathring\lambda}_{M_+}= r-1, \quad {\mathring\lambda}_{\ol{s}}=0, \ s\ge r.
\ealn
\]
Hence there exists some partition $\mu\in P_{M_+|M_-}$ such that ${\mathring\lambda}=\mu^\sharp$. 
Thus $L_{{\mathring\lambda}}$ is a simple tensor module, which  is  unitarisable for the type I compact $\ast$-structure by Theorem \ref{thm:unit-tensor}.

\noindent\underline{Case  (a)}: In this case, \eqref{eq:tp-ast} is satisfied, and we have 
\[
\baln
{\mathring\lambda}_i&=\lambda_i +   \lambda_{\ol{M_-}}, \  i=1, 2, \dots, M_+, \\
{\mathring\lambda}_{\ol{r}}&=\lambda_{\ol{r}} -\lambda_{\ol{M_-}}, \ r=1, 2, \dots, M_-.
\ealn
\]
Clearly \eqref{eq:tp-ast} is valid for ${\mathring\lambda}$, i.e.,
\beq\label{eq:rlam-tp}
({\mathring\lambda}+\rho, \varepsilon_{M_+}-\varepsilon_{\ol{M_-}})>0. 
\eeq

Note that ${\mathring\lambda}_{\ol{r}}\in\Z_+$ for all $r$, and  ${\mathring\lambda}_{i}$, for $i\le M_+$, are either all integers or all non-integers.  There exists $a\in [0, 1)$ such that 
\[
{\mathring\lambda}_i = \lceil{\mathring\lambda}_i\rceil - a, \quad {\mathring\lambda}_{\ol{r}} = \lceil{\mathring\lambda}_{\ol{r}}\rceil, \quad \forall i, r, 
\]
where $\lceil{x}\rceil $ is the ceiling function. 
Denote by 
$\lceil{\mathring\lambda}\rceil$ the weight with components $\lceil{\mathring\lambda}_i\rceil$ 
and ${\mathring\lambda}_{\ol{r}}$ for $i=1, \dots M_+$ and $r=1, \dots, M_-$. 
Then $\mathring\lambda=\lceil{\mathring\lambda}\rceil- a \SE_+$.
Clearly  $\lceil{\mathring\lambda}_{\ol{M_-}}\rceil=0$ 
and $\lceil{\mathring\lambda}_{M_+}\rceil\ge M_-$ by  \eqref{eq:tp-ast}. Thus 
there exists $\mu\in P_{M_+|M_-}$ 
such that ${\mathring\lambda}=\mu^\sharp$. 
Hence $L_{\lceil{\mathring\lambda}\rceil}$ is unitarisable for the compact $\ast$-structure of type I. 

If $a=0$, then $L_{\mathring\lambda}=L_{\lceil{\mathring\lambda}\rceil}$, which
is already shown to be unitarisable. 
If $0< a< 1$, the unitarisability of $L_{\mathring\lambda}$ follows from Lemma \ref{lem:key-tp} below
(which is a more general result implying unitarisability of $L_{\mathring\lambda}$  for all $a\in [0, 1)$). 

This proves the theorem,  given Lemma \ref{lem:key-tp}. 
\end{proof}

\begin{remark}
The proof of the typical case of Theorem \ref{thm:class-unit} shows that 
any simple tensor module $L_{\mu^\sharp}$,  with $\mu$ belonging to the set
\beq\label{eq:flex}
P^{(flex)}_{M_+|M_-}=\{\mu\in P_{M+|M_-}\mid \mu_{M_+}\ge M_- > \mu_{M_++1}\},  
\eeq
can be deformed while maintaining unitarisability. 
\end{remark}

Part (1) of the following result can be extracted from the proof of Theorem \ref{thm:class-unit}.
It gives a more explicit description of the simple unitarisable $\gl(V)$-modules for the compact $\ast$-structures. 
\begin{scholium}\label{sch:unit}
Let $\Lambda_{M_+|M_-}^{(u)}=\Lambda_{M_+|M_-}^{(u, 0)}\cup \Lambda_{M_+|M_-}^{(u, 1)}$ with
\[
\baln
\Lambda_{M_+|M_-}^{(u, 0)}&=\{a\SE + \mu^\sharp \mid a\in\R, \mu\in P_{M+|M_-}\},  \\
\Lambda_{M_+|M_-}^{(u, 1)}&=\{a\SE   + \mu^\sharp -b \SE_+ \mid a\in\R, b\in(0, 1), \mu\in P^{(flex)}_{M_+|M_-}\},
\ealn
\]
where the set $P^{(flex)}_{M_+|M_-}$ is defined by \eqref{eq:flex}. 
A simple $\gl(V)$-module $L_\lambda$ with  highest weight $\lambda$ is unitarisable 
\begin{enumerate}
\item for the compact $\ast$-structure of type I if and only if $\lambda\in \Lambda_{M_+|M_-}^{(u)}$; 
\item for the compact $\ast$-structure of  type II if and only if $L_\lambda=L_\mu^*$ with $\mu\in \Lambda_{M_+|M_-}^{(u)}$. 
\end{enumerate}
\end{scholium}
Note that part (2) follows from part (1) and 
the general fact proved in Lemma \ref{lem:unit-dual} about duals of unitarisable modules.

\subsection{Completing the proof of the classification theorem}
We prove Lemma \ref{lem:key-tp},  which implies Theorem \ref{thm:class-unit} for typical $L_\lambda$.
All results in this section, including their proofs,  are generalisations to the present context of those on 
the general linear Lie superalgebra proved in \cite[\S III]{GZ}. 

We maintain notation of the previous section.

\subsubsection{A unitarisability criterion in terms of the Casimir operator}
Recall the basis elements $E(\alpha, \beta)_{i j}$ of $\gl(V)$ introduced in Section \ref{sect:roots}. We regard $\gl(V)$
as embedded in its universal enveloping algebra $\U(\gl(V))$, and hence $E(\alpha, \beta)_{i j}\in \U(\gl(V))$.  
\begin{lemma}
The following element, called the quadratic Casimir operator,   
\beq
\Omega = \sum_{\alpha, \beta\in\Gamma_R}\sum_{i=1}^{m_\alpha} \sum_{j=1}^{m_\beta}  \omega(\beta, \beta) 
E(\alpha, \beta)_{i j}  E(\beta, \alpha)_{j i}, 
\eeq
belongs to the centre of $\U(\gl(V))$, i.e., 
$x\Omega=\Omega x$ for all $x\in \U(\gl(V))$. 
It acts on the simple $\gl(V)$-module with highest weight $\lambda$ by multiplication by the scalar $ (\lambda+2\rho, \lambda)$. 
\end{lemma}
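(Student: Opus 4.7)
The plan is to establish centrality of $\Omega$ via the $ad$-invariance of the bilinear form from Lemma~\ref{lem:forms}, then compute its eigenvalue on $L_\lambda$ by applying $\Omega$ to the highest weight vector. By \eqref{eq:form-basis} the element dual to $E(\alpha,\beta)_{ij}$ under $(\cdot,\cdot)$ is $\omega(\beta,\beta)E(\beta,\alpha)_{ji}$, so $\Omega=\sum_a X_a X^a$ where $\{X_a\}$ is the matrix-unit basis of $\gl(V)$ and $\{X^a\}$ its dual basis, with $d(X^a)=-d(X_a)$ throughout.

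For any $Y\in\gl(V)$, the graded Leibniz rule yields
\[
[Y,\Omega]=\sum_a\bigl([Y,X_a]X^a+\omega(d(Y),d(X_a))\,X_a[Y,X^a]\bigr).
\]
Writing $[Y,X_a]=\sum_p\lambda_{a\to p}X_p$ and $[Y,X^p]=\sum_q\mu_{p\to q}X^q$, I would apply $ad$-invariance $([X,Y],Z)=(X,[Y,Z])$, graded skew-symmetry of the bracket, and the $\omega$-symmetry of $(\cdot,\cdot)$ to derive
\[
\mu_{p\to a}=-\omega(d(X_a),d(X_a))\,\omega(d(X_a),d(Y))\,\omega(d(X_p),d(X_p))\,\lambda_{a\to p}.
\]
Substituting and relabelling so that monomials match $X_pX^a$, the combined coefficient becomes $\lambda_{a\to p}\bigl(1-\omega(\gamma,\pi)\omega(\alpha,\alpha)\omega(\alpha,\gamma)\omega(\pi,\pi)\bigr)$ with $\gamma=d(Y)$, $\alpha=d(X_a)$, $\pi=d(X_p)=\gamma+\alpha$ on the support. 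The bracketed factor equals $0$ because bimultiplicativity of $\omega$ gives $\omega(\gamma,\pi)=\omega(\gamma,\gamma)\omega(\gamma,\alpha)$ and $\omega(\pi,\pi)=\omega(\gamma,\gamma)\omega(\gamma,\alpha)\omega(\alpha,\gamma)\omega(\alpha,\alpha)$, after which $\omega(\gamma,\alpha)\omega(\alpha,\gamma)=1$ and $\omega(\mu,\mu)^2=1$ collapse the product to $1$. Hence $[Y,\Omega]=0$ for all $Y\in\gl(V)$, and since $\gl(V)$ generates $\U(\gl(V))$ and $\Omega$ is of degree $0$, $\Omega$ is central.

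Simplicity of $L_\lambda$ together with centrality of $\Omega$ forces $\Omega$ to act by a scalar $c_\lambda$: the degree-$0$ operator $\Omega$ preserves the one-dimensional $\lambda$-weight line $\C v_\lambda^+$, hence multiplies it by some $c_\lambda$, and because $L_\lambda=\U\,v_\lambda^+$ and $\Omega$ is central, it acts as $c_\lambda$ everywhere. I would compute $c_\lambda$ from $\Omega v_\lambda^+$ by splitting the defining sum into three classes. The Cartan terms ($\alpha=\beta$, $i=j$) contribute $\sum_{\alpha,i}\omega(\alpha,\alpha)\lambda(\alpha)_i^2=(\lambda,\lambda)$ using \eqref{eq:basis-prod}. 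Terms with $\varepsilon(\beta)_j-\varepsilon(\alpha)_i\in\Phi^+$ contribute zero, as $E(\beta,\alpha)_{ji}$ is then a positive root vector annihilating $v_\lambda^+$. For the remaining terms with $\Upsilon:=\varepsilon(\alpha)_i-\varepsilon(\beta)_j\in\Phi^+$, use \eqref{eq:CR} together with $\omega(\alpha-\beta,\beta-\alpha)=\omega(\alpha,\alpha)\omega(\beta,\beta)$ to write $E(\alpha,\beta)_{ij}E(\beta,\alpha)_{ji}\,v_\lambda^+=[E(\alpha,\beta)_{ij},E(\beta,\alpha)_{ji}]\,v_\lambda^+$ (the reordered product vanishes because $E(\alpha,\beta)_{ij}$ is now positive). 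The commutator contributes $\omega(\beta,\beta)\bigl(\lambda(\alpha)_i-\omega(\alpha,\alpha)\omega(\beta,\beta)\lambda(\beta)_j\bigr)$, which upon applying \eqref{eq:basis-prod} rewrites as $\chi(\Upsilon)(\lambda,\Upsilon)$ per positive root. Summing over $\Phi^+$ and using \eqref{eq:rho} gives $c_\lambda=(\lambda,\lambda)+2(\lambda,\rho)=(\lambda+2\rho,\lambda)$.

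The hard part will be the centrality argument: the $\omega$-bookkeeping coming from the graded Leibniz rule, the $\omega$-symmetric pairing, and the graded $ad$-invariance must combine into the single cancellation identity above, and it is easy to lose or misorient a factor along the way. Once centrality is secured, the eigenvalue computation is essentially routine modulo the three-way split. A direct alternative would be to verify $[E(\gamma,\delta)_{k\ell},\Omega]=0$ on generators using \eqref{eq:CR}, but this involves similarly heavy $\omega$-bookkeeping and does not sidestep the main difficulty.
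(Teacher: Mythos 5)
Your proposal is correct, and both halves check out against the structures the paper provides. The eigenvalue computation is exactly the ``usual computation'' the paper invokes without detail: the three-way split is exhaustive, the Cartan terms give $(\lambda,\lambda)$ via \eqref{eq:basis-prod}, and in the third class the identity $\omega(\alpha-\beta,\beta-\alpha)=\omega(\alpha,\alpha)\omega(\beta,\beta)=\chi(\Upsilon)$ makes the commutator contribution $\omega(\beta,\beta)\lambda(\alpha)_i-\omega(\alpha,\alpha)\lambda(\beta)_j=\chi(\Upsilon)(\lambda,\Upsilon)$, summing to $2(\rho,\lambda)$ by \eqref{eq:rho}. For centrality the paper merely asserts a ``direct calculation'' of $[E(\gamma,\delta)_{k\ell},\Omega]=0$ from \eqref{eq:CR}, whereas you run the standard dual-basis argument through the $ad$-invariant form of Lemma~\ref{lem:forms}; I verified your key relation $\mu_{p\to a}=-\omega(\alpha,\alpha)\,\omega(\alpha,\gamma)\,\omega(\pi,\pi)\,\lambda_{a\to p}$ (it follows from $\lambda_{a\to p}=\omega(\pi,\pi)([Y,X_a],X^p)$, $\mu_{p\to a}=\omega(\alpha,\alpha)(X_a,[Y,X^p])$, $ad$-invariance, and skew $\omega$-symmetry), and the bracketed factor does collapse to $1-1=0$ using bimultiplicativity and $\omega(\gamma,\alpha)\omega(\alpha,\gamma)=1$. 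Your route buys a cancellation that is uniform in the generator $Y$ and makes the $\omega$-bookkeeping conceptual rather than case-by-case; the paper's generator-by-generator check is more elementary but no shorter. One cosmetic caution: your ``dual basis'' satisfies $(X^a,X_a)=1$ but $(X_a,X^a)=\omega(d(X_a),d(X_a))$, so the order of arguments in the pairing matters when you extract $\lambda_{a\to p}$ and $\mu_{p\to a}$ — as written your signs are consistent, but this is exactly the spot where a factor is easiest to lose.
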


We can easily verify that 
$E(\alpha, \beta)_{i j} \Omega=\Omega  E(\alpha, \beta)_{i j}$ for all $\alpha, \beta,  i, j$,  by direct calculations.
If $v^+_\lambda\in L_\lambda$ is a highest weight vector, 
the usual computation in the Lie algebra case generalises to  the present case, yielding
$
\Omega v^+_\lambda = (\lambda+2\rho, \lambda) v^+_\lambda. 
$

Let 
\[
\baln
\Omega_0^\pm &=\pm \sum_{\alpha, \beta\in\Gamma_R^\pm}\sum_{i=1}^{m_\alpha} \sum_{j=1}^{m_\beta}  
E(\alpha, \beta)_{i j}  E(\beta, \alpha)_{j i}, \quad \\ 
\Omega_1^+ &=  \sum_{\alpha\in\Gamma_R^-}\sum_{\beta\in\Gamma_R^+}\sum_{i=1}^{m_\alpha} \sum_{j=1}^{m_\beta}  
E(\alpha, \beta)_{i j}  E(\beta, \alpha)_{j i}, \\
\Omega_1^- &=  \sum_{\alpha\in\Gamma_R^+} \sum_{\beta\in\Gamma_R^-}\sum_{i=1}^{m_\alpha} \sum_{j=1}^{m_\beta}  
E(\alpha, \beta)_{i j}  E(\beta, \alpha)_{j i},  
\ealn
\]
which all commute with the subalgebra $\gl(V_+)+\gl(V_-)$.  In fact, $\pm \Omega_0^\pm$ are the quadratic Casimir operators of 
$\gl(V_\pm)$ respectively. 
We have 
\[
\Omega_1^+ + \Omega_1^-=2\hat\rho_1, \quad  
\hat\rho_1= \frac{1}{2}\sum_{\alpha\in\Gamma_R^+} \sum_{\beta\in\Gamma_R^-} 
\sum_{i=1}^{m_\alpha} \sum_{j=1}^{m_\beta}(E(\alpha, \alpha)_{i i} + E(\beta, \beta)_{j j}), 
\]
where $\hat\rho_1$ satisfies $\mu(\hat\rho_1)=(\mu, \rho_1)$ for all $\mu\in\fh^*$. We have 
\[
\baln
\Omega&=\Omega_0^+ + \Omega_0^- + \Omega_1^+ - \Omega_1^-
= \Omega_0^+ + \Omega_0^- - 2\hat\rho_1 + 2 \Omega_1^+. 
\ealn
\]

%
%
%
Now we develop a criterion for unitarisability using the Casimir operator. 

A weight $\mu$ is called a $\fk$-highest weight 
of $L_\lambda$ if there exists a $\fk$-highest weight vector $v\in L_\lambda$ of weight $\mu$. 
[Highest weight vectors are non-zero.]
Denote by $\Pi^+(\lambda)$ the set of $\fk$-highest weights of $L_\lambda$.

The following result is a direct generalisation of \cite[Proposition 3]{GZ}.  
\begin{lemma}\label{lem:casimir}
A simple $\gl(V)$-module $L_\lambda$ 
is unitarisable for the type I compact $\ast$-structure
if and only if  $\lambda\in\Lambda_{M_+|M_-}\cap\fh^*_\R$ and
$
(\lambda+2\rho, \lambda)  - (\mu+2\rho, \mu)>0 
$
for all  $\mu\in\Pi^+(\lambda)\backslash\{\lambda\}$. 
\end{lemma}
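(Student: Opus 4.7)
My plan is to extract both implications from a single Casimir identity: for every $\fk$-highest weight vector $v_\mu\in L_\lambda$ of weight $\mu$,
\[
\tfrac12\bigl((\lambda+2\rho,\lambda)-(\mu+2\rho,\mu)\bigr)\,\langle v_\mu,v_\mu\rangle \;=\; \sum_{E}\|E v_\mu\|^2,
\]
where the sum runs over a homogeneous basis of $\fv$. To derive this I would combine three ingredients. Centrality of $\Omega$ forces $\Omega v=(\lambda+2\rho,\lambda)v$ on all of the simple module $L_\lambda$; the element $\Omega_0=\Omega_0^++\Omega_0^-$ is central in $\U(\fk)$ and, with the $\omega$-symmetric form of Lemma~\ref{lem:forms}, acts on the $\fk$-highest weight vector $v_\mu$ by the scalar $(\mu,\mu+2\rho_0)$; and $\hat\rho_1$ acts on $v_\mu$ by $(\mu,\rho_1)$. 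Plugging these into the decomposition $\Omega=\Omega_0-2\hat\rho_1+2\Omega_1^+$ displayed just before the lemma and using $\rho=\rho_0-\rho_1$ yields $2\Omega_1^+ v_\mu=((\lambda+2\rho,\lambda)-(\mu+2\rho,\mu))v_\mu$. Finally, under the type~I compact $\ast$-structure $\ast(E(\alpha,\beta)_{ij})=E(\beta,\alpha)_{ji}$, so each summand of $\Omega_1^+=\sum E(\alpha,\beta)_{ij}E(\beta,\alpha)_{ji}$ pairs an element of $\ol\fv$ with its $\ast$-conjugate in $\fv$; contravariance of $\langle\cdot,\cdot\rangle$ then rewrites $\langle v_\mu,\Omega_1^+ v_\mu\rangle$ as the sum of squared norms on the right.

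The necessity direction is then immediate. Assuming unitarisability, Lemma~\ref{lem:unit-fd} gives $\lambda\in\Lambda_{M_+|M_-}\cap\fh^*_\R$. For $\mu\in\Pi^+(\lambda)\setminus\{\lambda\}$, pick a $\fk$-highest weight vector $v_\mu$; since $\mu\neq\lambda$, $v_\mu$ cannot be a $\gl(V)$-highest weight vector of the simple module $L_\lambda$, so some $Ev_\mu\neq 0$ with $E\in\fv$. Positive definiteness yields $\|v_\mu\|^2>0$ and $\|Ev_\mu\|^2\ge 0$ for every basis element, with strict inequality for at least one summand, so the right-hand side of the core identity is strictly positive and the coefficient $(\lambda+2\rho,\lambda)-(\mu+2\rho,\mu)$ must be strictly positive as well.

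For sufficiency, $L_\lambda$ is finite dimensional by Theorem~\ref{thm:P} since $\lambda\in\Lambda_{M_+|M_-}$, and I would equip it with the unique contravariant Hermitian form $\langle\cdot,\cdot\rangle$ normalised by $\langle v^+_\lambda,v^+_\lambda\rangle=1$. Restricting to $\fk=\gl(V_+)+\gl(V_-)$, a Lie colour algebra, the finite-dimensional module $L_\lambda$ is $\fk$-semisimple, and every simple $\fk$-constituent has a real $\fk$-dominant integral highest weight and is therefore $\fk$-unitarisable by Corollary~\ref{cor:non-super-u}. Writing the isotypic decomposition $L_\lambda=\bigoplus_{\mu\in\Pi^+(\lambda)}W_\mu$, Schur's lemma identifies $\langle\cdot,\cdot\rangle|_{W_\mu}$ with $h_\mu\otimes(\cdot,\cdot)_\mu$ for a Hermitian form $h_\mu$ on the multiplicity space and the positive definite $\fk$-invariant form $(\cdot,\cdot)_\mu$ on $L^0_\mu(\fk)$; positivity of $\langle\cdot,\cdot\rangle$ therefore reduces to $\|v_\mu\|^2>0$ for every nonzero $\fk$-highest weight vector $v_\mu$. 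I would establish this by downward induction along the dominance partial order on the finite set $\Pi^+(\lambda)$: the base $\mu=\lambda$ is the chosen normalisation, and at $\mu\neq\lambda$ each $Ev_\mu$ with $E\in\fv$ has weight $\mu+d(E)>\mu$ with the $\fk$-submodule it generates supported on $\fk$-highest weights $\mu'>\mu$ in the $\gl(V)$-dominance order, so the inductive hypothesis supplies $\|Ev_\mu\|^2\ge 0$. The identity then reads $c(\mu)\|v_\mu\|^2=\sum_E\|Ev_\mu\|^2\ge 0$ with $c(\mu)=\tfrac12((\lambda+2\rho,\lambda)-(\mu+2\rho,\mu))>0$ by hypothesis, giving $\|v_\mu\|^2\ge 0$; equality would force all $Ev_\mu=0$, making $v_\mu$ a $\gl(V)$-highest weight vector of weight $\mu\neq\lambda$ in the simple module $L_\lambda$, which is impossible.

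The main obstacle will be the bookkeeping in the derivation of the core identity—in particular verifying that $\Omega_0$ acts by the scalar $(\mu,\mu+2\rho_0)$ on $v_\mu$ once the signs in $\Omega_0^\pm$ and the indefinite restriction of the $\omega$-symmetric form to $\fh^*|_-$ are correctly accounted for—together with the precise confirmation that $\fv$-action sends $W_\mu$ into $\bigoplus_{\mu'>\mu}W_{\mu'}$, needed so that the inductive hypothesis genuinely delivers $\|Ev_\mu\|^2\ge 0$ rather than merely its reality. Once these two points are nailed down, both implications fall out of the core identity in a few lines.
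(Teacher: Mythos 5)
Your proposal is correct and follows essentially the same route as the paper: the same decomposition $\Omega=\Omega_0^++\Omega_0^--2\hat\rho_1+2\Omega_1^+$ produces the core identity $\tfrac12\bigl((\lambda+2\rho,\lambda)-(\mu+2\rho,\mu)\bigr)\|v_\mu\|^2=\sum_E\|Ev_\mu\|^2$ with $E$ running over a basis of $\fv$, necessity is argued exactly as you do, and sufficiency is an induction propagating positivity from the vectors $Ev_\mu$. The only difference is organisational: the paper inducts on the $\Z$-grading $L_\lambda=\sum_k(L_\lambda)_{-k}$ and invokes Corollary \ref{cor:non-super-u} on each graded piece, rather than on the dominance order of $\fk$-highest weights with a Schur-lemma isotypic decomposition, but since $\fv$ raises the $\Z$-degree by exactly one the two inductions are interchangeable.
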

\begin{proof}
We will need the fact that 
the  finite dimensional simple $\gl(V)$-module $L_\lambda$ restricts to a semi-simple module for $\fk=\gl(V_+)+\gl(V_-)$. This can be seen by noting that $L_\lambda$ as $\fk$-module is a quotient module of $V(L_\lambda^0)=\U(\ol{\fv})\ot L_\lambda^0$, where $L_\lambda^0=\{v\in L_\lambda\mid \fv\cdot v=\{0\}\}$, which is the degree $0$ subspaces of $L_\lambda$ with respect to the $\Z$-grading discussed in Section
\ref{sect:modules}.   Now we can always find a $1$-dimensional $\fk$-module such that its tenor product with $V(L_\lambda^0)$ is a submodule of some direct sum of tensor power of $V_+\oplus V_-$. 
The tensor powers are semi-simple $\fk$-modules by Corollary \ref{cor:non-super-u}, 
hence the restriction of $L_\lambda$ to a module for $\fk$ is semi-simple.

Let $v^+_\lambda$ be a $\gl(V)$-highest weight vector of  $L_\lambda$. 
We can define a sesquilinear form $\langle \ , \ \rangle: L_\lambda\times L_\lambda\lra \C$ by requiring   
it be contravariant with respective to the $\ast$-structure \eqref{eq:cpct}, 
and $\langle v^+_\lambda, v^+_\lambda \rangle=1$. 
This uniquely defines the form. Unitarisability of $L_\lambda$ means positive definiteness of  the form. 

Given a $\fk$-highest weight vector $v\in L_\lambda$ of weight $\mu\ne \lambda$, 
\[
\baln
 \Omega v &= (\Omega_0^+ + \Omega_0^- - 2\hat\rho_1 + 2 \Omega_1^+)v \\
 &=( (\mu+2\rho_0, \mu) - 2(\rho_1, \mu))v + 2 \Omega_1^+v \\
 &= (\mu+2\rho, \mu) v + 2 \Omega_1^+v. 
 \ealn
\]
On the other hand $\Omega v =(\lambda+2\rho, \lambda) v$. Hence 
\beq
 \Omega_1^+v = \frac{1}{2}( (\lambda+2\rho, \lambda) - (\mu+2\rho, \mu)) v. 
\eeq
This leads to 
\beq
&&\frac{1}{2} ( (\lambda+2\rho, \lambda) - (\mu+2\rho, \mu))  \langle v, v \rangle \label{eq:crt} \\
&&= \sum_{\alpha\in\Gamma_R^-}\sum_{\beta\in\Gamma_R^+}\sum_{i=1}^{m_\alpha} \sum_{j=1}^{m_\beta}  
 \langle E(\beta, \alpha)_{j i}v,   E(\beta, \alpha)_{j i}v \rangle. \nonumber
\eeq
Since $v\ne 0$ and $\mu\ne \lambda$, some of the vectors $E(\beta, \alpha)_{j i}v$ are non-zero. 
If $L_\lambda$ is unitarisable, then the right hand side of \eqref{eq:crt} is positive. Hence  
\beq\label{eq:crt-1}
(\lambda+2\rho, \lambda) - (\mu+2\rho, \mu)  >0, \quad \mu\in \Pi^+(\lambda)\backslash\{\lambda\}.
\eeq

If \eqref{eq:crt-1} is satisfied (and $\lambda\in\fh^*_\R$ is assumed), we can prove 
unitarisability of $L_\lambda$ by induction of the degree of the homogeneous subspaces of 
$
L_\lambda=\sum_{k=0}^r (L_\lambda)_{-k}
$ 
in the $\Z$-grading, where $r$ is the largest integer such that $(L_\lambda)_{-r}\ne 0$. 
Note that $\langle (L_\lambda)_{-k},  (L_\lambda)_{-\ell} \rangle=0$ if $k\ne \ell$. 

The defining requirement  $\langle v^+_\lambda, v^+_\lambda \rangle=1$ 
implies that $(L_\lambda)_0= L_\lambda^0$ is unitarisable as $\fk$-module by Corollary \ref{cor:non-super-u}. 
Assume that  the form $\langle\ , \  \rangle$ is positive definite on all $(L_\lambda)_{0}$ , $(L_\lambda)_{-1}$ , \dots, $(L_\lambda)_{-k+1}$, where $k\le r$. Then given any $\fk$-highest weight vector in $v\in (L_\lambda)_{-k}$, some 
$E(\beta, \alpha)_{j i}v$ with $\alpha\in\Gamma_R^+$ and $\beta\in\Gamma_R^-$, are non-zero vectors in  $(L_\lambda)_{-k+1}$, hence $\langle E(\beta, \alpha)_{j i}v,   E(\beta, \alpha)_{j i}v \rangle>0$.  Thus it follows \eqref{eq:crt} that $\langle v, v \rangle>0$.  
Since $(L_\lambda)_{-k}$ is a finite dimensional semi-simple $\fk$-module,  it follows Corollary \ref{cor:non-super-u}
that 
$\langle v, v \rangle>0$ for all $\fk$-highest weight vectors $v\in (L_\lambda)_{-k}$ implies positivity of 
$\langle\ , \  \rangle$ on $(L_\lambda)_{-k}$. This completes the induction step, 
proving the unitarisability of $L_\lambda$. 
\end{proof}

\subsubsection{Unitarisable typical modules}
The following result is a generalisation of \cite[Lemma 1 (ii)]{GZ} to the present context, 
whose proof loc. cit. remains valid up to some minor modifications. 
\begin{lemma}\label{lem:wt-crit}
Assume that $\lambda\in\Lambda_{M_+|M_-}\cap\fh^*_\R$ satisfies the condition \eqref{eq:tp-ast}, i.e., 
$(\lambda+\rho, \varepsilon_{M_+}-\varepsilon_{\ol{M_-}})>0$. Then 
for any weight $\nu$ of $L_\lambda$, 
\[
(\nu, \Upsilon)>0, \quad \forall \Upsilon\in\Phi^+_1. 
\] 
\end{lemma}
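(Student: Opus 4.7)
The plan is to reduce the claim to an inequality on $\fk$-highest weights of $L_\lambda$ and then to squeeze out the required strict positivity from a careful analysis of the Kac module. First I translate the hypothesis into coordinates: using $(\varepsilon_a,\varepsilon_b)=\omega(\gamma_a,\gamma_b)\delta_{a,b}$ and the formula \eqref{eq:rho-ir} for $\rho$, one has $(\lambda+\rho,\varepsilon_{M_+}-\varepsilon_{\ol{M_-}})=\lambda_{M_+}+\lambda_{\ol{M_-}}+1-M_-$, so \eqref{eq:tp-ast} is just $\lambda_{M_+}+\lambda_{\ol{M_-}}>M_--1$; similarly the desired inequality $(\nu,\varepsilon_i-\varepsilon_{\ol r})>0$ reads $\nu_i+\nu_{\ol r}>0$.

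Next I reduce to a single odd root. Since $\Pi(L_\lambda)$ is $W=W_+\times W_-$-invariant (Section \ref{sect:W-gp}) and $W$ acts by independently permuting $\{\varepsilon_j\}_{j\le M_+}$ and $\{\varepsilon_{\ol s}\}_{s\le M_-}$, for any $\nu\in\Pi(L_\lambda)$ and any $(i,r)$ I can find $w\in W$ with $(w\nu)_{M_+}=\nu_i$ and $(w\nu)_{\ol{M_-}}=\nu_{\ol r}$. Rearranging further by $W$ (which can only decrease $\nu_{M_+}+\nu_{\ol{M_-}}$ while keeping the weight in $\Pi(L_\lambda)$), I may assume $\nu$ is $\fk$-dominant, i.e.\ a $\fk$-highest weight of $L_\lambda$. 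Thus the lemma reduces to: every $\fk$-highest weight $\mu$ of $L_\lambda$ satisfies $\mu_{M_+}+\mu_{\ol{M_-}}>0$.

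I then use that $L_\lambda$ is a quotient of the generalised Verma module $V(L^0_\lambda(\fk))$ of \eqref{eq:Verma}, so $\Pi(L_\lambda)\subset\Pi(V(L^0_\lambda(\fk)))$. Since $\ol\fv$ is $\Gamma$-graded $\omega$-commutative with $\ol\fv=\bigoplus_{i,r}\fg_{-(\varepsilon_i-\varepsilon_{\ol r})}$, Theorem \ref{thm:PBW} gives $\U(\ol\fv)\cong\bigwedge\ol\fv$ as $\fk$-module, and hence $V(L^0_\lambda(\fk))\cong\bigwedge\ol\fv\otimes L^0_\lambda(\fk)$ as $\fk$-module. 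The $\fk$-weights of $\bigwedge\ol\fv$ are precisely $\eta_S=-\sum_{(i,r)\in S}(\varepsilon_i-\varepsilon_{\ol r})$ with $S\subset\{1,\dots,M_+\}\times\{1,\dots,M_-\}$, so every $\fk$-weight of $L_\lambda$ admits a decomposition $\mu=\lambda'+\eta_S$ with $\lambda'\in\Pi(L^0_\lambda(\fk))=\Pi(L^+_{\lambda^+}\otimes L^-_{\lambda^-})$. Setting $S^+_i=\{r:(i,r)\in S\}$ and $S^-_r=\{i:(i,r)\in S\}$, this yields $\mu_{M_+}+\mu_{\ol{M_-}}=\lambda'_{M_+}+\lambda'_{\ol{M_-}}-|S^+_{M_+}|+|S^-_{M_-}|$.

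The conclusion will follow by combining three estimates. Since $\lambda'_\pm$ lies in the convex hull of the $W_\pm$-orbit of $\lambda^\pm$ and $\lambda$ is $\fk$-dominant, each $\lambda'_{M_+}$ is a convex combination of $\lambda_1,\dots,\lambda_{M_+}$ and hence $\lambda'_{M_+}\ge\lambda_{M_+}$, and likewise $\lambda'_{\ol{M_-}}\ge\lambda_{\ol{M_-}}$. For the combinatorial piece I split cases on whether $(M_+,M_-)\in S$: if it is, then $|S^-_{M_-}|\ge1$ and $|S^+_{M_+}|\le M_-$, giving $-|S^+_{M_+}|+|S^-_{M_-}|\ge1-M_-$; if not, then $|S^+_{M_+}|\le M_--1$ while $|S^-_{M_-}|\ge0$, giving the same bound. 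Combining,
\[
\mu_{M_+}+\mu_{\ol{M_-}}\ge\lambda_{M_+}+\lambda_{\ol{M_-}}+1-M_->0
\]
by the translated form of \eqref{eq:tp-ast}. The main obstacle is the case $M_-\ge1$ where the naive estimate $-|S^+_{M_+}|+|S^-_{M_-}|\ge-M_-$ would only yield $\ge 0$; extracting the extra $+1$ via the dichotomy above, anchored on the single pair $(M_+,M_-)$, is the crucial point that turns the weak inequality of the hypothesis into the strict inequality of the conclusion.
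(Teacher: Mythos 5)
Your proof is correct and follows essentially the same route as the paper's: both reduce to the single odd root $\varepsilon_{M_+}-\varepsilon_{\ol{M_-}}$ via $W$-invariance of $\Pi(L_\lambda)$, both write weights of $L_\lambda$ as (a weight of the $\fk$-part) minus a subset-sum of roots from $\Phi^+_1$ using the generalised Verma module, and both extract the crucial bound $\ge 1-M_-$ on the odd contribution from exactly the dichotomy you describe (the paper phrases it as $2(\rho_1(\Theta),\varepsilon_{M_+}-\varepsilon_{\ol{M_-}})\le M_--1$). The only slip is the harmless conflation of ``$\fk$-dominant'' with ``$\fk$-highest weight'' in your reduction step, which does not matter because your final estimate is established for every weight of $\bigwedge\ol\fv\otimes L^0_\lambda(\fk)$, hence for every weight of $L_\lambda$.
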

\begin{proof} 
For convenience of the reader, we give the proof adapted from \cite{GZ}. 
Let $\mu\in\Pi^+(\lambda)$,  a $\fk$-highest weight  of $L_\lambda$. There is $\Theta\subset \Phi^+_1$ such that 
$\mu=\lambda-2\rho_1(\Theta)$ where $2\rho_1(\Theta)=\sum_{\varepsilon_i - \varepsilon_{\ol{r}}\in \Theta} (\varepsilon_i - \varepsilon_{\ol{r}})$.  Note that
\[
\baln
2(\rho_1(\Theta), \varepsilon_{M_+} - \varepsilon_{\ol{M_-}}) 
&= \sum_{\varepsilon_{M_+} - \varepsilon_{\ol{r}}\in \theta}
(\varepsilon_{M_+} - \varepsilon_{\ol{r}}, \varepsilon_{M_+} - \varepsilon_{\ol{M_-}})\\
&\le M_--1 = -(\rho, \varepsilon_{M_+} - \varepsilon_{\ol{M_-}}).
\ealn
\]
Hence $(\mu, \varepsilon_{M_+} - \varepsilon_{\ol{M_-}})\ge (\lambda+\rho, \varepsilon_{M_+} - \varepsilon_{\ol{M_-}})>0$. 

Let  $\nu$ be an arbitrary weight of $L_\lambda$. There must exist a $\mu\in \Pi^+(\lambda)$ such that 
$\nu=\mu - \sum_{i=1}^{M_+-1}p_i(\varepsilon_i - \varepsilon_{i+1}) - \sum_{r=1}^{M_-1}q_r(\varepsilon_{\ol{r}}-\varepsilon_{\ol{r+1}})$ for some $p_i, q_r\in\Z_+$.  We have 
\[
\baln
(\nu, \varepsilon_{M_+} - \varepsilon_{\ol{M_-}})&= (\mu, \varepsilon_{M_+}-\varepsilon_{\ol{M_-}}) +
p_{M_+}  + q_{\ol{M_-}-1} >0. 
\ealn
\]
This proves that any weight $\nu$ of $L_\lambda$ satisfies $(\nu, \varepsilon_{M_+} - \varepsilon_{\ol{M_-}})>0$.

For any $i, r$, there is an element $\sigma$ of the Weyl group $W$ of $\gl(V(\Gamma, \omega))$ (see Definition \ref{def:Weyl}) 
such that  $\varepsilon_i - \varepsilon_{\ol{r}}=\sigma(\varepsilon_{M_+} - \varepsilon_{\ol{M_-}})$.  
Therefore, 
$
(\nu, \varepsilon_i - \varepsilon_{\ol{r}}) =(\sigma^{-1}(\nu), \varepsilon_i - \varepsilon_{\ol{r}}) >0,
$
completing the proof. 
\end{proof}

%

By applying Lemmas \ref{lem:casimir} and \ref{lem:wt-crit}, 
we can prove the sufficiency of the condition \eqref{eq:tp-ast}.

\begin{lemma}\label{lem:key-tp}
If $\lambda\in\Lambda_{M_+|M_-}\cap\fh^*_\R$ satisfies 
$(\lambda+\rho, \varepsilon_{M_+}-\varepsilon_{\ol{M_-}})>0$,  the
simple $\gl(V)$-module $L_\lambda$ 
is unitarisable for the compact $\ast$-structure of type I.
\end{lemma}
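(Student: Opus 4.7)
The plan is to invoke the Casimir-based unitarisability criterion of \lemref{lem:casimir}. Since $\lambda \in \Lambda_{M_+|M_-} \cap \fh^*_\R$ by hypothesis, what remains is to establish the strict inequality
\[
(\lambda+2\rho, \lambda) - (\mu+2\rho, \mu) > 0 \qquad \text{for every } \mu \in \Pi^+(\lambda) \setminus \{\lambda\}.
\]
From the proof of \lemref{lem:wt-crit}, every such $\mu$ has the form $\mu = \lambda - 2\rho_1(\Theta)$ for some nonempty $\Theta \subseteq \Phi^+_1$, with $2\rho_1(\Theta) := \sum_{\Upsilon \in \Theta}\Upsilon$. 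Via the identity
$(\lambda+2\rho, \lambda) - (\mu+2\rho, \mu) = (\lambda+\rho, \lambda+\rho) - (\mu+\rho, \mu+\rho)$
and the substitution $\mu+\rho = \lambda + \rho - 2\rho_1(\Theta)$, the criterion collapses to the combinatorial inequality
\[
F(\Theta) := 2(\lambda+\rho, 2\rho_1(\Theta)) - (2\rho_1(\Theta), 2\rho_1(\Theta)) > 0 \qquad \text{for every nonempty } \Theta \subseteq \Phi^+_1.
\]

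For the setup, \lemref{lem:forms} shows that each odd root $\varepsilon_i - \varepsilon_{\ol{r}}$ is isotropic and that $(\varepsilon_i - \varepsilon_{\ol{r}}, \varepsilon_j - \varepsilon_{\ol{s}}) = \delta_{ij} - \delta_{rs}$ for distinct odd roots. Identifying $\Theta$ with a subset $D \subseteq \{1, \ldots, M_+\} \times \{1, \ldots, M_-\}$ and writing $P_i$, $Q_r$ for the row and column cardinalities of $D$, direct computation yields
\[
(2\rho_1(\Theta), 2\rho_1(\Theta)) = \sum_i P_i^2 - \sum_r Q_r^2, \qquad 2(\lambda+\rho, 2\rho_1(\Theta)) = 2\sum_{(i,r) \in D} L_{ir},
\]
where $L_{ir} := (\lambda+\rho, \varepsilon_i - \varepsilon_{\ol{r}})$. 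By \lemref{lem:wt-crit} applied under the typicality hypothesis, $L_{ir} > 0$ for every $i, r$.

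The main obstacle is to prove $F(\Theta) > 0$ for every nonempty $\Theta$: the term $\sum_i P_i^2$ grows quadratically in the row multiplicities of $D$, so the pointwise positivity $L_{ir} > 0$ is by itself insufficient. My plan is induction on $|\Theta|$, with base case $|\Theta|=1$ handled directly by \lemref{lem:wt-crit} (in that case $F(\Theta) = 2L_{ir}$). For the inductive step, remove an extremal corner $\Upsilon_0 = \varepsilon_{i_0} - \varepsilon_{\ol{r_0}} \in \Theta$ chosen so as to keep $P_{i_0} - Q_{r_0}$ small; a short computation using the isotropy of odd roots gives
\[
F(\Theta) - F(\Theta \setminus \{\Upsilon_0\}) = 2(L_{i_0 r_0} - P_{i_0} + Q_{r_0}),
\]
where $P_{i_0}$, $Q_{r_0}$ refer to counts in $\Theta$. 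The strict dominance estimates $\hat\mu_i - \hat\mu_{i+1} \ge 1$ and $\hat\nu_r - \hat\nu_{r+1} \ge 1$ from $\lambda \in \Lambda_{M_+|M_-}$ (with $\hat\mu_i = \lambda_i + M_+ - i$, $\hat\nu_r = \lambda_{\ol{r}} - r$), combined with the typicality bound $L_{M_+, M_-} > 0$, yield the lower bound $L_{i_0 r_0} \ge L_{M_+, M_-} + (M_+ - i_0) + (M_- - r_0)$, which is shown to dominate $P_{i_0} - Q_{r_0}$ for a suitable choice of corner (taking $i_0$ maximal and, among such, $r_0$ minimal controls the row and column counts). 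This combinatorial reduction mirrors the argument of Gould and Zhang \cite{GZ} for the Lie superalgebra case; since by \lemref{lem:forms} the bilinear form on $\fh^*$ appearing in the computation is ordinary symmetric and contains no explicit dependence on the commutative factor, their argument transfers to the present setting without modification.
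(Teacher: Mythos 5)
Your proposal is correct, and the key combinatorial step does go through: with $r_0$ chosen minimal in row $i_0$ one has $P_{i_0}\le M_--r_0+1$ and $Q_{r_0}\ge 1$, so $P_{i_0}-Q_{r_0}\le M_--r_0\le (M_+-i_0)+(M_--r_0)<L_{i_0 r_0}$ by your lower bound, and the induction on $|\Theta|$ closes. However, your route is genuinely different from the paper's. The paper also starts from the Casimir criterion of \lemref{lem:casimir}, but then inducts on the level $k$ of the $\Z$-grading $L_\lambda=\sum_k (L_\lambda)_{-k}$: a $\fk$-highest weight $\nu$ at level $k+1$ is written as $\mu-(\varepsilon_i-\varepsilon_{\ol{r}})$ with $\mu$ a $\fk$-highest weight at level $k$, the criterion telescopes along this chain, and the increment at each step is controlled by \lemref{lem:wt-crit}, i.e.\ by the positivity of $(\mu,\Upsilon)$ for $\Upsilon\in\Phi^+_1$ and $\mu$ an actual weight of $L_\lambda$ (whose proof uses the Weyl group). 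You instead eliminate the module entirely, reducing to the inequality $F(\Theta)>0$ for \emph{every} nonempty $\Theta\subseteq\Phi^+_1$ and proving it by corner-removal on the $M_+\times M_-$ grid, using only the inner products of odd roots, the dominance conditions on $\lambda$, and the single typicality inequality; in particular you prove a slightly stronger, purely combinatorial statement, and \lemref{lem:wt-crit} is not actually needed (your citation of it for ``$L_{ir}>0$'' is a slight misattribution, since $\lambda+\rho$ is not a weight of $L_\lambda$, but your explicit bound $L_{ir}\ge L_{M_+M_-}+(M_+-i)+(M_--r)$ derived from dominance makes this immaterial). Both arguments lean on an unproved structural fact of comparable depth: the paper on the claim that level-$(k{+}1)$ $\fk$-highest weights arise from level-$k$ ones by subtracting a single odd root, and you on the claim (asserted in the paper's proof of \lemref{lem:wt-crit}) that every $\fk$-highest weight equals $\lambda-2\rho_1(\Theta)$. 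The trade-off is that your proof is more explicit and self-contained on the weight-combinatorics side, while the paper's stays closer to the representation-theoretic structure and reuses \lemref{lem:wt-crit}, which it needs elsewhere anyway.
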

\begin{proof}
Recall the $\Z$-grading of $L_\lambda=\sum_{k=0}^{M_+M_-} (L_\lambda)_{-k}$ (see \eqref{eq:Z-grad}). 
 If $\nu\in \Pi^+(\lambda)$
is a $\fk$-highest weigh of $(L_\lambda)_{-k-1}$, we say that it is at level $k+1$. 
Then there exist a  $\fk$-highest weigh $\mu\in \Pi^+(\lambda)$ of $(L_\lambda)_{-k}$ (at level $k$),  and $\varepsilon_i- \varepsilon_{\ol{r}}\in \Phi^+_1$, such that $\nu=\mu-(\varepsilon_i- \varepsilon_{\ol{r}})$. 
We use induction on $k$ to show that $L_\lambda$ satisfies the unitarizability criterion Lemma \ref{lem:casimir}, i.e., 
for all $\nu\in \Pi^+(\lambda)\backslash\{\lambda\}$,
\beq\label{eq:criterion}
(\lambda +\nu +2\rho, \lambda-\nu) >0.
\eeq

If $k=0$, then $\mu=\lambda$.  We have 
\[
(\lambda +\nu +2\rho, \lambda-\nu) 
= (2\lambda + 2\rho- (\varepsilon_i- \varepsilon_{\ol{r}}), \varepsilon_i- \varepsilon_{\ol{r}})
=2(\lambda + \rho,  \varepsilon_i- \varepsilon_{\ol{r}}). 
\]
Note that for any $\zeta\in \Pi^+(\lambda)$, we have 
\[
\baln
(\zeta+\rho,  \varepsilon_i- \varepsilon_{\ol{r}})&= 
(\zeta+\rho,  \varepsilon_i- \varepsilon_{M_+}) +  (\zeta+\rho,  \varepsilon_{M_+}- \varepsilon_{\ol{M_-}})   - (\zeta+\rho,  \varepsilon_{\ol{r}}- \varepsilon_{\ol{M_-}})\\
&\ge (\zeta+\rho,  \varepsilon_{M_+}- \varepsilon_{\ol{M_-}}), \quad \forall i, r. 
\ealn
\]
Hence $(\lambda + \rho,  \varepsilon_i- \varepsilon_{\ol{r}})>(\lambda + \rho, \varepsilon_{M_+}- \varepsilon_{\ol{M_-}})>0$, 
and we conclude that \eqref{eq:criterion} holds at $k=0$.  

For $\nu$ at arbitrary level $k+1\ge 2$, we have 
\[
\baln
(\lambda +\nu +2\rho, \lambda-\nu) 
&= (\lambda +\mu+ 2\rho- (\varepsilon_i- \varepsilon_{\ol{r}}), \lambda-\mu+ \varepsilon_i- \varepsilon_{\ol{r}})\\
&=(\lambda +\mu+2 \rho,  \lambda-\mu) + 2(\mu, \varepsilon_i- \varepsilon_{\ol{r}}). 
\ealn
\]
Since $\mu$ is at level $k$, by induction hypothesis, $(\lambda +\mu+2 \rho,  \lambda-\mu)>0$.  Also it follows 
Lemma \ref{lem:wt-crit} that $(\mu, \varepsilon_i- \varepsilon_{\ol{r}})>0$. Hence \eqref{eq:criterion} is satisfied by 
all $\nu\in \Pi^+(\lambda)\backslash\{\lambda\}$. 

This proves the lemma. 
\end{proof}



\section{The coordinate algebra}

We return to the setting of arbitrary $(\Gamma, \omega)$-algebras, 
where no conditions will be imposed on the commutative factor. 

We investigate a graded commutative Hopf $(\Gamma, \omega)$-algebra, 
which may be regarded as the ``coordinate algebra'' 
of some ``general linear supergroup" in the $\Gamma$-graded setting. 
As we will briefly allude to, there indeed exists a group functor associated 
with this graded commutative Hopf $(\Gamma, \omega)$-algebra.

The general method of this section is similar to that adopted in studying
the ``coordinate algebras'' 
for the general linear Lie superalgebra \cite{SchZ02}  
and quantum general linear supergroup \cite{Z98}.

\subsection{The coordinate algebra as a graded commutative Hopf $(\Gamma, \omega)$-algebra}\label{sect:coord-alg}

\subsubsection{The coordinate algebra} 
We assume that $V=V(\Gamma, \omega)$ is finite dimensional. 
Maintain the notation $\gl(V)=\gl(V(\Gamma, \omega))$, 
and $\U=\U(\gl(V))$. 
 Denote by $\U^0$ the finite dual of $\U$.  
Since $\U$ is a co-commutative Hopf $(\Gamma, \omega)$-algebra, 
$\U^0$ is a graded commutative Hopf $(\Gamma, \omega)$-algebra.

\begin{remark}
We will abuse notation to also denote the co-multiplication, co-unit and antipode of $\U^0$ by $\Delta$, 
$\varepsilon$ and $S$ respectively. This should not cause confusion with the structure maps of $\U$ in general, but if confusion could potentially arise in particular situations,
we will add in explanations of notation. 
\end{remark}

We also use notation from Section \ref{sect:roots}. In particular, 
$V=\sum_{\alpha\in \Gamma}V_\alpha$, and $m_\alpha=\dim V_\alpha \ne 0$ 
if and only if $\alpha$ belongs to the subset $\Gamma_R$ of $\Gamma$. 
We fix an arbitrary total order for $\Gamma_R$, and take an ordered basis 
$B_\alpha=\{e(\alpha)_i\mid 1\le i\le m_\alpha=\dim V_\alpha\}$ for each $V_\alpha$. 
Then $B=\bigcup_{\alpha\in\Gamma_R} B_\alpha$,  ordered according to the total order chosen for $\Gamma_R$,  is an ordered basis for $V$.  
Let $\ol{B}=
\{\ol{e}(-\gamma)_i \mid 1\le i\le m_\gamma, \gamma \in\Gamma_R\}$ 
be the dual basis for the dual space $V^*$, thus 
$\ol{e}(-\gamma)_i(e(\alpha)_j)=\delta_{\gamma \alpha}\delta_{i j}$.

Since $V$ and $V^*$ are simple left $\U$-modules, they are simple right $\U^0$-co-modules, 
whose structure maps will be written as 
\[
\baln
\delta_V: V\lra V\ot \U^0, \quad & \delta_{V^*}: V^*\lra V^*\ot \U^0.
\ealn
\]
We denote by $t(\beta, \alpha)_{j i}$ and $\ol{t}(-\beta, -\alpha)_{j i}$ respectively
 the matrix coefficients of the $\U^0$-co-modules $V$ and $V^*$, so that for all $e(\alpha)_i$ and $\ol{e}(-\alpha)_i$, 
\beq
&& \delta_V(e(\alpha)_i)=\sum_{\beta, j} e(\beta)_j\ot t(\beta, \alpha)_{j i},  \\ 
&& \delta_{V^*}(\ol{e}(-\alpha)_i)=\sum_{\beta, j} \ol{e}(-\beta)_j\ot \ol{t}(-\beta, -\alpha)_{j i}. 
\eeq
They are homogeneous  elements of $\U^0$, whose degrees are given by $d(t(\beta, \alpha)_{j i})=\alpha-\beta$
and $d(\ol{t}(-\beta, -\alpha)_{j i})=\beta-\alpha$ respectively. 

\begin{definition} Denote by $\ST(V(\Gamma, \omega))$ the $(\Gamma, \omega)$-subalgebra of $\U^0$ generated by the elements 
$t(\beta, \alpha)_{j i}, \ \ol{t}(-\beta, -\alpha)_{j i}$ for all $\alpha, \beta\in \Gamma_R$, $i=1, 2, \dots, m_\alpha$, $j=1, 2, \dots, m_{\beta}$. 
\end{definition}

We write $\ST(V)=\ST(V(\Gamma, \omega))$. 

Note that $\ST(V)$ is graded commutative since $\U^0$ is.

Consider the right $\U^0$-co-module $\delta_{V\ot V^*}: V\ot V^*\lra V\ot V^*\ot\U^0$.
We have 
\beq
\delta_{V\ot V^*}(\iota^{-1}(\id_V))= \iota^{-1}(\id_V)\ot 1, \label{eq:G-4}
\eeq
since $\iota^{-1}(\id_V)$ is a $\U$-invariant, 
hence a $\U^0$-invariant. 
This  implies the following result. 
\begin{lemma} \label{rem:T-def} 
The following relation holds in $\ST(V)$ for all $\beta, \gamma, j, k$.
\beq
\sum_{\alpha\in\Gamma_R}\sum_{i=1}^{m_\alpha} \omega(\gamma, \alpha-\beta)t(\beta, \alpha)_{j i} \ol{t}(-\gamma, -\alpha)_{k i} =\delta_{\beta \gamma}\delta_{j k}. \label{eq:G-a-4} 
\eeq
\end{lemma}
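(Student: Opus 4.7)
My plan is to unpack the invariance equation $\delta_{V\ot V^*}(\iota^{-1}(\id_V))= \iota^{-1}(\id_V)\ot 1$ in a chosen basis and read off the stated identity. The first step is to identify $\iota^{-1}(\id_V)$ explicitly. Since $\iota(e(\alpha)_i\ot\ol{e}(-\beta)_j)$ sends $e(\gamma)_k$ to $\ol{e}(-\beta)_j(e(\gamma)_k)\,e(\alpha)_i=\delta_{\beta\gamma}\delta_{jk}\,e(\alpha)_i$, it is the matrix unit $E(\alpha,\beta)_{ij}$, so under $\iota^{-1}$ the identity $\id_V=\sum_{\alpha,i} E(\alpha,\alpha)_{ii}$ pulls back to
\[
\iota^{-1}(\id_V)=\sum_{\alpha\in\Gamma_R}\sum_{i=1}^{m_\alpha} e(\alpha)_i\ot \ol{e}(-\alpha)_i.
\]

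Next I would compute $\delta_{V\ot V^*}$ on this element using the recipe \eqref{eq:comod-VW}. Since the structure maps $\delta_V$ and $\delta_{V^*}$ are homogeneous of degree $0$, the rule \eqref{eq:sign} introduces no sign at the first stage, and I get
\[
(\delta_V\ot\delta_{V^*})(e(\alpha)_i\ot\ol{e}(-\alpha)_i)=\sum_{\beta,j,\gamma,k} e(\beta)_j\ot t(\beta,\alpha)_{ji}\ot \ol{e}(-\gamma)_k\ot \ol{t}(-\gamma,-\alpha)_{ki}.
\]
The only non-trivial factor enters when I apply $\id_V\ot\tau_{\U^0,V^*}\ot\id_{\U^0}$: the braiding swaps $t(\beta,\alpha)_{ji}$ (of degree $\alpha-\beta$) past $\ol{e}(-\gamma)_k$ (of degree $-\gamma$), producing the factor $\omega(\alpha-\beta,-\gamma)=\omega(\gamma,\alpha-\beta)$ by \eqref{eq:cycle-1}. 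Multiplying the two $\U^0$-tensor factors together via $\mu_0$ and summing over $\alpha,i$ then gives
\[
\delta_{V\ot V^*}(\iota^{-1}(\id_V))=\sum_{\beta,j,\gamma,k}e(\beta)_j\ot\ol{e}(-\gamma)_k\ot\Big(\sum_{\alpha,i}\omega(\gamma,\alpha-\beta)\,t(\beta,\alpha)_{ji}\,\ol{t}(-\gamma,-\alpha)_{ki}\Big).
\]

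The third step is immediate: the right-hand side of \eqref{eq:G-4} equals $\sum_{\beta,j}e(\beta)_j\ot\ol{e}(-\beta)_j\ot 1$, so comparing the coefficients of the linearly independent basis vectors $e(\beta)_j\ot\ol{e}(-\gamma)_k$ in $V\ot V^*$ yields \eqref{eq:G-a-4}. I expect the only real obstacle to be bookkeeping the commutative factor that arises from the braiding $\tau_{\U^0,V^*}$; one must be careful that the only place a non-trivial $\omega$ appears is at that swap, and that the degree of $t(\beta,\alpha)_{ji}$ is correctly computed as $\alpha-\beta$ (consistent with the fact that $\delta_V$ itself has degree $0$). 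After applying \eqref{eq:cycle-1} to convert $\omega(\alpha-\beta,-\gamma)$ into $\omega(\gamma,\alpha-\beta)$, the identity matches the stated form exactly.
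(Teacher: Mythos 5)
Your proposal is correct and follows exactly the paper's own argument: identify $\iota^{-1}(\id_V)=\sum_{\alpha,i}e(\alpha)_i\ot\ol{e}(-\alpha)_i$, expand $\delta_{V\ot V^*}$ via \eqref{eq:comod-VW} to pick up the braiding factor $\omega(\alpha-\beta,-\gamma)=\omega(\gamma,\alpha-\beta)$, and compare coefficients with \eqref{eq:G-4}. No issues.
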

\begin{proof}
To prove this,  note that
\beq\label{eq:can-inv}
C:=\iota^{-1}(\id_V)=\sum_{\alpha\in\Gamma_R}\sum_{i=1}^{m_\alpha} e(\alpha)_i\ot \ol{e}(-\alpha)_i,  
\eeq
and we can also easily show that 
\[
\delta_{V\ot V^*}(C)=\sum_{\beta, \gamma, j, k} e(\beta)_j\ot \ol{e}(-\gamma)_k    \ot  \sum_{\alpha\in\Gamma_R} \sum_{i=1}^{m_\alpha}\omega(\alpha-\beta, -\gamma) t(\beta, \alpha)_{j i} \ol{t}(-\gamma, -\alpha)_{k i}. 
\]
We immediately obtain  \eqref{eq:G-a-4} by using equation \eqref{eq:G-4}. 
\end{proof}

\begin{remark}\label{rmk:trace}
It follows \eqref{eq:can-inv} that  $\tau_{V, V^*}(\iota^{-1}(\id_V))=\sum_{i, \alpha} \omega(\alpha, \alpha)\ol{e}(-\alpha)_i\ot  e(\alpha)_i$. 
The $\omega$-trace of any $\phi\in\End_\C(V)$ may be described more conceptually as 
\[
\baln
{\rm tr}_{(\Gamma, \omega)}(\phi) 
&:= \sum_{\alpha\in\Gamma_R}\sum_{i=1}^{m_\alpha}  \omega(\alpha-d(\phi), \alpha)\langle \ol{e}(-\alpha)_i, \phi\cdot  e(\alpha)_i\rangle, 
\ealn
\]
and hence 
$
{\rm tr}_{(\Gamma, \omega)}(\phi) = \sum_{\alpha\in\Gamma_R}\sum_{i=1}^{m_\alpha}  \omega(\alpha, \alpha) \langle\ol{e}(-\alpha)_i, \phi_0\cdot  e(\alpha)_i\rangle,
$
where $\phi_0$ is the degree $0$ homogeneous component of $\phi$. 
\end{remark}

The elements $t(\beta, \alpha)_{j i}$ (resp. $\ol{t}(-\beta, -\alpha)_{j i}$) generate a $(\Gamma, \omega)$-subalgebra
of $\ST(V)$,  
which we denote by $T$ (resp. $\ol{T}$). There is a $\Z$-grading for $\ST(V)$ with $t(\beta, \alpha)_{j i}$ of degree $1$, 
and $\ol{t}(-\beta, -\alpha)_{j i}$ of degree $-1$. Elements on the left hand side of \eqref{eq:G-a-4}  are of degree $0$.

The algebra $\ST(V)$ has a natural Hopf structure. [This is an analogue of a familiar result in the context of quantum supergroups \cite{M, Z98}.]

\begin{lemma} \label{lem:Hopf-T}
$\ST(V)$ is a Hopf $(\Gamma, \omega)$-subalgebra of $\U^0$ with co-multiplication $\Delta$, co-unit $\varepsilon$  and antipode $S$  respectively given by 
\beq
&&\Delta(t(\beta, \alpha)_{j i})=\sum_{\gamma\in\Gamma_R}\sum_{k=1}^{m_\gamma} t(\beta, \gamma)_{j k}\ot  t(\gamma, \alpha)_{k i}, \label{eq:co-t}\\
&&\Delta(\ol{t}(-\beta, -\alpha)_{j i})=\sum_{\gamma\in\Gamma_R}\sum_{k=1}^{m_\gamma} \ol{t}(-\beta, -\gamma)_{j k}\ot  \ol{t}(-\gamma, -\alpha)_{k i}, \label{eq:co-tbar}\\
&&\varepsilon(t(\beta, \alpha)_{j i})= \varepsilon(\ol{t}(-\beta, -\alpha)_{j i})=\delta_{\alpha \beta}\delta_{i j}, \label{eq:co-eps}\\
&&S(t(\beta, \alpha)_{j i})=\omega(\alpha -\beta, \alpha)\ol{t}(- \alpha, -\beta)_{i j}, \label{eq:co-S-1}\\
&&S(\ol{t}(- \alpha, -\beta)_{i j})=\omega(\alpha, \alpha-\beta) t(\beta, \alpha)_{j i},  \quad \forall  \alpha, \beta, i, j.
\label{eq:co-S-2}
\eeq
In particular, $T$ and $\ol{T}$ are $(\Gamma, \omega)$-bi-subalgebras of 
$\ST(V)$, and the homogeneous subspaces $T_r$ and $\ol{T}_{-r}$ with respect to the $\Z$-grading are two-sided co-ideals.  
\end{lemma}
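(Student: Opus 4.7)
The plan is to deduce all the claims from the observation that $t(\beta,\alpha)_{ji}$ and $\ol t(-\beta,-\alpha)_{ji}$ are the matrix coefficients of the right $\U^0$-comodules $V$ and $V^*$ in the sense of Example \ref{eg:coeffs}, so their Hopf-algebraic behaviour is governed entirely by the comodule structures of $V$ and $V^*$ together with the contragredient interpretation of $V^*$.

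First I would establish \eqref{eq:co-t}, \eqref{eq:co-tbar} and \eqref{eq:co-eps}. Applying the coassociativity identity $(\delta_V \otimes \id_{\U^0})\delta_V = (\id_V \otimes \Delta)\delta_V$ to the basis vector $e(\alpha)_i$, expanding both sides using the definition of $\delta_V$, and comparing coefficients of each $e(\beta)_j$ in the $V$-factor yields \eqref{eq:co-t} immediately. The same argument applied to $\delta_{V^*}$ produces \eqref{eq:co-tbar}. The counit axiom $(\id_V \otimes \varepsilon)\delta_V = \id_V$ collapses $\delta_V(e(\alpha)_i)$ back to $e(\alpha)_i$, forcing $\varepsilon(t(\beta,\alpha)_{ji}) = \delta_{\alpha\beta}\delta_{ij}$, and analogously for $\ol t$, giving \eqref{eq:co-eps}. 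Because $\Delta$ and $\varepsilon$ are algebra homomorphisms, the formulas \eqref{eq:co-t} and \eqref{eq:co-tbar} imply at once that $\Delta(T)\subset T\otimes T$ and $\Delta(\ol T)\subset \ol T\otimes \ol T$, so $T$ and $\ol T$ are $(\Gamma,\omega)$-bi-subalgebras. Since the generators all lie in a single $\Z$-degree and $\Delta$ is an algebra map, $\Delta$ preserves the $\Z$-grading, giving $\Delta(T_r)\subset T_r\otimes T_r$ and $\Delta(\ol T_{-r})\subset \ol T_{-r}\otimes \ol T_{-r}$, which yields the stated co-ideal property.

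For the antipode formulas \eqref{eq:co-S-1} and \eqref{eq:co-S-2} I would pair $S(t(\beta,\alpha)_{ji})$ with an arbitrary homogeneous $x \in \U$ and use the definition of the antipode on $\U^0$: $\langle S(t(\beta,\alpha)_{ji}), x\rangle = \langle t(\beta,\alpha)_{ji}, S(x)\rangle$. The right-hand side extracts the $e(\beta)_j$-coefficient of $S(x)\cdot e(\alpha)_i$; by the dual module action \eqref{eq:dual-mod}, this coefficient can be rewritten as a pairing computing the $\ol e(-\alpha)_i$-coefficient of $x\cdot \ol e(-\beta)_j$, which is in turn a scalar multiple of $\langle \ol t(-\alpha,-\beta)_{ij}, x\rangle$. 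Tracking the three commutative-factor contributions—one from the comodule-to-module conversion \eqref{eq:mod-comod} for $V$, one for $V^*$, and one from the sign $\omega(d(x), d(\ol w))$ in \eqref{eq:dual-mod}—together with the homogeneity constraint $d(x)=-d(t(\beta,\alpha)_{ji})=\beta-\alpha$ will produce the factor $\omega(\alpha-\beta,\alpha)$ and establish \eqref{eq:co-S-1}. Identity \eqref{eq:co-S-2} then follows either by the same argument with the roles of $V$ and $V^*$ swapped, or more efficiently by applying $S$ to \eqref{eq:co-S-1} and using that $S^2=\id_{\U^0}$ (since $\U$ is cocommutative). The two formulas together show $S(T)\subset \ol T$ and $S(\ol T)\subset T$, completing the proof that $\ST(V)$ is a Hopf $(\Gamma,\omega)$-subalgebra. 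As a consistency check, substituting \eqref{eq:co-S-1} into $\mu \circ (\id \otimes S) \circ \Delta$ applied to $t(\beta,\alpha)_{ji}$ should reproduce the relation \eqref{eq:G-a-4} of Lemma \ref{rem:T-def}.

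The main obstacle will be the $\omega$-bookkeeping in the antipode step: the three separate commutative-factor contributions must combine to yield exactly $\omega(\alpha-\beta,\alpha)$ rather than some close relative such as $\omega(\alpha-\beta,\beta)$ or $\omega(\beta-\alpha,\alpha)$. Getting this right requires careful use of the homogeneity constraint $d(x)=-d(f)$ built into the pairing and of the identity $\omega(-\gamma,-\delta)=\omega(\gamma,\delta)$. Everything else is a formal consequence of the matrix-coefficient formalism, so once the sign calculation in Step 3 is nailed down, the remaining assertions fall out immediately.
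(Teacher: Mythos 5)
Your proposal is correct, and the bi-algebra part (coassociativity and counit axioms of the comodules $V$, $V^*$ forcing \eqref{eq:co-t}--\eqref{eq:co-eps}, plus multiplicativity of $\Delta$ for the co-ideal statement) is essentially what the paper does. For the antipode, however, you take a genuinely different route. The paper never touches the pairing with $\U$: it starts from the already-established relation \eqref{eq:G-a-4} of Lemma \ref{rem:T-def} (invariance of the canonical element), multiplies by $S(t(\zeta,\beta)_{\ell j})$, sums, and collapses the sum via the antipode axiom $\mu(S\ot\id)\Delta=\varepsilon$ applied to \eqref{eq:co-t} --- in effect solving for $S(t)$ from the invertibility of the matrix $t$. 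You instead compute $\langle S(t(\beta,\alpha)_{ji}),x\rangle=\langle t(\beta,\alpha)_{ji},S(x)\rangle$ directly from the definition of $S_0$ on $\U^0$, the module--comodule dictionary \eqref{eq:mod-comod}, and the contragredient action \eqref{eq:dual-mod}. Your sign bookkeeping does close up: the factor $\omega(d(x),-\beta)$ from converting the $V^*$-comodule to a module cancels against the factor $\omega(d(x),d(\ol w))$ in \eqref{eq:dual-mod}, and the surviving factor $\omega(d(x),\alpha)^{-1}$ from the $V$-side conversion becomes $\omega(\alpha-\beta,\alpha)$ once one imposes $d(x)=\beta-\alpha$, exactly matching \eqref{eq:co-S-1}; the deduction of \eqref{eq:co-S-2} from $S_0^2=\id$ is also legitimate since $S^2=\id_\U$. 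What each approach buys: the paper's argument is shorter but presupposes Lemma \ref{rem:T-def} and leaves implicit why $\ol t$ is the ``inverse'' matrix; yours is self-contained from the definitions and makes explicit that $V^*$ with the action \eqref{eq:dual-mod} is precisely the comodule whose matrix coefficients are the $\ol t(-\beta,-\alpha)_{ji}$ --- your closing consistency check (that $\mu(\id\ot S)\Delta$ applied to $t(\beta,\alpha)_{ji}$ reproduces \eqref{eq:G-a-4}) is exactly the paper's argument run in reverse.
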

\begin{proof}
It follows from the right $\U^0$-co-module structure of $V$ and $V^*$ that the co-multiplication and co-unit of $\U^0$ have the claimed properties. Thus $\ST(V)$ is a bi-subalgebra of $\U^0$. It is clear from \eqref{eq:co-t}, 
\eqref{eq:co-tbar} and \eqref{eq:co-eps} that the subalgebras 
$T$ and $\ol{T}$ 
form  $(\Gamma, \omega)$-bi-subalgebras of $\ST(V)$, with 
$\Delta(T_r)\subset T_r\ot T_r$, and $\Delta(\ol{T}_{-r})\subset \ol{T}_{-r}\ot \ol{T}_{-r}$.

Now we consider the Hopf structure. 
We can re-write \eqref{eq:G-a-4} as 
\beq
\sum_{\alpha\in\Gamma_R}\sum_{i=1}^{m_\alpha} t(\beta, \alpha)_{j i} \omega(\gamma, \alpha-\gamma)\ol{t}(-\gamma, -\alpha)_{k i} =\delta_{\beta \gamma}\delta_{j k}, 
\eeq
from which we immediately obtain, for all $\gamma, \zeta, k, \ell$, 
\[
S(t(\zeta, \gamma)_{\ell k})= \sum_{\alpha, i} \sum_{\beta, j} S(t(\zeta, \beta)_{\ell j}) t(\beta, \alpha)_{j i} \omega(\gamma, \alpha-\gamma)\ol{t}(-\gamma, -\alpha)_{k i}. 
\]
By the defining property of the antipode,
$\sum_{\beta, j} S(t(\zeta, \beta)_{\ell j}) t(\beta, \alpha)_{j i}$ $=$ $\delta_{\zeta \alpha}\delta_{\ell i}$. Hence 
\[
\baln
S(t(\zeta, \gamma)_{\ell k})
&= \sum_{\alpha, i} \delta_{\zeta \alpha}\delta_{\ell i} \omega(\gamma, \alpha-\gamma)\ol{t}(-\gamma, -\alpha)_{k i}\\
&= \omega(\gamma-\zeta, \gamma) \ol{t}(-\gamma, -\zeta)_{k \ell},
\ealn
\]
 proving \eqref{eq:co-S-1}. We can similarly prove \eqref{eq:co-S-2}.
\end{proof}

Let us consider actions of $\U$ on $\ST(V)$. It follows general facts discussed in Section \ref{sect:L-R-acts} that 
$\ST(V)$ has $\U$-module algebra structures with respect to the left actions (cf. \eqref{eq:R-act} and \eqref{eq:L-act})
\[
L, R: U\ot\ST(V)\lra\ST(V). 
\]

Note that 
$V^{\ot r}$ is semi-simple as $\U^0$-co-module for any $r$, 
and $L_{\lambda^\sharp}$ is a simple sub-co-module of $V^{\ot r}$ 
if and only if  $\lambda\in \{\mu \in P_{M_+|M_-}\mid\mu\vdash r\}$.
Denote by $T^{(\lambda)}$ the space spanned by the matrix  coefficients of $L_{\lambda^\sharp}$. 
Then 
$T=\oplus_{\lambda\in P_{M_+|M_-}} T^{(\lambda)}$, which is a sub-bi-algebra of $\ST(V)$ 
by Lemma \ref{lem:Hopf-T}. Now
$T$ is a module algebra for $L_\U$ and for also $R_\U$, 
and $T^{(\lambda)}\simeq L_{\lambda^\sharp}^*\ot L_{\lambda^\sharp}$ as an $L_\U\ot R_\U$-module by Example \ref{eg:coeffs}.  We can similarly analyse $\ol{T}$.  
To summarise, 
\begin{theorem}\label{thm:P-W}
The $(\Gamma, \omega)$-sub-bi-algebras $T$ and $\ol{T}$ of $\ST(V)$ can be expressed as
\[
T=\bigoplus_{\lambda\in P_{M_+|M_-}} T^{(\lambda)}, \quad 
\ol{T}=\bigoplus_{\lambda\in P_{M_+|M_-}} \ol{T}^{(\lambda)},
\]
where $T^{(\lambda)}$ and $\ol{T}^{(\lambda)}$ are the spaces of matrix coefficients of 
 $L_{\lambda^\sharp}$ and $L_{\lambda^\sharp}^*$ respectively.  
 Both $T$ and $\ol{T}$  are $L_\U$- and $R_\U$-module algebras, 
 and have the following decompositions as  $L_\U\ot R_\U$-modules. 
\[
T\simeq \bigoplus_{\lambda\in P_{M_+|M_-}} L_{\lambda^\sharp}^*\ot L_{\lambda^\sharp},  
\quad 
\ol{T}\simeq \bigoplus_{\lambda\in P_{M_+|M_-}} L_{\lambda^\sharp}\ot L_{\lambda^\sharp}^*.
\]
\end{theorem}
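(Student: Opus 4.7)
The plan is to realise $T$ and $\ol{T}$ as the spaces of matrix coefficients of the iterated tensor powers $V^{\ot r}$ and ${V^*}^{\ot r}$ respectively, and then to invoke the multiplicity-free decompositions of these tensor powers coming from the colour Schur--Weyl duality (\thmref{thm:SW}) together with the general description of matrix coefficient spaces of simple right $\U^0$-comodules recorded in \exref{eg:coeffs} and \exref{eg:L-R-mod}.

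First I would fix the $\Z$-grading of $T$ inherited from $\ST(V)$, with $t(\beta,\alpha)_{j i}$ in degree $1$, and denote by $T_r$ the degree $r$ homogeneous subspace. The co-multiplication formula \eqref{eq:co-t} implies that $T_r$ coincides with the span of the matrix coefficients of the right $\U^0$-comodule $V^{\ot r}$: iterating the co-action on $V$ produces precisely the $r$-fold products of the generators $t(\beta,\alpha)_{j i}$. Hence $T=\sum_{r\ge 0} T_r$, and $T_r$ is the matrix coefficient space of $V^{\ot r}$. By \remref{rmk:power-ss} and \thmref{thm:SW}, the $\gl(V)$-module (equivalently, $\U^0$-comodule) $V^{\ot r}$ is semi-simple with multiplicity-free isotypic components indexed by $\{\lambda\in P_{M_+|M_-}\mid \lambda\vdash r\}$, the isotypic piece of type $L_{\lambda^\sharp}$ being $L_{\lambda^\sharp}\ot S_\lambda$. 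Since matrix coefficients of a semi-simple comodule are the union of the matrix coefficients of its simple sub-comodules, this gives $T_r=\sum_{\lambda\in P_{M_+|M_-},\ \lambda\vdash r} T^{(\lambda)}$, and summing over $r$ yields $T=\sum_{\lambda\in P_{M_+|M_-}} T^{(\lambda)}$.

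Next I would establish that this sum is direct. Here the needed input is the standard Peter--Weyl-type fact that matrix coefficients of pairwise non-isomorphic simple right $\U^0$-comodules are linearly independent in $\U^0$; this follows from the Jacobson density theorem applied to the image of $\U$ in $\End_\C\!\left(\bigoplus_\lambda L_{\lambda^\sharp}\right)$, once one observes that the $L_{\lambda^\sharp}$ are pairwise non-isomorphic (their highest weights $\lambda^\sharp$ are distinct) and that $\U$-module homomorphisms between non-isomorphic simples vanish in every $\Gamma$-degree. With directness in hand, \exref{eg:L-R-mod} identifies each $T^{(\lambda)}$ as a simple module for $L_\U\ot R_\U$ isomorphic to $L_{\lambda^\sharp}^*\ot L_{\lambda^\sharp}$, producing
\[
T=\bigoplus_{\lambda\in P_{M_+|M_-}} T^{(\lambda)} \simeq \bigoplus_{\lambda\in P_{M_+|M_-}} L_{\lambda^\sharp}^*\ot L_{\lambda^\sharp}.
\]
The $L_\U$- and $R_\U$-module algebra structures on $T$ are automatic from \lemref{lem:mod-alg}, once \lemref{lem:Hopf-T} has been used to exhibit $T$ as a sub-bi-algebra of $\ST(V)$.

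The argument for $\ol{T}$ is entirely parallel, working with the co-module $V^*$ and its tensor powers in place of $V$ and $V^{\ot r}$; the simple sub-comodules of ${V^*}^{\ot r}$ are the duals $L_{\lambda^\sharp}^*$ of those appearing in $V^{\ot r}$ (since duality is exact and inverts the simple decomposition), whence $\ol{T}^{(\lambda)}\simeq L_{\lambda^\sharp}\ot L_{\lambda^\sharp}^*$ by \exref{eg:L-R-mod}. The main obstacle I anticipate is the linear-independence statement for matrix coefficients across different $\lambda$: the usual proof over fields needs to be adapted so that all intertwiners are homogeneous of $\Gamma$-degree $0$, which is the condition under which Schur's lemma for $\Gamma$-graded simple modules forces such intertwiners to vanish; this is routine but deserves explicit verification to keep the Peter--Weyl separation of isotypic pieces under control in the $(\Gamma,\omega)$-graded setting.
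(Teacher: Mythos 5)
Your proposal is correct and follows essentially the same route as the paper, which likewise obtains the decomposition by identifying $T^{(\lambda)}$ with the matrix coefficients of the simple sub-comodules $L_{\lambda^\sharp}$ of the semi-simple comodules $V^{\ot r}$ (via \thmref{thm:commut}, \thmref{thm:SW} and \exref{eg:L-R-mod}), and treats $\ol{T}$ dually. The only difference is that you spell out the Peter--Weyl separation argument for the directness of the sum over $\lambda$ (via Jacobson density and $\Gamma$-graded Schur's lemma), a step the paper leaves implicit; this is a worthwhile addition rather than a deviation.
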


This clean description exists for $T$ and $\ol{T}$ because $V^{\ot r}$ and ${V^*}^{\ot r}$ 
are semi-simple $\U^0$-co-modules for all $r$. 
It is reminiscent of the description of the algebra of functions of compact Lie groups 
 by the Peter-Weyl theorem. Thus we shall refer to Theorem \ref{thm:P-W} as the Peter-Weyl theorem for $T$ and $\ol{T}$. 

\begin{remark}\label{rmk:coord-HD}
Theorem \ref{thm:P-W} can be interpreted as a generalised Howe duality of type 
$(\gl(V(\Gamma, \omega)), \gl(V(\Gamma, \omega)))$, from which we can deduce the 
generalised Howe duality of type 
$(\gl(V(\Gamma, \omega)), \gl(V'(\Gamma, \omega)))$; 
see Section  \ref{sect:coord-HD}.
\end{remark}

\begin{remark}
Results similar to Theorem \ref{thm:P-W} were obtained for the general linear Lie superalgebra  in \cite{SchZ02}, and for the quantum general linear supergroup in \cite{Z98}. 
\end{remark}

As a side comment, we mention a family of  unital associative $(\Gamma, \omega)$-algebras
arising from this context.  
Now $T_r$ is the space of matrix coefficients of $V^{\ot r}$ for any fixed $r\ge 1$. 
\[
T_r=\bigoplus_{\substack{\lambda\in P_{M_+|M_-}\\ \lambda\vdash r}} T^{(\lambda)}.
\]
It forms a sub $(\Gamma, \omega)$-co-algebra of $\ST(V)$.  Its dual space 
$T_r^*$ is a unital associative $(\Gamma, \omega)$-algebra, with multiplication 
$\mu^{(r)}: x\ot y\mapsto xy$, for any $x, y\in {T_r}^*$,  defined by 
\beq
\quad \langle x y, f\rangle = \sum_{(f)}\langle x\ot y, \Delta(f)\rangle, \quad \forall f\in T_r;  
\eeq
and unit ${\mathbb 1}_r$ being the restriction of the co-unit of $\ST(V)$ to $T_r ^*$. 
\begin{definition}\label{def:Schur-alg}
Call ${\mathbb S}_{\Gamma, \omega}(r)=(T_r^*,  \mu^{(r)},  {\mathbb 1}_r)$ a Schur $(\Gamma, \omega)$-algebra. 
\end{definition}
Note that the definition is valid over any field, and commutative rings. 

This algebra is nothing but the image of $\U(\gl(V))$ in the representation furnished by the module $V^{\ot r}$. 
Over $\C$, Theorem \ref{thm:SW} provides a complete understanding of this algebra. However, the algebra will become more interesting to study over fields of positive characteristics.

\subsubsection{The coordinate algebra as a ring of fractions}\label{sect:fractions}
We shall identify $\ST(V)$ with a ring of fractions of $T$ as graded commutative $(\Gamma, \omega)$-algebra.

Consider  $\ST(V)\ot\End_\C(V)$ as a $(\Gamma, \omega)$-algebra. 
The following elements belong to  the degree $0$ subalgebra $(\ST(V)\ot\End_\C(V))_0$, 
\[
\baln
t&=\sum_{\alpha, \beta, i, j}\omega(\beta, \alpha-\beta) t(\beta, \alpha)_{j i} \ot e(\beta, \alpha)_{j i}, \\
\ol{t} &=\sum_{\gamma, \zeta, k, \ell} \omega(\gamma -\zeta, \gamma-\zeta)\ol{t}(- \gamma, -\zeta)_{k \ell} \ot e(\zeta, \gamma)_{\ell k}.
\ealn
\]
Note that $\ol{t}=\sum_{\gamma, \zeta, k, \ell} \omega(\zeta, \gamma-\zeta) S(t(\zeta, \gamma)_{\ell k}) \ot e(\zeta, \gamma)_{\ell k}.$
Using Lemma \ref{rem:T-def}, we can prove the following result by straightforward computations. 
\begin{corollary} The elements $t$ and $\ol{t}$ are left and right inverses of each other in $(\ST(V)\ot\End_\C(V))_0$,  that is, 
\beq
t \ol{t} = \ol{t} t =1\ot 1.
\eeq
\end{corollary}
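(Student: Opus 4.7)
The plan is to verify both relations by direct computation, using the multiplication rule \eqref{eq:multip-tensor} for the tensor product $(\Gamma,\omega)$-algebra $\ST(V)\ot\End_\C(V)$, the matrix unit composition $e(\beta,\alpha)_{ji}e(\zeta,\gamma)_{\ell k}=\delta_{\alpha\zeta}\delta_{i\ell}e(\beta,\gamma)_{jk}$, and Lemma \ref{rem:T-def}. The principal task is to reduce the resulting commutative-factor prefactor to precisely the one appearing in \eqref{eq:G-a-4}.

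First I would expand $t\ol t$ explicitly. Since $d(e(\beta,\alpha)_{ji})=\beta-\alpha$ and $d(\ol t(-\gamma,-\zeta)_{k\ell})=\gamma-\zeta$, the multiplication produces an overall scalar $\omega(\beta-\alpha,\gamma-\zeta)$; combined with the prefactors built into $t$ and $\ol t$, and after setting $\zeta=\alpha$, $\ell=i$ via the matrix-unit identity, the coefficient of $e(\beta,\gamma)_{jk}$ becomes
\[
\sum_{\alpha,i}\omega(\beta,\alpha-\beta)\,\omega(\gamma-\alpha,\gamma-\alpha)\,\omega(\beta-\alpha,\gamma-\alpha)\,t(\beta,\alpha)_{ji}\,\ol t(-\gamma,-\alpha)_{ki}.
\]
The key simplification comes from the identity $\omega(\gamma-\alpha,\gamma-\alpha)=\omega(\gamma,\gamma)\omega(\alpha,\alpha)$, which follows immediately from bilinearity together with $\omega(\alpha,\gamma)\omega(\gamma,\alpha)=1$. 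Feeding this in and expanding $\omega(\beta-\alpha,\gamma-\alpha)$ by bilinearity, the factor $\omega(\alpha,\alpha)^2=1$ disappears, and after collapsing $\omega(\beta,\alpha)\omega(\alpha,\beta)=1$ the remaining $\alpha$-dependent piece collapses to $\omega(\gamma,\alpha-\beta)$, with the only leftover being the $\alpha$-independent scalar $\omega(\beta,\beta)^{-1}\omega(\gamma,\gamma)$. This scalar can be pulled outside the sum over $\alpha,i$, whereupon Lemma \ref{rem:T-def} collapses that sum to $\delta_{\beta\gamma}\delta_{jk}$. The remaining Kronecker delta then forces $\gamma=\beta$, at which point the leftover scalar equals $1$, giving $t\ol t=\sum_{\beta,j}1\ot e(\beta,\beta)_{jj}=1\ot 1$.

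For $\ol t\, t$, the same strategy applies: expanding the product, collapsing matrix units, and simplifying the commutative factors via $\omega(\alpha-\beta,\alpha-\beta)=\omega(\alpha,\alpha)\omega(\beta,\beta)$ reduces the computation to a companion form of \eqref{eq:G-a-4}. Alternatively, since $\ST(V)$ is already known to be a Hopf $(\Gamma,\omega)$-algebra by Lemma \ref{lem:Hopf-T}, the identity $\mu(S\ot\id)\Delta=\varepsilon$ applied to $\ol t(-\zeta,-\gamma)_{\ell k}$ via the coproduct \eqref{eq:co-tbar} and the antipode formula \eqref{eq:co-S-2} yields the required companion relation for free, avoiding a second independent calculation.

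The main obstacle is purely bookkeeping of the commutative factors: the prefactors $\omega(\beta,\alpha-\beta)$, $\omega(\gamma-\zeta,\gamma-\zeta)$ in the definitions of $t$ and $\ol t$ are chosen precisely so that after the braiding contribution $\omega(\beta-\alpha,\gamma-\zeta)$ from tensor multiplication is absorbed, the net prefactor matches the $\omega(\gamma,\alpha-\beta)$ on the left-hand side of \eqref{eq:G-a-4} up to an $\alpha$-independent scalar that trivializes when $\beta=\gamma$. Once the cancellation $\omega(\gamma-\alpha,\gamma-\alpha)=\omega(\gamma,\gamma)\omega(\alpha,\alpha)$ is in hand, everything else is routine.
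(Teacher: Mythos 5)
Your proposal is correct and follows essentially the same route as the paper: the paper reduces the corollary to \eqref{eq:G-a-4} together with a companion relation obtained by applying the antipode, and you carry out explicitly the commutative-factor bookkeeping (including the key identity $\omega(\gamma-\alpha,\gamma-\alpha)=\omega(\gamma,\gamma)\omega(\alpha,\alpha)$ and the $\alpha$-independent leftover $\omega(\beta,\beta)^{-1}\omega(\gamma,\gamma)$) that the paper dismisses as "straightforward computations". One tiny remark: applying $\mu(S\ot\id)\Delta=\varepsilon$ to $\ol{t}(-\zeta,-\gamma)_{\ell k}$ produces the factors in the order $t(\cdot)\ol{t}(\cdot)$, so to land exactly on the companion relation needed for $\ol{t}\,t$ you should either apply the identity to $t(\zeta,\gamma)_{\ell k}$ instead, or swap the two factors using the graded commutativity of $\ST(V)$.
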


\begin{proof}
The corollary is equivalent to \eqref{eq:G-a-4} and the following relation 
\beq\label{eq:G-a-4-1}
\sum_{\alpha, i} \omega(\alpha-\beta, \alpha) \ol{t}(- \alpha, -\beta)_{i j}  t(\alpha, \gamma)_{i k}  
= \delta_{\beta \gamma}\delta_{j k}, \quad \forall
\beta, \gamma, j, k,
\eeq
which can be obtained by applying the antipode to \eqref{eq:G-a-4}.
\end{proof}

To better understand the matrices $t$ and $\ol{t}$, we decompose $\End_\C(V)$ into blocks as
\[
\begin{pmatrix}\Hom_\C(V_+, V_+) & \Hom_\C(V_-, V+)\\
 \Hom_\C(V_+, V_-) & \Hom_\C(V_-, V_-)
\end{pmatrix}.
\]
Then the elements $t$ and $\ol{t}$ of $(\ST(V)\ot\End_\C(V))_0$ have the block decompositions
\[
\baln
&t = t_{++}+t_{+-}+t_{-+}+t_{--}, \quad \ol{t} = \ol{t}_{++}+\ol{t}_{+-}+\ol{t}_{-+}+\ol{t}_{--},
\ \text{ with}\\ 
&t_{\pm \pm} =  \sum_{\alpha, \beta\in\Gamma_R^\pm}\sum_{i, j}
	\omega(\beta, \alpha-\beta) t(\beta, \alpha)_{j i} \ot e(\beta, \alpha)_{j i}, \\
&t_{\pm \mp} =  \sum_{\alpha\in\Gamma_R^\pm, \beta\in\Gamma_R^\mp}\sum_{i, j}
	\omega(\beta, \alpha-\beta) t(\beta, \alpha)_{j i} \ot e(\beta, \alpha)_{j i}, \\
&\ol{t}_{\pm \pm} =\sum_{\gamma, \zeta\in\Gamma_R^\pm} \sum_{k, \ell}
	\omega(\gamma -\zeta, \gamma-\zeta)\ol{t}(- \gamma, -\zeta)_{k \ell} \ot e(\zeta, \gamma)_{\ell k}, \\
&\ol{t}_{\pm \mp} =\sum_{\gamma\in\Gamma_R^\pm, \zeta\in\Gamma_R^\mp} \sum_{k, \ell}
	\omega(\gamma -\zeta, \gamma-\zeta)\ol{t}(- \gamma, -\zeta)_{k \ell} \ot e(\zeta, \gamma)_{\ell k}. 
\ealn
\]
Note that $t_{\pm \pm}, \ol{t}_{\pm \pm} \in\ST(V)\ot\End_\C(V_\pm)$, and 
$t_{+-}+t_{-+}$ and $\ol{t}_{+-}+\ol{t}_{-+}$ are nilpotent.

We will introduce a ring of fractions $\BD^{-1}T$ of $T$ such that $t$ has an inverse (i.e., left and right inverse) in 
$\BD^{-1}T\ot\End_\C(V)$. This will be done in several steps.

Let $p_+: V\lra V_+\subset V$ be the projection onto $V_+$, 
and $p_+^{\ot r}: \wedge_\omega^r V \lra \wedge_\omega^r V_+\subset \wedge_\omega^r V$ for any $r$. 
Similarly, we let $\ol{p}_+: V^*\lra V^*_+\subset V^*$ be the projection onto $V^*_+$, 
and $\ol{p}_+^{\ot r}: \wedge_\omega^r V^* \lra \wedge_\omega^r V^*_+\subset \wedge_\omega^r V^*$ 
for any $r$. These maps are all $\gl(V_+)$-module homomorphisms. 

Note that $\wedge_\omega^{M_+} V_+$ and  $\wedge_\omega^{M_+} V^*_+$ are both $1$-dimensional,  
and are homogeneous of   degrees $\pm\theta$ respectively, 
where $\theta=\sum_{\alpha\in\Gamma_R^+} m_\alpha\alpha$.
We choose any basis elements $\Lambda_+$ for $\wedge_\omega^{M_+} V_+$ and 
$\Lambda^*_+$ for $\wedge_\omega^{M_+} V^*_+$. 
Since $p_+^{\ot M_+}\circ\delta_{V^{\ot M_+}}$ and $\ol{p}_+^{\ot M_+}\circ
\delta_{{V^*}^{\ot M_+}}$ are both $\gl(V_+)$-module homomorphisms, 
 there exists $D_+, \ol{D}_+\in T$ such that 
\beq
 &p^{\ot M_+}\circ\delta_{V^{\ot M_+}}(\Lambda_+) 
 	= \Lambda_+\ot   D_+,  \label{eq:D+}\\ 
 &\ol{p}_+^{\ot M_+}\circ \delta_{{V^*}^{\ot M_+}}(\Lambda^*_+ )
 	= \Lambda^*_+ \ot  \ol{D}_+.   \label{eq:Dbar+}
\eeq
 
 Observe that $D_+$ and $\ol{D}_+$ are homogeneous of degree $0$ in the $\Gamma$-grading. 
 Thus they commute with all elements of $\ST(V)$, that is  $D_+ f = f D_+$ and $ \ol{D}_+ f = f \ol{D}_+$  for all $f\in \ST(V)$. 
 
 Denote by $T_{D_+}$ the ring of fractions of $T$ obtained by inverting $D_+$. This is a
 $(\Gamma, \omega)$-algebra. 
 \begin{assertion}\label{aser:inv+}
 The element $t_{++}\in T\ot\End_\C(V_+)$ has a multiplicative (left and right)  inverse   in $T_{D_+}\ot\End_\C(V_+)$. 
\end{assertion}
\begin{proof}
Note that $\dim\left(\wedge_\omega^{M_+-1} V_+\right)=M_+$. 
Let $\{f(\theta-\alpha)_i\mid \alpha\in\Gamma_R^+, 1\le i\le  m_\alpha\}$ be a basis  of 
$\wedge_\omega^{M_+-1} V_+$ such that 
\beq\label{eq:Lam+}
\Lambda_+=\sum_{\alpha\in\Gamma_R^+}\sum_{i=1}^{m_\alpha} e(\alpha)_i\ot f(\theta-\alpha)_i.
\eeq
Then there exist elements $A(\theta-\beta, \theta-\alpha)_{j i}$ of $T$ such that 
\[
\baln
p_+^{\ot (M_+-1)}\circ\delta_{V^{\ot (M_+-1)}}(f(\theta-\alpha)_i) = \sum_{\beta\in\Gamma_R^+}\sum_ j f(\theta-\beta)_j \ot A(\theta-\beta, \theta-\alpha)_{j i}. 
\ealn
\]
Now $p^{\ot M_+}\circ\delta_{V^{\ot M_+}}(\Lambda_+)$ can be expressed as
\[
\baln
\sum_{\alpha, \beta, \gamma\in\Gamma_R^+} \sum_{i, j, k} \omega(\alpha-\beta, \theta-\gamma) e(\beta)_j\ot  f(\theta-\gamma)_k \ot t(\beta, \alpha)_{j i} A(\theta-\gamma, \theta-\alpha)_{k i}, 
\ealn
\]
and hence 
\beq\label{eq:t-inv-r}
\sum_{\alpha\in\Gamma_R^+} \sum_i 
t(\beta, \alpha)_{j i}  \omega(\alpha-\gamma, \theta-\gamma) A(\theta-\gamma, \theta-\alpha)_{k i}
=\delta_{\beta \gamma}\delta_{j k} D_+. 
\eeq
This implies that $t_{++}\in T\ot\End_\C(V_+)$ has a right inverse in $T_{D_+}\ot\End_\C(V_+)$. 

We can similarly consider $\wedge_\omega^{M_+-1} V^*_+$. Let
$\{ \ol{f}(\alpha-\theta)_i\mid \alpha\in\Gamma_R^+, 1\le i\le m_\alpha\}$ be a basis for it 
 such that 
\beq\label{eq:Lambar+}
\Lambda^*_+ = \sum_{\alpha\in\Gamma_R^+}\sum_{i=1}^{m_\alpha} \ol{f}(\alpha-\theta)_i\ot\ol{e}(-\alpha)_i.
\eeq
Then there are $\ol{A}(\beta-\theta, \alpha-\theta)_{j i}\in \ol{T}$, for $\alpha, \beta\in\Gamma_R^+$, $1\le i \le m_\alpha$ and 
$1\le j\le m_\beta$,  such that 
\[
\ol{p}_+^{\ot (M_+-1)}\circ \delta_{{V^*}^{\ot (M_+-1)}}(\ol{f}(\alpha-\theta)_i)= \sum_{\beta\in\Gamma_R^+}\sum_{j=1}^{m_\beta} \ol{f}(\beta-\theta)_j\ot \ol{A}(\beta-\theta, \alpha-\theta)_{j i}. 
\]
Using  \eqref{eq:Lambar+} and this relation in equation \eqref{eq:Dbar+}, we obtain 
\beq
\sum_{\alpha\in\Gamma_R^+} \sum_i   \omega(\alpha-\beta,   -\alpha)\ol{t}(-\gamma, -\alpha)_{k i} \ol{A}(\beta-\theta, \alpha-\theta)_{j i}=\delta_{\beta \gamma}\delta_{j k} \ol{D}_+. 
\eeq
Applying the antipode to this equation, we obtain 
\beq\label{eq:t-inv-l}
\sum_{\alpha\in\Gamma_R^+} \sum_i   
S(\ol{A}(\beta-\theta, \alpha-\theta)_{j i}) t(\alpha, \gamma)_{i k} 
=\delta_{\beta \gamma}\delta_{j k} S(\ol{D}_+), 
\eeq
which is an equation in $T$. 

If we invert  $S(\ol{D}_+)$, equations \eqref{eq:t-inv-l}  gives rise to a left inverse of $t_{++}\in T\ot\End_\C(V_+)$. Now  left and right inverses must be the same. By inspecting \eqref{eq:t-inv-l} and \eqref{eq:t-inv-r}, we conclude that $S(\ol{D}_+)$ must be a scalar multiple of $D_+$. Hence the inverse of $t_{++}$ belongs to $T_{D_+}\ot\End_\C(V_+)$. 

This completes the proof. 
\end{proof}

Now consider $S_\omega^r(V_-)$ and $S_\omega^r(V^*_-)$. 
Let $p_-: V\lra V_-\subset V$ be the projection onto $V_-$. 
Since $S_\omega^{M_-}(V_-)$ is one dimensional,   for any basis element $\Lambda_-$ of it, there is $D_-\in T$ such that 
\beq \label{eq:D-}
 p_-^{\ot M_-}\circ\delta_{V^{\ot M_-}}(\Lambda_-) = \Lambda_-\ot   D_-.
\eeq

Observe that $D_-$ is homogeneous of degree $0$. 
Thus it commutes with all elements of $\ST(V)$, that is  $D_- f = f D_-$ for all $f\in \ST(V)$. 
 
 Let $T_{D_-}$ be the ring of fractions obtained by inverting $D_-$. 

\begin{assertion}\label{aser:inv-}
 The element $t_{--}\in T\ot\End_\C(V_-)$ has a multiplicative inverse in $T_{D_-}\ot\End_\C(V_-)$. 
\end{assertion}
\begin{proof}
The proof is similar to that of the Assertion \ref{aser:inv+}. We omit details. 
\end{proof}

Introduce the multiplicative set $\BD=\{D_+^i D_-^j\mid i, j\in\Z_+\}$, and denote by $\BD^{-1}T$ the ring of fractions of $T$ with respect $\BD$.  We have the following result. 
\begin{theorem}\label{thm:fract-T}
The element $t\in \ST(V)\ot\End_\C(V)$ has a multiplicative inverse in $\BD^{-1}T\ot\End_\C(V)$. 
 Furthermore, $\ST(V)\simeq \BD^{-1}T$ as $(\Gamma, \omega)$-algebra.
\end{theorem}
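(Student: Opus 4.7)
The plan is to invert $t$ in $\BD^{-1}T \ot \End_\C(V)$ by decomposing $t = D + N$ with $D = t_{++} + t_{--}$ block-diagonal and $N = t_{+-} + t_{-+}$ block-off-diagonal, using $D^{-1}$ supplied by Assertions~\ref{aser:inv+} and~\ref{aser:inv-} together with nilpotency of $N$; then, for the isomorphism $\ST(V) \simeq \BD^{-1}T$, the key will be to show that $D_+$ and $D_-$ are already invertible in $\ST(V)$, after which the identification follows from uniqueness of matrix inverses.

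Nilpotency of $N$ is the first step. Each entry of $N$ is some $t(\beta, \alpha)_{ji}$ with $\alpha$ and $\beta$ on opposite sides of $\Gamma_R = \Gamma_R^+ \sqcup \Gamma_R^-$, hence of degree $\alpha - \beta \in \Gamma^-$ since $\omega(\alpha-\beta, \alpha-\beta) = \omega(\alpha,\alpha)\omega(\beta,\beta) = -1$. Graded commutativity of $T$ then forces each such entry to square to zero, and because there are only $2 M_+ M_-$ of them, any product of more than that many vanishes by the pigeonhole principle, so $N$ is nilpotent in $\ST(V) \ot \End_\C(V)$. Assembling $D^{-1} = t_{++}^{-1} \oplus t_{--}^{-1}$ in $\BD^{-1}T \ot \End_\C(V)$, the factorization $t = D(I + D^{-1}N)$ together with the finite geometric series $(I + D^{-1}N)^{-1} = \sum_{k \ge 0}(-D^{-1}N)^k$ yields $t^{-1} \in \BD^{-1}T \ot \End_\C(V)$, establishing the first assertion.

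For the isomorphism, I would first establish invertibility of $D_\pm$ in $\ST(V)$ via the $\U^0$-equivariant evaluation pairing on $\wedge^{M_+}_\omega V \ot \wedge^{M_+}_\omega V^*$. Equivariance applied to $\Lambda_+ \ot \Lambda^*_+$ produces the nonzero scalar $c = \langle \Lambda^*_+, \Lambda_+ \rangle$ tensored with $1$; on the other hand, expanding the coaction $\delta$ and then applying the pairing yields $c \ot D_+\ol{D}_+$ together with correction terms, each of which carries a coefficient involving at least one off-diagonal entry of $t$ or $\ol{t}$ (which is of $\Gamma^-$-degree). Since $D_+\ol{D}_+$ lies in the commutative subalgebra $\ST(V)_0$, the relation $D_+\ol{D}_+ = 1 - (\text{nilpotent})$ makes $D_+\ol{D}_+$ invertible, and therefore both $D_+$ and $\ol{D}_+$ are invertible in $\ST(V)$; an identical argument handles $D_-\ol{D}_-$.

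With $D_+, D_-$ invertible in $\ST(V)$, the universal property of (central) localisation produces a $(\Gamma, \omega)$-algebra homomorphism $\psi \colon \BD^{-1}T \to \ST(V)$ extending the inclusion $T \hookrightarrow \ST(V)$. Surjectivity follows because $\ST(V)$ is generated by $T$ together with $\ol{T}$, and by uniqueness of matrix inverses in $\ST(V) \ot \End_\C(V)$ the image of $t^{-1}$ constructed above must equal $\ol{t}$, placing each $\ol{t}(-\beta, -\alpha)_{ji}$ inside $\psi(\BD^{-1}T)$. Injectivity reduces to the injectivity of $T \hookrightarrow \ST(V)$: any $f \in \ker\psi$ can be written as $g(D_+^a D_-^b)^{-1}$ with $g \in T$, and $\psi(f) = 0$ combined with invertibility of $\ol{D}_+^a \ol{D}_-^b$ in $\ST(V)$ forces $\psi(g) = 0$, hence $g = 0$ in $T$ and $f = 0$. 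The hardest step I expect will be the pairing argument showing $D_+\ol{D}_+$ differs from $1$ by a nilpotent: one must carefully track the braiding signs in $\delta(\Lambda_+ \ot \Lambda^*_+)$ and verify that every correction term orthogonal to $\Lambda_+ \ot \Lambda^*_+$ drags in at least one $\Gamma^-$-degree factor; the subsequent steps are comparatively routine once this invertibility is in place.
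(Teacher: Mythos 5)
Your construction of $t^{-1}$ is the same as the paper's: both factor $t=(t_{++}+t_{--})(1+(t_{++}+t_{--})^{-1}(t_{+-}+t_{-+}))$, invert the block-diagonal part using Assertions~\ref{aser:inv+} and~\ref{aser:inv-}, and terminate the geometric series by nilpotency of the odd off-diagonal entries (your pigeonhole count via $x^2=0$ for $x$ of degree in $\Gamma^-$ is exactly the justification the paper leaves implicit). Where you genuinely diverge is the isomorphism $\ST(V)\simeq\BD^{-1}T$. The paper declares a map $\iota\colon\ST(V)\to\BD^{-1}T$ on the generators $t,\ol t$ and asserts it is an isomorphism; making that well defined requires knowing which relations the generators of $\ST(V)\subset\U^0$ satisfy, which the paper does not spell out. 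You go the opposite way: you first prove $D_\pm$ are invertible in $\ST(V)$ (by showing $D_\pm\ol D_\pm$ equals a nonzero scalar modulo the nilpotent ideal generated by the odd matrix coefficients), invoke the universal property of central localisation to get $\psi\colon\BD^{-1}T\to\ST(V)$ extending $T\hookrightarrow\ST(V)$, and then get surjectivity from uniqueness of two-sided inverses (forcing $(\psi\ot\id)(t^{-1})=\ol t$) and injectivity from $T\hookrightarrow\ST(V)$. This buys a map whose existence is automatic once $D_\pm$ are units in the target, at the cost of the extra invertibility lemma. That lemma is correct as you set it up; note, though, that you can shorten your ``hardest step'': rather than evaluating the pairing on $\Lambda_+^*\ot\Lambda_+$ and verifying $\langle\Lambda_+^*,\Lambda_+\rangle\ne0$, apply $\mu_0(S\ot\id)\Delta$ to the diagonal matrix coefficient $D_+$ of $\wedge^{M_+}_\omega V$. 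Every off-diagonal term of $\Delta(D_+)$ carries an odd factor, so $S(D_+)D_+=\varepsilon(D_+)-(\text{nilpotent})=1-(\text{nilpotent})$, and since the proof of Assertion~\ref{aser:inv+} already shows $S(\ol D_+)\in\C^*D_+$ (hence $S(D_+)\in\C^*\ol D_+$ by $S^2=\id$), this gives $\ol D_+D_+\in\C^*(1-(\text{nilpotent}))$ without any computation of braiding signs in the pairing. With that substitution your argument is complete and, in my view, tighter than the paper's at the isomorphism step.
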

 \begin{proof}
 
It follows Assertions \ref{aser:inv+} 
and \ref{aser:inv-} that $t_{++}$ is invertible in $\BD^{-1}T\ot \End_\C(V_+)$
and $t_{- -}$ is invertible in $\BD^{-1}T\ot \End_\C(V_-)$. 
We will denote their inverses by $\hat{t}_{++}^{-1}$ and $\hat{t}_{- -}^{-1}$ respectively.   
As $t_{++}t_{--}=t_{--}t_{++}=0$, we have 
\[
(t_{++} + t_{--})^{-1}= \hat{t}_{++}^{-1} + \hat{t}_{--}^{-1}. 
\]
Thus $
t= (t_{++}+ t_{--})(1 + \hat{t}_{++}^{-1} t_{+-} + \hat{t}_{--}^{-1}t_{-+}).
$
Since $\hat{t}_{++}^{-1} t_{+-} +\hat{t}_{--}^{-1} t_{-+}$ is nilpotent,  
\beq
(1 + \hat{t}_{++}^{-1} t_{+-} +\hat{t}_{--}^{-1} t_{-+})^{-1} = 1+ \sum_{n\ge 1} (-1)^n
(\hat{t}_{++}^{-1} t_{+-} +\hat{t}_{--}^{-1} t_{-+})^n,
\eeq
where the right hand side is a finite sum.  It follows that 
\beq
t^{-1}=\left(1+ \sum_{n\ge 1} (-1)^n(t_{++}^{-1} t_{+-} t_{--}^{-1}+ t_{-+})^n\right)(\hat{t}_{++}^{-1} + \hat{t}_{--}^{-1}).
\eeq
Note that $\BD^{-1}T$ is generated by the matrix entries of $t$ and $t^{-1}$. 

Recall that $\ST(V)$ is generated by the matrix entries of $t$ and $\ol{t}$, and $\ol{t}$ is the inverse of $t$ 
in $\ST(V)\ot\End_\C(V)$. Thus the algebra map
$\iota: \ST(V)\lra \BD^{-1}T$ such that $(\iota\ot\id)(t)=t$ and $(\iota\ot\id)(\ol{t})=t^{-1}$ is an isomorphism. 
 \end{proof}
 
 \begin{remark}
 Assume that  $V_-=0$, thus $V=V_+$. Then $D=D_+$ satisfies $\Delta(D)=D\ot D$ and $\varepsilon(D)=1$. 
 The $r$-th power $D^r$ spans a $1$-dimensional  $\gl(V)\times\gl(V)$-module isomorphic 
 to $L_{r{\mathbb 1}}^*\ot L_{r{\mathbb 1}} \simeq L_{-r{\mathbb 1}}\ot L_{r{\mathbb 1}}$, 
 where ${\mathbb 1}=(1, 1, \dots, 1)$.  
 This makes sense for $r<0$ as well. 
By using Theorem \ref{thm:P-W} and noting that 
$L_\lambda\simeq L_{\lambda_{M_+}{\mathbb 1}}\ot L_{\lambda-\lambda_{M_+}{\mathbb 1}}$, 
we obtain in this case
  \[
  \baln
 \ST(V)&\simeq T_{D_+} \simeq \sum_{\substack{\lambda\in P_{M_+}, \ \lambda_{M_+}=0, \\ r\in\Z}}  L_{\lambda+r{\mathbb 1}}^*\ot L_{\lambda+r{\mathbb 1}}
 = \sum_{\substack{\mu=(\mu_1, \mu_2, \dots \mu_{M_+}) \in\Z^{M_+}, \\ \mu_1\ge \mu_2\ge \dots \ge \mu_{M_+} }} L_\mu^*\ot L_\mu.
 \ealn
 \]
 \end{remark}

\subsection{Another construction of the coordinate algebra}
We give another construction of the coordinate algebra $\ST(V)$, 
which only requires information 
about the $\Gamma$-graded vector space $V=V(\Gamma, \omega)$. This is inspired by the 
construction of quantum groups in \cite{M}. 

\subsubsection{A Hopf $(\Gamma, \omega)$-algebra}\label{sect:poly}

Let $F(V)=S_\omega(V^*\ot V)$ and $\ol{F}(V)=S_\omega(V\ot V^*)$ be the symmetric $(\Gamma, \omega)$-algebras  over 
$V^*\ot V$ and $V\ot V^*$ respectively. Recall the canonical surjection $T(V^*\ot V)\lra F(V)$ 
from the tensor algebra to $F(V)$.
Let $ X(\beta, \alpha)_{j i}$ be the image of $\ol{e}(-\beta)_j\ot e(\alpha)_i$ under the map for all $\alpha, \beta, i, j$. 
Also introduce the elements $\ol{X}(-\alpha, -\beta)$ of $\ol{F}(V)$ such that  
$\omega(\beta, \beta) e(\beta)_j\ot \ol{e}(-\alpha)_i\mapsto \ol{X}(-\alpha, -\beta)$, for all $\alpha, \beta, i, j$, 
under the canonical surjection 
$T(V\ot V^*)\lra \ol{F}(V)$. 

Note the following fact.
\begin{lemma}\label{eq:F-co-act}
The linear maps 
\[
\baln
&V\lra V\ot S^1_\omega(V^*\ot V), \quad e(\alpha)_i\mapsto \sum_{\beta, j} e(\beta)_j\ot X(\beta, \alpha)_{j i}, \\
&V^*\lra V^*\ot S^1_\omega(V\ot V^*), \quad \ol{e}(-\alpha)_i\mapsto \sum_{\beta, j} e(-\beta)_j\ot \ol{X}(-\beta, -\alpha)_{j i}
\ealn
\]
are $\gl(V)$-module homomorphisms, where $\gl(V)$ acts on both $V\ot S^1_\omega(V^*\ot V)\simeq V\ot V^*\ot V$ and $V^*\ot S^1_\omega(V\ot V^*)\simeq V^*\ot V\ot V^*$ diagonally.  
\end{lemma}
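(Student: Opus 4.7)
The plan is to reduce both statements to the fact that tensoring with a $\gl(V)$-invariant produces a $\gl(V)$-equivariant map, exploiting the canonical invariant $C=\iota^{-1}(\id_V)\in V\otimes V^*$ from Section 2.4 and its image $\bar C=\tau_{V,V^*}(C)\in V^*\otimes V$ under the braiding.

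First I would identify the degree-1 components with the underlying tensor product spaces: the canonical surjections $T(V^*\otimes V)\to F(V)$ and $T(V\otimes V^*)\to\ol{F}(V)$ restrict to $\gl(V)$-linear isomorphisms $V^*\otimes V\to S^1_\omega(V^*\otimes V)$ and $V\otimes V^*\to S^1_\omega(V\otimes V^*)$, under which (by construction) $X(\beta,\alpha)_{ji}$ corresponds to $\bar e(-\beta)_j\otimes e(\alpha)_i$ and $\bar X(-\alpha,-\beta)_{ij}$ corresponds to $\omega(\beta,\beta)\,e(\beta)_j\otimes\bar e(-\alpha)_i$. Under these identifications (and the matching convention for the indices of $\bar X$ dictated by $\Gamma$-degree-preservation), the first map becomes
\[
V\longrightarrow V\otimes V^*\otimes V,\qquad e(\alpha)_i\longmapsto \sum_{\beta,j}e(\beta)_j\otimes\bar e(-\beta)_j\otimes e(\alpha)_i = C\otimes e(\alpha)_i,
\]
while the second map becomes
\[
V^*\longrightarrow V^*\otimes V\otimes V^*,\qquad \bar e(-\alpha)_i\longmapsto \sum_{\beta,j}\omega(\beta,\beta)\,\bar e(-\beta)_j\otimes e(\beta)_j\otimes\bar e(-\alpha)_i = \bar C\otimes\bar e(-\alpha)_i.
\]

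Next I would verify that $\bar C$ is a $\gl(V)$-invariant. The lemma preceding \eqref{eq:adx} gives $x\cdot C=0$ for all $x\in\gl(V)$, and since the braiding $\tau_{V,V^*}$ is a $\gl(V)$-module isomorphism by \eqref{eq:tau-endo}--\eqref{eq:tau-gl}, its image $\bar C=\tau_{V,V^*}(C)$ is likewise annihilated by $\gl(V)$. With both $C$ and $\bar C$ of $\Gamma$-degree $0$, the tensor-action formula \eqref{eq:act-tensor-1} applied to the coproduct $\Delta(x)=x\otimes 1+1\otimes x$ in $\U(\gl(V))$ then yields, for any invariant $I$ of degree $0$ and any vector $w$,
\[
x\cdot(I\otimes w) = (x\cdot I)\otimes w+\omega(d(x),d(I))\,I\otimes(x\cdot w) = I\otimes(x\cdot w),
\]
so that $w\mapsto I\otimes w$ commutes with the $\gl(V)$-action; applied with $I=C$ and $I=\bar C$ respectively this proves both parts of the lemma.

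The only bookkeeping needed is to confirm the matching of basis labels and commutative-factor weights in the identification of the second map as $\bar v\mapsto\bar C\otimes\bar v$; this uses only the defining formula for $\bar X$ and the skew-symmetry \eqref{eq:cycle-1} of $\omega$, and is mechanical. No genuine obstacle arises, since the whole argument becomes conceptual once the canonical invariants $C$ and $\bar C$ have been put in place.
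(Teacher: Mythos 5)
The paper states this lemma without proof (it appears only as ``Note the following fact''), so there is no argument of the paper's to compare against; your proof is correct and is the natural one. Identifying the degree-one components with $V^*\otimes V$ and $V\otimes V^*$, recognising the two maps as $v\mapsto C\otimes v$ and $\ol{v}\mapsto \ol{C}\otimes\ol{v}$ with $C=\iota^{-1}(\id_V)$ the canonical invariant and $\ol{C}=\tau_{V,V^*}(C)$, and then observing that left tensoring with a degree-zero invariant commutes with the diagonal action, is exactly in the spirit of how the paper itself uses $C$ in Lemma \ref{rem:T-def}; your argument also accounts for the paper's remark that the factor $\omega(\beta,\beta)$ is essential, since that factor is precisely the braiding weight making $\ol{C}=\sum_{\beta,j}\omega(\beta,\beta)\,\ol{e}(-\beta)_j\otimes e(\beta)_j$ invariant. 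The one point to pin down is the index convention for $\ol{X}$, which you correctly flag: the paper's displayed definition of $\ol{X}(-\alpha,-\beta)$ is ambiguous (its arguments appear transposed relative to the degree convention $d(\ol{t}(-\beta,-\alpha)_{ji})=\beta-\alpha$), and your proof needs $\ol{X}(-\beta,-\alpha)_{ji}\leftrightarrow\omega(\beta,\beta)\,e(\beta)_j\otimes\ol{e}(-\alpha)_i$, which is the reading forced by degree preservation; with that fixed, the verification is complete.
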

It is worth observing that the factor $\omega(\beta, \beta)$ in the definition of $\ol{X}(-\alpha, -\beta)$ is crucial to make the second map a $\gl(V)$-module homomorphism. 

Let $J_V$ be the two-sided $(\Gamma, \omega)$-ideal in $F(V)\ot \ol{F}(V)$  generated by the elements 
\beq
&&\sum_{\alpha, i} \omega(\gamma, \alpha-\beta) X(\beta, \alpha)_{j i} \ol{X}(-\gamma, -\alpha)_{k i} - \delta_{\beta \gamma}\delta_{j k},  \label{eq:JV-1}\\
&&\sum_{\alpha, i} \omega(\alpha-\beta, \alpha) \ol{X}(- \alpha, -\beta)_{i j}  X(\alpha, \gamma)_{i k}  
- \delta_{\beta \gamma}\delta_{j k}, \quad \forall
\beta, \gamma, j, k, \label{eq:JV-2}
\eeq
and define the associative $(\Gamma, \omega)$-algebra
\[
\SF(V)=F(V)\ot \ol{F}(V)/J_V.
\]
We will abuse notation slightly to use the same symbols to denote the images 
of $X(\beta, \alpha)_{j i} $ and $\ol{X}(- \alpha, -\beta)_{i j}$ in the quotient algebra.

We have the following result.
\begin{lemma} The algebra $\SF(V)$ has the structure of a Hopf $(\Gamma, \omega)$-algebra with 
co-multiplication $\Delta: \SF(V)\lra \SF(V)\ot\SF(V)$, co-unit $\varepsilon: \SF(V)\lra \C$ and antipode $S: \SF(V)\lra \SF(V)$ (an anti-homomorphism) respectively defined by 
\beq
&&\Delta(X(\beta, \alpha)_{j i})=\sum_{\gamma\in\Gamma_R}\sum_{k=1}^{m_\gamma} X(\beta, \gamma)_{j k}\ot  X(\gamma, \alpha)_{k i}, \label{eq:F-Del-1}\\
&&\Delta(\ol{X}(-\beta, -\alpha)_{j i})=\sum_{\gamma\in\Gamma_R}\sum_{k=1}^{m_\gamma} \ol{X}(-\beta, -\gamma)_{j k}\ot  \ol{X}(-\gamma, -\alpha)_{k i},  \label{eq:F-Del-2}\\
&&\varepsilon(X(\beta, \alpha)_{j i})= \varepsilon(\ol{X}(-\beta, -\alpha)_{j i})=\delta_{\alpha \beta}\delta_{i j},  \label{eq:F-epsi}\\
&&S(X(\beta, \alpha)_{j i})=\omega(\alpha -\beta, \alpha)\ol{X}(- \alpha, -\beta)_{i j}, \label{eq:F-S-1}\\
&&S(\ol{X}(- \alpha, -\beta)_{i j})=\omega(\alpha, \alpha-\beta) X(\beta, \alpha)_{j i},  \quad \forall  \alpha, \beta, i, j.
\label{eq:F-S-2}
\eeq
\end{lemma}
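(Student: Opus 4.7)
My plan is to reduce this to the Hopf $(\Gamma,\omega)$-algebra structure on $\ST(V)$ already established in Lemma \ref{lem:Hopf-T}, by identifying $\SF(V)$ with $\ST(V)$ and transporting the structure.

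First I would construct a surjective $(\Gamma,\omega)$-algebra homomorphism
$\Psi\colon F(V)\otimes \ol{F}(V)\to \ST(V)$
by sending $X(\beta,\alpha)_{ji}\mapsto t(\beta,\alpha)_{ji}$ and $\ol{X}(-\beta,-\alpha)_{ji}\mapsto \ol{t}(-\beta,-\alpha)_{ji}$. This extends unambiguously to the symmetric $(\Gamma,\omega)$-algebras $F(V)$ and $\ol{F}(V)$ because $\ST(V)\subset\U^0$ is graded commutative, so the images of the generators satisfy the required graded commutativity relations; on the tensor product it is a homomorphism for the braided multiplication \eqref{eq:multip-tensor}. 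One then checks that $\Psi$ kills the ideal $J_V$: the relation \eqref{eq:JV-1} maps to the identity \eqref{eq:G-a-4} of Lemma \ref{rem:T-def}, and \eqref{eq:JV-2} maps to the companion identity \eqref{eq:G-a-4-1} (its antipode image), both of which already hold in $\ST(V)$. Hence $\Psi$ descends to a surjective $(\Gamma,\omega)$-algebra homomorphism $\ol{\Psi}\colon \SF(V)\twoheadrightarrow \ST(V)$.

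Next I would show $\ol{\Psi}$ is injective, giving $\SF(V)\simeq\ST(V)$ as $(\Gamma,\omega)$-algebras. The key tool is the colour Howe duality of type $(\gl(V),\gl(V))$ from Theorem \ref{thm:HD-general}: both $F(V)=S_\omega(V^*\otimes V)$ and the sub-bi-algebra $T\subset\ST(V)$ decompose multiplicity-freely as $\bigoplus_{\lambda\in P_{M_+|M_-}} L^*_{\lambda^\sharp}\otimes L_{\lambda^\sharp}$ under $\gl(V)\times\gl(V)$, and $\ol{\Psi}$ is a $\gl(V)\times\gl(V)$-equivariant surjection between them, hence an isomorphism on the positive $\Z$-degree part by Schur's lemma. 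Symmetrically $\ol{F}(V)\xrightarrow{\sim}\ol{T}$ on the non-positive part. The mixed bi-degrees are then controlled by the defining relations of $J_V$, which express every ``mixed monomial'' $X\cdots\ol{X}\cdots$ as a finite sum of products $f\cdot\ol{g}$ with $f\in F(V)$ and $\ol{g}\in\ol{F}(V)$ of separately bounded bi-degree, already determined by the pure-degree components.

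Once $\ol{\Psi}\colon\SF(V)\xrightarrow{\sim}\ST(V)$ is established, I would define $\Delta$, $\varepsilon$ and $S$ on $\SF(V)$ as the pullbacks via $\ol{\Psi}$ of the corresponding structure maps on $\ST(V)$ from Lemma \ref{lem:Hopf-T}. The prescribed formulas \eqref{eq:F-Del-1}–\eqref{eq:F-S-2} follow immediately from \eqref{eq:co-t}–\eqref{eq:co-S-2} by matching generators, and all the Hopf $(\Gamma,\omega)$-algebra axioms (co-associativity, counit, antipode) transfer automatically. The main obstacle is the injectivity argument: one must carefully track the reduction of ``mixed'' bi-degree monomials modulo $J_V$ to separate pure $F(V)$ and $\ol{F}(V)$ parts. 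An alternative route, avoiding the isomorphism, is to verify well-definedness of $\Delta$, $\varepsilon$, $S$ on $\SF(V)$ directly via the matrix identity $t\,\ol{t}=\ol{t}\,t=\mathbf 1$ in $\SF(V)\otimes\End_\C(V)$ (which encodes \eqref{eq:JV-1}–\eqref{eq:JV-2}), combined with the matrix form $\Delta(t)=t^{(1)}t^{(2)}$ and $\Delta(\ol{t})=\ol{t}^{(1)}\ol{t}^{(2)}$; the delicate point there is the cancellation of $\omega$-factors from the braiding, which ultimately follows from the cocycle properties \eqref{eq:cycle-1}–\eqref{eq:cycle-3} of the commutative factor.
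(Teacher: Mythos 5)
Your primary route inverts the paper's logical order: the paper proves this lemma by direct verification and only \emph{afterwards} establishes $\SF(V)\simeq\ST(V)$ (Theorem \ref{thm:poly-constr}), using the present lemma as an input. That inversion is not automatically circular, since you only need an \emph{algebra} isomorphism in order to transport the Hopf structure from Lemma \ref{lem:Hopf-T}; but the whole burden then falls on your injectivity argument for $\ol{\Psi}$, and that argument has a genuine gap. Howe duality plus Schur's lemma gives injectivity on the images of $F(V)$ and of $\ol{F}(V)$ separately (this is essentially Lemma \ref{lem:bialg-iso}), but it says nothing about the mixed part of $\SF(V)$: a sum $\sum_i f_i\,\ol{g}_i$ with $f_i\in F(V)$ and $\ol{g}_i\in\ol{F}(V)$ can map to zero in $\ST(V)$ without any individual factor doing so, and your claim that the relations generating $J_V$ ``control'' the mixed bi-degrees is precisely the statement to be proved, not an argument for it. The paper closes exactly this gap only via the ring-of-fractions presentations $\SF(V)\simeq\Psi^{-1}F(V)$ and $\ST(V)\simeq\BD^{-1}T$ (Theorems \ref{thm:fract-F} and \ref{thm:fract-T}), which you would have to reprove for $\SF(V)$ before you could legitimately invoke the isomorphism — at which point the detour has cost far more than the lemma itself.

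The paper's actual proof is the ``alternative route'' you relegate to your final sentences. One checks directly that $\Delta$ and $\varepsilon$, defined on generators by \eqref{eq:F-Del-1}--\eqref{eq:F-epsi}, descend to the quotient by $J_V$ (the only real content of the bi-algebra claim), and then observes that the antipode axiom for $S$ as given by \eqref{eq:F-S-1}--\eqref{eq:F-S-2} is \emph{exactly} the matrix identity $X\,\ol{X}=\ol{X}\,X=1\ot 1$ in $\SF(V)\ot\End_\C(V)$, which holds by construction because $J_V$ is generated by \eqref{eq:JV-1} and \eqref{eq:JV-2}. This short, self-contained verification should be your main proof; the transport-of-structure argument should be discarded or deferred until after the fractions theorems are available.
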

\begin{proof}
It is quite obvious that $(\SF(V), \Delta, \varepsilon)$ is a $(\Gamma, \omega)$-bi-algebra. 
Now we show that the anti-homomorphism $S$ is an antipode.

Recall that $J_V$ is generated by the two types of elements given by \eqref{eq:JV-1} and \eqref{eq:JV-2}. This guarantees that the elements 
\[
\baln
X&=\sum_{\alpha, \beta, i, j}\omega(\beta, \alpha-\beta) X(\beta, \alpha)_{j i} \ot e(\beta, \alpha)_{j i} \quad  \text{and}\\
\ol{X} &=\sum_{\gamma, \zeta, k, \ell} \omega(\gamma -\zeta, \gamma-\zeta)\ol{X}(- \gamma, -\zeta)_{k \ell} \ot e(\zeta, \gamma)_{\ell k}
\ealn
\] 
are two-sided inverses of each other in $\SF(V)\ot\End_C(V)$, that is 
\beq \label{eq:F-inv}
X \ol{X}=\ol{X} X=1\ot 1. 
\eeq
Now with the given bi-algebra structure for $\SF(V)$, equation \eqref{eq:F-inv} is exactly the defining property of the antipode given by \eqref{eq:F-S-1} and \eqref{eq:F-S-2}.
\end{proof}

Now $V$ and $V^*$ form right $\SF(V)$ modules with the structure maps 
\[
\delta^{(\SF)}_V: V\lra V\ot \SF(V), \quad \delta^{(\SF)}_{V^*}: V^*\lra V\ot \SF(V)
\]
given by the maps introduced in Lemma \ref{eq:F-co-act}.  The constructions in Section \ref{sect:fractions} 
can be repeated for $F(V)$ and $\SF(V)$. 
We have the elements $D_+(X), D_-(X)\in F(V)$, which are analogues  of the elements  $D_+, D_-$ defined by 
\eqref{eq:D+} and \eqref{eq:D-} respectively. 
We can also obtain these elements by replacing $t(\alpha, \beta)_{i j}$  in $D_+$ and $D_-$ by $X(\alpha, \beta)_{i j}$ for any $\alpha, \beta, i, j$.  Let $\Psi=\{D_+(X)^i D_-(X)^j\mid i, j\in\Z_+\}$, and introduce the ring of fractions $\Psi^{-1}F(V)$. 
Then we have the following analogue of Theorem \ref{thm:fract-T}.
 \begin{theorem} \label{thm:fract-F}
The element $X\in \SF(V)\ot\End_\C(V)$ has a multiplicative inverse in $\Psi^{-1}F(V)\ot\End_\C(V)$. 
 Furthermore, $\SF(V)\simeq \Psi^{-1}F(V)$ as $(\Gamma, \omega)$-algebra.
\end{theorem}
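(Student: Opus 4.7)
The plan is to mirror, step for step, the proof of Theorem \ref{thm:fract-T}, with the triple $(t, \ol{t}, \BD)$ replaced by $(X, \ol{X}, \Psi)$ and $T, \ST(V)$ replaced by $F(V), \SF(V)$; the crucial geometric input—coaction on top and next-to-top wedge/symmetric powers—transfers verbatim because the co-action $\delta^{(\SF)}_V$ was designed (Lemma \ref{eq:F-co-act}) to be a $\gl(V)$-module map of exactly the same shape as $\delta_V$. Concretely, I would decompose $X = X_{++} + X_{+-} + X_{-+} + X_{--}$ in $F(V) \otimes \End_\C(V)$ using the splitting $V = V_+ \oplus V_-$, and observe that $X_{+-} + X_{-+}$ is still nilpotent: applied iteratively it produces monomials in $X(\beta,\alpha)_{ji}$ with $\alpha \in \Gamma_R^\mp, \beta \in \Gamma_R^\pm$ and ends up computing $p_+^{\otimes r}\circ\delta^{(\SF)}$ on $\wedge_\omega^r V_-$ or its mirror, both of which vanish for $r > M_+$ or $r > M_-$ respectively.

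Next I would prove the analogues of Assertions \ref{aser:inv+} and \ref{aser:inv-}: the element $X_{++}$ admits a two-sided inverse in $F(V)_{D_+(X)} \otimes \End_\C(V_+)$, and $X_{--}$ admits a two-sided inverse in $F(V)_{D_-(X)} \otimes \End_\C(V_-)$. The argument uses the same bases $\{f(\theta-\alpha)_i\}$ of $\wedge^{M_+-1}_\omega V_+$ and $\{\ol f(\alpha-\theta)_i\}$ of $\wedge^{M_+-1}_\omega V_+^*$ as in \eqref{eq:Lam+}, \eqref{eq:Lambar+}, yielding cofactor-type identities that involve only entries of $X$, not of $\ol{X}$. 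Putting the block inverses together exactly as in Theorem \ref{thm:fract-T},
\[
X^{-1} = \Bigl(1 + \sum_{n\ge 1}(-1)^n (X_{++}^{-1} X_{+-} X_{--}^{-1} + X_{-+})^n\Bigr)(X_{++}^{-1} + X_{--}^{-1}),
\]
a finite sum lying in $\Psi^{-1}F(V) \otimes \End_\C(V)$, establishing part (1).

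For the isomorphism $\SF(V) \simeq \Psi^{-1}F(V)$ I would construct maps in both directions. In one direction, the quotient map $F(V)\otimes \ol F(V) \twoheadrightarrow \SF(V)$ restricts to a $(\Gamma,\omega)$-algebra homomorphism $F(V) \to \SF(V)$; I would show that the images of $D_\pm(X)$ are invertible in $\SF(V)$ by reading off their explicit inverses from the entries of $\ol{X}$ via the cofactor identities above (these are available because $\ol{X}$ is, by the defining relations \eqref{eq:JV-1} and \eqref{eq:JV-2}, a two-sided inverse of $X$ in $\SF(V)\otimes\End_\C(V)$). The universal property of the localization then yields $\psi \colon \Psi^{-1}F(V) \to \SF(V)$. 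In the opposite direction, $\SF(V)$ is presented by the generators $X(\beta,\alpha)_{ji}$ and $\ol{X}(-\alpha,-\beta)_{ij}$ modulo the $\omega$-commutativity of $F(V) \otimes \ol{F}(V)$ together with \eqref{eq:JV-1}–\eqref{eq:JV-2}; sending $X(\beta,\alpha)_{ji}$ to itself and $\ol{X}(-\alpha,-\beta)_{ij}$ to the entry of $X^{-1}$ prescribed by the sign pattern of \eqref{eq:F-S-1}–\eqref{eq:F-S-2} gives a well-defined $\phi \colon \SF(V) \to \Psi^{-1}F(V)$, since all defining relations then hold by construction of $X^{-1}$. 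The compositions $\phi\circ\psi$ and $\psi\circ\phi$ act as the identity on generators and hence are the identity.

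The main obstacle I expect is the subtle well-definedness issue on the $\psi$ side: a priori the quotient by $J_V$ might collapse relations purely among the $X(\beta,\alpha)_{ji}$'s, in which case $F(V)$ would not embed into $\SF(V)$ and the images of $D_\pm(X)$ might even be zero. The clean way around this is to use $\phi$ first: since $\phi$ is manifestly surjective onto generators and carries the $X$'s injectively into $\Psi^{-1}F(V)$, the inclusion $F(V) \hookrightarrow \SF(V)$ is forced to be injective and the $D_\pm(X)$ are forced to be nonzerodivisors, so $\psi$ is well-defined. Once this ordering is arranged, the remaining verifications are formal.
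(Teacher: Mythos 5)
Your proposal is correct and follows exactly the route the paper intends: the paper omits the proof of Theorem \ref{thm:fract-F} entirely, stating only that it ``merely repeats the proof for Theorem \ref{thm:fract-T}'', and your argument is precisely that repetition with $(t,\ol{t},\BD)$ replaced by $(X,\ol{X},\Psi)$. Your extra care about the well-definedness of the two maps between $\SF(V)$ and $\Psi^{-1}F(V)$ (in particular, using the presentation of $\SF(V)$ to build $\phi$ first and thereby force injectivity of $F(V)\hookrightarrow\SF(V)$) supplies detail the paper leaves implicit, and is a sound way to organise the final step.
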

\begin{proof}
The proof is omitted as it merely repeats the proof for Theorem \ref{thm:fract-T} in the present context.
\end{proof}

\subsubsection{An alternative construction of $\ST(V)$}

We can introduce a $\Z$-grading to $\SF(V)$ with 
$X(\beta, \alpha)_{j i}$ having degree $1$, and $\ol{X}(-\gamma, -\alpha)_{k i}$ degree $-1$. 
Then $J_V$ is a homogeneous ideal. 
Clearly $F(V)\cap J_V=0$  and $\ol{F}(V)\cap J_V=0$, thus $F(V)$ and $\ol{F}(V)$ 
are $\Z$-graded sub $(\Gamma, \omega)$-bi-algebras of $\SF(V)$.
Denote by $F(V)_r$ the degree $r$ subspace of $F(V)$ with respect to the $\Z$-grading. 
Then $\Delta(F(V)_r)\subset F(V)_r\ot F(V)_r$, 
that is, $F(V)_r$ is a two-sided co-ideal of $F(V)$.

Recall from Section \ref{sect:coord-alg}.1 that $T$ and $\ol{T}$ are sub $(\Gamma, \omega)$-bi-algebras of $\ST(V)$.
Also recall that $\ST(V)$ admits a $\Z$-grating with $t(\beta, \alpha)_{j i}$ having degree 
$1$ and $\ol{t}(-\beta, -\alpha)_{j i}$ having degree $-1$. The degree $r$ subspace $T_r$ of $T$ is a
two-sided co-ideal, i.e.,  $\Delta(T_r)\subset T_r\ot T_r$ for all $r$. 

\begin{lemma}\label{lem:bialg-iso}
As $\Z$-graded $(\Gamma, \omega)$-bi-algebras, $T\simeq F(V)$ and $\ol{T}\simeq \ol{F}(V)$.  
\end{lemma}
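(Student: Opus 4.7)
\textbf{Proof plan for Lemma \ref{lem:bialg-iso}.}

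The plan is to construct a natural map $\varphi: F(V) \to T$ and prove it is a $\Z$-graded bi-algebra isomorphism by matching multiplicity-free decompositions. The same strategy will then be applied to $\ol{F}(V) \simeq \ol{T}$.

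\emph{Step 1: Construct $\varphi$.} Since $T \subset \ST(V) \subset \U^0$ and $\U^0$ is a graded commutative $(\Gamma, \omega)$-algebra, the matrix coefficients $t(\beta, \alpha)_{j i}$ graded commute. By the universal property of the symmetric $(\Gamma, \omega)$-algebra $F(V) = S_\omega(V^* \ot V)$, there is a unique associative $(\Gamma, \omega)$-algebra homomorphism
\[
\varphi: F(V) \lra T, \qquad X(\beta, \alpha)_{j i} \longmapsto t(\beta, \alpha)_{j i},
\]
which is homogeneous of degree $0$ in the $\Gamma$-grading and sends the $\Z$-degree $1$ generators to $\Z$-degree $1$ elements, hence respects the $\Z$-grading. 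It is surjective by construction.

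\emph{Step 2: Verify $\varphi$ is a bi-algebra homomorphism.} A direct comparison of \eqref{eq:co-t} with \eqref{eq:F-Del-1}, and of \eqref{eq:co-eps} with \eqref{eq:F-epsi}, shows
\[
(\varphi \ot \varphi) \circ \Delta_{F(V)} = \Delta_T \circ \varphi, \qquad \varepsilon_T \circ \varphi = \varepsilon_{F(V)},
\]
on generators; by multiplicativity this holds on all of $F(V)$.

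\emph{Step 3: Prove injectivity via representation theory.} Both sides carry commuting $\gl(V) \times \gl(V)$-module structures, and I claim $\varphi$ is equivariant. On $T$, these are the $L_\U$- and $R_\U$-actions of Section \ref{sect:L-R-acts}; on $F(V) = S_\omega(V^* \ot V)$, they come from the canonical $\gl(V)$-actions on the $V^*$- and $V$-tensor factors. Using the bijection \eqref{eq:mod-comod} between right $\U^0$-comodules and locally finite left $\U$-modules, one checks on generators that $\varphi$ intertwines the two actions (with the $V$-factor on $F(V)$ matching the $R_\U$-action on $T$ in accordance with Example \ref{eg:L-R-mod}). Now Theorem \ref{thm:P-W} gives
\[
T \;\simeq\; \bigoplus_{\lambda \in P_{M_+|M_-}} L_{\lambda^\sharp}^* \ot L_{\lambda^\sharp},
\]
as an $L_\U \ot R_\U$-module, and Lemma \ref{lem:at-infty} gives the same multiplicity-free decomposition
\[
F(V) \;\simeq\; \bigoplus_{\lambda \in P_{M_+|M_-}} L_{\lambda^\sharp}^* \ot L_{\lambda^\sharp}
\]
as $\gl(V) \times \gl(V)$-module. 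Since $\varphi$ is an equivariant surjection between two multiplicity-free modules with the same isotypic components, Schur's lemma forces $\varphi$ to be an isomorphism on each isotypic summand, hence on the whole space.

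\emph{Step 4: The dual statement.} An entirely analogous construction produces an algebra homomorphism $\ol\varphi: \ol F(V) \to \ol T$ sending $\ol{X}(-\alpha, -\beta)_{i j} \mapsto \ol{t}(-\alpha, -\beta)_{i j}$; the scalar $\omega(\beta,\beta)$ built into the definition of $\ol{X}(-\alpha,-\beta)_{i j}$ is exactly what is needed for $\ol\varphi$ to be $\gl(V) \times \gl(V)$-equivariant (cf. the remark after Lemma \ref{eq:F-co-act}). Combining the second decomposition of Theorem \ref{thm:P-W} with the analogue of Lemma \ref{lem:at-infty} for $S_\omega(V \ot V^*)$ then yields $\ol F(V) \simeq \ol T$ by the same Schur-lemma argument.

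\emph{Main obstacle.} The delicate point is the precise matching of the two $\gl(V)$-actions in Step 3: tracking which $\gl(V)$ factor on $F(V)$ (acting on $V^*$ versus $V$) corresponds to the $L_\U$- versus $R_\U$-action on $T$, and carefully verifying the commutative-factor bookkeeping when translating between the right $\U^0$-co-action on $V$ and the induced left $\U$-action via \eqref{eq:mod-comod}. Once this dictionary is fixed, equivariance is an elementary verification on generators, and the isomorphism follows automatically from multiplicity freeness.
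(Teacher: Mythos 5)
Your proposal is correct, but it takes a genuinely different route from the paper. The paper also starts from the surjective $\Z$-graded bi-algebra map $\psi:F(V)\to T$, $X(\beta,\alpha)_{ji}\mapsto t(\beta,\alpha)_{ji}$, but it proves injectivity by a commutative diagram relating the $F(V)$- and $T$-co-actions on $V^{\ot r}$: combined with the $\Sym_r$-action and the colour Schur--Weyl duality (Theorem \ref{thm:SW}), surjectivity of $\psi$ forces each $L_\lambda$ to be a \emph{simple} $F(V)$-co-module, whence $\psi(F^{(\lambda)})=T^{(\lambda)}$ with $\ker\psi\cap F^{(\lambda)}=0$, and the Peter--Weyl decomposition of $T$ finishes the dimension count. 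You instead invoke Lemma \ref{lem:at-infty} to get the multiplicity-free $\gl(V)\times\gl(V)$-decomposition of $S_\omega(V^*\ot V)$ and match it against Theorem \ref{thm:P-W}; an equivariant surjection between multiplicity-free semisimple modules with identical isotypic components is then forced to be an isomorphism (indeed, once the gradings are matched one can even skip equivariance and just compare $\dim F(V)_r$ with $\dim T_r$). Your argument is non-circular \emph{provided} Lemma \ref{lem:at-infty} is taken with its original inverse-limit proof via the $(\gl(V),\gl_N(\C))$ dualities; but note that the paper deliberately keeps Lemma \ref{lem:bialg-iso} independent of Lemma \ref{lem:at-infty}, because Corollary \ref{cor:coord-pf} later re-derives Lemma \ref{lem:at-infty} (and hence Theorem \ref{thm:HD-general}) from Theorems \ref{thm:P-W} and \ref{thm:poly-constr}. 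With your proof that re-derivation would become circular, so what your route buys in brevity it costs in the logical architecture of the paper. The equivariance check you flag as the main obstacle is indeed the only delicate bookkeeping, and your observation about the factor $\omega(\beta,\beta)$ in $\ol{X}(-\alpha,-\beta)_{ij}$ is exactly the right point to watch.
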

\begin{proof} Clearly there is a  surjective $\Z$-graded $(\Gamma, \omega)$-bi-algebra map $\psi:  F(V)\lra T$ such that 
$\psi(X(\beta, \alpha)_{j i})=t(\beta, \alpha)_{j i}$ for all $\alpha, \beta, i, j$. This map  
 renders the following diagram commutative for all $r$.
\[
\begin{tikzcd}
& V^{\ot r} \arrow[dl, swap, "\delta^{(\SF)}_{V^{\ot r}}"] \arrow[dr,   "\delta_{V^{\ot r}}"] &\\
V^{\ot r} \ot F(V)_r \arrow[rr, swap, "\id_{V^{\ot r}}\ot \psi"]	&&V^{\ot r}\ot T_r. 
\end{tikzcd}
\]

The action of $\Sym_r$ on $V^{\ot r}$ given by Corollary \ref{cor:sym} commutes with the co-actions of $F(V)$ and $T$. 
Under the $\Sym_r$-action and $T$-co-action, 
$V^{\ot r}=\sum_\lambda  S_\lambda\ot L_\lambda$ Theorem \ref{thm:SW}.(1), 
where the sum is over $\{\lambda \in P_{M_+|M_-}\mid\lambda\vdash r\}$. Let $T^{(\lambda)}\subset T_r$ be the subspace spanned by the matrix coefficients of $L_\lambda\subset V^{\ot r}$.
Note that under the $\Sym_r$ -action and $F(V)$-co-action, $V^{\ot r}$ has the same decomposition, 
but $L_\lambda$ should be considered as an $F(V)$-co-module.  Let $F^{(\lambda)}\subset F(V)_r$ be the subspace spanned by the matrix coefficients of $L_\lambda$.  
The commutativity of the above diagram together with the surjectivity of $\psi$ imply that $L_\lambda$ 
is a simple $F(V)$-co-module. Thus $\psi(F^{(\lambda)})=T^{(\lambda)}$ and 
$\ker(\psi)\cap F^{(\lambda)}=0$ for all $\lambda$. This shows that $\psi$ is an isomorphism. 

We can similarly prove the isomorphism between $\ol{T}$ and $\ol{F}(V)$.  
\end{proof}

We have the following result.

\begin{theorem}\label{thm:poly-constr}
There is a unique Hopf $(\Gamma, \omega)$-algebra isomorphism 
$
\SF(V) \stackrel{\simeq}\lra \ST(V)
$
obtained by extending the map $X(\beta, \alpha)_{j i}\mapsto t(\beta, \alpha)_{j i}$ and 
$\ol{X}(-\beta, -\alpha)_{j i}\mapsto \ol{t}(-\beta, -\alpha)_{j i}$ for all $\beta, \gamma, j, k.$
\end{theorem}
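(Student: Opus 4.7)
The plan is to construct the desired map in two stages: first produce a Hopf $(\Gamma, \omega)$-algebra homomorphism $\SF(V) \to \ST(V)$, then show it is bijective by using the ring-of-fractions descriptions supplied by Theorems \ref{thm:fract-T} and \ref{thm:fract-F}.

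First I would use Lemma \ref{lem:bialg-iso} to obtain $\Z$-graded $(\Gamma, \omega)$-bialgebra isomorphisms $\psi: F(V) \to T$ and $\ol{\psi}: \ol{F}(V) \to \ol{T}$ sending $X(\beta, \alpha)_{j i} \mapsto t(\beta, \alpha)_{j i}$ and $\ol{X}(-\beta, -\alpha)_{j i} \mapsto \ol{t}(-\beta, -\alpha)_{j i}$, and combine them into an associative $(\Gamma, \omega)$-algebra homomorphism $\Phi: F(V) \ot \ol{F}(V) \to \ST(V)$ by $\Phi(a \ot b) = \psi(a)\ol{\psi}(b)$. To see that $\Phi$ factors through the two-sided ideal $J_V$, observe that the defining relations \eqref{eq:JV-1} and \eqref{eq:JV-2} of $J_V$, once the $X$'s and $\ol{X}$'s are replaced by their images in $\ST(V)$, become exactly the identities \eqref{eq:G-a-4} and \eqref{eq:G-a-4-1} that already hold in $\ST(V)$. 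Hence $\Phi$ descends to a $(\Gamma, \omega)$-algebra homomorphism $\tilde\Phi: \SF(V) \to \ST(V)$, which is surjective because its image contains all generators of $\ST(V)$; uniqueness of any such extension is automatic since $\SF(V)$ is generated by the $X$'s and $\ol{X}$'s.

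Next I would verify that $\tilde\Phi$ respects the Hopf $(\Gamma, \omega)$-algebra structures. This is a direct comparison of formulas on generators: the expressions \eqref{eq:F-Del-1}--\eqref{eq:F-S-2} defining $\Delta$, $\varepsilon$ and $S$ on $\SF(V)$ have the same form as \eqref{eq:co-t}--\eqref{eq:co-S-2} defining them on $\ST(V)$, so $\tilde\Phi$ intertwines the co-multiplications, co-units and antipodes on generators, and therefore everywhere.

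The hard part will be injectivity of $\tilde\Phi$, and here I would exploit the localisation descriptions. Theorem \ref{thm:fract-T} gives $\ST(V) \simeq \BD^{-1}T$ with $\BD = \{D_+^i D_-^j \mid i, j \in \Z_+\}$, while Theorem \ref{thm:fract-F} gives $\SF(V) \simeq \Psi^{-1}F(V)$ with $\Psi = \{D_+(X)^i D_-(X)^j \mid i, j \in \Z_+\}$. By their very construction via \eqref{eq:D+} and \eqref{eq:D-}, the elements $D_\pm(X) \in F(V)$ are obtained from exactly the same formulas as $D_\pm \in T$, with $X$'s in place of $t$'s; so $\psi(D_\pm(X)) = D_\pm$, and $\psi$ carries the multiplicative set $\Psi$ bijectively onto $\BD$. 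The induced map of localisations $\Psi^{-1}F(V) \to \BD^{-1}T$ is then an isomorphism of $(\Gamma, \omega)$-algebras, and under the identifications of Theorems \ref{thm:fract-T} and \ref{thm:fract-F} it coincides with $\tilde\Phi$ on generators, hence equals $\tilde\Phi$. This yields injectivity and completes the proof.
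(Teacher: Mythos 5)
Your proposal is correct and follows essentially the same route as the paper: existence and uniqueness of the homomorphism from the defining relations, surjectivity from the generators, and injectivity by matching $\psi(D_\pm(X))=D_\pm$ and invoking the localisation isomorphisms of Theorems \ref{thm:fract-F} and \ref{thm:fract-T}. The only difference is that you spell out the factorisation through $J_V$ via \eqref{eq:G-a-4} and \eqref{eq:G-a-4-1}, which the paper leaves as "easy to see."
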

\begin{proof}
It is easy to see that there exists such a Hopf $(\Gamma, \omega)$-algebra homomorphism $\psi: \SF(V)\lra \ST(V)$, 
which is unique.  The homomorphism is clearly surjective, and its restrictions to 
$F(V)$ and $\ol{F}(V)$ have images $T$ and $\ol{T}$ respectively. They yield
bi-algebra isomorphisms $\psi|_F: F(V)\stackrel{\simeq}{\lra} T$ 
and $\ol\psi|_{\ol{F}}: \ol{F}(V)\stackrel{\simeq}{\lra} \ol{T}$, 
which are those described in Lemma \ref{lem:bialg-iso}.

In order to prove that $\psi$ is a Hopf $(\Gamma, \omega)$-algebra isomorphism, 
it suffices to show that it is a $\Gamma$-graded vector space isomorphism. Note that the 
bi-algebra isomorphism $\psi|_F: F(V)\stackrel{\simeq}{\lra} T$  
satisfies $\psi(D_\pm(X))=D_\pm$. Thus it extends to an isomorphism 
$\Psi^{-1}F(V)\stackrel{\simeq}\lra \BD^{-1}T$ between the rings of fractions. Now by Theorems \ref{thm:fract-F} and 
\ref{thm:fract-T}, we have a $(\Gamma, \omega)$-algebra isomorphism 
$
\SF(V)\simeq \ST(V),  
$
proving the theorem. 
\end{proof}

\subsubsection{Another proof of the generalised Howe duality of type 
$(\gl(V), \gl(V'))$}\label{sect:coord-HD}
We expound on Remark \ref{rmk:coord-HD} to provide a much 
simpler proof for 
the generalised Howe duality of type 
$(\gl(V), \gl(V'))$ (i.e., Theorem \ref{thm:HD-general}), 
by using Theorem \ref{thm:P-W}. 

It is important to note that the proof of Theorem \ref{thm:P-W}  only  
used the semi-simplicity of $V^{\ot r}$ and their duals as $\gl(V)$-modules for all $r$. 
It did not depend on Lemma \ref{lem:at-infty} or Theorem \ref{thm:HD-general}.

\begin{corollary}\label{cor:coord-pf}
Theorem \ref{thm:P-W} implies Lemma \ref{lem:at-infty}, 
and hence also  
the generalised Howe duality of type 
$(\gl(V(\Gamma, \omega)), \gl(V'(\Gamma, \omega)))$, i.e., Theorem \ref{thm:HD-general} 
\end{corollary}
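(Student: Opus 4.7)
The plan is to invoke Theorem \ref{thm:poly-constr} and Lemma \ref{lem:bialg-iso} to identify $S_\omega(V^*\ot V) = F(V)$ with $T$ as $(\Gamma, \omega)$-bi-algebras, transport the $L_\U \ot R_\U$-module structure of $T$ via this identification, and then apply Theorem \ref{thm:P-W}. The generalised Howe duality of type $(\gl(V), \gl(V'))$ will then follow from Lemma \ref{lem:at-infty} by the same truncation arguments that were used to prove Theorem \ref{thm:HD-general} from that lemma in the first place.

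More concretely, first I would verify that the bi-algebra isomorphism $\psi|_F: F(V) \stackrel{\simeq}\lra T$ of Lemma \ref{lem:bialg-iso}, which sends $X(\beta,\alpha)_{ji} \mapsto t(\beta,\alpha)_{ji}$, intertwines the two natural $\gl(V) \times \gl(V)$-module algebra structures. On the left, $S_\omega(V^* \ot V) = F(V)$ carries a $\gl(V) \times \gl(V)$-action induced from the natural action on $V^* \ot V$, where one factor of $\gl(V)$ acts on $V^*$ and the other on $V$. On the right, Theorem \ref{thm:P-W} equips $T$ with an $L_\U \ot R_\U$-action. By construction, the co-action $\delta^{(\SF)}_V: V \lra V \ot F(V)$ of Lemma \ref{eq:F-co-act} matches $\delta_V: V \lra V \ot T$ under $\psi|_F$, and dually for $V^*$. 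Translating these co-actions into left $\U$-actions via the correspondence \eqref{eq:mod-comod} and comparing with \eqref{eq:R-act}--\eqref{eq:L-act} then shows that one copy of $\gl(V)$ corresponds to $R_\U$ acting on $V$-co-module matrix coefficients, and the other to $L_\U$ acting via the antipode on $V^*$-co-module matrix coefficients.

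Once this compatibility is in place, Theorem \ref{thm:P-W} transports to the isomorphism
\[
S_\omega(V^*\ot V) \;\simeq\; \bigoplus_{\lambda \in P_{M_+|M_-}} L_{\lambda^\sharp}^* \ot L_{\lambda^\sharp}
\]
of $\gl(V) \times \gl(V)$-modules, which is exactly Lemma \ref{lem:at-infty}. To then deduce Theorem \ref{thm:HD-general}, I would repeat the truncation argument given in the existing proof: choose a $\Gamma$-graded vector space $\hat{V}$ containing both $V$ and $V'$ (as graded subspaces), so that $\gl(\hat{V}) \supset \gl(V), \gl(V')$; use the projections $I': \hat{V} \lra V$ and (a second) $I'': \hat{V} \lra V'$ to build a surjective $\gl(V) \times \gl(V')$-equivariant $(\Gamma,\omega)$-algebra homomorphism $\J: S_\omega(\hat{V}^* \ot \hat{V}) \lra S_\omega(V^* \ot V')$ on each symmetric degree; and apply $\J$ to the decomposition for $\hat{V}$ provided by the (just-established) Lemma \ref{lem:at-infty}. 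The image of $L_{\lambda^\sharp(\hat{M}_+|\hat{M}_-)}^*(\gl(\hat{V})) \ot L_{\lambda^\sharp(\hat{M}_+|\hat{M}_-)}(\gl(\hat{V}))$ under $\J$ is $L_{\lambda^\sharp(M_+|M_-)}^*(\gl(V)) \ot L_{\lambda^\sharp(M'_+|M'_-)}(\gl(V'))$ when $\lambda \in P_{M_+|M_-} \cap P_{M'_+|M'_-}$ and is zero otherwise, yielding the desired decomposition.

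The main obstacle, though really more of a careful bookkeeping exercise than a conceptual difficulty, is the first step: one must correctly track the commutative factor $\omega$ and the antipode $S$ as they appear in the definitions of $L$ and $R$ (see \eqref{eq:R}, \eqref{eq:L}), the co-action \eqref{eq:mod-comod}, the dual-module action \eqref{eq:dual-mod}, and the multiplication on the tensor product $F(V) \ot F(V)$ via \eqref{eq:multip-tensor}. Once these factors are shown to cancel consistently, so that $\psi|_F$ genuinely intertwines the module structures, the rest of the argument is purely formal and inherits its content from Theorem \ref{thm:P-W} and the truncation machinery already in place.
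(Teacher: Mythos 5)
Your proposal is correct and follows essentially the same route as the paper: identify $T\simeq F(V)=S_\omega(V^*\ot V)$ as $\gl(V)\times\gl(V)$-module algebras via Theorem \ref{thm:poly-constr}, read off Lemma \ref{lem:at-infty} from the Peter--Weyl decomposition of $T$ in Theorem \ref{thm:P-W}, and then recover Theorem \ref{thm:HD-general} by the truncation arguments already used in its original proof. The only difference is that you spell out the intertwining of the two module structures in more detail than the paper, which simply asserts the module-algebra isomorphism.
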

\begin{proof}
By Theorem \ref{thm:poly-constr}, we have the isomorphism of $\gl(V)\times \gl(V)$-module algebras
\[
T\simeq F(V)=S_\omega(V^*\ot V).
\] 
Hence statements on $T$ in Theorem \ref{thm:P-W} imply Lemma \ref{lem:at-infty}, 
the generalised Howe duality of type 
$(\gl(V(\Gamma, \omega)), \gl(V(\Gamma, \omega)))$.
Now Theorem \ref{thm:HD-general} 
follows from Lemma \ref{lem:at-infty}
by truncation arguments, as we have already seen.  
\end{proof}

\begin{remark}
Howe dualities for quantum general linear (super)groups 
were originally obtained \cite{LZ, WZ, Z03} following this ``coordinate algebra approach''. 
The method was also applied to prove a Howe duality 
for the quantum queer superalgebra \cite{CW20}.  
\end{remark}

\subsection{Borel-Weil construction for simple tensor modules}\label{sect:BW}

We give a construction of simple tensor representations in the spirit of the Borel-Weil theorem, 
which realises representations on spaces of holomorphic sections of line bundles on flag manifolds.  
The general methods  are adapted from  \cite{GoZ, Z98, Z04} on quantum (super)groups to the present context.

\subsubsection{The $\fh$-invariant subalgebra and finitely generated projective modules }

Recall from Section \ref{sect:roots} the Cartan and Borel subalgebras 
$\fh\subset \fb=\fh+\fn$ of $\gl(V)$. 
Let 
\[
\CA:=\ST(V)^{R_\fh}=\{ f\in \ST(V)\mid R_h(f)=0, \forall h\in\fh\},
\]
which is a graded commutative $(\Gamma, \omega)$-subalgebra of $\ST(V)$. 
Given any $\fh$-module $V^0$, let 
\[
V^0_\ST=\ST(V)\ot_\C V^0.
\]
We will consider various module structures of it.
Note that $V^0_\ST$ is 
a left and right $\CA$-module with the obvious actions 
\beq\label{eq:A-act}
\baln
&\CA\ot V^0_\ST\lra V^0_\ST, \quad  g s = \sum_i g f_i\ot v_i,\\
&V^0_\ST \ot \CA\lra V^0_\ST, \quad  s g= \sum_i \sum_i \omega(d(v_i), d(g))f_i g\ot v_i,
\ealn
\eeq
for all $g\in \CA$ and $s=\sum_i f_i\ot v_i\in V^0_\ST$.
Because of the graded commutativity of $\ST(V)$, 
the left and right module structures are related to each other by 
\[
s g= \sum_i \omega(d(v_i), d(g))f_i g\ot v_i 
= \omega(d(s), d(g)) g s.
\]
Thus it suffices to consider left $\CA$-modules only. We shall do this hereafter, 
and also refer to left $\CA$-modules simply as $\CA$-modules.

Also note that $V^0_\ST$ is a left $\U(\fh)$-module, with the action 
\[
\U(\fh)\ot V^0_\ST \lra V^0_\ST
\]
defined, for all 
$s=\sum_i f_i\ot v_i\in V^0_\ST$ and $h\in\fh$, by  
\beq\label{eq:h-act}
h\cdot s  =  \sum_i (R_h(f_i)\ot v_i + f_i \ot h\cdot v_i).
\eeq
We are interested in the subspace of $\fh$-invariants with respect to the above action:
\beq
\CS(V^0)=\left(\ST(V)\ot V^0\right)^\fh. 
\eeq
It is easy to see from \eqref{eq:h-act} that an element $s\in V^0_\ST$ belongs to $\CS(V^0)$ if and only if 
the following condition is satisfied. 
\beq\label{eq:homog}
(R_h\ot\id_{V^0})  s = - (\id_{\ST(V)}\ot h) s, \quad \forall h\in\fh.
\eeq

\begin{lemma} Retain notation above. 
The following statements are true: 
\begin{enumerate}\label{lem:sects}
\item $\CA$ is a left $\gl(V)$-module algebra with respect to the action $L$; 
\item $\CS(V^0)$ is a left $\gl(V)$-module, where any $X\in\gl(V)$ acts by $L_X\ot\id_{V^0}$;  
\end{enumerate}
\end{lemma}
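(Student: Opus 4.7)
Both parts will follow straightforwardly from Lemmas \ref{lem:R-L} and \ref{lem:mod-alg}, using the observation that $\fh$ sits entirely in degree $0$ (its generators $E(\alpha,\alpha)_{ii}$ have degree $\alpha-\alpha=0$), which trivialises the commutative factors in the $\omega$-commutation relation $R_h L_X = \omega(d(h), d(X)) L_X R_h$ of Lemma \ref{lem:R-L}.

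For part (1), the first step is to verify that $\CA$ is a $(\Gamma,\omega)$-subalgebra of $\ST(V)$. For $f, g \in \CA$ and $h \in \fh$, Lemma \ref{lem:mod-alg}(1) together with $\Delta(h) = h \otimes 1 + 1 \otimes h$ and $d(h)=0$ gives $R_h(fg) = R_h(f)\,g + f\,R_h(g) = 0$. Next, stability under the $L$-action: by Lemma \ref{lem:R-L} and $d(h)=0$, one has $R_h L_X = L_X R_h$, so $R_h(L_X(f)) = L_X(R_h(f))=0$ whenever $f \in \CA$, and thus $L_X(\CA)\subset\CA$. The module-algebra identity
\[
L_X(fg) = L_X(f)\, g + \omega(d(X), d(f))\, f\, L_X(g), \quad X \in \gl(V),\ f, g \in \CA,
\]
is then inherited by restriction from Lemma \ref{lem:mod-alg}(2), using that $\U(\gl(V))$ is co-commutative, so $\Delta'(X) = \Delta(X) = X \otimes 1 + 1 \otimes X$ for $X \in \gl(V)$.

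For part (2), given $s = \sum_i f_i \otimes v_i \in \CS(V^0)$ and $X \in \gl(V)$, I must check that $s' := (L_X \otimes \id_{V^0})(s)$ again satisfies the invariance condition \eqref{eq:homog}. The computation is direct:
\begin{align*}
(R_h \otimes \id_{V^0})(s')
&= \sum_i R_h L_X(f_i) \otimes v_i = \sum_i L_X R_h(f_i)\otimes v_i \\
&= (L_X \otimes \id_{V^0})(R_h \otimes \id_{V^0})(s) \\
&= -(L_X \otimes \id_{V^0})(\id_{\ST(V)} \otimes h)(s) \\
&= -(\id_{\ST(V)} \otimes h)(s'),
\end{align*}
where the second equality uses $R_h L_X = L_X R_h$, the third uses \eqref{eq:homog} for $s$, and the last equality follows because $L_X$ and $h$ act on different tensor factors. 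That $X \mapsto L_X \otimes \id_{V^0}$ defines a $\gl(V)$-module structure on $\CS(V^0)$ then follows from $L$ being a left action of $\U(\gl(V))$ (Lemma \ref{lem:R-L}):
\[
L_{[X,Y]} = L_{XY} - \omega(d(X), d(Y)) L_{YX} = L_X L_Y - \omega(d(X), d(Y)) L_Y L_X,
\]
an identity preserved by tensoring with $\id_{V^0}$.

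No serious obstacle is anticipated; the entire argument is bookkeeping with commutative factors, and the degree-$0$ nature of $\fh$ eliminates every factor that could cause trouble. The essential conceptual input is the $\omega$-commutation of the $L$- and $R$-actions established in Lemma \ref{lem:R-L}, without which even the stability of $\CA$ under $L$ would fail.
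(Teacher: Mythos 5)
Your proof is correct and follows exactly the route the paper takes: the paper's own proof is the one-line observation that both statements follow from the graded commutation of the left actions $L$ and $R$ (Lemma \ref{lem:R-L}), which is precisely your central ingredient, with the degree-zero nature of $\fh$ killing all commutative factors. You additionally spell out the module-algebra details via Lemma \ref{lem:mod-alg} and the co-commutativity of $\Delta$ on $\U(\gl(V))$, which the paper leaves implicit; this is a faithful and complete expansion of the same argument.
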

\begin{proof}
Both statements follow from the fact that the $\U(\gl(V))$-actions $L$ and $R$ commute by 
Lemma \ref{lem:R-L}. 
\end{proof}

Let $W$ be a right $\ST(V)$-co-module, thus is a left $\U(\gl(V))$-module, 
and we write $\pi_W: \U(\gl(V))\lra \End_\C(W)$ for the associated representation. 
Consider
$W_\ST=\ST(V)\ot W$, and let $\eta:   W_\ST\lra    W_\ST$ 
be the map defined by the following composition
\[
  W_\ST\stackrel{\id_  \ST\ot\delta}{\loongrightarrow}   W_\ST \ot  \ST \stackrel{\id_  \ST\ot\tau_{W,  \ST}}{\looongrightarrow} 
  \ST\ot  \ST\ot W \stackrel{\mu_0\ot\id_W}{\loongrightarrow}  W_\ST,
\]
where $\mu_0$ is the multiplication of $\ST(V)$, and $\id_\ST=\id_{\ST(V)}$.

\begin{lemma}\label{lem:h-homog}
Retain notation above. 
\begin{enumerate}
\item The map $\eta$ is an  $\CA$-module isomorphism with  
the inverse map $\eta^{-1}:   W_\ST\lra    W_\ST$ defined by the composition
\[
  W_\ST\stackrel{\id_  \ST\ot\delta}{\loongrightarrow}   W_\ST \ot  \ST \stackrel{\id_ \ST\ot(S^{-1}\ot\id_W)\circ \tau_{W,  \ST}}{\looongrightarrow} 
  \ST\ot  \ST\ot W \stackrel{\mu_0\ot\id_W}\lra   \ST\ot W=W_\ST.
\]
\item The maps $\eta$ and $\eta^{-1}$ have the following properties.
\beq
&&(R_x\ot\id_W) \eta = \eta (R\ot \pi_W)\Delta(x),   \label{eq:to-free} \\
&&
\eta^{-1}(R_x\ot\id_W) =(R\ot \pi_W)\Delta(x)\eta^{-1},
\quad \forall x\in \U(\gl(V)). \label{eq:from-free}
\eeq
\end{enumerate}
\end{lemma}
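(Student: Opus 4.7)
\smallskip
\noindent\emph{Proof plan for Lemma \ref{lem:h-homog}.}

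The first step is to unpack the categorical definitions of $\eta$ and $\eta^{-1}$ into explicit formulas on elements. For homogeneous $w \in W$ with $\delta(w) = \sum w_{(0)} \ot w_{(1)}$, the braiding and multiplication give
\[
\eta(f \ot w) = \sum \omega(d(w_{(0)}), d(w_{(1)})) \, f w_{(1)} \ot w_{(0)},
\]
\[
\eta^{-1}(f \ot w) = \sum \omega(d(w_{(0)}), d(w_{(1)})) \, f S^{-1}(w_{(1)}) \ot w_{(0)}.
\]
With these formulas in hand, the left $\CA$-linearity of both maps is immediate from the associativity of multiplication in $\ST(V)$: a left multiplication by $g \in \CA$ simply passes through the leftmost $\ST(V)$-slot and introduces no additional commutative factors.

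Next I establish invertibility. To show $\eta^{-1} \circ \eta = \id$, I apply $\eta^{-1}$ to $\eta(f \ot w)$ and use the co-associativity identity $(\delta \ot \id_{\ST(V)}) \circ \delta = (\id_W \ot \Delta) \circ \delta$ to rewrite the iterated co-action. After re-grouping with Sweedler notation $(\delta \ot \id)\delta(w) = \sum w_{(0)} \ot \Delta(w_{(1)})$, the inner factor that appears is $\sum w_{(1)_{(1)}} S^{-1}(w_{(1)_{(2)}})$, which collapses to $\varepsilon(w_{(1)}) \cdot 1$ by the defining identity of the antipode (valid for $S^{-1}$ because $S$ is bijective for $\ST(V)$ as a Hopf $(\Gamma,\omega)$-subalgebra of $\U^0$). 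The commutative factors combine cleanly via \eqref{eq:cycle-2}--\eqref{eq:cycle-3} to yield $f \ot w$. The identity $\eta \circ \eta^{-1} = \id$ is proved symmetrically using $\sum S^{-1}(w_{(1)_{(1)}}) w_{(1)_{(2)}} = \varepsilon(w_{(1)}) \cdot 1$.

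For part (2), I establish \eqref{eq:to-free} by direct computation. The right-hand side is obtained by first expanding $(R \ot \pi_W)\Delta(x)$ on $f \ot w$ using \eqref{eq:act-tensor-1}, then applying $\eta$. The key ingredients are: Lemma \ref{lem:mod-alg}(1), which expresses $R_x(f w_{(1)})$ as $\sum \omega(d(x_{(2)}), d(f)) R_{x_{(1)}}(f)\, R_{x_{(2)}}(w_{(1)})$; the identification $\pi_W(y)(w) = \omega(d(y), d(w)) \sum \langle w_{(1)}, y\rangle\, w_{(0)}$ dictated by the comodule-module correspondence \eqref{eq:mod-comod}; and the observation that after summing against the pairing, co-associativity converts $\sum R_{x_{(2)}}(w_{(1)}) \ot w_{(0)}$ into a form comparable to what appears on the left-hand side. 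Assembling these and cancelling $\omega$-factors via \eqref{eq:cycle-2}--\eqref{eq:cycle-3} produces the required equality. Equation \eqref{eq:from-free} then follows by composing both sides of \eqref{eq:to-free} with $\eta^{-1}$ on the left and right and using the invertibility established in part (1).

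The main obstacle is the bookkeeping of the commutative factors $\omega$. Every braiding $\tau$ introduces an $\omega$-factor depending on the degrees of the elements being swapped, and the co-multiplication of $\ST(V)$ is $\Gamma$-graded in a non-trivial way (cf. \eqref{eq:co-t}), so factors coming from different stages of the computation must be reconciled. I expect the calculation to be essentially mechanical, provided one adopts the discipline of bringing every $\omega$-factor into a canonical form using the cocycle relations before attempting cancellation; this step is delicate but follows the pattern of the proof of Lemma \ref{lem:mod-alg} already carried out in the paper.
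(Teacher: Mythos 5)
Your proposal is correct and follows essentially the same route as the paper's proof: the same explicit Sweedler-notation formulas for $\eta$ and $\eta^{-1}$, mutual inverseness via co-associativity of the co-action plus the antipode identity, $\CA$-linearity read off from the leftmost $\ST(V)$-slot, and part (2) from the module-algebra property of $R$ (Lemma \ref{lem:mod-alg}(1)), the naturality \eqref{eq:PAB} of the braiding, and the module--comodule correspondence, with \eqref{eq:from-free} deduced from \eqref{eq:to-free}. The only (immaterial) differences are that you expand the right-hand side of \eqref{eq:to-free} rather than the left, and your inner antipode factor has its Sweedler legs in the order $\sum w_{(1)_{(1)}}S^{-1}(w_{(1)_{(2)}})$ rather than the reversed order that the computation actually produces --- harmless here since $S^2=\id$ on $\ST(V)$.
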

\begin{proof}
Note that for any $f\in\ST$ and $v\in W$, we have 
\beq
\eta(f\ot v)= \sum_{(v)} \omega(d(v_{(1)}), d(v_{(2)})) f  v_{(2)} \ot v_{(1)}, \label{eq:eta}
\eeq
where we have used Sweedler's notation $\delta(v)=\sum_{(v)} v_{(1)}\ot v_{(2)}$. 
Also, the map defined by part (1) is given by 
\beq
\eta^{-1}(f\ot v)=\sum_{(v)} \omega(d(v_{(1)}), d(v_{(2)})) f  S^{-1}(v_{(2)}) \ot v_{(1)}. \label{eq:eta-inv}
\eeq
To show that it is indeed the inverse of $\eta$,  
let us temporarily write it as $\wt\eta$. 
Then
\[
\wt\eta(\eta(f\ot v))= \sum_{(v)} \omega(d(v_{(1)}), d(v_{(2)}))\wt\eta( f  v_{(2)} \ot v_{(1)}).
\]
Denote the right side by ${\rm RS}$. Using \eqref{eq:eta-inv}, we can express  it as 
\[
\baln
{\rm RS}&=\sum_{(v)} \omega(d(v_{(1)})+d(v_{(2)}), d(v_{(3)})) \omega(d(v_{(1)}), d(v_{(2)})) f  v_{(3)} S^{-1}(v_{(2)})\ot v_{(1)} \\
&=\sum_{(v)} \omega(d(v_{(1)}), d(v_{(3)})) \omega(d(v_{(1)}), d(v_{(2)})) f  S^{-1}(v_{(2)} S(v_{(3)} ))\ot    v_{(1)}.
\ealn
\]
Note that 
$\omega(d(v_{(1)}), d(v_{(3)})) \omega(d(v_{(1)}), d(v_{(2)})) = \omega(d(v_{(1)}), d(v)-d(v_{(1)}))$, and hence 
\[
\baln
{\rm RS}&=\sum_{(v)} \omega(d(v_{(1)}), d(v)-d(v_{(1)}))  f  S^{-1}(v_{(2)} S(v_{(3)} ))\ot v_{(1)}\\
&=\sum_{(v)} \omega(d(v_{(1)}), d(v)-d(v_{(1)}))  f \ot v_{(1)} \varepsilon(v_{(2)})\\
&= f \ot \sum_{(v)} v_{(1)} \varepsilon(v_{(2)}) = f\ot v.
\ealn
\]
Thus $\wt\eta(\eta(f\ot v))= f\ot v$.

Now
$
\eta(\wt\eta(f\ot v))=\sum_{(v)} \omega(d(v_{(1)}), d(v_{(2)})) \eta(f  S^{-1}(v_{(2)}) \ot v_{(1)}), 
$
where the right hand side is equal to 
\[
\baln
&\sum_{(v)} \omega(d(v_{(1)})+d(v_{(2)}), d(v_{(3)})) \omega(d(v_{(1)}), d(v_{(2)})) f  S^{-1}(v_{(3)}) v_{(2)} \ot v_{(1)} \\
&=\sum_{(v)} \omega(d(v_{(1)}), d(v_{(2)})+d(v_{(3)})) f  S^{-1}(S(v_{(2)})v_{(3)})\ot    v_{(1)}.
\ealn
\]
The same calculations as above show that this is equal to $f\ot v$. 
Hence $\eta(\wt\eta(f\ot v))= f\ot v$. This proves $\wt\eta=\eta^{-1}$ as we have claimed.

Let us consider $W_\ST$ as an $\CA$-module. Then by inspecting equations \eqref{eq:eta} and  \eqref{eq:eta-inv}, we easily see that $\eta$ is an $\CA$-module automorphism. 

This completes the proof of part (1).

To prove part (2), note that for any $x\in \U(\gl(V))$, we have 
\[
\baln
(R_x\ot\id_W)\eta&= (R_x\ot\id_W) (\mu_0\ot \id_W)(\id_\ST\ot\tau_{W, \ST})(\id_\ST\ot \delta).
 \ealn 
\]
As $\ST$ is an $R_\U$-module algebra (see Lemma \ref{lem:mod-alg}.(1)), the right hand side can be re-written as 
\[
\sum (\mu_0\ot \id_W)(R_{x_{(1)}}\ot R_{x_{(2)}} \ot \id_W) (\id_\ST\ot\tau_{W, \ST})(\id_\ST\ot \delta).
\]
Using the property \eqref{eq:PAB} of $\tau_{W, \ST}$, we can cast this to
\beq\label{eq:express}
\sum (\mu_0\ot \id_W)(\id_\ST\ot\tau_{W, \ST}) (R_{x_{(1)}} \ot \id_W \ot R_{x_{(2)}}) (\id_\ST\ot \delta).
\eeq
It follows the definition  of $R_x$ that 
\[
\baln
&\sum (R_{x_{(1)}} \ot \id_W \ot R_{x_{(2)}}) (\id_\ST\ot \delta)\\
 &=\sum  (R_{x_{(1)}} \ot \id_W \ot \id_\ST) (\id_\ST\ot \delta)(\id_\ST\ot \pi_W(x_{x_{(2)}}))\\
  &= \sum  (\id_\ST\ot \delta)(R_{x_{(1)}}\ot \pi_W(x_{(2)})).
\ealn 
\]
Hence 
\[
\baln
\text{\eqref{eq:express}}&=\sum (\mu_0\ot \id_W)(\id_\ST\ot\tau_{W, \ST})  (\id_\ST\ot \delta)(R_{x_{(1)}}\ot \pi_W(x_{(2)}))\\
&= \eta \sum R_{x_{(1)}}\ot \pi_W(x_{(2)}), 
\ealn
\]
where the second equality follows from the definition of $\eta$. 

This shows that $(R_x\ot\id_W)\eta= \eta \sum R_{x_{(1)}}\ot \pi_W(x_{(2)})$, which is 
equation  \eqref{eq:to-free}. 

Equation  \eqref{eq:from-free} follows \eqref{eq:to-free}, completing the proof of part (2). 
\end{proof}

The lemma above enables us to prove part (2) of the theorem below. 
\begin{theorem} \label{thm:proj}
Retain notation above. 
The following statements are true. 
\begin{enumerate}
\item $\CS(V^0)$ is an $\CA$-submodule of $\ST(V)\ot V^0$ (cf. \eqref{eq:A-act}).

\item Assume that $W$ is a right $\ST(V)$-co-module, then $\CS(W)\simeq \CA\ot_\C W$ as $\CA$-module. 

\item Assume that $V^0$ is a finite dimensional weight module for $\fh$ with weights belonging to 
$\fh^*_\Z=\sum_{\alpha\in \Gamma_R}\sum_{i=1}^{m_\alpha}\Z\varepsilon(\alpha)_i$. 
Then $\CS(V^0)$ is a finitely generated projective $\CA$-module.
\end{enumerate}
\end{theorem}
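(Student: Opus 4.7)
The plan is to address the three parts in sequence, using the module-algebra identity of Lemma \ref{lem:mod-alg}(1) and the intertwining properties of $\eta$ from Lemma \ref{lem:h-homog} as the main technical input.

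For part (1), I will verify the defining condition \eqref{eq:homog} directly. Since every $h \in \fh$ has $\Gamma$-degree $0$, Lemma \ref{lem:mod-alg}(1) reduces to the ordinary Leibniz rule $R_h(gf) = R_h(g)f + g R_h(f)$ on $\ST(V)$. For $g \in \CA$ and $s = \sum_i f_i \otimes v_i \in \CS(V^0)$, this gives $(R_h \otimes \id_{V^0})(gs) = g(R_h \otimes \id_{V^0})(s) = -g(\id_\ST \otimes h)(s) = -(\id_\ST \otimes h)(gs)$, confirming that $gs \in \CS(V^0)$; the right $\CA$-action follows by graded commutativity.

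For part (2), the key observation is that Lemma \ref{lem:h-homog}(2) applied to $x = h \in \fh$, combined with $\Delta(h) = h \otimes 1 + 1 \otimes h$, yields the operator identity
\[
(R_h \otimes \id_W)\circ \eta \;=\; \eta \circ \bigl(R_h \otimes \id_W + \id_\ST \otimes \pi_W(h)\bigr)
\]
on $W_\ST$. The operator on the right is exactly the total $\fh$-action defining $\CS(W)$ via \eqref{eq:homog}, while that on the left has kernel $\bigcap_{h \in \fh} \ker(R_h \otimes \id_W) = \CA \otimes_\C W$ (using $\CA = \ker R_\fh$ and that $W$ is free over $\C$). Since $\eta$ is an $\CA$-automorphism of $W_\ST$ by Lemma \ref{lem:h-homog}(1), its restriction furnishes the desired $\CA$-module isomorphism $\CS(W) \xrightarrow{\sim} \CA \otimes_\C W$.

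For part (3), the strategy is to realise $V^0$ as an $\fh$-direct summand of a finite-dimensional right $\ST(V)$-comodule $W$, and then apply part (2) together with the (immediate) fact that the functor $\CS(-) = (\ST(V) \otimes_\C -)^\fh$ carries direct sums of $\fh$-modules to direct sums of $\CA$-modules. If such a $W$ is found, then $\CS(V^0)$ is an $\CA$-direct summand of $\CS(W) \simeq \CA \otimes_\C W$, which is a finitely generated free $\CA$-module; hence $\CS(V^0)$ is finitely generated and projective. To construct $W$, I will use that $V$ and $V^*$ carry $\fh$-weights $\{\pm \varepsilon(\alpha)_i\}$, which generate $\fh^*_\Z$ as an abelian group. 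Consequently every weight $\mu \in \fh^*_\Z$ occurring in $V^0$ appears in some tensor product $V^{\otimes a(\mu)} \otimes (V^*)^{\otimes b(\mu)}$; summing suitably many copies over the finite set of weights of $V^0$ yields a finite-dimensional right $\ST(V)$-comodule $W$ whose weight-$\mu$ subspace has dimension at least $\dim (V^0)_\mu$ for every $\mu$. Weight-by-weight selection of sub-weight-spaces, combined with the semisimplicity of the $\fh$-weight decomposition, exhibits $V^0$ as a direct $\fh$-summand of $W|_\fh$. The step requiring the most care will be this explicit $\fh$-summand construction inside $W$; once it is in place the rest of the argument is formal.
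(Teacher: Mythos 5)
Your proposal is correct and follows essentially the same route as the paper: part (1) via the Leibniz rule for $R_h$ with $R_h(g)=0$, part (2) by restricting the $\CA$-automorphism $\eta$ using the intertwining identity from Lemma \ref{lem:h-homog}(2) specialised to $x=h$, and part (3) by exhibiting $V^0$ as an $\fh$-direct summand of a comodule and invoking part (2). The only difference is that you spell out the construction of the comodule $W$ from tensor powers of $V$ and $V^*$, where the paper simply asserts its existence; your added detail is sound.
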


\begin{proof}
To prove part (1), note that for any $g\in \CA$ and $s=\sum_i f_i\ot v_i\in \CS$, 
\[
\baln
(R_h\ot\id_{V^0})  (g s) &= \sum_i R_h( g f_i)\ot v_i = \sum_i g R_h(f_i)\ot v_i \\
&=- (\id_{\ST(V)}\ot h) (g s), \quad \forall h\in \fh. 
\ealn
\] 
Hence $gs\in \CS(V^0)$ by \eqref{eq:homog}.

We now prove part (2) by adapting methods of \cite{GoZ, Z98}. 
Given any $\zeta\in \CS(W)$, it follows \eqref{eq:to-free} that 
\[
(R_h\ot\id_W)\eta(z) = \eta(R_h\ot\id_W+ \id_{\ST(V)}\ot\pi_W(h))\zeta=0, \quad \forall h\in\fh.  
\]
This shows that $\eta(z)\in \CA\ot W$, and 
hence $\eta(\CS(W))\subset \CA\ot W$. 

Also, if $\xi\in  \CA\ot W$, then by \eqref{eq:from-free}, 
\[
(R_h \ot \id_W + \id_{\ST(V)}\ot \pi_W(h))\eta^{-1}(\xi)  =\eta^{-1}(R_h\ot\id_W) = 0, \quad \forall h\in\fh.
\]
Hence $\eta^{-1}(\xi) \in \CS(W)$, and this shows that $\eta^{-1}(\CA\ot W)\subset \CS(W)$. Hence follows part(2) of the theorem. 

Part (3) is a consequence of part (2). 
Note that all finite dimensional weight modules for $\fh$ are semi-simple, in particular, 
right $\ST(V)$-co-modules restrict to semi-simple $\fh$-modules. 
Under the given conditions of $V^0$, there always exists a  right $\ST(V)$-co-module $W$ such that 
$W=V^0\oplus V^0_\top$ as $\fh$-module.  Thus $\CS(W)=\CS(V^0)\oplus \CS(V^0_\top)$. 
It immediately follows from part (2) that
$\CS(W)\simeq \CA\ot W$. Hence $\CS(V^0)$ is a finitely generated projective $\CA$-module. 
\end{proof}

\begin{remark}\label{rem:ncg}
In the general spirit of non-commutative geometry \cite{C, La},  
one may think of $\CA$ as the coordinate algebra of some non-commutative analogue of a flag variety, 
and finitely generated projective $\CA$-modules  as the spaces of sections of non-commutative vector bundles on the non-commutative flag variety. 
\end{remark}

\subsubsection{Constructing tensor representations}
We realises simple tensor modules on sections of the non-commutative line bundles alluded to in  Remark \ref{rem:ncg} 
by mimicking the classic Borel-Weil theorem. 
This will be done by adapting the techniques developed in \cite{GoZ, Z98} 
for quantum (super)groups to the present setting.  

We shall assume that $V^0$ is a module for the Borel subalgebra $\fb$, with the Cartan subalgebra $\fh\subset \fb$ acting semi-simply. Denote by $\pi^0: \U(\fb)\lra \End_\C(V^0)$ the associated $\U(\fb)$-representation. Then $\CS(V^0)$ admits an $\fn$-action 
$
\partial: \fn\times \CS(V^0)\lra \CS(V^0)$, $(X, \zeta)\mapsto \partial_X(\zeta), 
$
defined by 
\beq
\phantom{XXX}
\partial_X(\zeta)= (R_X\ot \id_{V^0} + \id_{\ST(V)}\ot \pi^0(X))\zeta, \quad \forall X\in\fn, \zeta\in \CS(V^0). 
\eeq

Consider the following subspaces of $\CS(V^0)$
{\small 
\[
\begin{tikzcd}
\CO(V^0)   \arrow[hook,dr]\\
&\Gamma(V^0)  \arrow[hook,r] &\CS(V^0).\\
\ol{\CO}(V^0)\arrow[hook,ur]
\end{tikzcd}
\]
}
which are defined by 
\[
\baln
\Gamma(V^0)=\CS(V^0)^\fn, \quad
\CO(V^0)= (T\ot V^0)^\fn, \quad
\ol{\CO}(V^0)=(\ol{T}\ot V^0)^\fn,
\ealn
\]
with respect to the action $\partial_\fn$ of $\fn$. 
Since $\partial_\fn$ commutes with the left $\U(\gl(V))$-action $L_{\U(\gl(V))}\ot\id_{V^0}$ 
(cf. Lemma \ref{lem:sects}.(2)), each subspace is a left $\U(\gl(V))$-module. 

The $\U(\gl(V))$-module structures of $\CO(V^0) $ and $\ol{\CO}(V^0)$ have a neat description for simple $\fb$-modules.  
If $V^0\ne 0$ is a simple $\U(\fb)$-module, then $\dim V^0=1$ and $\fn\cdot V^0=0$. There is some $\mu\in\fh^*$ such that $h\cdot v=\mu(h) v$ for all $h\in \fh$ and $v\in V^0$. In this case, we denote $V^0$ by $L^0_\mu$. 

Recall from Theorem \ref{thm:P} that a simple $\gl(V)$-module $L_\nu$ is finite dimensional if and only if its highest weight $\nu$ 
belongs to $\Lambda_{M_+|M_-}\subset \fh^*$. 
In this case,  the dual module $L_\nu^*$ is again a highest weight module, 
whose highest weigh will be denoted by $\nu^*$.

\begin{theorem} \label{thm:BW}
Let $L^0_\mu$ be a simple $\fb$-module. Then as $\U(\gl(V))$-modules, 
\beq
\CO(L^0_\mu)=\left\{
\begin{array}{l l}
L_{\lambda^\sharp}^*, & \text{if $\mu=-\lambda^\sharp$ for $\lambda\in P_{M_+|M_-}$},\\
0, &\text{otherwise},
\end{array}
\right.  \label{eq:CO}\\
\ol{\CO}(L^0_\mu)=\left\{
\begin{array}{l l}
L_{\lambda^\sharp}, & \text{if $\mu=-{\lambda^\sharp}^*$ for $\lambda\in P_{M_+|M_-}$},\\
0, &\text{otherwise}.
\end{array}
\right.
\eeq
\end{theorem}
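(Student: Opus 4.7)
The plan is to reduce both assertions to the Peter-Weyl type decomposition of Theorem \ref{thm:P-W}, together with a straightforward identification of the induced action of $\fn$ and $\fh$ on a tensor factor of the form (matrix coefficients) $\otimes$ (one-dimensional simple $\fb$-module).

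First I would unpack the data. Since $L^0_\mu$ is a simple $\U(\fb)$-module, it is one-dimensional with $\pi^0(h)=\mu(h)\id$ for $h\in\fh$ and $\pi^0(X)=0$ for $X\in\fn$. Therefore, on the space $T\otimes L^0_\mu$, the $\fh$-action used in defining $\CS$ becomes $(R_h+\mu(h)\id)\otimes\id$, while the $\fn$-action $\partial_X$ reduces to $R_X\otimes\id$ for $X\in\fn$. Consequently,
\[
\CO(L^0_\mu)=(T\otimes L^0_\mu)^{\fn\oplus\fh}
\simeq\bigoplus_{\lambda\in P_{M_+|M_-}}\bigl(T^{(\lambda)}\bigr)^{R_\fn}_{R_\fh=-\mu}\otimes L^0_\mu,
\]
where the subscript indicates the $R_\fh$-weight subspace of $R_\fn$-invariants. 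Thus the problem is purely one about the $R_\U$-module structure on each Peter-Weyl block $T^{(\lambda)}$.

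Next I would invoke Theorem \ref{thm:P-W} to identify $T^{(\lambda)}\simeq L_{\lambda^\sharp}^{*}\otimes L_{\lambda^\sharp}$ as an $L_\U\otimes R_\U$-module, with $R_\U$ acting on the second factor as the defining $\gl(V)$-action on $L_{\lambda^\sharp}$ and $L_\U$ acting on the first factor as the $\gl(V)$-action on $L_{\lambda^\sharp}^{*}$. Under this identification, the $R_\fn$-invariants in the second factor form the highest weight line of $L_{\lambda^\sharp}$ (one-dimensional), and its $R_\fh$-weight is precisely $\lambda^\sharp$. Hence the summand for $\lambda$ contributes a non-zero space if and only if $\lambda^\sharp=-\mu$, in which case the contribution is exactly $L_{\lambda^\sharp}^{*}$ with its natural $L_\U$-module (i.e.\ $\gl(V)$-module) structure, tensored with the one-dimensional space $L^0_\mu$. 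Since the condition $-\mu=\lambda^\sharp$ admits a solution $\lambda\in P_{M_+|M_-}$ exactly when $-\mu$ is one of the highest weights $\lambda^\sharp$ appearing in Theorem \ref{thm:H-wts} (equivalently, $\mu=-\lambda^\sharp$ for a unique such $\lambda$), the first assertion \eqref{eq:CO} follows.

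For the $\ol{\CO}$ statement I would run the same argument verbatim with $\ol T$ in place of $T$, using the second half of Theorem \ref{thm:P-W}: $\ol{T}^{(\lambda)}\simeq L_{\lambda^\sharp}\otimes L_{\lambda^\sharp}^{*}$ as $L_\U\otimes R_\U$-module. Now the $R_\fn$-invariant line in $L_{\lambda^\sharp}^{*}$ carries $R_\fh$-weight $(\lambda^\sharp)^{*}$, so the condition becomes $\mu=-(\lambda^\sharp)^{*}$, and in that case the surviving $L_\U$-factor is $L_{\lambda^\sharp}$.

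The main technical obstacle I anticipate is a bookkeeping one rather than a conceptual one: verifying, in the $(\Gamma,\omega)$-graded setting, that the $R_\U$-structure on the second tensor factor of $T^{(\lambda)}$ (respectively $\ol T^{(\lambda)}$) is genuinely the module structure of $L_{\lambda^\sharp}$ (respectively $L_{\lambda^\sharp}^{*}$) without any twist by the commutative factor, so that ``annihilated by $R_\fn$ and $R_\fh$-weight $=-\mu$'' translates into ``highest weight vector with highest weight $-\mu$'' in the standard sense. This is exactly the content of Example~\ref{eg:L-R-mod} combined with the Peter-Weyl decomposition, but it needs to be stated carefully for signs from $\omega$ in the pairing $\langle\ ,\ \rangle$ between $L_{\lambda^\sharp}^{*}$ and $L_{\lambda^\sharp}$ not to disturb the weight/highest-weight bookkeeping. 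Once this identification is nailed down, the proof reduces to the two-line argument above.
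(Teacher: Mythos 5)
Your proposal is correct and follows essentially the same route as the paper's proof: both reduce to the Peter-Weyl decomposition of Theorem \ref{thm:P-W}, identify the $\fn$-invariants of the right factor $L_{\lambda^\sharp}$ (resp. $L_{\lambda^\sharp}^*$) with its one-dimensional highest weight line, and match its $\fh$-weight against $-\mu$. The paper phrases this as $(T\ot L^0_\mu)^\fb \simeq \bigoplus_\lambda L_{\lambda^\sharp}^*\ot (L_{\lambda^\sharp}^\fn\ot L^0_\mu)^\fh$, which is exactly your computation, and the identification of the $R_\U$-structure that you flag as the main technical point is indeed supplied by Theorem \ref{thm:P-W} together with Example \ref{eg:L-R-mod}.
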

\begin{proof}
Consider $\CO(L^0_\mu)$. It follows from the statements on $T$ in Theorem \ref{thm:P-W} that
\[
\baln
\CO(L^0_\mu) &= (T\ot L^0_\mu)^\fb \simeq  \bigoplus_{\lambda\in P_{M_+|M_-}} L_{\lambda^\sharp}^*\ot (L_{\lambda^\sharp}\ot L^0_\mu)^\fb\\
&= \bigoplus_{\lambda\in P_{M_+|M_-}} L_{\lambda^\sharp}^*\ot (L_{\lambda^\sharp}^\fn\ot L^0_\mu)^\fh.
\ealn
\]
Since $L_{\lambda^\sharp}^\fn$ is the space of highest weight vectors $(L_{\lambda^\sharp})_{\lambda^\sharp}$, which is $1$-dimensional, 
$
((L_{\lambda^\sharp})_{\lambda^\sharp}\ot L^0_\mu)^\fh=\left\{
\begin{array}{l l}
\C, & \text{if $\mu=-\lambda^\sharp$},\\
0, &\text{otherwise}.
\end{array}
\right.
$
This immediately leads to \eqref{eq:CO}.  

We can similarly prove the claim for $\ol{\CO}(L^0_\mu)$ by using Theorem \ref{thm:P-W}. 
\end{proof}

Note that an analogous result  was known \cite{Z98, Z04} for the general linear quantum supergroup at generic $q$.

\subsection{Remarks on a group functor}\label{sect:group}

Denote by $\Alg$ the category of finitely generated graded commutative associative 
$(\Gamma, \omega)$-algebras with identity over $\C$. Let $\Set$ be the category of sets.
\begin{definition}
Given any graded commutative Hopf $(\Gamma, \omega)$-algebra $H$, define the functor
\beq
G=\Hom_\Alg(H,  -): \Alg\lra \text{\bf Set}.
\eeq  
\end{definition}
Recall that  $(\Gamma, \omega)$-algebra homomorphisms 
are homogeneous of degree $0$.  

Because of the  graded commutativity all objects of $\Alg$, 
the Hopf $(\Gamma, \omega)$-algebra structure
of $H$ induces the following natural transformations.
\beq
&&\mu_0:  G \times G  \lra G , \quad  \mu_0(\varphi, \psi)(f)= \sum_{(f)}\varphi(f_{(1)}) \psi(f_{(2)}), \\
&&S_0: G \lra G , \quad  S_0(\varphi)(f)= \varphi(S(f)),\quad 
\forall f\in H,  
\eeq
where Sweedler's notation $\Delta(f)=\sum_{(f)}f_{(1)} \ot f_{(2)}$ is 
used for the co-product of $f$.  
Let us prove that $\Im(\mu_0)$ and $\Im(S_0)$ are indeed contained in $G$. 
\begin{proof}
Let $\varphi,  \psi\in G$. For all $f, g\in H$, we have 
\[
\baln
S_0(\varphi)(f g)&= \varphi(S(f g))=\omega(d(f), d(g)) \varphi(S(g) S(f)) \\
&= \omega(d(f), d(g)) \varphi(S(g)) \varphi(S(f)). 
\ealn
\]
Because of the graded commutativity of objects in $\Alg$, 
the last line of the above equation can be re-written as 
\[
\baln
\varphi(S(f)) \varphi(S(g))  =S_0(\varphi)(f) S_0(\varphi)(g).
\ealn
\]
Hence $S_0(\varphi)(f g)=S_0(\varphi)(f) S_0(\varphi)(g)$, proving that $S_0(\varphi)\in G$. 

Using
$\Delta(fg)=\sum \omega( d(f_{(2)}),  d(g_{(1)})) f_{(1)} g_{(1)} \ot f_{(2)} g_{(2)}$, 
we obtain
\[
\mu_0(\varphi, \psi) (f g)=\sum \omega( d(f_{(2)}),  d(g_{(1)})) \varphi(f_{(1)})  \varphi(g_{(1)})\psi(f_{(2)}) \psi(g_{(2)}).
\]
Again by the graded commutativity of objects in $\Alg$, we can re-write this as 
\[
\baln
\sum \varphi(f_{(1)})  \psi(f_{(2)}) \varphi(g_{(1)}) \psi(g_{(2)}) 
=\mu_0(\varphi, \psi)(f) \mu_0(\varphi, \psi)(g), 
\ealn
\]
Hence $\mu_0(\varphi, \psi) (f g)=\mu_0(\varphi, \psi)(f) \mu_0(\varphi, \psi)(g)$, 
proving that $\mu_0(\varphi, \psi)\in G$.
\end{proof}

The following result generalises to the $\Gamma$-graded setting a fundamental fact in the theories of algebraic groups (see, e.g., \cite[\S4]{A}) and algebraic supergroups (see, e.g., \cite[\S 10, 11]{CCF}).
Its proof is very easy.
\begin{lemma} The set $G$ forms a group 
with multiplication $\mu_0$, inverse map $S_0$, and identity $\varepsilon$ (the co-unit of $H$). 
\end{lemma}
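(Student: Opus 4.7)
The plan is to verify the three group axioms for $(G,\mu_0,S_0,\varepsilon)$—associativity, the two-sided identity law, and the two-sided inverse law—by reducing each to the corresponding Hopf $(\Gamma,\omega)$-algebra axiom of $H$ and then transferring it into a target algebra $R\in\Alg$ via the homomorphism property of elements of $G=\Hom_\Alg(H,R)$. The philosophy is entirely standard in the theory of affine (super)group schemes: the operations on $G$ are the transposes of the Hopf operations of $H$, and associativity, the identity law, and the inverse law correspond respectively to co-associativity, the co-unit axiom, and the antipode axiom of $H$. The preceding paragraphs have already checked that $\mu_0(\varphi,\psi)$ and $S_0(\varphi)$ are again degree-zero $(\Gamma,\omega)$-algebra homomorphisms, hence lie in $G$, so the well-definedness of the operations is in place.

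For associativity, I would fix $\varphi,\psi,\chi\in G$, $f\in H$, and expand both $\mu_0(\mu_0(\varphi,\psi),\chi)(f)$ and $\mu_0(\varphi,\mu_0(\psi,\chi))(f)$ in Sweedler notation. Applying co-associativity $(\Delta\ot\id)\Delta=(\id\ot\Delta)\Delta$ identifies the two iterated co-products, so both expressions collapse to the same triple sum $\sum\varphi(f_{(1)})\psi(f_{(2)})\chi(f_{(3)})$ inside $R$, giving equality. For the identity law, one reads the element $\varepsilon\in G$ as the composition $H\xrightarrow{\varepsilon}\C\hookrightarrow R$; then $\mu_0(\varepsilon,\varphi)(f)=\sum\varepsilon(f_{(1)})\varphi(f_{(2)})$ becomes $\varphi(f)$ upon pulling the scalars $\varepsilon(f_{(1)})$ through $\varphi$ and applying the co-unit axiom $(\varepsilon\ot\id)\Delta=\id$, with the right identity law following by the symmetric argument.

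For inverses, I would compute
\[
\mu_0(S_0(\varphi),\varphi)(f)
=\sum_{(f)}\varphi(S(f_{(1)}))\,\varphi(f_{(2)})
=\varphi\!\Bigl(\sum_{(f)}S(f_{(1)})\,f_{(2)}\Bigr)
=\varphi(\varepsilon(f)\cdot 1_H)
=\varepsilon(f)\cdot 1_R,
\]
using multiplicativity of $\varphi$ and the antipode axiom $\mu\circ(S\ot\id)\circ\Delta=u\circ\varepsilon$, where $u$ is the unit; the symmetric identity $\mu_0(\varphi,S_0(\varphi))=\varepsilon$ uses $\mu\circ(\id\ot S)\circ\Delta=u\circ\varepsilon$. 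The right-hand side $\varepsilon(f)\cdot 1_R$ is precisely the evaluation of the identity element of $G$ at $f$, closing the argument.

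The only place one might fear a genuine obstacle is the potential intrusion of a commutative factor $\omega(\alpha,\beta)$ during the rearrangements—this is what typically complicates graded computations—but in fact no such factor ever appears here. The images $\varphi(f_{(i)}),\psi(f_{(j)}),\chi(f_{(k)})$ all lie in the graded commutative target $R$, and each of $\varphi,\psi,\chi$ is a degree-zero algebra homomorphism, so $\varphi(xy)=\varphi(x)\varphi(y)$ is sign-free and no braiding needs to be invoked. Graded commutativity of $R$ (and of $H$) played its only essential role already in the preceding lemma showing $\mu_0(\varphi,\psi),S_0(\varphi)\in G$. Once that is granted, the three group axioms are mechanical dualisations of the Hopf axioms of $H$, and there is no further obstacle.
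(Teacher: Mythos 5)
Your proof is correct and follows exactly the same route as the paper: associativity from co-associativity, the identity law from the co-unit axiom, and the inverse law from the antipode axiom, all transferred into $R$ via the algebra-homomorphism property of elements of $G$. The paper states these three reductions in one sentence each; you merely spell out the Sweedler-notation computations, and your observation that no commutative factors intervene is accurate since everything happens inside the single associative algebra $R$ under degree-zero homomorphisms.
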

\begin{proof}
It immediately follows from the co-associativity of the co-multiplication of $H$ 
that the map $\mu_0$ is associative. Also for any $\varphi\in G$,  the co-unit property leads to 
$\mu_0(\varepsilon, \varphi)= \mu_0(\varphi, \varepsilon)=\varphi$, 
and the defining property of the antipode implies 
 $\mu_0(S_0(\varphi), \varphi)= \mu_0(\varphi, S_0(\varphi))=\varepsilon$.  

This completes the proof of the lemma. 
\end{proof}

In particular,  for $H=\ST(V(\Gamma, \omega))$, we have the group functor
\[
\GL(V(\Gamma, \omega))=\Hom_\Alg(\ST(V(\Gamma, \omega)),  -): \Alg\lra \text{\bf Set}.  
\] 
For any $R\in \Alg$,  we have the group
\beq\label{eq:grp}
\GL(V(\Gamma, \omega); R):= \Hom_\Alg(\ST(V(\Gamma, \omega)),  R).
\eeq

Thus the coordinate algebra $\ST(V(\Gamma, \omega))$ leads to a  
group functor,  which is some kind of  $\Gamma$-graded ``general linear group''. 
Here we merely wish to point out this fact; 
any in-depth investigation is beyond the scope of this paper. 


\begin{thebibliography}{9999}

\bibitem{A} Abe, E. Hopf algebras. Cambridge University Press, 2009. 

\bibitem{AD} Aizawa, N.; Doi, S.
``Irreducible representations of  $\Z_2^2$-graded  $N=2$  supersymmetry algebra 
and  $\Z_2^2$-graded supermechanics''.  
{\sl J. Math. Phys. \bf 63} (2022), no. 9, Paper No. 091704, 12 pp.

\bibitem{AI} Aizawa, N. ;   Ito, R. 
Integration on minimal $\Z_2^2$-superspace and emergence of space.
{\sl J. Phys. A: Math. Theor. \bf 56} (2023) 485201.

\bibitem{AIT}  Aizawa, N; Ito, R;  Tanaka,  T. 
$\Z_2^2$-graded supersymmetry via superfield on minimal $\Z_2^2$-superspace. {\sl 
Int. J. Geom. Methods  Mod. Physics \bf 23} (2025) No. 6, 2540035.

\bibitem{AIS}   Aizawa, N.;  Isaac, P. S. ;  Segar, J.
$\Z_2\times\Z_2$ generalizations of infinite dimensional Lie superalgebra of conformal type with complete classification of central extensions. {\sl Rep. Math. Phys. \bf 85} (2020) 351.

\bibitem{AIKT} Aizawa, N.;  Ito, R.;  Kuznetsova, Z.; Tanaka, T.; Toppan, F.
Integrable $\Z_2\times\Z_2$-graded extensions of the Liouville and Sinh–Gordon theories.
{\sl J Physics {\bf A}: Math. Theoretical \bf 58} (2025) 055201.

\bibitem{AKT2}  Aizawa, N.;  Kuznetsova, Z.;  Toppan, F.
$\Z_2\times\Z_2$-graded mechanics: the quantization.
{\sl Nucl. Phys. \bf B 967} (2021) article 115426.

\bibitem{AKTT} 
  Aizawa, N.;  Kuznetsova,Z.;   Tanaka, H.; Toppan, F. 
  $\Z_2\times\Z_2$-graded 
 Lie symmetries of the L\'evy-Leblond equations.
{\sl Prog. Theor. Exp. Phys. \bf 12}  (2016) 123A01, 26pp.

\bibitem{AKT1}  Aizawa, N; Kuznetsova, Z.; Toppan,  F. 
$\Z_2\times\Z_2$-graded mechanics: the classical theory. 
{\sl Eur. Phys. J. \bf C} (2020) 80:668.

\bibitem{AS}  Aizawa, N;  Segar, J., Affine extensions of $\Z_2\times\Z_2$-graded 
$\osp(1|2)$ and Virasoro algebra. 
{\sl Int. J. Geom. Methods Mod. Phys.}, 
https://doi.org/10.1142/S0219887825400523. 

\bibitem{AlI} Almutairi, Sihanouk M.; Isaac, Phillip S. 
A connection between $\U_q(\fsl(3))$ and $\Z_2\times\Z_2$-graded special linear Lie colour algebras via Klein operators. 
{\sl J. Math. Phys. \bf 65} (2024), no. 1, Paper No. 013503, 6 pp.

\bibitem{AA}  Amakawa, K.;  Aizawa, N.  
A classification of lowest weight irreducible modules over 
$\Z_2^2$-graded extension of $osp(1|2)$. 
{\sl J. Math. Phys. \bf 62} (2021), 043502. 

\bibitem{BFM} 
Bahturin, Y.; Fischman, D.; Montgomery, S.
On the generalized Lie structure of associative algebras. 
{\sl Israel J. Math. \bf 96} (1996), part A, 27--48.

\bibitem{BK}  Bahturin, Yuri; Kochetov, Mikhail,
Classification of group gradings on simple Lie algebras of types $\CA$,$\CB$,$\CC$ and ${\mathcal D}$.
{\sl J. Algebra \bf 324} (2010), no. 11, 2971–2989. 

\bibitem{BFM-2} 
 Bahturin, Yuri; Fischman, Davida; Montgomery, Susan, 
  Bicharacters, twistings, and Scheunert's theorem for Hopf algebras. 
  {\sl J. Algebra \bf 236} (2001), no. 1, 246--276.
  
\bibitem{BFRT} Balbino, M. M.; de Freitas, I. P.;   Rana, R. G.;  Toppan, F.
Inequivalent $\Z_2^n$-graded brackets, $n$-bit parastatistics and statistical 
transmutations of supersymmetric quantum mechanics
{\sl Nucl. Phys. \bf B 1009} (2024), 116729. 

\bibitem{BR}  Berele, A. and Regev, A.: Hook Young diagrams with applications to combinatorics and
to representations of Lie superalgebras. {\sl  Adv. Math. \bf 64} (1987) 118--175.

\bibitem{B} 
Bourbaki, N. Algebra. II. Chapters 4–7.
Translated from the French. Elements of Mathematics (Berlin). Springer-Verlag, Berlin, 1990. 

\bibitem{BGZ}
Bracken, A. J.; Gould, M. D.; Zhang, R. B.  Quantum supergroups and solutions of the Yang-Baxter equation. {\sl Modern Physics Letters \bf A 5} (1990).831--840.

\bibitem{Bruce}  Bruce, A.J. 
$\Z_2\times\Z_2$-graded supersymmetry: 2-d sigma models.
{\sl J. Phys.{\bf  A}: Math. Theor. \bf 53}  (2020) 455201. 

\bibitem{BD} 
Bruce, A.J.; Duplij, S.
Double-graded supersymmetric quantum mechanics.
{\sl J. Math. Phys.  \bf 61}  (2020) 063503. 


\bibitem{BP-2} Bruce, Andrew James; Poncin, Norbert,  
``Products in the category of  $\Z^n_2$-manifolds.''
{\sl J. Nonlinear Math. Phys. \bf 26} (2019), no. 3, 420 - 453.

\bibitem{BIP}
Bruce, Andrew James; Ibarguengoytia, Eduardo; Poncin, Norbert, 
 ``Linear  $\Z^n_2$-manifolds and linear actions''
{\sl SIGMA Symmetry Integrability Geom. Methods Appl. \bf 17} (2021), 
Paper No. 060, 58 pp.

\bibitem{CCF} Carmeli, Claudio; Caston, Lauren; Fioresi, Rita.
{\em Mathematical foundations of supersymmetry}.
EMS Series of Lectures in Mathematics. European Mathematical Society (EMS), Zürich, 2011.

\bibitem{CW20}
 Chang, Zhihua; Wang, Yongjie, 
 Howe duality for quantum queer superalgebras. 
 {\sl J. Algebra \bf 547} (2020), 358–378. 

\bibitem{CW23}
Chang, Zhihua; Wang, Yongjie,   
A first fundamental theorem of invariant theory for the quantum queer superalgebra.
{\sl Transform. Groups} (2023), 
https://doi.org/10.1007/s00031-023-09818-z.

\bibitem{CSO}
Chen, X.-W.; Silvestrov, S. D.; Van Oystaeyen, F.
Representations and cocycle twists of color Lie algebras. 
{\sl Algebr. Represent. Theory \bf 9} (2006), no. 6, 633--650.

\bibitem{CLZ} Cheng, S. -J.;  Lam, N.; Zhang, R. B. 
Character formula for infinite-dimensional unitarisable modules of the general linear superalgebra. 
{\sl J. Algebra \bf 273} (2004)  780--805. 

\bibitem{CW} Cheng, S.-J., Wang, W. Howe duality for Lie superalgebras.
 {\sl Compositio Math. \bf 128} (2001), 53--94. 

\bibitem{CZ}  Cheng, S. -J.;  Zhang, R. B. 
Howe duality and combinatorial character formula for orthosymplectic Lie superalgebras.
{\sl Adv. Math. {\bf 182}} (2004) 124 -- 172.

\bibitem{C} Connes, A., {\em Noncommutative geometry}. Academic press, San Diego, 1994. 

\bibitem{CGP-1} Covolo, Tiffany; Grabowski, Janusz; Poncin, Norbert, 
{\sl The category of  $\Z^n_2$ -supermanifolds}. 
{\sl J. Math. Phys. \bf 57} (2016), no. 7, 073503, 16 pp.

\bibitem{DLZ}
Deligne, P.; Lehrer, G. I.; Zhang, R. B. The first fundamental theorem of invariant theory for the orthosymplectic super group.  {\sl Adv. Math. \bf 327} (2018), 4--24.

\bibitem{DA} Doi, S.;   Aizawa, N.
Comments of  $\Z_2^2$-supersymmetry in superfield formalism. 
{\sl Nuclear Phys. \bf B 974} (2022), Paper No. 115641, 17 pp.

\bibitem{D}  Drinfeld, V.  Quantum groups.
Proceedings of the 1986 International Congress of Mathematics, Vol. 1  798–820, 1987. 

\bibitem{F} Feldvoss, Jörg,  
Representations of Lie colour algebras. {\sl Adv. Math. \bf 157} (2001) 95--137. 

\bibitem{FH} Fulton, W;  Harris, J. ; {\em Representation theory. A first course}. Graduate Texts in Mathematics, Readings in Mathematics. Vol. {\bf 129}., Springer, 1991.

\bibitem{FM} Fischman, D.;  Montgomery, M.
A Schur double centralizer theorem for cotriangular Hopf algebras and generalized Lie algebras.
{\sl J. Algebra \bf 168} (1994) 594--614.

\bibitem{G}  Green, H. S., A generalized method of field quantization. {\sl Phys. Rev. \bf 90} (1953) 270 - 273.

\bibitem{GW} 
Goodman, Roe; Wallach, Nolan R. {\em Representations and invariants of the classical groups}. 
Encyclopedia of Mathematics and its Applications, 68. Cambridge University Press, 1998. 

\bibitem{GZ} Gould, M. D.;  Zhang, R. B.
Classification of all star irreps of $\gl(m|n)$. 
{\sl  J. Math. Physics \bf 31}  (1990) 2552-2559. 

\bibitem{GoZ} Gover, A. R.;  Zhang, R. B., 
Geometry of quantum homogeneous vector bundles and representation theory of quantum groups I.
{\sl Rev. Math. Physics \bf 11} (1999) 533--552.

\bibitem{HLS}
 Haag, R.; \L{}opusza\'nski, J.T.; Sohnius, M. 
 All possible generators of supersymmetries of the S-matrix. 
 {\sl Nucl. Physics \bf B 88} (1975) 257–274. 

\bibitem{Ho1} 
Howe, R.  Remarks on classical invariant theory.
{\sl  Trans. American Math. Society, \bf 313} (1989),  539--570.
 
\bibitem{Ho} Howe, R.  Perspectives on invariant theory: Schur
duality, multiplicity-free actions and beyond. The Schur lectures (1992)
(Tel Aviv), 1--182, Israel Math. Conf. Proc., 8, Bar-Ilan Univ., Ramat Gan,
1995.

\bibitem{H} Humphreys, J. E., 
 Introduction to Lie algebras and representation theory. Graduate Texts in Mathematics, 
 Vol. {\bf 9}. Springer-Verlag, New York-Berlin, 1972

\bibitem{ISV}
 Isaac, Phillip S.; Stoilova, N. I.; Van der Jeugt, Joris,  
 The $\Z_2\times \Z_2$-graded general linear Lie superalgebra. 
{\sl J. Math. Phys. \bf 61} (2020), no. 1, 011702, 7 pp.

\bibitem{JYW}
Jarvis, P. D.; Yang, Mei; Wybourne, B. G. Generalized quasispin for supergroups. 
{\sl J. Math. Phys. \bf 28} (1987) 1192--1197. 

\bibitem{J} Jimbo, M.  A q-Difference Analogue of U(g) and the Yang-Baxter Equation. 
{\sl Lett. Math. Physics, \bf 10} (1985) 63--69.

\bibitem{K} Kac, V. Lie superalgebras. {\sl  Adv. Math. \bf 26} (1977) 8--96. 

\bibitem{Ko} Kostant, B. On Macdonald’s $\eta$-function formula, the Laplacian and generalized exponents.
{\sl Adv. Math. \bf 20} (1976) 257--285. 

\bibitem{LZ} 
Lai, K. F.;  Zhang, R. B. 
Multiplicity free actions of quantum groups and generalized Howe duality.
{\sl Lett. Math. Physics \bf 64}  (2003), 255--272. 

\bibitem{La}Landi, G., 
{\em An introduction to noncommutative spaces and their geometries}.  
Lecture Notes in Physics Monographs {\bf 51}, 2002 edition;  Springer.

\bibitem{Le}
Lanzmann,  E. The Zhang transformation and Uq(osp(1,2l))-Verma modules annihilators,
{\sl  Alge. Rep. Theory \bf 5} (2002), no. 3, 235–258.

\bibitem{LZZ11}
Lehrer, G. I.; Zhang, Hechun; Zhang, R. B.
A quantum analogue of the first fundamental theorem of classical invariant theory.
{\sl Commun. Math. physics \bf 301} (2022), 131--174.

\bibitem{LZZ20}
Lehrer, G. I.; Zhang, Hechun; Zhang, R. B.
First fundamental theorems of invariant theory for quantum supergroups.
{\sl Euro. J. Math, \bf 6} (2020) 928--976. 

\bibitem{LZ17} Lehrer, G. I.; Zhang, R. B. The first fundamental theorem of invariant theory for the orthosymplectic supergroup.  {\sl Comm. Math. Phys. \bf 349} (2017), no. 2, 661--702.

\bibitem{LZ21}  Lehrer, G. I.; Zhang, R. B. The second fundamental theorem of invariant theory for the orthosymplectic supergroup.  {\sl  Nagoya Math. J. \bf 242} (2021), 52--76. 

\bibitem{LR}
J. Lukierski, L; Rittenberg, V., 
Color-de Sitter and color-conformal superalgebras
{\sl Phys. Rev. \bf D 18} (1978), 385 


\bibitem{MB} Mcanally, D.S.;  Brakcen, A.J., 
Uncolouring of Lie colour algebras. 
{\sl Bull. Austral. Math. Soc. \bf 55} (1997) 425--452.

\bibitem{Ma}
Majid, S. Cross products by braided groups and bosonization. {\sl J. Algebra \bf 163} (1994) 165--190.

\bibitem{M} Manin, Yuri I., Quantum groups and non-commutative geometry. 
Centre de recherches math\'ematiques, Universit\'e de Montr\'eal, 1988.

\bibitem{Mo} Montgomery, S. 
Constructing simple Lie superalgebras from associative graded algebras.
{\sl J. Algebra \bf 195} (1997), no. 2, 558--579.

\bibitem{RW} Rittenberg, V.;  Wyler, D., 
Generalized superalgebras. {\sl  Nucl. Phys. \bf B 139}
(1978)  189 - 202.

\bibitem{RW-2} Rittenberg, V.;  Wyler, D., 
Sequences of $\Z^2\times \Z^2$ graded Lie algebras and superalgebras.
{\sl J. Math. Phys. \bf 19}  (1978) 2193--2200.

\bibitem{Rm25} Ryan, Mitchell,  
Graded colour Lie superalgebras for solving Lévy-Leblond equations.
{\sl J. Phys. {\bf A}: Math. Theor. \bf 58} (2025)  015204.

\bibitem{Rm} Ryan, Mitchell,  
Refining the grading of irreducible Lie colour algebra representations.
arXiv:2403.02855.

\bibitem{SS} 
Salam, A.; Strathdee, J. A.  "Supersymmetry and Nonabelian Gauges". {\sl Physics Lett. \bf B 51} (1974)  353–355.

\bibitem{Sch} Scheunert, M.  The theory of Lie superalgebras: an introduction. 
Lect. Notes Math. {\bf 716}, Springer (1979).

\bibitem{Sch79}
Scheunert, M. 
Generalized Lie algebras. 
{\sl J. Math. Phys. \bf 20}  (1979), no. 4, 712–720.

\bibitem{Sch83} Scheunert, M.  
Graded tensor calculus. {\sl J. Math. Phys. \bf  24}  (1983)  2658–2670.

\bibitem{SchZ}
Scheunert, M.; Zhang, R. B. 
Cohomology of Lie superalgebras and their generalizations. 
{\sl J. Math. Phys. \bf 39} (1998), no. 9, 5024–5061.

\bibitem{SchZ02}Scheunert, M.; Zhang, R. B.
The general linear supergroup and its Hopf superalgebra of regular functions
{\sl J. Algebra \bf 254}   (2002), 44--83.

\bibitem{S}
Sergeev, A. N.: The tensor algebra of the identity representation as a module over the
Lie superalgebras $\gl(n, m)$ and $Q(n)$. {\sl Math. USSR Sbornik \bf 51} (1985) 419--427.

\bibitem{SV18} Stoilova, N. I.; Van der Jeugt, J., 
The  $\Z_2\times \Z_2$-graded Lie superalgebra  $\mathfrak{pso}(2m+1|2n)$  
and new parastatistics representations.
  {\sl J. Phys. \bf A 51} (2018), no. 13, 135201, 17 pp.

\bibitem{SV23} Stoilova, N. I.; Van der Jeugt, J.,
On classical  $\Z_2\times \Z_2$-graded Lie algebras.
{\sl J. Math. Phys. \bf 64}  (2023), no. 6, Paper No. 061702, 8 pp.


\bibitem{SV25a} Stoilova, N. I.; Van der Jeugt, J. 
Matrix structure of classical $\Z_2\times\Z_2$ graded Lie algebras. Lie theory and its applications in physics, 123–132,
Springer Proc. Math. Stat., 473, Springer, Singapore, 2025.

\bibitem{SV25b} Stoilova, N. I.; Van der Jeugt, J., A class of representations of the $\Z_2\times\Z_2$-graded special linear Lie superalgebra and quantum statistics.
{\sl J. Geom. Symmetry Phys. \bf 71} (2025) 1--9. 

\bibitem{T} Tolstoy, V.N.   Once more on parastatistics. 
{\sl Phys. Part. Nucl. Lett. \bf 11}  (2014) 933--93.

\bibitem{Tf1} Toppan, F.  $\Z_2\times\Z_2$-graded parastatistics in multiparticle quantum Hamiltonians
 {\sl J. Phys. {\bf A}: Math. Theor. \bf 54}  (2021), 115203.

\bibitem{Tf2} Toppan,  F.  Inequivalent quantizations from gradings and $\Z_2\times\Z_2$ parabosons
{\sl J. Phys. {\bf A}: Math. Theor. \bf 54} (2021), 355202.

\bibitem{Tf3} Toppan,  F.
On the detectability of paraparticles beyond bosons
and fermions. 
{\sl Int. J. Geom. Methods Mod. Phys.} (2025), 2540042. 

\bibitem{V} Vasiliev, M. A. 
de Sitter supergravity with positive cosmological constant and generalised Lie superalgebras.
{\sl Class. Quantum Grav.\bf 2} (1985) 645--660.

\bibitem{WZu} 
Wess, J.; Zumino, B.  "Supergauge transformations in four dimensions". 
{\sl Nucl. Physics \bf B  70} (1974)  39--50.

\bibitem{W} Weyl, H.  {\em The Classical Groups: Their Invariants and Representations}. Princeton University Press, 1939.

\bibitem{WZ} Wu, Yuezhu, Zhang, R. B. 
Unitary highest weight representations of quantum general linear superalgebra.
{\sl J.  Algebra \bf 321} (2009) 3568-3593.

\bibitem{XZ}  Xu, Ying; Zhang, R. B. 
Quantum correspondences of affine Lie superalgebras. 
{\sl Math. Res. Lett. \bf} 25 (2018), no. 3, 1009--1036.

\bibitem{XZ26}  Xu, Ying; Zhang, R. B. 
Drinfeld realisations and vertex operator representations of quantum affine superalgebras,
{\sl J. Alg. Appl. }(2026), https://doi.org/10.1142/S0219498826502415. 

\bibitem{Y91}
Yamane, H.  Universal R-matrices for quantum groups associated to simple Lie superalgebras. 
{\sl Proc. Japan Acad. Ser. A Math. Sci. \bf 67} (1991), no. 4, 108–112.

\bibitem{Y94}
Yamane, H.  A Serre type theorem for affine Lie superalgebras and their quantized enveloping
superalgebras. {\sl Proc. Japan Acad. Ser. A Math. Sci. \bf 70} (1994), no. 1, 31–36.

\bibitem{YJ} Yang, W.; Jing, S. 
A new kind of graded Lie algebra and parastatistical
supersymmetry. 
{\sl Sci. China \bf A 44}  (2001) 1167--1173.

\bibitem{Z92}
Zhang, R. B. 
Finite-dimensional representations of $\U_q(osp(1/2n))$ and its connection with quantum $so(2n+1)$, 
{\sl Lett. Math. Physics \bf 25} (1992), 317–325. 

\bibitem{Z93} Zhang, R. B.
Finite dimensional irreducible representations of the quantum supergroup $\U_q(gl(m/n))$.
{\sl J. Math. Physics \bf 34} (1993) 1236--1254. 

\bibitem{Z93b} Zhang, R. B.
Finite-dimensional representations of $U_q(C(n+1))$ at arbitrary q
{\sl J. Physics {\bf A}: Math. General \bf 26} (1993), 7041--7059.

\bibitem{Z97}
Zhang, R. B.  Symmetrizable quantum affine superalgebras and their representations, 
{\sl J. Math Phys. \bf 38} (1997), 535–543. 

\bibitem{Z98} Zhang, R. B.
Structure and representations of the quantum general linear supergroup.
{\sl Commun. Math. Physics \bf 195} (1998) 525 -- 547.	

\bibitem{Z03} Zhang, R. B.
 Howe duality and the quantum general linear group.
 {\sl Proc. Amer. Math. Soc. \bf131} (2003), 2681--2692. 
 
\bibitem{Z04} Zhang, R. B.
 Quantum superalgebra representations on cohomology groups of non-commutative bundles.
{\sl J. Pure Appl. Algebra \bf 191} (2004),  285--314.
 
\bibitem{Z24} Zhang, R. B. Noncommutative spaces for parafermions. 
{\sl J. Geom. Phys. \bf 201} (2024) 105192, pp 24.

\bibitem{ZGB}
Zhang, R. B.;  Gould, M. D.;  Bracken, A. J.
Solutions of the graded classical Yang-Baxter equation and integrable models
{\sl  J. Phys. {\bf A}: Math. Gen. \bf 24} (1991) 1185--1198.

\bibitem{Zy1}  Zhang, Y.	
On the second fundamental theorem of invariant theory for the orthosymplectic supergroup.
{\sl J. Algebra \bf 501} (2018) 394--434.

\bibitem{Zy}  Zhang, Y.	
The first and second fundamental theorems of invariant theory 
for the quantum general linear supergroup. 
{\sl J. Pure Appl. Algebra \bf 224} (2020), 106411.

\end{thebibliography}
\end{document}